 \def\itbf{\itshape\bfseries }
\numberwithin{equation}{section} 
\theoremstyle{plain}
\newtheorem{thm}{Theorem}[section]
\newtheorem{coro}[thm]{Corollary}
\newtheorem{lema}[thm]{Lemma}
\newtheorem{prop}[thm]{Proposition}
\newtheorem{rems}[thm]{Remarks}
\numberwithin{equation}{section}
\theoremstyle{plain}
\theoremstyle{definition}
\theoremstyle{remark}
\newcommand{\pFqcomma}{\mskip\pFqmuskip}
\newcommand{\cal}{\mathscr}
\newcommand{\ggeq}{>\!\!>}
\newcommand{\lleq}{<\!\!<}
\renewcommand{\Re}{\operatorname{Re}}
\renewcommand{\Im}{\operatorname{Im}}
\def\sg{\operatorname{sg}}
\def\supp{\operatorname{supp}}
\def\id{\operatorname{id}}
\def\1{\hbox{\bf 1}}
\newcommand\C{{\mathbb{C}}}
\newcommand\N{{\mathbb{N}}}
\newcommand\R{{\mathbb{R}}}
\newcommand\KK{{\mathbb{K}}}
\def\HHH{\mbox{${\mathcal H}$\kern-5.6pt${\mathcal H}$}}
\newcommand*\pFq[6][8]{%
  \begingroup 
  \pFqmuskip=#1mu\relax
  \mathcode`\,=\string"8000
  \begingroup\lccode`\~=`\,
  \lowercase{\endgroup\let~}\pFqcomma
  {}_{#2}F_{#3}{\left[\genfrac..{0pt}{}{#4}{#5};#6\right]}%
  \endgroup
}
\renewcommand{\mathcal}{\mathscr}
\newcommand{\bigsearrow}{\rotatebox[origin=b]{-45}{$\xrightarrow{\kern10mm}$}}
\newcommand{\bigswarrow}{\rotatebox[origin=b]{45}{$\xleftarrow{\kern10mm}$}}
\newcommand{\bignearrow}{\rotatebox[origin=b]{-315}{$\xrightarrow{\kern10mm}$}}
\begin{document}
\title{On a family of differential-reflection operators: 
intertwining operators, and  Fourier transform of rapidly decreasing functions}
\author[S. Ben Said, A. Boussen  and M. Sifi]
{Salem Ben Said, Asma Boussen \& Mohamed Sifi}
\address{S. Ben Said: Institut  \'Elie Cartan,
Universit\'e de Lorraine-Nancy, B.P. 239, F-54506
Vandoeuvres-L\`es-Nancy, France}
\email{salem.bensaid@univ-lorraine.fr}
\address{A. Boussen et M. Sifi: Universit\'e de Tunis El Manar,
Facult\'e des Sciences de Tunis, LR11ES11 Laboratoire d'Analyse
Math\'ematiques et Applications, 2092, Tunis, Tunisie}
\email{asma.boussen@live.fr, mohamed.sifi@fst.rnu.tn}
\thanks{2000 Mathematics Subject Classification.  34K99,  34B25, 43A32, 43A15.}
\thanks{Keywords. Differential-reflection operators,   spectral problem,  intertwining operators, generalized Fourier transform, $L^p$-harmonic analysis.}

\begin{abstract}
We introduce a family of differential-reflection operators
$\Lambda_{A, \varepsilon}$ acting on smooth functions defined on
 $\R.$ Here $A$ is a    Strum-Liouville function with additional hypotheses and $\varepsilon\in \R.$ For special pairs $(A,\varepsilon),$ we recover Dunkl's, Heckman's and Cherednik's operators (in  one dimension). The spectral problem for the   operators $\Lambda_{A, \varepsilon}$ is studied. In particular, we obtain suitable growth estimates for the eigenfunctions of $\Lambda_{A, \varepsilon}$.

As the  operators $\Lambda_{A, \varepsilon}$ are mixture of $d/dx$
and reflection operators, we   prove the existence of an intertwining 
operator $V_{A,\varepsilon}$ between  $\Lambda_{A, \varepsilon}$ and the 
usual derivative. The   positivity of  $V_{A,\varepsilon}$ is also established. 

Via the eigenfunctions of $\Lambda_{A,\varepsilon},$ we introduce a generalized Fourier transform $\mathcal F_{A,\varepsilon}.$   An $L^p$-harmonic analysis for $\mathcal F_{A,\varepsilon}$ is 
developed when $0<p\leq {2\over{1+\sqrt{1-\varepsilon^2}}}$ and $-1\leq \varepsilon\leq 1.$ In particular, an $L^p$-Schwartz space isomorphism theorem for $\mathcal F_{A,\varepsilon}$ is proved. 

\end{abstract}
\maketitle

\section{Introduction}
Dunkl's ascertainment in the late eighties of the operators that now bear his name is one of the most significant  developments in the theory of special
 functions associated with root systems \cite{Dun}. Some early work in this area was done by Koornwinder \cite{Koo1}. 
A lot of the motivation for the subject comes from analysis on symmetric spaces. In the
one-variable cases, spherical functions on Riemannian symmetric spaces can be written as special functions depending on parameters which assume only special
 discrete values. The case of more general parameter values yields   special functions associated with root systems.

In \cite{Dun} Dunkl  generalized the operator $d/dx$ to a mixture of a differential and a reflection operators (in one dimension):
\index{$D_\alpha$lmain}
\begin{equation}\label{Dop}
D_\alpha f(x)=f'(x)+{{2\alpha+1}\over x} \left({{f(x)-f(-x)}\over 2}\right), \qquad \alpha >-1/2.
\end{equation}
By the specialization $\alpha={1\over 2}d-1$ with $d\in \N_{\geq 2},$ the   operator
$D_\alpha^2$ coincides on even functions with the radial part of the Laplace operator on the flat symmetric space $M(d)/SO(d),$ where $M(d)$ is the motion group of $\R^d.$ Important work in the analysis of Dunkl operators  has been done by several authors 
(see  \cite{DJO, DX, dJ, He1,  O1, R3, R31, roslervoit, xuu}; this list is far from being complete). 
 
Some years after, Heckman \cite{Heck1} wrote down a trigonometric variant of the Dunkl operators \eqref{Dop}
 (in one dimension): \index{$H_{\alpha,\beta}$}
$$
H_{\alpha, \beta} f(x)=f'(x) +\bigg\{(2\alpha+1)\coth x+(2\beta+1)\tanh x\bigg\} \left({{f(x)-f(-x)}\over 2}\right),
$$
 where $\alpha\geq \beta \geq -1/2$ and $\alpha\not =-1/2. $
Heckman's operators play a key role in proving the existence of the nowadays called Opdam's shift operators.
For $\alpha={1\over 2}(p-1) $ and $\beta ={1\over 2}(q-1)$ with  $ p\geq q>0,$ the  restriction of $H_{\alpha, \beta} ^2$ to  even functions coincides with the radial part of the Laplace-Beltrami operator on Riemannian symmetric spaces of the non-compact type and of real rank one. Significant  results  in the analysis of Heckman operators have been obtained  by several authors  (see for instance \cite{NSS,  Heck1, HeckOpd, Opd12, Heck11, BOS}).  
 
Next, in \cite{Cher1} Cherednik  made a slight but significant variation of Heckman's operator. He put (in one dimension)
\index{$\widetilde H_{\alpha,\beta}$}
\begin{equation}\label{Cherop}
\widetilde H_{\alpha, \beta} f(x)=f'(x) +\bigg\{(2\alpha+1)\coth x+(2\beta+1)\tanh x\bigg\} \left({{f(x)-f(-x)}\over 2}\right)-\varrho f(-x),
\end{equation}
 where $\alpha\geq \beta \geq -1/2,$   $\alpha\not =-1/2, $ and $\varrho=\alpha+\beta+1.$ 
It is known by now  that  harmonic analysis associated with  $\widetilde H_{\alpha, \beta}$  has a considerable technical difficulties to be overcome compare to harmonic analysis for Heckman's operator $ H_{\alpha, \beta}$ (see for instance \cite{Op, Cher2, AASS, Bch}).
 
The growing interest on these differential-reflection operators comes from their relevance for generalizing harmonic analysis on Riemannian symmetric spaces, and from their importance  for developing new topics in mathematical physics and probability (see for instance  \cite{GPS, roslervoit, GY, Bch, DS, BF}). 
 
In the present paper we consider  some aspects of harmonic analysis associated with the following    family of $(A,\varepsilon)$-operators  
$$
\Lambda_{A,\varepsilon} f(x)= f'(x)+{{A'(x)}\over{A(x)}} \left({{f(x)-f(-x)}\over 2}\right) -\varepsilon \varrho f(-x),
$$
where $A$ is   so-called a Ch\'ebli function on $\R$ (i.e.  $A$ is a continuous
 $\R^+$-valued function on $\R$ satisfying certain regularity and convexity hypotheses),
 $\varrho$ is the index of $A,$ and $\varepsilon \in \R.$ We note that $\varrho\geq 0.$ The function $A$ and the real number $\varepsilon$ are the deformations parameters
 giving back the above three cases (as special examples) when:
\begin{enumerate} 
\item $A(x)=A_\alpha(x)=|x|^{2\alpha+1}$ and $\varepsilon $ arbitrary (Dunkl's operators), \label{dunklcase}
\item $A(x)=A_{\alpha,\beta}(x)=|\sinh x|^{2\alpha+1} (\cosh x)^{2\beta+1}$ and $\varepsilon =0$ (Heckman's operators), \index{$A_{\alpha,\beta}(x)$lmain}
\item $A(x)=A_{\alpha,\beta}(x)=|\sinh x|^{2\alpha+1} (\cosh x)^{2\beta+1}$ and $\varepsilon =1$ (Cherednik's operators).
\end{enumerate}

This paper consists of three  parts. In the first part  we consider  the spectral problem for this  family of $(A,\varepsilon)$-operators.
More precisely, let $\lambda \in \C$ and consider the equation
\begin{equation}\label{syin}
\Lambda_{A,\varepsilon } f(x)=i\lambda f(x),
\end{equation}
where $f:\R\rightarrow \C.$
 We prove that there exists a unique solution  $\Psi_{A,\varepsilon}(\lambda,\cdot)$ of \eqref{syin}
satisfying $\Psi_{A,\varepsilon}(\lambda, 0)=1.$  
Moreover, under the assumption $-1\leq \varepsilon\leq 1,$ we establish in Theorems \ref{esti} and \ref{estder}  suitable estimates for the growth of the eigenfunction
$\Psi_{A,\varepsilon}(\lambda, x)$ and of its  partial derivatives. Our first step is Theorem \ref{thm-posi}, where we prove that $\Psi_{A,\varepsilon}(\lambda,\cdot)>0$ whenever $\lambda\in i\R.$ These estimates are the key tools for developing $L^p$-harmonic analysis associated with   the $(A, \varepsilon)$-operators (see Sections \ref{sec8} and \ref{sec9}). 

We note that $\Psi_{A,\varepsilon}$ reduces to the Dunkl kernel   in the $(A_\alpha, \varepsilon)$-case \cite{Dun2, Rosen};  to the Heckman kernel  in the $(A_{\alpha,\beta}, 0)$-case \cite{Heck11,  BOS}; and to the Cherednik kernel (or Opdma's kernel)  in the $(A_{\alpha,\beta}, 1)$-case \cite{Op, AASS}.

In the second part of this paper we study the existence and the positivity of an  intertwining operator between  $\Lambda_{A, \varepsilon}$ and  the ordinary derivative. We prove that there exists a unique isomorphism $V_{A,\varepsilon} : C^\infty(\R) \rightarrow C^\infty(\R)$  satisfying 
 $\Lambda_{A, \varepsilon}\circ V_{A,\varepsilon}=V_{A,\varepsilon}\circ {d\over {dx}},$  with $V_{A,\varepsilon} f(0)=f(0)$ (see  Theorem \ref{th63}). The construction of $V_{A,\varepsilon}$ involves Delsarte type operators \cite{Th, Li}.   
  
The intertwining operator  $V_{A,\varepsilon}$ plays a crucial  role for developing Fourier analysis associated with the $(A,\varepsilon)$-operators. In particular, it allows to write the eigenfunction $\Psi_{A,\varepsilon}$ as
\begin{equation}
\Psi_{A,\varepsilon}(\lambda, x)= V_{A,\varepsilon}(e^{i\lambda \,\cdot})(x),
\end{equation}
which gives a link between the Fourier transform with kernel  $\Psi_{A,\varepsilon}$  (say $\mathcal F_{A,\varepsilon}$) and the Euclidean Fourier transform. This alliance between $\mathcal F_{A,\varepsilon}$ and the Euclidean Fourier transform   will be a crucial trick to overcome difficulties in several places. 

Another important  result concerning the intertwining operator  $V_{A,\varepsilon}$  is that the latter is of positive type  in the sense that, if $f\geq 0$ then $V_{A,\varepsilon} f\geq 0$ 
(see Theorem \ref{cent}). The major technical step in  the proof of Theorem \ref{cent} is the positivity of $V_{A,\varepsilon}(h_t(u,\cdot))(x)$,  where $h_t(u,v)$ denotes the Euclidean heat kernel at time $t>0.$ For $\varepsilon=0$ and $1,$ this result can be found in \cite{tripo1} and \cite{tripo2}.
We pin down that the positivity of $V_{A,\varepsilon}$ played a fundamental  role in \cite{BBS2} in  establishing an analogue of Beurling's theorem, and its relatives such as theorems of type Gelfand-Shilov, Morgan's, Hardy's, and Cowling-Price in the setting of this paper.  

In the particular case  where $A=A_\alpha$ (see \eqref{dunklcase}), the intertwining operator $V_{A, \varepsilon}$ reduces to the Dunkl intertwining operator  in one dimension (see for instance \cite{R3, R31}).

The third part of this paper is concerned with a development of the $L^p$-harmonic analysis for a Fourier transform $\cal F_{A,\varepsilon}$ 
when $0<p\leq {2\over {1+  \sqrt{1-\varepsilon^2}}}$ and   $-1\leq \varepsilon\leq 1.$ Here $$\cal F_{A,\varepsilon}f(\lambda)=\int_\R f(x) \Psi_{A,\varepsilon}(\lambda, -x) A(x)dx$$ for $f\in L^1(\R, A(x)dx).$

 Using the estimates for the growth of  $\Psi_{A,\varepsilon}(\lambda, x)$   we get  the  holomorphic properties of $\cal F_{A, \varepsilon}$ on $L^p(\R, A(x)dx).$ A Riemann-Lebesgue lemma is also obtained for $1\leq p<{2\over {1+  \sqrt{1-\varepsilon^2}}}.$ 

 We then turn our attention to an $L^p$-Schwartz space  isomorphism theorem for $\cal F_{A, \varepsilon}.$   In \cite{Ha} Harish-Chandra proved an $L^2$-Schwartz space isomorphism for the spherical Fourier transform on non-compact Riemannian symmetric spaces. This result was extended to $L^p$-Schwartz spaces with $0<p<2$ by Trombli and Varadrajan \cite{TV} (see also \cite{EM, FJ, clerc}). In the early nighties, Anker gave a new and simple proof of their result, based on the Paley-Wiener theorem for the spherical Fourier transform on Riemannian symmetric spaces \cite{Anker}.  Recently, Anker's method was used in \cite{PPN} to prove an $L^p$-Schwartz space  isomorphism theorem for the Heckman-Opdam hypergeometric functions. Our Approach is inspired from Anker's paper [loc. cit.]. More precisely,  for  $-1\leq \varepsilon\leq 1$ and  $0<p\leq {2\over {1+  \sqrt{1-\varepsilon^2}}},$ put
\begin{equation}\label{tube}
\C_{p,\varepsilon}:=\left\{ \lambda\in \C\;|\; |\Im \lambda| \leq \varrho \left((2/ p) -1 -\sqrt{1-\varepsilon^2}\right)  \right\}.
\end{equation}
Denote by  $\mathcal S_p(\R)$  the $L^p$-Schwartz space on $\R,$ and by $\mathcal S(\C_{p,  \varepsilon}) $   the Schwartz space on the tube domain  $\C_{p,\varepsilon}.$ 
We prove  that  $\mathcal F_{A,\varepsilon}$ is a topological isomorphism between $\mathcal S_p(\R)$ and
 $ \mathcal S(\C_{p,\varepsilon})$ (see Theorem \ref{+-1}).

We close the third part of this paper by establishing a result in connection with  pointwise multipliers of $\mathcal S(\C_{p,\varepsilon}).$ More precisely, for arbitrary $\alpha\geq 0,$ a function $\psi$ defined on  the tube domain $\C_\alpha:=\big\{ \lambda\in \C\;|\; |\Im\lambda| \leq \alpha\big\}$
  is called a pointwise multiplier of $\cal S(\C_\alpha)$ if the mapping $\phi \mapsto \psi \phi$ is continuous from $\cal S(\C_\alpha)$  into itself. In \cite{BBM} Betancor {\it et al.} characterize  the set of pointwise multipliers of the Schwartz spaces $\cal S(\C_\alpha).$  

Under the assumptions $0<p \leq {2\over {1+  \sqrt{1-\varepsilon^2}}}$  whenever $\varrho=0,$ and $ {2\over {2+  \sqrt{1-\varepsilon^2}}}\leq p \leq {2\over {1+  \sqrt{1-\varepsilon^2}}}$  whenever $\varrho>0,$ we prove that if $T$ is in   the dual space $\mathcal S_p'(\R)$ of  $\mathcal S_p(\R)$ such that $\psi:=\mathcal F_{A,\varepsilon}(T)$  is 
a pointwise multiplier of  $\mathcal S(\C_{p,\varepsilon}),$ then for any $s\in \N$  
there exist $\ell\in \N$ and continuous functions $f_m$ defined on $\R,$ $m=0,1,\ldots  ,\ell ,$  such that 
$$T=\sum_{m=0}^\ell \Lambda_{A,\varepsilon}^m \,f_m  $$ and, for every such $m,$ 
\begin{equation}\label{modi}
\sup_{x\in \R} \,(|x|+1)^s \,e^{({2\over p}-\sqrt{1-\varepsilon^2})\, \varrho|x|} \,| f_m (x)|<\infty.
\end{equation}

The organization of this paper is as follows: In Section 2 we recapitulate  some definitions and basic notations, as well as some  results from literature. In Sections 3 and 4  we study the main  properties of the eigenfunction $\Psi_{A,\varepsilon}.$ In particular, we obtain estimates for the growth of  $\Psi_{A,\varepsilon}$  and  of its partial derivatives. A Laplace type representation of the eigenfunction $\Psi_{A,\varepsilon}$  is derived  in Section 5.   
Sections 6 and 7 are devoted to the existence and to the positivity of the intertwining operator 
$V_{A,\varepsilon}$ between  $\Lambda_{A, \varepsilon}$ and  the ordinary derivative. In Section 8 we develop the $L^p$-harmonic analysis for the Fourier  transform $\mathcal F_{A,\varepsilon},$ where  we mainly prove an $L^p$-Schwartz space  isomorphism theorem for $\cal F_{A, \varepsilon}.$ Finally, in Section 9 we characterize the distributions $T\in \mathcal S_p'(\R)$ so that $\mathcal F_{A,\varepsilon}(T)$ is a pointwise multiplier   of the Schwartz space  $ \mathcal S(\C_{p,\varepsilon}).$

\section{Background}\label{sec2}
In this introductory section we present results from \cite{C2, C3, C1, Tri, Tribook}. See also \cite{BH, BX2, BX1}. 

Throughout this paper we will denote by   $A$ \index{$A$lmain}  a function on $\R$  satisfying  the following hypotheses:
\begin{enumerate}
\item[{(H1)}]  $A(x)=|x|^{2\alpha+1} B(x),$ where $\alpha>-{1\over 2}$ and $B$ is any even,  positive and smooth function 
on $\R$ with   $B(0)=1.$
\item[{(H2)}] $A$ is increasing and unbounded on $\R_+.$
\item[{(H3)}]   ${{A'}/ {A}}$ is a decreasing and smooth function on $\R^*_+,$ and  hence  the limit $2\varrho:= \lim\limits_{x\rightarrow +\infty}{{A'(x)}/{A(x)}} \geq 0$ exists. 
\end{enumerate}
Such a function $A$ is called a Ch\'ebli function. From (H1) it follows that 
\begin{equation}\label{hnew}
{{A'(x)}\over{A(x)}}={{2\alpha+1}\over x} +C(x),\qquad x\not=0,
\end{equation} where $C:={{B'}/{B}}$ is an odd and smooth function on $\R.$   

Let $ \Delta_A,$ or simply $\Delta  ,$ be the following second order differential operator  \index{$\Delta$}
\begin{equation}\label{L}
\Delta={{d^2}\over {dx^2}}+ {{A'(x)}\over{A(x)}}  {d\over {dx}} .
\end{equation}
 For  $\mu\in \C,$ we consider the Cauchy problem 
\begin{equation}\label{CP}
 \begin{cases}
&\!\!\!\!\!\Delta f(x)=-(\mu^2+\varrho^2) f(x)\\
&\!\!\!\!\!f(0)=1,\quad f'(0)=0.
\end{cases}
\end{equation}
In \cite{C1} the author  proved that the system  \eqref{CP} admits  a unique solution 
$\varphi_\mu.$  For every $\mu\in \C$, the solution $\varphi_\mu$ is an even smooth function  on $\R$ 
and the map $\mu\mapsto \varphi_\mu(x)$ is  analytic.  The following Laplace type representation of $\varphi_\mu$  
can be found in \cite{C1} (see also \cite{Tri}).
\begin{lema} \label{Ch1}  For every $x\in \R^* $ there exists a  probability measure $\nu_x$ on $\R$ supported in 
$[-|x|,|x|]$ such that for all $\mu \in \C$
$$\varphi_\mu(x)=\int_{-|x|}^{|x|} e^{(i\mu-\varrho)t} \nu_x(dt).$$
Also, for $x\in \R^*,$ there is a non-negative even continuous function $K (|x|,\cdot)$
supported in $[-|x|, |x|]$  such that for all  $\mu\in \C$  \index{$K (|x|,s)$}
\begin{equation}\label{K1}
\varphi_{\mu } (x) =\int_{0}^{|x|} K (|x|,t) \cos(\mu t) dt.
\end{equation} 
\end{lema}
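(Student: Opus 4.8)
The plan is to construct $K$ as the kernel of a transmutation (Delsarte) operator intertwining $d^2/dt^2$ with $\Delta$, and then to read off $\nu_x$ from $K$. Since the equation in \eqref{CP} depends on $\mu$ only through $\mu^2$, the solution $\varphi_\mu$ is even in $\mu$; it is also even in $x$. Hence it suffices to work on $x>0$ and to seek a representation $\varphi_\mu(x)=\int_0^x K(x,t)\cos(\mu t)\,dt$. Substituting this ansatz into $\Delta\varphi_\mu=-(\mu^2+\varrho^2)\varphi_\mu$, integrating by parts twice in $t$ and using $(\cos(\mu\cdot))''=-\mu^2\cos(\mu\cdot)$, and then separating terms by letting $\mu$ vary (the family $\{\cos(\mu\cdot)\}_{\mu}$ being rich enough to force each coefficient to vanish), one is led to require that $K$ solve the hyperbolic Darboux-type equation
$$K_{xx}(x,t)+\frac{A'(x)}{A(x)}K_x(x,t)+\varrho^2 K(x,t)=K_{tt}(x,t),\qquad 0<t<x,$$
together with the characteristic relation $2\frac{d}{dx}K(x,x)+\frac{A'(x)}{A(x)}K(x,x)=0$ on the diagonal (formally $K(x,x)\propto A(x)^{-1/2}$) and the condition $K_t(x,0)=0$, which makes $K(x,\cdot)$ even; the support requirement $\supp K(x,\cdot)\subset[-x,x]$ is built into the domain $0\le t\le x$.

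The first difficulty is the singularity of $A'/A\sim(2\alpha+1)/x$ at the origin, visible in \eqref{hnew}, which makes the Goursat problem singular along $x=0$ and along the characteristic $t=x$. I would handle it by peeling off the explicit Bessel kernel: in the model case $A(x)=|x|^{2\alpha+1}$, $\varrho=0$, the Sonine--Poisson formula for $\varphi_\mu=j_\alpha(\mu\,\cdot)$ gives $K_0(x,t)=c_\alpha\,x^{-2\alpha}(x^2-t^2)^{\alpha-1/2}$, which is manifestly non-negative, even in $t$ and supported in $[-x,x]$. Writing $A=|x|^{2\alpha+1}B$ as in (H1) and factoring $K=K_0\,L$, the worst singularity cancels and $L$ solves a regularized Goursat problem whose coefficients involve only the smooth odd function $C=B'/B$ and the constant $\varrho^2$. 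I would solve this for $L$ by passing to characteristic coordinates $\xi=x+t,\ \eta=x-t$, recasting it as a Volterra integral equation, and applying successive approximations; the Neumann series converges locally uniformly and yields $K$ with the stated regularity and support.

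The main obstacle is the \emph{positivity} of $K$, equivalently of $\nu_x$. Here I would try to keep non-negativity throughout the construction: the base kernel $K_0$ is positive, so if the Volterra/Picard iteration for $L$ can be arranged with a non-negative iterating kernel, then every iterate and the limit stay non-negative. Since the sign of the lower-order terms is not obviously favorable, the safer route is a comparison/approximation argument: approximate $A$ by Chébli functions for which the positive representation is already known (the explicit Bessel and Jacobi cases), and pass to the limit using the analyticity of $\mu\mapsto\varphi_\mu(x)$ together with a maximum principle for the hyperbolic operator to control the limiting kernel. It is precisely at this step that the monotonicity hypotheses (H2) and (H3) on $A$ and on $A'/A$ become indispensable.

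Finally, granting a non-negative $K$, I would extend $K(x,\cdot)$ evenly to $[-x,x]$ and set $\nu_x(dt)=\tfrac12 e^{\varrho t}K(x,t)\,dt$. Using the evenness of $K$ in $t$ one gets $\varphi_\mu(x)=\tfrac12\int_{-x}^{x}K(x,t)e^{i\mu t}\,dt=\int_{-x}^{x}e^{(i\mu-\varrho)t}\nu_x(dt)$, which is the first asserted formula, while restricting back to $[0,x]$ recovers \eqref{K1}. To see that $\nu_x$ is a \emph{probability} measure, I would evaluate at $\mu=-i\varrho$: then $\mu^2+\varrho^2=0$, so $\varphi_{-i\varrho}$ solves $\Delta f=0$ with $f(0)=1$, $f'(0)=0$; since $\Delta f=0$ gives $(Af')'=0$ and $Af'\to0$ at the origin, one finds $f\equiv1$. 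Plugging $\mu=-i\varrho$ into the first formula gives $\int_{-x}^{x}\nu_x(dt)=\varphi_{-i\varrho}(x)=1$, and non-negativity of $\nu_x$ is inherited from that of $K$, which completes the proof.
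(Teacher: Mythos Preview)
The paper does not supply its own proof of this lemma: it is quoted as a background result from Ch\'ebli \cite{C1} and Trim\`eche \cite{Tri}. Your transmutation/Goursat approach is indeed the method used in those references, and the parts of your argument concerning existence and regularity of the kernel $K$ (peeling off the Bessel factor $K_0$, regularizing via the smooth function $C=B'/B$, and solving the resulting Volterra equation in characteristic coordinates) are essentially the standard proof. Your derivation of $\nu_x$ from $K$ in the last paragraph is also correct: setting $\nu_x(dt)=\tfrac12 e^{\varrho t}K(|x|,t)\,dt$ and evaluating at $\mu=-i\varrho$ gives total mass $\varphi_{-i\varrho}(x)=1$, exactly as you say.

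The genuine gap is the positivity of $K$, which you correctly identify as the crux but do not prove. Your route (a) fails for the reason you yourself note: the lower-order term $\varrho^2 K$ and the perturbation coming from $C$ do not have a definite sign, so the Picard iterates are not termwise non-negative. Your route (b) --- approximating $A$ by explicit Bessel/Jacobi cases and passing to the limit --- is not how the literature proceeds and would require uniform control on the kernels that you have not supplied. In Ch\'ebli's original argument the positivity is obtained by a hyperbolic maximum-principle argument applied directly to the Riemann function of the singular wave equation $u_{xx}+\tfrac{A'}{A}u_x-u_{tt}=0$; it is precisely the monotonicity hypothesis (H3) (that $A'/A$ is decreasing on $\R_+^*$) that makes the comparison go through, not merely that the model cases are positive. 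You gesture at (H2)--(H3) being ``indispensable'' but do not actually use them. To complete the proof you would need to carry out that maximum-principle step, or equivalently establish the positivity of the associated generalized translation, which is the substantive content of \cite{C3,C1}.
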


The following  estimates of  the eigenfunctions $\varphi_\mu$  can be found in  \cite{C2, C1,  Tribook,  BX1}.
\begin{lema}\label{BX} Let $\mu\in \C$ such that $| \Im \mu |\leq \varrho.$   Then  
\begin{enumerate} [\upshape 1)]
\item $\varphi_{\pm i\varrho}(x) =1.$ 
\item $\varphi_{-\mu}(x)=\varphi_\mu(x).$
\item   $|\varphi_\mu(x)|\leq 1.$  
\item    $|\varphi_\mu(x)|\leq \varphi_{i \Im \mu}(x) \leq e^{| \Im \mu | |x|} \varphi_0(x).$ 
\item    $|\varphi_\mu'(x)|\leq c\, (\varrho^2+\vert \mu\vert^2) e^{| \Im \mu | |x|} \varphi_0(x).$ 
\item    $ e^{-\varrho |x|} \leq \varphi_0(x) \leq c (|x|+1)e^{-\varrho |x|} .$
\end{enumerate}
\end{lema}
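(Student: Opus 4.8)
The plan is to read off all six estimates from the two tools supplied just before the statement: uniqueness of the solution of the Cauchy problem \eqref{CP}, and the Laplace/cosine representations of Lemma \ref{Ch1}. I would begin with items (1) and (2), both of which are pure uniqueness arguments. The eigenvalue $-(\mu^2+\varrho^2)$ in \eqref{CP} depends on $\mu$ only through $\mu^2$, so $\varphi_\mu$ and $\varphi_{-\mu}$ solve the same initial value problem and therefore coincide, which is (2). For (1), the choice $\mu=\pm i\varrho$ makes $\mu^2+\varrho^2=0$, hence $\Delta\varphi_{\pm i\varrho}=0$; since the constant function $1$ also solves $\Delta f=0$ with $f(0)=1$, $f'(0)=0$, uniqueness forces $\varphi_{\pm i\varrho}\equiv 1$.

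Items (3) and (4) I would extract from the cosine representation \eqref{K1} together with the elementary identity $|\cos(a+ib)|^2=\cos^2 a+\sinh^2 b$, which gives $|\cos(\mu t)|\le\cosh((\Im\mu)t)$. Because $K(|x|,\cdot)\ge 0$, integrating this against $K(|x|,t)\,dt$ over $[0,|x|]$ and using $\cosh((\Im\mu)t)=\cos(i(\Im\mu)t)$ yields $|\varphi_\mu(x)|\le\varphi_{i\Im\mu}(x)$, the first inequality of (4). Bounding $\cosh((\Im\mu)t)\le e^{|\Im\mu||x|}$ on $[0,|x|]$ and recalling $\varphi_0(x)=\int_0^{|x|}K(|x|,t)\,dt$ gives the second inequality of (4); finally, for $|\Im\mu|\le\varrho$ the monotonicity $\cosh((\Im\mu)t)\le\cosh(\varrho t)$ together with (1) gives $|\varphi_\mu(x)|\le\varphi_{i\varrho}(x)=1$, which is (3).

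For (5) I would pass to the divergence form of the equation. Since $(A\varphi_\mu')'=A\,\Delta\varphi_\mu=-(\mu^2+\varrho^2)A\varphi_\mu$ and $\varphi_\mu'(0)=0$, $A(0)=0$, integration yields
\[
\varphi_\mu'(x)=-\frac{\mu^2+\varrho^2}{A(x)}\int_0^x A(s)\varphi_\mu(s)\,ds\qquad(x>0).
\]
Estimating $|\varphi_\mu(s)|\le e^{|\Im\mu||x|}\varphi_0(s)$ for $0\le s\le x$ by (4) and using $|\mu^2+\varrho^2|\le|\mu|^2+\varrho^2$, the assertion reduces to the weighted bound $A(x)^{-1}\int_0^x A(s)\varphi_0(s)\,ds\le c\,\varphi_0(x)$. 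When $\varrho>0$ the same identity with $\mu=0$ shows the left-hand side equals $-\varphi_0'(x)/\varrho^2$, which is controlled by the boundedness of the logarithmic derivative $-\varphi_0'/\varphi_0\ge 0$ on $\R_+$ (it vanishes at the origin and tends to $\varrho$ at infinity); the degenerate case $\varrho=0$ is handled through the explicit Bessel form of $\varphi_\mu$ recorded in the cited references.

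The lower bound in (6) is immediate from the Laplace representation of Lemma \ref{Ch1}: since $e^{-\varrho t}\ge e^{-\varrho|x|}$ for every $t\in[-|x|,|x|]$ and $\nu_x$ is a probability measure, $\varphi_0(x)=\int e^{-\varrho t}\,\nu_x(dt)\ge e^{-\varrho|x|}$. The upper bound, with its sharp polynomial factor $(|x|+1)$, is the main obstacle. My approach would be the substitution $g=e^{\varrho x}\varphi_0$, which converts the equation $\Delta\varphi_0=-\varrho^2\varphi_0$ into $g''+q\,g'-\varrho q\,g=0$ with $q:=A'/A-2\varrho\ge 0$ and $q\to 0$ at infinity, and then to show $g(x)=O(x)$ by a Levinson-type asymptotic analysis of this perturbation of a constant-coefficient equation with the double root $-\varrho$; alternatively one may simply invoke the Ch\'ebli--Trim\`eche asymptotics of \cite{C2, C1, Tribook, BX1}. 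This is the single step where the finer structure of the Ch\'ebli function enters, in particular the rate at which $A'/A\to 2\varrho$, and I expect it to be the delicate point of the proof.
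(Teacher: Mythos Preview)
The paper does not prove this lemma: it is quoted in the background section with the remark that the estimates ``can be found in \cite{C2, C1, Tribook, BX1}''. So there is no argument in the paper to compare yours against; your proposal is already more detailed than what the paper offers.

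That said, your sketch is essentially the standard route and is correct for (1)--(4) and the lower bound in (6). The one point worth flagging is a small circularity in your treatment of (5): you reduce to the boundedness of $-\varphi_0'/\varphi_0$ and then justify that boundedness by asserting the ratio tends to $\varrho$ at infinity --- but that asymptotic is precisely the content of the upper bound in (6), which you have not yet proved. A self-contained fix is to show directly that $(e^{\varrho x}\varphi_0)'\ge 0$ on $\R_+$: setting $v=e^{\varrho x}\varphi_0$ one computes $v''=(2\varrho-A'/A)(v'-\varrho v)$, so at any first zero $x_1>0$ of $v'$ one has $v''(x_1)=\varrho\, v(x_1)\bigl(A'(x_1)/A(x_1)-2\varrho\bigr)\ge 0$, which prevents $v'$ from turning negative. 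This gives $-\varphi_0'/\varphi_0\le\varrho$ on $\R_+$ with no appeal to (6). For the upper bound in (6) your honest fallback to the Ch\'ebli--Trim\`eche asymptotics is exactly what the paper itself does by citation.
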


The Ch\'ebli   transform of $f\in L^1(\R_+, A(x)dx)$ is given by \index{$\mathcal F_{\Delta}(f)$}
\begin{equation}\label{Fdelta}
 \mathcal F_{\Delta}(f)(\mu):=\int_{\R_+} f(x)\varphi_{\mu } (x) A(x) dx. 
 \end{equation}

The following Plancherel and inversion formulas for $\mathcal F_{\Delta}$ are proved in \cite{C1}.
 \begin{thm}  There exists a unique positive   measure $\pi $ with support $\R_+$ such that 
 ${\mathcal F}_{\Delta}$ induces 
 an isometric isomorphism from $L^2(\R_+, A(x)dx)$ onto $L^2(\R_+, \pi(d\mu)),$ 
 and for any $f\in L^1(\R_+, A(x)dx) \cap L^2(\R_+, A(x)dx)$ we have 
$$ \int_{\R_+} |f(x)|^2 A(x) dx=\int_{\R_+} | \mathcal F_{\Delta}(f)(\mu)|^2 \:\pi(d\mu).$$ 
 The inverse transform is given by 
\begin{equation}\label{inv1}
\mathcal F_{\Delta}^{-1}g(x)=\int_{\R_+} g(\mu) \varphi_{\mu } (x) \:\pi(d\mu). 
\end{equation}
 \end{thm}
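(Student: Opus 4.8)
The plan is to realise $-\Delta$ as a non-negative self-adjoint operator on the Hilbert space $L^2(\R_+, A(x)\,dx)$ and to read the statement off from the spectral theorem, the measure $\pi$ being the associated spectral (Plancherel) measure. The starting point is that (H1)--(H3) make $\Delta$ formally self-adjoint for the weight $A$: one has $\Delta = A^{-1}(A\,f')'$, so that for smooth compactly supported $f,g$ vanishing near $0$ Green's identity $\int_{\R_+}\big((\Delta f)\,g-f\,(\Delta g)\big)A\,dx = \big[A\,(f'g-fg')\big]_0^{+\infty}$ holds with the boundary terms dropping out. First I would fix the minimal operator, show that the endpoint $x=0$ singles out the solution regular at the origin (this is precisely the normalisation $f(0)=1$, $f'(0)=0$ defining $\varphi_\mu$, and reflects the Bessel-type singularity $A'/A\sim (2\alpha+1)/x$), and that $x=+\infty$ is in the limit-point case thanks to (H2)--(H3); together these give an essentially self-adjoint operator with a scalar spectral measure.

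Next I would build the resolvent explicitly. For $\lambda\notin[\varrho^2,+\infty)$, write $\lambda=\mu^2+\varrho^2$ and form the Green kernel from the regular solution $\varphi_\mu$ and the Weyl solution $\Phi_\mu$ that is square-integrable at $+\infty$; their Wronskian, computed against the weight $A$, is proportional to a function $c(\mu)$, the analogue of the Harish-Chandra $c$-function. The spectral measure is then extracted by the Stone--Kodaira formula: $\pi$ is the weak limit, as $\varepsilon\downarrow 0$, of the jump across $[\varrho^2,+\infty)$ of the diagonal resolvent $\big\langle(-\Delta-(\lambda\pm i\varepsilon))^{-1}f,f\big\rangle$, and a computation identifies $d\pi(\mu)$ with a positive multiple of $|c(\mu)|^{-2}\,d\mu$ on $\R_+$ (plus possibly finitely many point masses, which (H2)--(H3) will exclude, yielding purely absolutely continuous spectrum $[\varrho^2,+\infty)$). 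Positivity and $\supp\pi=\R_+$ come out of this description, while uniqueness of $\pi$ is automatic since the spectral measure of a self-adjoint operator is determined by its resolvent.

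With $\pi$ in hand, the Plancherel isometry and the inversion formula are the abstract eigenfunction-expansion theorem for $-\Delta$: the map $\mathcal F_{\Delta}$ diagonalises $-\Delta$, hence extends from $L^1(\R_+,A\,dx)\cap L^2(\R_+,A\,dx)$ to an isometry of $L^2(\R_+,A\,dx)$ onto $L^2(\R_+,\pi)$, and its adjoint is integration against $\varphi_\mu$ as in \eqref{inv1}. The bounds of Lemma \ref{BX} guarantee absolute convergence of the defining integrals and let one pass from the dense subspace of nice functions to all of $L^2$; the surjectivity onto $L^2(\R_+,\pi)$ is equivalent to the multiplicity-one statement, i.e. that no part of $L^2(\R_+,A\,dx)$ is annihilated, which is where the limit-point analysis at both endpoints re-enters.

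I expect the genuine obstacle to be the asymptotic analysis at $+\infty$: under the sole hypotheses (H1)--(H3) one must show that, for real $\mu\neq 0$, $\varphi_\mu$ admits an expansion $\varphi_\mu(x)\sim c(\mu)\,e^{(i\mu-\varrho)x}+c(-\mu)\,e^{(-i\mu-\varrho)x}$ with a $c$-function that is non-vanishing and of controlled growth, since it is exactly $|c(\mu)|^{-2}$ that furnishes the density of $\pi$ and governs both the positivity/support assertion and the limit-point classification. Establishing this — equivalently, the existence and boundary behaviour of the Weyl solution $\Phi_\mu$ — is the technical heart; here the Laplace-type representation of Lemma \ref{Ch1}, which already exhibits $\varphi_\mu$ as a transmutation of $\cos(\mu\,\cdot)$, is the main tool, both for transporting the classical cosine-transform Plancherel theorem and for pinning down the behaviour of $c(\mu)$.
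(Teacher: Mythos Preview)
The paper does not give its own proof of this theorem: it is stated in Section~\ref{sec2} as a background result, with the proof attributed to Ch\'ebli \cite{C1}. Your sketch via Weyl--Titchmarsh--Kodaira spectral theory for the singular Sturm--Liouville operator $-\Delta$ on $L^2(\R_+,A(x)\,dx)$ is precisely the route taken in those original references (and in Trim\`eche \cite{Tri,Tribook}), so your plan is both correct in outline and aligned with the literature the paper is quoting.

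One minor calibration: your remark that the density of $\pi$ should come out as $|c(\mu)|^{-2}$ is exactly what the paper records \emph{after} the theorem, but only under the additional growth hypothesis (H4); under (H1)--(H3) alone the cited result gives just the abstract existence of $\pi$, which matches how you yourself defer the hard asymptotic analysis of $\varphi_\mu$ and the $c$-function to ``the technical heart''.
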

 
To have a nice behavior for the Plancherel measure $\pi$ we must add 
a further (growth) restriction on the function $A.$  Following \cite{Tri},  we will assume that 
$A'/A$ satisfies the following additional hypothesis: \index{$({\rm H}4)$}
\begin{itemize}
\item[{(H4)}] There exists a constant $\delta>0$ such that for all $x\in [x_0,\infty)$ (for some $x_0> 0$),

\begin{equation}
\displaystyle  {{A'(x)}\over {A(x)}}=
\begin{cases} \;
 \displaystyle  2\varrho+e^{-\delta x} D(x)  &  \text{  if  }\; \varrho >0, \vspace{.2cm}\\
 
\; \displaystyle   {{2\alpha+1}\over x}+e^{-\delta x} D(x)   & \text{  if  } \; \varrho =0, 
  \end{cases}
\end{equation}
with $D$ being  a smooth function bounded together with its derivatives.
  \end{itemize}

In these circumstances  the Plancherel measure $\pi$ is absolutely continuous 
with respect to the Lebesgue measure and has density 
$|c(\mu)|^{-2}$ \index{$|c(\mu)|^{-2}$lmain} where  $c $ is continuous function on $\R_+$ and zero free 
on $\R^*_+$ (see \cite{BX2}). Moreover, by \cite[Proposition 6.1.12 and Corollary 6.1.5]{Tribook} (see also \cite{Bra1}), for  $\mu\in \C$ we have
\begin{enumerate} 
\item [{(i)}] If $\varrho \geq 0$ and $\alpha>-1/2,$ then $|c(\mu)|^{-2}\sim |\mu|^{2\alpha+1}$ whenever $|\mu|\ggeq 1.$
\item[{(ii)}] If $\varrho > 0$ and $\alpha>-1/2,$ then $|c(\mu)|^{-2}\sim |\mu|^{2 }$ whenever $|\mu|\lleq 1.$
\item[{(iii)}] If $\varrho= 0$ and $\alpha>0,$ then $|c(\mu)|^{-2}\sim |\mu|^{2\alpha+1}$ whenever $|\mu|\lleq 1.$
\end{enumerate}
In the literature, the function $c$ is called Harish-Chandra's function 
of the operator $\Delta .$ We refer to \cite{Bra2} for more details on the $c$-function. 

Henceforth we will assume that  Ch\'ebli's  function $A$ satisfies the additional  hypothesis  (H4). It follows that  for $|x|$ is large enough:
\begin{itemize}
\item[{(i)}]  $A(x)=O(e^{2\varrho |x|})$  for  $\varrho>0$. 
\item[{(ii)}]  $A(x)=O( |x|^{2\alpha+1})$ for $\varrho=0.$
\end{itemize}

We close this section by giving some basic results of  (the analogue of)  the Abel transform associated 
with the second order differential operator $\Delta .$

Denote by $\mathcal D_e(\R)$ the space of even and compactly supported  functions in $C^\infty(\R).$ 
In \cite{Tri} the author has proved that the Abel transform defined on $\mathcal D_e(\R)$ by 
\index{$\mathcal A$lmain}
\begin{equation}\label{abel}
\mathcal A f(y)={1\over 2}\int_{|x|>|y|} K(|x|,y) f(x) A(x)dx
\end{equation}
 is an automorphism  of $\mathcal D_e(\R)$  and satisfying 
\begin{equation}\label{1.10}\mathcal A \circ (\Delta +\varrho^2)=\displaystyle {{d^2}\over{dx^2}}\circ  \mathcal A .\end{equation}
  Furthermore, on $\mathcal D_e(\R),$ we have
\begin{equation}\mathcal F_{\Delta} =\mathcal F_{\rm euc}  \circ \mathcal A ,\end{equation} where $\mathcal F_{\rm euc}$ is the Euclidean Fourier transform. \index{$\mathcal F_{\rm{euc}}(f)$}
 
\section{A family of differential-reflection operators}
For $\varepsilon\in \R$ we consider the following differential-reflection operators \index{$\Lambda_{A, \varepsilon}$}
\begin{equation}\label{Op} 
\Lambda_{A,\varepsilon} f(x)= f'(x)+{{A'(x)}\over{A(x)}} \left({{f(x)-f(-x)}\over 2}\right) -\varepsilon \varrho f(-x).
\end{equation}
In view of \eqref{hnew} and the hypothesis (H4) on $A'/A,$ the  space $\mathcal D(\R)$ \index{$\mathcal D(\R)$lmain}
(of   smooth  functions with compact support on $\R$) and the  space $\mathcal S(\R)$ (of  Schwartz functions on $\R$)
\index{$\mathcal S (\R)$} are invariant under the action of $ \Lambda_{A,\varepsilon}.$
 
 Let $S$ denote the symmetry \index{$S$}   $(S f)(x):= f(-x).$  The following lemma is needed 
 later. The easy proof is left to the reader.
\begin{lema}\label{lema1} 
Let $f\in C^\infty(\R)$ such that $ \sup_{x\in \R} (1+|x|)^r e^{s|x|} |f^{(t)}(x)|<\infty$  for every $r, t\in \N$  and for some $2\varrho \leq s<\infty,$
  and let $g\in C^\infty(\R)$  such that   $g$ and all its derivatives are 
at most of polynomial growth. Then 
$$\int_{\R} \Lambda_{A,\varepsilon} f(x) g(x) A(x) dx
=-\int_{\R}  f(x) (\Lambda_{A,\varepsilon} +2\varepsilon \varrho S) g(x) A(x)dx.$$
\end{lema}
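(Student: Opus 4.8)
The plan is to establish the integration-by-parts formula
$$
\int_{\R} \Lambda_{A,\varepsilon} f(x)\, g(x)\, A(x)\, dx
=-\int_{\R} f(x)\,(\Lambda_{A,\varepsilon}+2\varepsilon\varrho S)g(x)\, A(x)\, dx
$$
by splitting $\Lambda_{A,\varepsilon}$ into its constituent pieces according to the defining formula \eqref{Op}, handling each piece separately, and then recombining. First I would verify that the decay hypothesis on $f$ together with the polynomial-growth hypothesis on $g$ and the bound $A(x)=O(e^{2\varrho|x|})$ (from (H4), since $s\geq 2\varrho$) guarantees that every integrand in sight is absolutely integrable and that all boundary terms at $\pm\infty$ vanish; this is the routine analytic bookkeeping that legitimizes the manipulations. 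The three pieces are the derivative term $f'(x)$, the reflection-difference term $\tfrac{A'(x)}{A(x)}\big(\tfrac{f(x)-f(-x)}{2}\big)$, and the lower-order reflection term $-\varepsilon\varrho f(-x)$.

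For the derivative term, the key computation is a genuine integration by parts against the weight: writing $\int_\R f'(x)g(x)A(x)\,dx$ and integrating by parts (the boundary terms vanishing by the decay of $f$), one obtains $-\int_\R f(x)\,\big(g'(x)A(x)+g(x)A'(x)\big)\,dx$, i.e. $-\int_\R f(x)\big(g'(x)+\tfrac{A'(x)}{A(x)}g(x)\big)A(x)\,dx$. The extra $\tfrac{A'(x)}{A(x)}g(x)$ term produced here is exactly what will later cancel against a term arising from the reflection-difference piece, so I would keep track of it carefully. For the two reflection terms I would exploit the change of variables $x\mapsto -x$ together with the fact that $A$ is even (by (H1)) and $A'/A$ is odd; this lets me move reflections from $f$ onto $g$ at the cost of sign changes and of swapping $g(x)$ with $g(-x)=Sg(x)$.

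The heart of the matter is the reflection-difference term. I would split it as $\tfrac12\int_\R \tfrac{A'}{A}(x)f(x)g(x)A(x)\,dx-\tfrac12\int_\R \tfrac{A'}{A}(x)f(-x)g(x)A(x)\,dx$; in the second integral the substitution $x\mapsto -x$, using that $A$ is even and $A'/A$ is odd, converts it into $+\tfrac12\int_\R \tfrac{A'}{A}(x)f(x)g(-x)A(x)\,dx$. Collecting the results of all three pieces, the $\tfrac12\,\tfrac{A'}{A}fg$ contribution here combines with the unwanted $+\tfrac{A'}{A}g$ from the derivative term and with the half coming from $f(x)$ in the difference to reassemble precisely the operator $\tfrac{A'}{A}\big(\tfrac{g(x)-g(-x)}{2}\big)$ acting on $g$, while the $-\varepsilon\varrho f(-x)$ term contributes $-\varepsilon\varrho\int_\R f(x)g(-x)A(x)\,dx=-\varepsilon\varrho\int_\R f(x)(Sg)(x)A(x)\,dx$. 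This last term is what forces the appearance of $(\Lambda_{A,\varepsilon}+2\varepsilon\varrho S)$ rather than a clean formal adjoint: the reflection operator is not formally skew-adjoint, and tracking the sign of the $\varepsilon\varrho$ contribution under $x\mapsto -x$ is the one place where a sign slip would invalidate the identity, so that bookkeeping is the main (though modest) obstacle. Assembling these cancellations and identifications yields the stated formula.
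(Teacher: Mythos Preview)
Your argument is correct. The paper itself does not provide a proof of this lemma---it states ``The easy proof is left to the reader''---and your computation is precisely the direct verification the authors have in mind: integrate the $f'$ term by parts against the weight $A$, use the parity of $A$ and the oddness of $A'/A$ to move reflections from $f$ to $g$ via $x\mapsto -x$, and collect terms. Your bookkeeping of the $\tfrac{A'}{A}$ contributions and the identification of the extra $2\varepsilon\varrho S$ piece are accurate, and the decay/growth hypotheses on $f$, $g$, and $A$ (using (H4) for the bound $A(x)=O(e^{2\varrho|x|})$) indeed justify the vanishing of boundary terms and the absolute integrability of every term.
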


Let $\lambda\in \C$   and consider the initial data problem  
\begin{equation}\label{system}
\Lambda_{A,\varepsilon} f(x)=i\lambda f(x), \qquad f(0)=1,
\end{equation}
where $f:\R\rightarrow \C.$ We  have the following  statement. 
\begin{thm} \label{thm1} Let  $\lambda\in \C.$ There exists a unique solution  
$\Psi_{A, \varepsilon}(\lambda,\cdot)$ to the problem   \eqref{system}. 
 Further, for every $x\in \R,$ the function 
 $\lambda\mapsto \Psi_{A, \varepsilon}(\lambda,x)$ is analytic on $\C.$
More explicitly: \index{$\Psi _{ \varepsilon } (\lambda,x)$}
 \begin{itemize}
\item[{(i)}] For $i\lambda\not =\varepsilon \varrho,$  
\begin{equation}\label{2.2}
\Psi_{A, \varepsilon}(\lambda,x)=
\varphi_{\mu_\varepsilon}(x)+{1\over{ i\lambda-\varepsilon \varrho}}\varphi_{\mu_\varepsilon}'(x),
\end{equation}
where \index{$\mu_\varepsilon$}
\begin{equation}\mu_\varepsilon^2:=\lambda^2+(\varepsilon^2-1)\varrho^2.\end{equation}
We may rewrite the solution \eqref{2.2}  as
\begin{equation}\label{2.4}
\Psi_{A, \varepsilon}(\lambda,x)=\varphi_{\mu_\varepsilon}(x)+ (i\lambda+\varepsilon \varrho) {{\sg(x)}\over {A(x)}} \int_0^{|x|} \varphi_{\mu_\varepsilon}(t) A(t)dt.
\end{equation}
\item[{(ii)}] For $i\lambda=\varepsilon\varrho,$  
\begin{equation}\label{=0}
\Psi_{A, \varepsilon}(\lambda,x)=1+2\varepsilon \varrho {{\sg(x) }\over{A(x)}}\int_0^{|x|} A(t)dt.
\end{equation}
\end{itemize}
\end{thm}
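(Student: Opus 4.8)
The plan is to solve the first-order differential-reflection equation $\Lambda_{A,\varepsilon}f=i\lambda f$ by decomposing $f$ into even and odd parts, thereby converting the reflection operator $S$ into a sign, and reducing everything to a classical second-order Cauchy problem for the Ch\'ebli operator $\Delta$. Write $f=f_e+f_o$ with $f_e(x)=\tfrac12(f(x)+f(-x))$ and $f_o(x)=\tfrac12(f(x)-f(-x))$. Since $Sf=f_e-f_o$ and $(f(x)-f(-x))/2=f_o(x)$, the defining relation \eqref{Op} becomes
\begin{equation*}
f_e'(x)+f_o'(x)+\frac{A'(x)}{A(x)}f_o(x)-\varepsilon\varrho\bigl(f_e(x)-f_o(x)\bigr)=i\lambda\bigl(f_e(x)+f_o(x)\bigr).
\end{equation*}
Now $f_e'$ is odd and $f_o'$ is even, so splitting this identity into its even and odd components yields the coupled system $f_o'+\tfrac{A'}{A}f_o+\varepsilon\varrho f_o=i\lambda f_e$ (even part) and $f_e'-\varepsilon\varrho f_e=i\lambda f_o$ (odd part). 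The first equation expresses $f_e$ in terms of $f_o$, and the second expresses $f_o'$-type information in terms of $f_e$; eliminating one of the two unknowns is the crux of the reduction.

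Next I would eliminate $f_o$ to obtain a closed second-order equation for the even function $f_e$. Differentiating the even-part equation and substituting the odd-part relation (together with $f_o=(f_e'-\varepsilon\varrho f_e)/(i\lambda)$ when $i\lambda\neq 0$), one is led to $\Delta f_e=-(\mu_\varepsilon^2+\varrho^2)f_e$ with $\mu_\varepsilon^2=\lambda^2+(\varepsilon^2-1)\varrho^2$; the shift $(\varepsilon^2-1)\varrho^2$ is exactly what the cross terms involving $\varepsilon\varrho$ produce. Since $f_e$ is even with $f_e(0)=1$ (because $f(0)=1$ forces $f_e(0)=1$, $f_o(0)=0$) and $f_e'(0)=0$ by evenness, Lemma \ref{Ch1}'s framework and the uniqueness of the Cauchy problem \eqref{CP} force $f_e=\varphi_{\mu_\varepsilon}$. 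This immediately pins down the even part, and then $f_o$ is recovered from the odd-part relation as $f_o=\tfrac{1}{i\lambda-\varepsilon\varrho}\varphi_{\mu_\varepsilon}'$, giving the formula \eqref{2.2} once we note $f=f_e+f_o$. Existence and uniqueness follow from the existence and uniqueness of $\varphi_{\mu_\varepsilon}$, and analyticity of $\lambda\mapsto\Psi_{A,\varepsilon}(\lambda,x)$ follows from the stated analyticity of $\mu\mapsto\varphi_\mu(x)$ together with analyticity of $\mu_\varepsilon$ as a function of $\lambda$ (the branch ambiguity in $\mu_\varepsilon$ is harmless since $\varphi_\mu$ depends only on $\mu^2$, by Lemma \ref{BX}(2)).

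To obtain the integral form \eqref{2.4}, I would rewrite $\varphi_{\mu_\varepsilon}'$ using the self-adjoint structure of $\Delta$. From $(A\varphi_{\mu_\varepsilon}')'=A\Delta\varphi_{\mu_\varepsilon}=-(\mu_\varepsilon^2+\varrho^2)A\varphi_{\mu_\varepsilon}$, integrating from $0$ to $|x|$ and using $\varphi_{\mu_\varepsilon}'(0)=0$ gives $A(x)\varphi_{\mu_\varepsilon}'(x)=-(\mu_\varepsilon^2+\varrho^2)\int_0^{|x|}\varphi_{\mu_\varepsilon}(t)A(t)\,dt$ for $x>0$, with the sign $\sg(x)$ handling the odd extension. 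Substituting $\mu_\varepsilon^2+\varrho^2=\lambda^2+\varepsilon^2\varrho^2=(i\lambda-\varepsilon\varrho)(-1)(i\lambda+\varepsilon\varrho)$ and dividing by $i\lambda-\varepsilon\varrho$ converts the coefficient $\tfrac{1}{i\lambda-\varepsilon\varrho}$ into $i\lambda+\varepsilon\varrho$, producing \eqref{2.4}.

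Finally, the degenerate case $i\lambda=\varepsilon\varrho$ requires separate handling, since the denominator in \eqref{2.2} vanishes. Here $\mu_\varepsilon^2=\lambda^2+(\varepsilon^2-1)\varrho^2=-\varepsilon^2\varrho^2+(\varepsilon^2-1)\varrho^2=-\varrho^2$, so $\mu_\varepsilon=\pm i\varrho$ and Lemma \ref{BX}(1) gives $f_e=\varphi_{\pm i\varrho}\equiv 1$. The odd part no longer comes from dividing by $i\lambda-\varepsilon\varrho$; instead one returns to the even-part equation $f_o'+\tfrac{A'}{A}f_o+\varepsilon\varrho f_o=i\lambda f_e=\varepsilon\varrho$, i.e. $(Af_o)'=\varepsilon\varrho A(1-f_o)$, which with the constant solution $f_e\equiv 1$ reduces to $(Af_o)'=2\varepsilon\varrho A$ after using $f_o'+\tfrac{A'}{A}f_o=(Af_o)'/A$ and the odd-part relation; integrating from $0$ to $|x|$ with $f_o(0)=0$ yields $A(x)f_o(x)=2\varepsilon\varrho\int_0^{|x|}A(t)\,dt$, which is precisely \eqref{=0}. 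I expect the main obstacle to be the bookkeeping in the elimination step: one must verify carefully that the cross terms assemble into the clean shift $\mu_\varepsilon^2+\varrho^2=\lambda^2+\varepsilon^2\varrho^2$ and confirm that the recovered $f=f_e+f_o$ genuinely satisfies the original first-order relation \eqref{Op} (not merely a differentiated consequence), which amounts to checking the even/odd parity is consistent and that no spurious solution has been introduced.
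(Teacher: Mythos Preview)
Your overall strategy---splitting $f=f_e+f_o$ and reducing to the second-order Cauchy problem \eqref{CP} for $\Delta$---is exactly the paper's approach. However, your parity split contains an error. In the expanded identity
\[
f_e'+f_o'+\tfrac{A'}{A}f_o-\varepsilon\varrho f_e+\varepsilon\varrho f_o=i\lambda f_e+i\lambda f_o,
\]
the term $\varepsilon\varrho f_o$ is \emph{odd} (not even) and $-\varepsilon\varrho f_e$ is \emph{even} (not odd). The correct coupled system is
\[
f_o'+\tfrac{A'}{A}f_o=(i\lambda+\varepsilon\varrho)f_e\quad\text{(even part)},\qquad f_e'=(i\lambda-\varepsilon\varrho)f_o\quad\text{(odd part)}.
\]
With your system the elimination step does \emph{not} produce $\Delta f_e=-(\mu_\varepsilon^2+\varrho^2)f_e$: an extra variable-coefficient term $-\varepsilon\varrho\tfrac{A'}{A}f_e$ survives, so the reduction to the Ch\'ebli problem fails. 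Your degenerate-case argument is likewise inconsistent: your odd-part relation with $f_e\equiv 1$ and $i\lambda=\varepsilon\varrho$ forces $f_o\equiv -1$, contradicting $f_o(0)=0$ (you used this tacitly to reach $(Af_o)'=2\varepsilon\varrho A$).

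Once the parities are corrected, everything you wrote after the split goes through verbatim: the second equation gives $f_o=\tfrac{1}{i\lambda-\varepsilon\varrho}\varphi_{\mu_\varepsilon}'$ directly, your integral rewriting of $\varphi_{\mu_\varepsilon}'$ yields \eqref{2.4}, and in the degenerate case the corrected even part with $f_e\equiv 1$ is immediately $(Af_o)'=2\varepsilon\varrho A$. The paper, incidentally, obtains the case $i\lambda=\varepsilon\varrho$ by analytic continuation from \eqref{2.4} rather than by your direct integration; either route works.
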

\begin{proof} Assume first  that $i\lambda\not =\varepsilon\varrho.$  After the formula \eqref{2.4} is established the restriction on $\lambda$ can be dropped by analytic continuation.  Write $f$ as the superposition  $f=f_e+f_o$
of an  even function $f_e$ and an odd  function $f_o.$  
Then, the problem    \eqref{system} is equivalent to the following   system: 
\begin{subnumcases}{}
&$\displaystyle  \!\!\! \!\!\! \!\!\! \!\!\! f_o'(x)+{{A'(x)}\over {A(x)}} f_o(x)   =(i\lambda+\varepsilon \varrho ) f_e(x), $  \\
  & $\displaystyle\!\!\! \!\!\! \!\!\! \!\!\!  f'_e(x)   =(i\lambda -\varepsilon \varrho) f_o(x),$ \label{3b} \\
  & $\displaystyle\!\!\! \!\!\! \!\!\! \!\!\! f_e(0)=1,\; f_o(0)=0.$
  \end{subnumcases}
Combining  the two equations above yields $$f_e''(x)+{{A'(x)}\over {A(x)}}  f_e'(x)=-(\lambda^2+\varepsilon^2\varrho^2) f_e(x).$$
That is  $$\Delta f_e(x)=-\big(\underbrace{\lambda^2+(\varepsilon^2-1)\varrho^2}_{:=\mu_\varepsilon^2}+\varrho^2\big)f_e(x).$$
Since $f_e(0)=1,$   the uniqueness of the solution to the Cauchy problem \eqref{CP} gives 
 $$f_e(x)=\varphi_{\mu_\varepsilon}(x),$$
 which, in part, explains the uniqueness of the desired solution.   Now, from \eqref{3b} we obtain 
$$
f_o(x)={1\over {i\lambda-\varepsilon \varrho}} \varphi_{\mu_\varepsilon}'(x). 
$$
Consequently 
\begin{equation}\label{so}
\Psi_{A, \varepsilon}(\lambda, x)=\varphi_{\mu_\varepsilon}(x)+{1\over {i\lambda-\varepsilon \varrho}}\varphi_{\mu_\varepsilon}'(x).
\end{equation}
Further we have 
\begin{equation}\label{Lw}
 \varphi_{\mu_\varepsilon}'(x) =-({\mu_\varepsilon}^2+\varrho^2) {{\sg(x)}\over {A(x)}} \int_0^{|x|} \varphi_{\mu_\varepsilon}(t) \,A(t)dt,
 \end{equation}
 which is a consequence of  the following   known formula  for even functions 
\begin{equation}\label{Lw}
g'(x)= {{\sg(x)}\over {A(x)}} \int_0^{|x|} \Delta\, g(t) \,A(t)dt
 \end{equation}
 and the fact that $\varphi_\mu$ satisfies \eqref{CP}. Hence,  we can rewrite the solution   \eqref{so} as 
\begin{equation}\label{2.7}
\Psi_{A, \varepsilon}(\lambda, x)=\varphi_{\mu_\varepsilon}(x)+(i\lambda+\varepsilon \varrho)  {{\sg(x)}\over {A(x)}} \int_0^{|x|} \varphi_{\mu_\varepsilon}(t) A(t)dt.
\end{equation} 

Since the function $\mu\mapsto \varphi_\mu(x)$ is holomorphic for all $\mu\in \C,$ it follows from \eqref{2.7}
 that for every $x\in \R,$
 the map $\lambda \mapsto \Psi_{A, \varepsilon}(\lambda, x)$ is analytic on $\C.$ 
\end{proof}

\section{Growth of the eigenfunctions}

The eigenfunction $\Psi_{A, \varepsilon}$ is of particular interest as it gives rise to an associated integral transform on $\R$ which generalizes the Euclidean Fourier transform in a natural way (see Section \ref{sec8}). Its definition and essential properties rely    on suitable growth estimates of $\Psi_{A, \varepsilon}$. The following positivity result is the basic ingredient  in obtaining these estimates. 
\begin{thm}\label{thm-posi} Assume that $-1\leq \varepsilon\leq 1.$ For  all $\lambda\in i\R,$ the  function $\Psi_{A, \varepsilon}(\lambda, \cdot)$ is real and strictly positive.
\end{thm}
\begin{proof} If we take complex conjugates in \eqref{system}, we see that 
$\overline{\Psi_{A, \varepsilon}(\lambda, \cdot)}$ and $\Psi_{A, \varepsilon}(\lambda, \cdot)$
 satisfy the same system \eqref{system}. Since 
$\overline{\Psi_{A, \varepsilon}(\lambda, 0)}=1,$ the uniqueness part in Theorem \ref{thm1}  shows that 
$\overline{\Psi_{A, \varepsilon}(\lambda, x)}=\Psi_{A, \varepsilon}(\lambda, x)$ for all $x\in \R.$

Assume that $\Psi_{A, \varepsilon}(\lambda,\cdot)$ is not strictly positive. Since $\Psi_{A, \varepsilon}(\lambda, 0)=1>0,$ 
it follows that 
$\Psi_{A, \varepsilon}(\lambda,\cdot)$ vanishes. 
Let $x_0$ be a zero of $\Psi_{A, \varepsilon}(\lambda,\cdot)$ so that 
$$|x_0|={\rm inf} \left\{ |x|\;:\;  \Psi_{A, \varepsilon}(\lambda,x)=0 \right\}. $$
Since $\Psi_{A, \varepsilon}(\lambda,0)=1$ we have $\Psi_{A, \varepsilon}(\lambda,x) \geq 0$
 on $[-|x_0|, |x_0|].$ In particular $\Psi_{A, \varepsilon}(\lambda,-x_0)\geq 0.$
We claim that
\begin{equation}
\begin{cases}\label{claim1}
  \Psi_{A, \varepsilon}'(\lambda,x_0)=0,\\
 \Psi_{A, \varepsilon}(\lambda,-x_0)=0.
\end{cases}
\end{equation}
To prove \eqref{claim1}, let us first assume that $x_0> 0.$ Then   $\Psi_{A, \varepsilon}'(\lambda,x_0)\leq 0.$
Moreover, 
\begin{subequations}\label{diff}
\begin{align}
\Psi_{A, \varepsilon}'(\lambda,x)&= -{{A'(x)}\over{2A(x)}} \Big( \Psi_{A, \varepsilon}(\lambda,x)-\Psi_{A, \varepsilon}(\lambda,-x)\Big) +\varepsilon \varrho \Psi_{A, \varepsilon}(\lambda,-x)+i\lambda \Psi_{A, \varepsilon}(\lambda,x) \\
&=  \Big({{A'(x)}\over{2A(x)}} +\varepsilon \varrho\Big)  \Big( \Psi_{A, \varepsilon}(\lambda,-x)-\Psi_{A, \varepsilon}(\lambda,x)\Big)+(i\lambda+\varepsilon \varrho)\Psi_{A, \varepsilon}(\lambda,x) . \label{diff}
\end{align}
\end{subequations}
Thus 
 \begin{equation}\label{posi}
 \Psi_{A, \varepsilon}'(\lambda,x_0)=\Big({{A'(x_0)}\over{2A(x_0)}} +\varepsilon \varrho\Big)   \Psi_{A, \varepsilon}(\lambda,-x_0) .
 \end{equation}
From \eqref{posi} it follows that  $ \Psi_{A, \varepsilon}'(\lambda,x_0)$  is positive. This is   due to the fact that $\varepsilon  \geq -1$ and the fact that $A'/(2A)$ is a decreasing function on $\R^*_+$ and 
 $\lim_{x\rightarrow +\infty} A'(x)/2A(x)=\varrho.$
We deduce that  $\Psi_{A, \varepsilon}'(\lambda,x_0)=0,$ and  therefore, from \eqref{posi},   $\Psi_{A, \varepsilon}(\lambda,-x_0)=0.$ 

Now, let us assume that  $x_0< 0.$ Then  $\Psi_{A, \varepsilon}'(\lambda,x_0)\geq 0.$ Moreover, for $x_0<0,$ equation \eqref{posi} implies 
$\Psi_{A, \varepsilon}'(\lambda,x_0)\leq 0.$ This   is due to $\varepsilon \leq 1$ and to  
assumptions on $A'/(2A). $ Then, as above, we conclude that  $\Psi_{A, \varepsilon}'(\lambda,x_0)=0,$ and once again appealing to 
\eqref{posi} we have  $\Psi_{A, \varepsilon}(\lambda,-x_0)=0.$ 
This finishes the proof of the claim \eqref{claim1}.

Starting this time from  $\Psi_{A, \varepsilon}(\lambda,-x_0)=0$  and proceeding analogously as in  the case   
$\Psi_{A, \varepsilon}(\lambda,x_0)=0,$   
we conclude that 
\begin{equation*}
\begin{cases}
 \Psi_{A, \varepsilon}'(\lambda,-x_0)=0,\\
  \Psi_{A, \varepsilon}(\lambda,x_0)=0.
  \end{cases}
\end{equation*}
 In summary,  $\Psi_{A, \varepsilon}(\lambda,\pm x_0)=0$ 
and $\Psi_{A, \varepsilon}'(\lambda, \pm x_0)=0.$   Differentiating \eqref{diff}, we see that the second derivative of 
$\Psi_ {A, \varepsilon}(\lambda,\cdot)$ vanishes at $\pm x_0.$ 
Repeating the same  argument over and over again to get $ \Psi_{A, \varepsilon}^{(k)}(\lambda, \pm x_0)=0$ for all $k\in \N.$ 
Since $\Psi_{A, \varepsilon}(\lambda,\cdot)$ is a real analytic  function, we deduce that   $\Psi_{A, \varepsilon}(\lambda,x)= 0 $ for all $x\in \R.$ 
This contradicts  $\Psi_ \varepsilon(\lambda,0)=1.$ Thus, either $\Psi_{A, \varepsilon}(\lambda,x)$ is  strictly positive for all $x,$ 
or it is strictly negative for all $x.$ But since $\Psi_{A, \varepsilon}(\lambda,0)=1,$ it must be  $\Psi_{A, \varepsilon}(\lambda,x)>0$ for all $x\in \R.$
\end{proof}

The following theorem contains  important estimates for the growth  of  the eigenfunction $\Psi_{A, \varepsilon}.$ 
\begin{thm}\label{esti} Suppose  that $-1\leq \varepsilon\leq 1$ and  $x\in \R.$ Then:   
\begin{enumerate}[\upshape 1)]
\item For real $\lambda$  we have $ |\Psi_{A, \varepsilon}(\lambda,x)|  \leq \sqrt 2.$  
\item  For  $\lambda=a+ib\in \C $ we have   $ |\Psi _{A, \varepsilon}(\lambda,x)| \leq \Psi_{A, \varepsilon}(i b,x).$ 
\item  For  $\lambda=ib \in i\R$ we have $\Psi_{A, \varepsilon}(ib ,x)\leq \Psi_{A, \varepsilon}(0,x) \;e^{|b |\, |x|}.$ 
\item  For $\lambda=0$ we distinguish the following two cases:
\begin{enumerate}[\upshape a)]
\item For   $\varepsilon =0,$ we have $\Psi_{A,0}(0,x)=1 .$
\item   For   $\varepsilon\not=0,$ there is a  constant $c_\varepsilon>0$  such that  $\Psi_{A, \varepsilon}(0,x)\leq c_\varepsilon (|x|+1) e^{-\varrho (1- \sqrt{1-\varepsilon^2 } )|x|}   .$ 
\end{enumerate}
\end{enumerate}
\end{thm}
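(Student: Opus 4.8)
The plan is to dispatch the four items by quite different means: a conserved-energy computation for (1), the explicit formula \eqref{2.4} together with Lemma~\ref{BX} for (4), and a positive integral representation of $\Psi_{A,\varepsilon}$ for the two sharp comparison bounds (2) and (3), which carry the real content.

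For (1), write $\Psi_{A,\varepsilon}(\lambda,\cdot)=f_e+f_o$ with $f_e$ even and $f_o$ odd, as in the proof of Theorem~\ref{thm1}, so that $f_e'=(i\lambda-\varepsilon\varrho)f_o$ and $f_o'=(i\lambda+\varepsilon\varrho)f_e-(A'/A)f_o$. For real $\lambda$ one has $\overline{(i\lambda-\varepsilon\varrho)}+(i\lambda+\varepsilon\varrho)=0$, so the cross terms cancel and
\[
\frac{d}{dx}\big(|f_e(x)|^2+|f_o(x)|^2\big)=2\,\Re\big(f_e'\overline{f_e}+f_o'\overline{f_o}\big)=-2\,\frac{A'(x)}{A(x)}\,|f_o(x)|^2\le 0\qquad(x>0),
\]
because $A$ is increasing on $\R_+$. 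Hence $G(x):=|f_e(x)|^2+|f_o(x)|^2$ is even and nonincreasing on $\R_+$, so $G\le G(0)=1$; since $|\Psi_{A,\varepsilon}(\lambda,x)|^2+|\Psi_{A,\varepsilon}(\lambda,-x)|^2=2G(x)$, we get $|\Psi_{A,\varepsilon}(\lambda,x)|\le\sqrt2$. Item (4a) is immediate from \eqref{=0}; for (4b) with $\varrho=0$ one again reads $\Psi_{A,\varepsilon}(0,\cdot)\equiv1$ off \eqref{=0}, while for $\varrho>0$ the frequency is $\mu_\varepsilon=i\varrho\sqrt{1-\varepsilon^2}$ with $|\Im\mu_\varepsilon|=\varrho\sqrt{1-\varepsilon^2}\le\varrho$, so Lemma~\ref{BX}(4),(6) give $\varphi_{\mu_\varepsilon}(x)\le e^{\varrho\sqrt{1-\varepsilon^2}\,|x|}\varphi_0(x)\le c\,(|x|+1)e^{-\varrho(1-\sqrt{1-\varepsilon^2})|x|}$; inserting this together with $A(x)=O(e^{2\varrho|x|})$ into the antiderivative term of \eqref{2.4} and performing the resulting elementary integral produces the same majorant, which is (4b).

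The heart of the theorem is (2) and (3). The plan is to produce a Laplace-type representation
\[
\Psi_{A,\varepsilon}(\lambda,x)=\int_{-|x|}^{|x|}e^{i\lambda t}\,d\gamma_x(t),\qquad \gamma_x\ge 0,
\]
valid for $-1\le\varepsilon\le1$, with $\gamma_x$ a positive measure supported in $[-|x|,|x|]$. Granting this, both estimates are one line: since $|e^{i(a+ib)t}|=e^{-bt}$,
\[
|\Psi_{A,\varepsilon}(a+ib,x)|\le\int_{-|x|}^{|x|}e^{-bt}\,d\gamma_x(t)=\Psi_{A,\varepsilon}(ib,x),
\]
which is (2), and then, using $\supp\gamma_x\subseteq[-|x|,|x|]$ and $\Psi_{A,\varepsilon}(0,x)=\gamma_x(\R)$,
\[
\Psi_{A,\varepsilon}(ib,x)=\int_{-|x|}^{|x|}e^{-bt}\,d\gamma_x(t)\le e^{|b||x|}\int_{-|x|}^{|x|}d\gamma_x(t)=e^{|b||x|}\,\Psi_{A,\varepsilon}(0,x),
\]
which is (3). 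To build $\gamma_x$ I would start from the positive-kernel representation \eqref{K1} of the even part, $\varphi_{\mu_\varepsilon}(x)=\int_0^{|x|}K(|x|,t)\cos(\mu_\varepsilon t)\,dt$ with $K\ge0$, substitute it into the antiderivative term of \eqref{2.4}, and interchange the order of integration to write $\Psi_{A,\varepsilon}(\lambda,x)$ as a single integral of $\cos(\mu_\varepsilon t)$ (equivalently $e^{i\lambda t}$) against a kernel $\mathcal K_\varepsilon(x,\cdot)$ assembled from $K(|x|,\cdot)$ and $\tfrac{1}{A(x)}\int_{\cdot}^{|x|}K(t,\cdot)A(t)\,dt$.

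The hard part will be the positivity of $\mathcal K_\varepsilon$ (equivalently of $\gamma_x$): this is exactly where the hypothesis $-1\le\varepsilon\le1$ and the positivity already proved in Theorem~\ref{thm-posi} must enter, since the coefficient $i\lambda+\varepsilon\varrho$ multiplying the antiderivative term can be negative and a crude term-by-term estimate is hopeless — the modulus of the complex kernel and the mismatched coefficients both overshoot, destroying the cancellation that makes (2) and (3) sharp. To organize the sign analysis I would exploit the symmetry $\Psi_{A,\varepsilon}(\lambda,-x)=\Psi_{A,-\varepsilon}(-\lambda,x)$ (read off \eqref{2.4}) to reduce to $x>0$ with a fixed sign of $\varepsilon$, and then split on the sign of $\Re(i\lambda+\varepsilon\varrho)=-\Im\lambda+\varepsilon\varrho$; the favorable case $x>0$, $\Im\lambda\le0$, $\varepsilon\ge0$ already closes cleanly, because there $\mathcal K_\varepsilon\ge0$ and $\cos(\mu_\varepsilon t)=\cosh(\nu t)$ with $\nu^2=b^2+(1-\varepsilon^2)\varrho^2$ obeys $\cosh(\nu t)\le e^{|b||x|}\cosh(\nu_0 t)$ for $|t|\le|x|$, where $\nu_0^2=(1-\varepsilon^2)\varrho^2$ and $\nu^2-\nu_0^2=b^2$. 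As a consistency check on the growth rate, applying the energy argument of (1) to $E(x)=e^{-2|b|x}G(x)$ yields at once the weaker bound $|\Psi_{A,\varepsilon}(a+ib,x)|\le\sqrt2\,e^{|b||x|}$, which already has the correct exponential order; the positive representation is precisely what upgrades it to the sharp comparisons (2) and (3).
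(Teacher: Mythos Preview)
Your treatments of parts (1) and (4) are fine and essentially match the paper's: the energy identity you wrote for (1) is exactly the content of the paper's computation of $\{|\Psi_{A,\varepsilon}(\lambda,x)|^2+|\Psi_{A,\varepsilon}(\lambda,-x)|^2\}'$, and for (4b) the paper uses the decomposition \eqref{2.2} together with Lemma~\ref{BX}(4)--(6), just as you do.

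The gap is in your strategy for (2) and (3). You propose to build a positive measure $\gamma_x$ with $\Psi_{A,\varepsilon}(\lambda,x)=\int e^{i\lambda t}\,d\gamma_x(t)$ and then read off both estimates. The existence of such a Laplace-type representation is exactly Theorem~\ref{Mh} (with kernel $\KK_\varepsilon(x,\cdot)$), but the \emph{positivity} of $\KK_\varepsilon$ is not available at this point in the paper: it is the content of Theorem~\ref{cent}, whose proof (via the heat kernel, Theorem~\ref{step} and Lemma~\ref{w_ep}) explicitly uses Theorem~\ref{esti}(4). So your argument, as written, is circular. Your attempt to salvage it by a ``favorable case'' is also off: the kernel $\mathcal K_\varepsilon(x,\cdot)$ does not depend on $\lambda$, so it makes no sense to say it is nonnegative when $\Im\lambda\le 0$; and the third term $-\tfrac{\sg(x)}{2A(x)}\partial_yG_\varepsilon(x,y)$ in \eqref{K} has no obvious sign even when $x>0$ and $\varepsilon\ge0$. (Also, ``$\cos(\mu_\varepsilon t)$, equivalently $e^{i\lambda t}$'' is not right: passing from $\mu_\varepsilon$ to $\lambda$ requires the transmutation $\mathcal E_\varepsilon$ of Section~5, and that is where $K$ becomes $K_\varepsilon$.)

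The paper avoids all of this by a direct monotonicity argument that uses only Theorem~\ref{thm-posi}. For (2) it sets $Q_{\varepsilon,\lambda}(x)=\Psi_{A,\varepsilon}(\lambda,x)/\Psi_{A,\varepsilon}(ib,x)$ (well defined since the denominator is strictly positive), computes $\{|Q_{\varepsilon,\lambda}(\pm x)|^2\}'$, and shows that $M(x):=\max\{|Q_{\varepsilon,\lambda}(x)|^2,|Q_{\varepsilon,\lambda}(-x)|^2\}$ is nonincreasing on $\R_+$; hence $M(x)\le M(0)=1$. Part (3) is the same trick applied to $R_{\varepsilon,b}(x)=\Psi_{A,\varepsilon}(ib,x)e^{-|b||x|}/\Psi_{A,\varepsilon}(0,x)$. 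This is the missing idea: rather than proving the representing measure is positive (which is harder and comes later), one divides by the positive comparison function and runs a max-of-two monotonicity argument, the assumption $-1\le\varepsilon\le1$ entering through the sign of $\tfrac{A'(x)}{2A(x)}\pm\varepsilon\varrho$ on $\R_+$.
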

\begin{proof}  1) Assume that $\lambda\in \R.$ Since $\Psi_{A, \varepsilon}(\lambda, x)$ is a solution of 
 the problem   \eqref{system}, we deduce that 
\begin{equation}\label{di-di}
 \Psi_{A, \varepsilon}'(\lambda,x)=-{{A'(x)}\over{2A(x)}} \Big( \Psi_{A, \varepsilon}(\lambda,x)-\Psi_{A, \varepsilon}(\lambda,-x)\Big) +\varepsilon \varrho \Psi_{A, \varepsilon}(\lambda,-x)+i\lambda \Psi_{A, \varepsilon}(\lambda,x). 
 \end{equation}
Thus   $\Psi_{A, \varepsilon}(\lambda,-x)$ satisfies  the following  equation
\begin{equation}\label{exp}
  \big\{\Psi_{A, \varepsilon}(\lambda,-x) \big\}'={{A'(-x)}\over{2A(x)}} \Big( \Psi_{A, \varepsilon}(\lambda ,-x)-\Psi_{A, \varepsilon}(\lambda,x)\Big) -\varepsilon \varrho \Psi_{A, \varepsilon}(\lambda,x)-i\lambda \Psi_{A, \varepsilon}(\lambda,-x).
\end{equation}
If we take complex conjugates in \eqref{exp}, we obtain
$$
\overline{  \big\{\Psi_{A, \varepsilon}(\lambda,-x) \big\}' }={{A'(-x)}\over{2A(x)}} \Big( \overline{\Psi_{A, \varepsilon}(\lambda ,-x)}-\overline{\Psi_{A, \varepsilon}(\lambda,x)}\Big) -\varepsilon \varrho \overline{\Psi_{A, \varepsilon}(\lambda,x)}+i\lambda \overline{\Psi_{A, \varepsilon}(\lambda,-x).}
$$
Hence
 \begin{eqnarray*}
 \big\{ |\Psi_{A, \varepsilon}(\lambda,-x)|^2\big\}' &=&   \: \big\{\Psi_{A, \varepsilon}(\lambda,-x) \big\}' \;\overline{\Psi_{A, \varepsilon}(\lambda,-x)}+
 \overline{\big\{\Psi_{A, \varepsilon}(\lambda,-x) \big\}'} \; {\Psi_{A, \varepsilon}(\lambda,-x)}\\
 &=& \: {{A'(-x)}\over{2A(x)}} \Big( \Psi_{A, \varepsilon}(\lambda ,-x)-\Psi_{A, \varepsilon}(\lambda,x)\Big)\overline{\Psi_{A, \varepsilon}(\lambda,-x)} -\varepsilon \varrho \Psi_{A, \varepsilon}(\lambda,x) \overline{\Psi_{A, \varepsilon}(\lambda,-x)}\\
 && +{{A'(-x)}\over{2A(x)}} \Big( \overline{\Psi_{A, \varepsilon}(\lambda ,-x)}-\overline{\Psi_{A, \varepsilon}(\lambda,x)}\Big) {\Psi_{A, \varepsilon}(\lambda,-x)} -\varepsilon \varrho \overline{\Psi_{A, \varepsilon}(\lambda,x)} {\Psi_{A, \varepsilon}(\lambda,-x)}.
\end{eqnarray*}
Similarly we have 
 \begin{eqnarray*}
 \big\{ |\Psi_{A, \varepsilon}(\lambda,x)|^2\big\}' &=&  \big\{\Psi_{A, \varepsilon}(\lambda,x) \big\}' \;\overline{\Psi_{A, \varepsilon}(\lambda,x)}+
 \overline{\big\{\Psi_{A, \varepsilon}(\lambda,x) \big\}'} \; {\Psi_{A, \varepsilon}(\lambda,x)}\\
 &= & -{{A'(x)}\over{2A(x)}} \Big( \Psi_{A, \varepsilon}(\lambda ,x)-\Psi_{A, \varepsilon}(\lambda,-x)\Big)\overline{\Psi_{A, \varepsilon}(\lambda,x)} +\varepsilon \varrho \Psi_{A, \varepsilon}(\lambda,-x) \overline{\Psi_{A, \varepsilon}(\lambda,x)}\\
&&   -{{A'(x)}\over{2A(x)}} \Big( \overline{\Psi_{A, \varepsilon}(\lambda ,x)}-\overline{\Psi_{A, \varepsilon}(\lambda,-x)}\Big) {\Psi_{A, \varepsilon}(\lambda,x)} +\varepsilon \varrho \overline{\Psi_{A, \varepsilon}(\lambda,-x)} {\Psi_{A, \varepsilon}(\lambda,x)}.
\end{eqnarray*}
Using the fact that  $A'$ is  an odd  function  we obtain 
\begin{align*}
 &\big\{ |\Psi_{A, \varepsilon}(\lambda,-x)|^2\big\}'+ \big\{ |\Psi_{A, \varepsilon}(\lambda,x)|^2\big\}'\\
 =  &\, \Big(\Psi_{A, \varepsilon}(\lambda,-x)-\Psi_{A, \varepsilon}(\lambda,x)\Big)  
\Big(-{{A'(x)}\over{2A(x)}}  \overline{\Psi_{A, \varepsilon}(\lambda,-x)} +{{A'(x)}\over{2A(x)}} \overline{\Psi_{A, \varepsilon}(\lambda,x)}\Big)\hfill\\ 
  & +\Big( \overline{ \Psi_{A, \varepsilon}(\lambda,-x)}- \overline{\Psi_{A, \varepsilon}(\lambda,x)}\Big)  
\Big(-{{A'(x)}\over{2A(x)}}   {\Psi_{A, \varepsilon}(\lambda,-x)} +{{A'(x)}\over{2A(x)}}  {\Psi_{A, \varepsilon}(\lambda,x)}\Big)\hfill\\ 
  = &   -{{A'(x)}\over{A(x)}} \;\left | \Psi_{A, \varepsilon}(\lambda,-x)-\Psi_{A, \varepsilon}(\lambda,x)\right|^2.\hfill
\end{align*}
Since ${{A'(x)}/ {A(x)}} \geq 2\varrho\geq  0$ for all  $x\in \R_+,$ it follows that 
  $$ \big\{ |\Psi_{A, \varepsilon}(\lambda,-x)|^2\big\}'+ \big\{ |\Psi_{A, \varepsilon}(\lambda,x)|^2\big\}'\leq 0,\qquad \forall x\in \R_+.$$ This implies 
$$   |\Psi_{A, \varepsilon}(\lambda,-x)|^2 +   |\Psi_{A, \varepsilon}(\lambda,x)|^2 \leq   |\Psi_{A, \varepsilon}(\lambda,0)|^2 +   |\Psi_{A, \varepsilon}(\lambda,0)|^2=2 ,\qquad \forall x\in \R_+.$$
As a consequence
 $$   |\Psi_{A, \varepsilon}(\lambda,-x)| \leq \sqrt 2 \quad \text{and} \quad    |\Psi_{A, \varepsilon}(\lambda,x)|\leq \sqrt 2, \qquad \forall x\in \R_+ .$$
  
This finishes the proof of the first statement.   
  \\
 2) For $\lambda =a+ib\in \C$  we define  the function $Q_{\varepsilon,\lambda}$ by 
 $$Q_{\varepsilon,\lambda}(x)={{\Psi_{A, \varepsilon}(\lambda,x)}\over {\Psi_{A, \varepsilon}(ib,x)}}.$$ By Theorem \ref{thm-posi}  the function $Q_{\varepsilon,\lambda}$ is well defined. 
Since $ \Psi_{A, \varepsilon}(\lambda,x)$ satisfies the differential-reflection  equation \eqref{di-di}, it follows that 
$${{ \Psi_{A, \varepsilon}'(\lambda,x) }\over{\Psi_{A, \varepsilon}(ib,x)}}=-{{A'(x)}\over{2A(x)}} \Big( Q_{\varepsilon,\lambda}(x)-Q_{\varepsilon,\lambda}(-x){{\Psi_{A, \varepsilon}(ib,-x)}\over{\Psi_{A, \varepsilon}(ib,x)}}\Big)
+\varepsilon \varrho Q_{\varepsilon,\lambda}(-x) {{\Psi_{A, \varepsilon}(ib,-x)}\over{\Psi_{A, \varepsilon}(ib,x)}}+i\lambda Q_{\varepsilon,\lambda}(x).$$
We now take the derivative of $Q_{\varepsilon,\lambda}:$ 
\begin{eqnarray*}
 Q'_{\varepsilon,\lambda}(x) 
&=&   {{ \Psi_{A, \varepsilon}'(\lambda,x) }\over{\Psi_{A, \varepsilon}(ib,x)}} -Q_{\varepsilon,\lambda}(x) {{ \Psi_{A, \varepsilon}'(ib,x) }\over{\Psi_{A, \varepsilon}(ib,x)}}\\
&= & -{{A'(x)}\over{2A(x)}}  Q_{\varepsilon,\lambda}(x) +{{A'(x)}\over{2A(x)}} Q_{\varepsilon,\lambda}(-x){{\Psi_{A, \varepsilon}(ib,-x)}\over{\Psi_{A, \varepsilon}(ib,x)}}+\varepsilon \varrho Q_{\varepsilon,\lambda}(-x) {{\Psi_{A, \varepsilon}(ib,-x)}\over{\Psi_{A, \varepsilon}(ib,x)}}\\
& & -Q_{\varepsilon,\lambda}(x) \Big(-{{A'(x)}\over{2A(x)}}  \Big(1-{{\Psi_{A, \varepsilon}(ib,-x)}\over{\Psi_{A, \varepsilon}(ib,x)}}\Big) +\varepsilon \varrho {{\Psi_{A, \varepsilon}(ib,-x)}\over{\Psi_{A, \varepsilon}(ib,x)}}-b\Big)
+i\lambda Q_{\varepsilon,\lambda}(x)\\
&=&  \Big( {{A'(x)}\over{2A(x)}} +\varepsilon \varrho\Big)  \Big(Q_{\varepsilon,\lambda}(-x)-Q_{\varepsilon,\lambda}(x)\Big) {{\Psi_{A, \varepsilon}(ib,-x)}\over{\Psi_{A, \varepsilon}(ib,x)}}+(i\lambda+b) Q_{\varepsilon,\lambda}(x).
\end{eqnarray*}
Hence,
\begin{align} \label{210}
&\big\{|Q_{\varepsilon,\lambda}(x)|^2\big\}' \nonumber\\
=&\, Q_{\varepsilon,\lambda}'(x)\overline{Q_{\varepsilon,\lambda}(x)}+Q_{\varepsilon,\lambda}(x) \big\{\overline{Q_{\varepsilon,\lambda}(x)}\big\}' \nonumber\\
=&\,   2\Re\big\{ Q_{\varepsilon,\lambda}'(x)\overline{ Q_{\varepsilon,\lambda}(x)}\big\}  \nonumber\\
= & \, 2\Re \Big\{\Big( {{A'(x)}\over{2A(x)}} +\varepsilon \varrho\Big)  \Big(Q_{\varepsilon,\lambda}(-x)\overline{Q_{\varepsilon,\lambda}(x)}-|Q_{\varepsilon,\lambda}(x)|^2\Big) {{\Psi_{A, \varepsilon}(ib,-x)}\over{\Psi_{A, \varepsilon}(ib,x)}}+(i\lambda+b) |Q_{\varepsilon,\lambda}(x)|^2\Big\} \nonumber \\
=&\,  2\Re \Big\{\Big ({{A'(x)}\over{2A(x)}} +\varepsilon \varrho\Big) \Big( Q_{\varepsilon,\lambda}(-x)\overline{Q_{\varepsilon,\lambda}(x)} -|Q_{\varepsilon,\lambda}(x)|^2\Big) {{\Psi_{A, \varepsilon}(ib,-x)}\over{\Psi_{A, \varepsilon}(ib,x)}} \Big\} \nonumber \\
=&\,  -2\Big({{A'(x)}\over{2A(x)}} +\varepsilon \varrho\Big)\left( |Q_{\varepsilon,\lambda}(x)|^2-\Re \big\{Q_{\varepsilon,\lambda}(-x)\overline{Q_{\varepsilon,\lambda}(x)} \big\}  \right){{\Psi_{A, \varepsilon}(ib,-x)}\over{\Psi_{A, \varepsilon}(ib,x)}}.
\end{align}
Similarly  we have 
\begin{align}
\big\{|Q_{\varepsilon,\lambda}(-x)|^2\big\}'&= -Q_{\varepsilon,\lambda}'(-x)\overline{Q_{\varepsilon,\lambda}(-x)}-Q_{\varepsilon,\lambda}(-x)  \overline{Q_{\varepsilon,\lambda}'(-x)} \nonumber \\
&=  -2\Re\big\{ Q_{\varepsilon,\lambda}'(-x)\overline{ Q_{\varepsilon,\lambda}(-x)}\big\}\nonumber \\
&= -2\Big({{A'(x)}\over{2A(x)}} -\varepsilon \varrho\Big)\Big( |Q_{\varepsilon,\lambda}(-x)|^2-\Re \big\{Q_{\varepsilon,\lambda}(x)\overline{Q_{\varepsilon,\lambda}(-x)} \big\}  \Big) {{\Psi_{A, \varepsilon}(ib,x)}\over{\Psi_{A, \varepsilon}(ib,-x)}}.\label{211}
\end{align}
Since $-1\leq \varepsilon\leq 1,$ then, by assumptions on the function $A'/(2A),$  we  have 
  $$\Big({{A'(x)}\over{2A(x)}} \pm \varepsilon \varrho\Big)\geq 0 \qquad \forall \,x\in \R_+ . $$
It follows from \eqref{210} and \eqref{211} that for every $x\in \R_+$
 $$
 \big\{|Q_{\varepsilon,\lambda}(x)|^2\big\}'\leq -2\Big ({{A'(x)}\over{2A(x)}} +\varepsilon \varrho\Big)
  |Q_{\varepsilon,\lambda}(x)|
 \Big( |Q_{\varepsilon,\lambda}(x)|- |Q_{\varepsilon,\lambda}(-x)|  \Big) {{\Psi_{A, \varepsilon}(ib,-x)}\over{\Psi_{A, \varepsilon}(ib,x)}},$$
and
$$ \big\{|Q_{\varepsilon,\lambda}(-x)|^2\big\}'\leq -2\Big({{A'(x)}\over{2A(x)}} -\varepsilon \varrho\Big)
 |Q_{\varepsilon,\lambda}(-x)| \Big( |Q_{\varepsilon,\lambda}(-x)| - |Q_{\varepsilon,\lambda}(x)|    \Big){{\Psi_{A, \varepsilon}(ib,x)}\over{\Psi_{A, \varepsilon}(ib,-x)}}.$$
Thus we can conclude that 
$$ \big\{|Q_{\varepsilon,\lambda}(x)|^2\big\}'\leq 0 \quad \text{if} \quad  |Q_{\varepsilon,\lambda}(x)| \geq  |Q_{\varepsilon,\lambda}(-x)|,$$
 and $$\big\{|Q_{\varepsilon,\lambda}(-x)|^2\big\}'\leq 0  \quad \text{if} \quad  |Q_{\varepsilon,\lambda}(-x)| \geq  |Q_{\varepsilon,\lambda}(x)|.$$
 As a real analytic function of $x,$ $|Q_{\varepsilon,\lambda}(x)|^2$ and $|Q_{\varepsilon,\lambda}(-x)|^2$ coincide either everywhere or on a discrete 
 subset of $\R$ with no accumulation point. In the first case, $|Q_{\varepsilon,\lambda}(x)|^2 = |Q_{\varepsilon,\lambda}(-x)|^2$ is a 
 decreasing function of $x\in \R_+. $
 In the second case, for $x\in \R_+,$ let $$M(x):=\max \big\{ |Q_{\varepsilon,\lambda}(x)|^2 ,|Q_{\varepsilon,\lambda}(-x)|^2\big\}.$$
  If $|Q_{\varepsilon,\lambda}(x)|> |Q_{\varepsilon,\lambda}(-x)|,$ then $M(x)=|Q_{\varepsilon,\lambda}(x)|^2$  and 
  $M'(x)= \big\{|Q_{\varepsilon,\lambda}(x)|^2\big\}'<0.$  If $|Q_{\varepsilon,\lambda}(x)|< |Q_{\varepsilon,\lambda}(-x)|,$ then 
  $M(x)=|Q_{\varepsilon,\lambda}(-x)|^2$  and $M'(x)= \big\{|Q_{\varepsilon,\lambda}(-x)|^2\big\}'<0. $
 If $|Q_{\varepsilon,\lambda}(x)|= |Q_{\varepsilon,\lambda}(-x)|$ for some $x\in \R_+,$ then $M$ has left and right derivatives
  at $x,$ which are non-positive. 
 Thus $M$ is decreasing on $\R_+.$ In conclusion,  for every $x\in \R_+,$     $|Q_{\varepsilon,\lambda}(x)|^2\leq  M(0) =1$
  and  $|Q_{\varepsilon,\lambda}(-x)|^2\leq  M(0)=1.$
That is for every  $x\in \R,$ we have $|Q_{\varepsilon,\lambda}(x)|\leq  |Q_{\varepsilon,\lambda}(0)|=1 .$ 
This  finishes the proof of the second statement. 
\\
3) We proceed analogously to the function $Q_{\varepsilon,\lambda}$ above 
by considering the function  $$R_{\varepsilon,b}(x):={{\Psi_{A, \varepsilon}(ib,x) e^{-| b |\, |x|}} \over{\Psi_{A, \varepsilon}(0,x)}} .$$
\\
4) The fact that $\Psi_{A,0}(0,x) =1$ follows immediately from \eqref{=0}. Assume that  $\varepsilon\not =0.$ In this case 
  $$\Psi_{A, \varepsilon}(0,x)=\varphi_{\mu_\varepsilon^0}(x) -{1\over {\varepsilon \varrho}} \varphi_{\mu_\varepsilon^0}'(x),$$
  where $\mu_\varepsilon^0$ satisfies   $({\mu_\varepsilon^0})^2=(\varepsilon^2-1)\varrho^2.$ Since  $|\varepsilon |\leq 1,$ it follows from Lemma \ref{BX} 4), 5),   6)   that there exists a positive constant $c_\varepsilon$ such that 
    $$ \Psi_{A, \varepsilon}(0,x) \leq c_\varepsilon (|x|+1) e^{-\varrho (1- \sqrt{1-\varepsilon^2 } )|x|}.$$
 \end{proof} 

Henceforth, we will assume that  $-1\leq \varepsilon \leq 1.$ Beside the growth estimates above, we will include  estimates for  the partial derivatives of 
$ \Psi_{A, \varepsilon} $. We remind the reader that 
$$
 \Psi_{A, \varepsilon}(0,x)+\Psi_{A, \varepsilon}(0,-x)  =2\varphi_{i\sqrt{1-\varepsilon^2}\varrho}(x), 
 $$
with $\varphi_{i\varrho}(x)=1$ and $$\varphi_{i\sqrt{1-\varepsilon^2}\varrho}(x)\leq c(|x|+1)e^{- \varrho (1-\sqrt{1-\varepsilon^2}) |x| } ,$$ (see  Lemma \ref{BX}).

\begin{thm}\label{estder}  
\begin{enumerate}[\upshape 1)] 
\item Assume that $\lambda\in \C$ and $|x|\geq x_0$ with $x_0>0.$
Given $N\in \N,$ there is a  positive constant $c$ such that 
\begin{equation}\label{es1}
\Big| {{\partial ^N}\over{\partial x^N}} \Psi_{A,\varepsilon}(\lambda, x)\Big| \leq c (|\lambda|+1)^N 
e^{|\Im \lambda | \;|x|} \varphi_{i\sqrt{1-\varepsilon^2}\varrho}(x).
\end{equation} 
\item Assume that $\lambda\in \C$  and $x\in \R. $
Given $M\in \N,$ there is a  positive constant $c$ such that 
\begin{equation}\label{es2}
\Big| {{\partial ^M}\over{\partial \lambda^M}}   \Psi_{A, \varepsilon}(\lambda, x)\Big| \leq c |x| ^M
e^{|\Im \lambda | \;|x|} \varphi_{i\sqrt{1-\varepsilon^2}\varrho}(x).
 \end{equation}
 \end{enumerate}
 \end{thm}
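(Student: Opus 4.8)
The plan is to prove the two statements by different mechanisms: \eqref{es1} (derivatives in $x$) by induction on $N$ via the differential-reflection equation \eqref{di-di}, and \eqref{es2} (derivatives in $\lambda$) by Cauchy's integral formula applied to the entire function $\lambda\mapsto\Psi_{A,\varepsilon}(\lambda,x)$. Both rest on the scalar base estimate corresponding to $N=M=0$. I would first record it: combining Theorem \ref{esti} 2) and 3) with the identity $\Psi_{A,\varepsilon}(0,x)+\Psi_{A,\varepsilon}(0,-x)=2\varphi_{i\sqrt{1-\varepsilon^2}\varrho}(x)$ and the positivity of Theorem \ref{thm-posi} (which forces $\Psi_{A,\varepsilon}(0,x)\leq 2\varphi_{i\sqrt{1-\varepsilon^2}\varrho}(x)$), one gets, for all $\lambda\in\C$ and all $x\in\R$,
$$|\Psi_{A,\varepsilon}(\lambda,x)|\leq 2\,e^{|\Im\lambda|\,|x|}\,\varphi_{i\sqrt{1-\varepsilon^2}\varrho}(x),$$
where the evenness of $\varphi_{i\sqrt{1-\varepsilon^2}\varrho}$ will also be used repeatedly.

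For statement 1), set $\sigma:=A'/(2A)$ and rewrite \eqref{di-di} as $\Psi_{A,\varepsilon}'(\lambda,x)=-\sigma(x)\big(\Psi_{A,\varepsilon}(\lambda,x)-\Psi_{A,\varepsilon}(\lambda,-x)\big)+\varepsilon\varrho\,\Psi_{A,\varepsilon}(\lambda,-x)+i\lambda\,\Psi_{A,\varepsilon}(\lambda,x)$. The point of the hypothesis $|x|\geq x_0>0$ is that (H4) makes $\sigma$ and each of its derivatives bounded on $\{|x|\geq x_0\}$, whereas the term $(2\alpha+1)/(2x)$ in \eqref{hnew} rules out any control near the origin. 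Differentiating the identity $N-1$ times by the Leibniz rule, and using $\tfrac{d^j}{dx^j}\Psi_{A,\varepsilon}(\lambda,-x)=(-1)^j\,\partial_x^j\Psi_{A,\varepsilon}(\lambda,-x)$, writes $\partial_x^N\Psi_{A,\varepsilon}(\lambda,x)$ as a finite combination of bounded multiples of $\partial_x^j\Psi_{A,\varepsilon}(\lambda,\pm x)$ with $0\leq j\leq N-1$, the highest power of $|\lambda|$ coming only from $i\lambda\,\partial_x^{N-1}\Psi_{A,\varepsilon}(\lambda,x)$. Inserting the induction hypothesis, and using $e^{|\Im\lambda|\,|-x|}=e^{|\Im\lambda|\,|x|}$ together with the evenness of $\varphi_{i\sqrt{1-\varepsilon^2}\varrho}$, every term is dominated by $(|\lambda|+1)^N\,e^{|\Im\lambda|\,|x|}\,\varphi_{i\sqrt{1-\varepsilon^2}\varrho}(x)$, closing the induction.

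For statement 2), I would use that $\lambda\mapsto\Psi_{A,\varepsilon}(\lambda,x)$ is entire (Theorem \ref{thm1}). Cauchy's formula on the circle $|\zeta-\lambda|=r$ gives
$$\Big|\partial_\lambda^M\Psi_{A,\varepsilon}(\lambda,x)\Big|\leq\frac{M!}{r^M}\,\max_{|\zeta-\lambda|=r}|\Psi_{A,\varepsilon}(\zeta,x)|,$$
and since $|\Im\zeta|\leq|\Im\lambda|+r$ on that circle, the base estimate bounds the right-hand side by $\tfrac{2\,M!}{r^M}\,e^{r|x|}\,e^{|\Im\lambda|\,|x|}\,\varphi_{i\sqrt{1-\varepsilon^2}\varrho}(x)$. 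For $x\neq0$ I then optimize in $r$: the choice $r=M/|x|$ yields $r^{-M}e^{r|x|}=e^M|x|^M/M^M$, and as $M!\,e^M/M^M$ is a finite constant depending only on $M$, this is precisely \eqref{es2}. The case $x=0$ is immediate since $\Psi_{A,\varepsilon}(\lambda,0)\equiv1$, so its $\lambda$-derivatives of order $M\geq1$ vanish, matching the vanishing right-hand side.

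The genuinely delicate point is the inductive step of statement 1): one must check, from (H4), that all derivatives of $A'/A$ remain bounded away from the origin (treating $\varrho>0$ and $\varrho=0$ separately), and keep careful track of the reflected arguments $\pm x$ and the signs $(-1)^j$ so that the power of $(|\lambda|+1)$ never exceeds $N$. Everything else, including the radius optimization in statement 2), is routine bookkeeping.
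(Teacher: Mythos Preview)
Your proposal is correct and follows essentially the same approach as the paper: induction on $N$ via the differential-reflection equation together with the (H4)-boundedness of the derivatives of $A'/(2A)$ away from the origin for part 1), and Cauchy's integral formula on a circle of radius proportional to $1/|x|$ for part 2). Your handling of the base estimate (via positivity and the identity $\Psi_{A,\varepsilon}(0,x)+\Psi_{A,\varepsilon}(0,-x)=2\varphi_{i\sqrt{1-\varepsilon^2}\varrho}(x)$) and of the case $x=0$ in part 2) (directly from $\Psi_{A,\varepsilon}(\lambda,0)\equiv 1$ rather than via Liouville) are in fact slightly more explicit than the paper's sketch.
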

\begin{proof} 1) If $N=0$ this is nothing but Theorem \ref{esti} 2), 3) and 4). So assume $N\geq 1.$ On the one hand,  
$\Psi_{A, \varepsilon}(\lambda, x)$ satisfies the following equation 
$$
 \Psi_{A, \varepsilon}'(\lambda,x)=-{{A'(x)}\over{2A(x)}} \Big( \Psi_{A, \varepsilon}(\lambda,x)-\Psi_{A, \varepsilon}(\lambda,-x)\Big) +\varepsilon \varrho \Psi_{A, \varepsilon}(\lambda,-x)+i\lambda \Psi_{A, \varepsilon}(\lambda,x).
$$
This  allows us to express   the derivatives of $\Psi_{A, \varepsilon}(\lambda, \cdot)$ in terms of lower order derivatives. On the other hand, since  $A'/(2A)$ satisfies the hypothesis  (H4), it follows that there exists a positive constant $C$ such that $$\left| \left({{A'(x)}\over{2A(x)}}\right)^{(N)}\right| \leq C,\qquad \forall\; |x|\geq x_0 \;\text{with}\; x_0>0.$$  
 Now the estimate \eqref{es1} can be proved by induction on $N.$
 \\
2) Recall that the mapping $\lambda\mapsto \Psi_{A, \varepsilon}(\lambda, x)$ is entire, for every $x\in \R,$ 
and that \begin{equation}\label{estine}
 |   \Psi_{A,\varepsilon}(\lambda, x) | \leq c 
e^{|\Im \lambda | \;|x|} \varphi_{i\sqrt{1-\varepsilon^2}\varrho}(x)
\end{equation} for all $\lambda\in \C$ and $x\in \R.$
If $M=0$ this is just \eqref{estine}. So assume $M>0.$ If $x=0,$ the statement follows from Liouville's theorem. If $x\not =0,$ 
apply  Cauchy's integral formula for $ \Psi_{A, \varepsilon}(\lambda, x)$  over a circle with radius proportional to ${1\over{|x|}},$ centered at $\lambda$ in the complex plane.
\end{proof}

\section{A Laplace type representation of  the eigenfunctions}

In this section we will show that $\Psi_{A, \varepsilon}(\lambda,\cdot)$ can be expressed as the 
Laplace transform of a compactly supported function. In the literature this is the so-called Mehler's type formula. 

Denote by $C_e^\infty(\R)$ the space of even functions in $C^\infty(\R).$ For $f\in C_e^\infty(\R)$ we set 
\index{$\mathcal E_{\varepsilon}$lmain}
\begin{equation}\label{e}
\mathcal E_\varepsilon f(x):= f(x)-{{\varrho_\varepsilon |x|}\over 2} \int_{|y|<|x|} f(y)
{{J_1(\varrho_\varepsilon \sqrt{x^2-y^2})}\over { \sqrt{x^2-y^2}}} dy,
\end{equation}
where  
$J_1$ is the Bessel function of the first kind, and \index{$\varrho_\varepsilon$}
\begin{equation}\label{rho}
\varrho_\varepsilon:=\sqrt{1-\varepsilon^2}\,\varrho .
\end{equation}
If $\varepsilon=\pm1,$ then $\varrho_{\pm1}=0,$ and therefore $\mathcal E_{\pm1}=\id.$ The following statement  is nothing but a reformulation of  Proposition 2.1  in \cite{Th}. See also Theorem 5.1 in \cite{Li}.
\begin{prop} The transform integral $\mathcal E_\varepsilon$ is an automorphism of $C_e^\infty(\R)$   satisfying 
\begin{equation}\label{c3}
\begin{cases}
&\!\!\!\displaystyle {{d^2}\over {dx^2}} \circ \mathcal E_\varepsilon =\mathcal E_\varepsilon \circ \Big( {{d^2}\over {dx^2}} -\varrho_\varepsilon^2\Big),\vspace{.2cm}\\
&\!\!\!\mathcal E_\varepsilon f(0)=f(0).
\end{cases}
\end{equation}
The transform inverse  $\mathcal E_\varepsilon^{-1}$ is given by 
\begin{equation}\label{e-1}
\mathcal E_\varepsilon^{-1} f(x)=f(x)+{{\varrho_\varepsilon |x|}\over 2} \int_{|y|<|x|} f(y) {{I_1(\varrho_\varepsilon \sqrt{x^2-y^2})}\over { \sqrt{x^2-y^2}}} dy, 
\end{equation}
where $I_1$ is the modified Bessel function of the first kind. 
\end{prop}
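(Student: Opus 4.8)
The plan is to regard $\mathcal E_\varepsilon=\id-K$ as a Volterra perturbation of the identity, with
\[
Kf(x)=\frac{\varrho_\varepsilon|x|}{2}\int_{|y|<|x|}f(y)\,\frac{J_1(\varrho_\varepsilon\sqrt{x^2-y^2})}{\sqrt{x^2-y^2}}\,dy,
\]
to verify the two properties in \eqref{c3} by a direct computation, and then to read off the inverse \eqref{e-1} and bijectivity from the forward intertwining via a short uniqueness argument. By evenness it suffices to argue for $x>0$. The decisive preliminary move is the substitution $y=xs$, $s\in(-1,1)$, which rewrites
\begin{equation*}
Kf(x)=\frac{\varrho_\varepsilon|x|}{2}\int_{-1}^{1}f(xs)\,\frac{J_1\!\big(\varrho_\varepsilon|x|\sqrt{1-s^2}\big)}{\sqrt{1-s^2}}\,ds .
\end{equation*}
Since $w\mapsto J_1(w)/w$ is even and entire, the quotient appearing here equals $\varrho_\varepsilon|x|\,\frac{J_1(w)}{w}$ with $w=\varrho_\varepsilon|x|\sqrt{1-s^2}$, hence is a \emph{smooth} function of $(x,s)\in\R\times[-1,1]$ with no singularity at $s=\pm1$. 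Thus the limits of integration are fixed and the integrand is smooth, so $\mathcal E_\varepsilon f\in C^\infty(\R)$, it is even because $f$ is, and one may differentiate under the integral sign. Setting $x=0$ annihilates $K$, which gives the normalization $\mathcal E_\varepsilon f(0)=f(0)$.

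\emph{The intertwining relation} is the crux. Writing $D=d^2/dx^2$ and expanding $\mathcal E_\varepsilon=\id-K$, the first line of \eqref{c3} is equivalent to the commutator identity
\begin{equation*}
D\circ K-K\circ D=\varrho_\varepsilon^2\,(\id-K)\qquad\text{on }C_e^\infty(\R).
\end{equation*}
I would prove this by differentiating $Kf$ twice in $x$ (legitimate by the previous step) and integrating by parts twice in $y$, so as to move $\partial_y^2$ off the kernel and onto $f$; the two second derivatives then assemble into the planar operator $\partial_x^2-\partial_y^2$ acting on $g(r):=J_1(\varrho_\varepsilon r)/r$, $r:=\sqrt{x^2-y^2}$. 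Here one uses the radial reduction $(\partial_x^2-\partial_y^2)g(r)=g''(r)+g'(r)/r$ together with the order-one Bessel equation $g''+\frac{3}{r}g'+\varrho_\varepsilon^2 g=0$ satisfied by $g$: the term $-\varrho_\varepsilon^2 g$ reproduces $-\varrho_\varepsilon^2 Kf$, while the endpoint values at $y=\pm x$ (where $g(r)\to\varrho_\varepsilon/2$) produce exactly $\varrho_\varepsilon^2 f$. In other words, the precise prefactor $\frac{\varrho_\varepsilon|x|}{2}$ and kernel $J_1(\varrho_\varepsilon r)/r$ are the Riemann function of $D-\varrho_\varepsilon^2$, which is what forces the lower-order and boundary terms to telescope into $\varrho_\varepsilon^2(\id-K)$. \emph{The main obstacle is precisely this boundary bookkeeping:} the $y$-derivatives of the kernel blow up like $1/\sqrt{x^2-y^2}$ at $y=\pm x$, so the integrations by parts have to be carried out in the regularized variable $s$ of the first step, where every boundary term is finite and unambiguous. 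As an independent check one may instead verify the equivalent eigenfunction identity $\mathcal E_\varepsilon\big[\cos(\sqrt{\lambda^2-\varrho_\varepsilon^2}\,\cdot)\big](x)=\cos(\lambda x)$ --- a classical Sonine--Gegenbauer Bessel integral --- which expresses that $\mathcal E_\varepsilon$ carries the even eigenfunctions of $D-\varrho_\varepsilon^2$ to those of $D$.

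\emph{The inverse and automorphism property.} Since $I_1(z)=-iJ_1(iz)$, the operator in \eqref{e-1} is the same construction with $\varrho_\varepsilon^2$ replaced by $-\varrho_\varepsilon^2$; denote it $\widetilde{\mathcal E}=\id+M$. Running the previous computation verbatim --- the modified Bessel equation $g''+\frac{3}{r}g'-\varrho_\varepsilon^2 g=0$ merely flips one sign --- shows that $\widetilde{\mathcal E}$ is smooth, even-preserving, of the form $\id+(\text{Volterra})$, and satisfies $D\circ\widetilde{\mathcal E}=\widetilde{\mathcal E}\circ(D+\varrho_\varepsilon^2)$ and $\widetilde{\mathcal E}f(0)=f(0)$. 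Consequently $T:=\mathcal E_\varepsilon\circ\widetilde{\mathcal E}$ commutes with $D$ on $C_e^\infty(\R)$, preserves evenness and values at $0$, and has the form $\id+R$ with $R$ a Volterra operator whose kernel $N(x,y)$ is continuous and supported in $\{|y|\le|x|\}$. Applying $T$ to $\cos(\lambda\,\cdot)$ and using $DT=TD$ shows that $T\cos(\lambda\,\cdot)$ is an even solution of $u''=-\lambda^2u$ with $u(0)=1$, hence equals $\cos(\lambda\,\cdot)$; therefore $\int N(x,y)\cos(\lambda y)\,dy=0$ for all $\lambda$ and all $x$, and injectivity of the Fourier--cosine transform forces $N\equiv0$, i.e.\ $T=\id$. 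The symmetric argument yields $\widetilde{\mathcal E}\circ\mathcal E_\varepsilon=\id$. Hence $\mathcal E_\varepsilon$ is a bijection of $C_e^\infty(\R)$ with inverse $\widetilde{\mathcal E}=\mathcal E_\varepsilon^{-1}$ given by \eqref{e-1}, and the smooth-kernel representation of the first step makes both maps continuous, so $\mathcal E_\varepsilon$ is an automorphism.
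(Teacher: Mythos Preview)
The paper does not prove this proposition at all: it is stated as ``nothing but a reformulation of Proposition~2.1 in \cite{Th}'' (see also \cite{Li}), and the authors simply import it. Your write-up is therefore not competing against a proof in the paper but supplying the classical Delsarte--Lions--Thyssen argument that the citation points to. The overall architecture you propose --- write $\mathcal E_\varepsilon=\id-K$ with a smooth Volterra kernel, verify the transmutation identity by differentiating under the integral plus Bessel-equation algebra (or, equivalently, the Sonine--Gegenbauer identity $\mathcal E_\varepsilon[\cos(\mu_\varepsilon\cdot)]=\cos(\lambda\cdot)$, which is exactly what the paper later uses in Lemma~\ref{alt}), then obtain $\mathcal E_\varepsilon^{-1}$ by replacing $\varrho_\varepsilon^2\mapsto-\varrho_\varepsilon^2$ and conclude via a uniqueness argument on the $\cos(\lambda\cdot)$ --- is correct and is precisely the content of the cited references.

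One small correction of emphasis: the ``main obstacle'' you flag is overstated. Since $J_1(w)/w$ is an even entire function of $w$, the kernel $J_1(\varrho_\varepsilon r)/r$ is an entire function of $r^2=x^2-y^2$, hence jointly $C^\infty$ in $(x,y)$ with no singularity whatsoever at $y=\pm x$; its $y$-derivatives do \emph{not} blow up like $1/\sqrt{x^2-y^2}$. The change of variable $y=xs$ is still a clean way to organize the computation, and one still has to keep track of the (finite) boundary contributions at $y=\pm x$ when integrating by parts --- those boundary terms are what produce the $\varrho_\varepsilon^2 f$ piece --- but the difficulty is purely algebraic, not analytic. With that adjustment your sketch goes through.
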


Let  $\mathcal D_e(\R)$ be the space of even functions in $\mathcal D(\R).$ For $g\in  \mathcal D_e(\R)$  put
\index{${}^{\rm t}\mathcal E_{\varepsilon}$}
\begin{equation}
\label{e-t11}{}^{\rm t} \mathcal E_\varepsilon g(y)=g(y)-{{\varrho_\varepsilon}\over 2}   \int_{|x|> |y|}  \, |x|\, g(x) \,{{J_1(\varrho_\varepsilon \sqrt{x^2-y^2})}\over{ \sqrt{x^2-y^2}}} dx, 
\end{equation}
 where $\varrho_\varepsilon$ is as in \eqref{rho}. We may rewrite ${}^{\rm t} \mathcal E_\varepsilon g$ as
\begin{equation}
\label{e-t12}{}^{\rm t} \mathcal E_\varepsilon g(y)= -   \int_{ |y|}^\infty  g'(x) J_0(\varrho_\varepsilon \sqrt{x^2-y^2})dx. 
\end{equation}
 Below we will show that $\mathcal D_e(\R)$ is stable by ${}^{\rm t} \mathcal E_\varepsilon .$ Thus, one may check   that for all $f\in C_e^\infty(\R)$ and all  $g\in \mathcal D_e(\R),$  
 $$ \int_\R\mathcal E_\varepsilon f(x) \,g(x)dx=\int_\R f(y) \;   {}^{\rm t} \mathcal E_\varepsilon  g(y)dy.$$

 \begin{thm} The integral transform  ${}^{\rm t} \mathcal E_\varepsilon $ is an automorphism of $\mathcal D_e(\R)$  satisfying 
\begin{equation}\label{et}
{}^{\rm t} \mathcal E_\varepsilon  \circ {{d^2}\over{dx^2}} =\Big({{d^2}\over{dx^2}}-\varrho_\varepsilon^2\Big) \circ{}^{\rm t} \mathcal E_\varepsilon .
 \end{equation}
  The transform  inverse ${{}^{\rm t}\mathcal E_\varepsilon}^{-1}$ is given by 
\begin{equation}
\label{e-t21}{}^{\rm t}\mathcal  E_\varepsilon^{-1} g(y)=g(y) +{{\varrho_\varepsilon}\over 2}  \int_{|x|> |y|} \,|x|\, g(x)\, {{I_1(\varrho_\varepsilon \sqrt{x^2-y^2})}\over{ \sqrt{x^2-y^2}}} dx ,
\end{equation}
which we may rewrite it as 
\begin{equation}
\label{e-t22}
{}^{\rm t}\mathcal  E_\varepsilon^{-1} g(y)=- \int_{ |y|}^\infty g'(x)  I_0(\varrho_\varepsilon \sqrt{x^2-y^2})  dx .
\end{equation}
  \end{thm}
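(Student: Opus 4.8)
The plan is to deduce every assertion about ${}^{\rm t}\mathcal E_\varepsilon$ by transposing the corresponding property of $\mathcal E_\varepsilon$ through the duality bracket $\langle u,v\rangle=\int_\R u(x)v(x)\,dx$, which is exactly the point of the two representations \eqref{e-t11} and \eqref{e-t12}. First I would record the transpose relation itself: inserting the definition \eqref{e} into $\int_\R \mathcal E_\varepsilon f(x)g(x)\,dx$ and interchanging the order of integration over $\{|y|<|x|\}$ reproduces \eqref{e-t11}, giving $\int_\R \mathcal E_\varepsilon f\cdot g=\int_\R f\cdot {}^{\rm t}\mathcal E_\varepsilon g$ for $f\in C_e^\infty(\R)$, $g\in\mathcal D_e(\R)$. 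The interchange is legitimate because $g$ is compactly supported and the kernel $J_1(\varrho_\varepsilon\sqrt{x^2-y^2})/\sqrt{x^2-y^2}$ is continuous and bounded (since $J_1(z)/z\to 1/2$ as $z\to 0$). The same computation applied to $\mathcal E_\varepsilon^{-1}$ from \eqref{e-1} produces the operator \eqref{e-t21}, so that candidate is precisely ${}^{\rm t}(\mathcal E_\varepsilon^{-1})$. The equivalent forms \eqref{e-t12} and \eqref{e-t22} then follow from \eqref{e-t11} and \eqref{e-t21} by a single integration by parts in $x$, using $J_0'=-J_1$, $I_0'=I_1$ and $J_0(0)=I_0(0)=1$: the boundary term at $x=|y|$ contributes $g(y)$ and the term at infinity vanishes.

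Next I would prove that $\mathcal D_e(\R)$ is stable under ${}^{\rm t}\mathcal E_\varepsilon$, which I expect to be the delicate step. Compact support and evenness are read off from \eqref{e-t12}: if $\supp g\subset[-R,R]$ then $g'(x)=0$ for $x>R$, whence ${}^{\rm t}\mathcal E_\varepsilon g(y)=0$ for $|y|>R$, and the right-hand side depends on $y$ only through $|y|$. The real obstruction is smoothness at $y=0$, where both the corner $|y|$ in the lower limit and the square root are singular. I would remove both simultaneously by substituting $x=\sqrt{y^2+s^2}$ with $s\ge 0$ and writing the even smooth function $g$ as $g(x)=G(x^2)$ with $G\in C^\infty([0,\infty))$, so that $g'(x)=2xG'(x^2)$. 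This turns \eqref{e-t12} into $-2\int_0^\infty s\,G'(y^2+s^2)\,J_0(\varrho_\varepsilon s)\,ds$, whose integrand is smooth in $y$ and, for $y$ in a compact set, compactly supported in $s$; differentiation under the integral sign then gives ${}^{\rm t}\mathcal E_\varepsilon g\in C_e^\infty(\R)$, hence ${}^{\rm t}\mathcal E_\varepsilon g\in\mathcal D_e(\R)$. The identical argument with $I_0$ in place of $J_0$ shows ${}^{\rm t}(\mathcal E_\varepsilon^{-1})$ also maps $\mathcal D_e(\R)$ into itself.

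With stability established, the intertwining relation \eqref{et} follows by pairing. For $f\in C_e^\infty(\R)$ and $g\in\mathcal D_e(\R)$ I would compute $\int_\R \big(\tfrac{d^2}{dx^2}\mathcal E_\varepsilon f\big)\,g$ in two ways. Using \eqref{c3} it equals $\int_\R \mathcal E_\varepsilon\big((\tfrac{d^2}{dx^2}-\varrho_\varepsilon^2)f\big)\,g=\int_\R \big((\tfrac{d^2}{dx^2}-\varrho_\varepsilon^2)f\big)\,{}^{\rm t}\mathcal E_\varepsilon g$; integrating by parts twice (no boundary terms, as $g$ is compactly supported) it also equals $\int_\R \mathcal E_\varepsilon f\cdot g''=\int_\R f\cdot{}^{\rm t}\mathcal E_\varepsilon(g'')$. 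Moving the two derivatives off $f$ in the first expression—now legitimate, since ${}^{\rm t}\mathcal E_\varepsilon g$ is compactly supported—yields $\int_\R f\cdot\big[(\tfrac{d^2}{dx^2}-\varrho_\varepsilon^2){}^{\rm t}\mathcal E_\varepsilon g-{}^{\rm t}\mathcal E_\varepsilon g''\big]=0$ for every $f\in C_e^\infty(\R)$. The bracket lies in $\mathcal D_e(\R)$, and since the test functions may be taken even it vanishes identically, which is exactly \eqref{et}.

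Finally, for the automorphism claim I would transpose the identities $\mathcal E_\varepsilon\circ\mathcal E_\varepsilon^{-1}=\mathcal E_\varepsilon^{-1}\circ\mathcal E_\varepsilon=\id$ on $C_e^\infty(\R)$. For $f\in C_e^\infty(\R)$, $g\in\mathcal D_e(\R)$ one has $\langle f,g\rangle=\langle \mathcal E_\varepsilon\mathcal E_\varepsilon^{-1}f,g\rangle=\langle \mathcal E_\varepsilon^{-1}f,{}^{\rm t}\mathcal E_\varepsilon g\rangle=\langle f,{}^{\rm t}(\mathcal E_\varepsilon^{-1}){}^{\rm t}\mathcal E_\varepsilon g\rangle$, where the middle step uses that ${}^{\rm t}\mathcal E_\varepsilon g\in\mathcal D_e(\R)$, and symmetrically $\langle f,g\rangle=\langle f,{}^{\rm t}\mathcal E_\varepsilon{}^{\rm t}(\mathcal E_\varepsilon^{-1})g\rangle$. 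The even-test-function argument then forces ${}^{\rm t}(\mathcal E_\varepsilon^{-1}){}^{\rm t}\mathcal E_\varepsilon g=g={}^{\rm t}\mathcal E_\varepsilon{}^{\rm t}(\mathcal E_\varepsilon^{-1})g$, so ${}^{\rm t}\mathcal E_\varepsilon$ is an automorphism of $\mathcal D_e(\R)$ with inverse ${}^{\rm t}(\mathcal E_\varepsilon^{-1})$, namely \eqref{e-t21}. I expect the smoothness of ${}^{\rm t}\mathcal E_\varepsilon g$ near $y=0$ to be the only genuinely technical point; everything else is bookkeeping with transposes and integration by parts.
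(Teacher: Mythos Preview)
Your proof is correct and takes a genuinely different route from the paper's.

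The paper proceeds in the opposite logical order: it first claims the intertwining relation \eqref{et} ``by direct calculation,'' then uses that relation to control the even-order derivatives $D^{2N}\,{}^{\rm t}\mathcal E_\varepsilon g = {}^{\rm t}\mathcal E_\varepsilon\,(D^2+\varrho_\varepsilon^2)^N g$ via the bounded kernel $J_0$, thereby arguing stability of $\mathcal D_e(\R)$. For the inverse, the paper does not transpose at all: it computes ${}^{\rm t}\mathcal E_\varepsilon\big({}^{\rm t}\mathcal E_\varepsilon^{-1}g\big)(y)$ directly, using the explicit identity $\int_0^a x^{\mu+1}(a^2-x^2)^{-\mu/2-1}J_\mu(x)I_\nu(\sqrt{a^2-x^2})\,dx = c\,J_\nu(a)$ from Gradshteyn--Ryzhik together with the recursion and derivative formulas for $J_\nu$ and $I_\nu$, and checks the result collapses to $g(y)$.

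Your approach is more conceptual: you establish stability first via the substitution $x=\sqrt{y^2+s^2}$ and $g(x)=G(x^2)$, which cleanly resolves the smoothness issue at $y=0$ that the paper's argument treats only sketchily, and then you obtain both \eqref{et} and the inverse formula for free by transposing the corresponding properties of $\mathcal E_\varepsilon$ already recorded in \eqref{c3} and \eqref{e-1}. This avoids the special-function integral identity entirely. The paper's direct computation, on the other hand, is self-contained in the sense that it does not logically rely on the proposition about $\mathcal E_\varepsilon$ (though of course that proposition is stated in the paper). Both arguments are valid; yours is shorter and exploits duality systematically, while the paper's makes the Bessel-function mechanics explicit.
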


\begin{proof} It is clear that   ${}^{\rm t} \mathcal E_\varepsilon  g $ is an even function whenever  $ g$ 
is even. A direct calculation gives the intertwining property  \eqref{et},  which we  may rewrite it  as  
$D^2\circ  {}^{\rm t} \mathcal E_\varepsilon = {}^{\rm t} \mathcal E_\varepsilon \circ (D^2+\varrho_\varepsilon^2),$ where $D:= {{d }\over{dx }}.$ Thus, for all $N\in \N$ 
and for all $y\in \R_+,$ we have
\begin{eqnarray*}
D^{2N}\circ   {}^{\rm t}\mathcal E_\varepsilon g(y)&=&  {}^{\rm t}\mathcal E_\varepsilon\circ  (D^2+\varrho_\varepsilon^2)^Ng(y)\\
&=&-\int_y^\infty J_0\big(\varrho_\varepsilon \sqrt{x^2-y^2}\big) \;D(D^2+\varrho_\varepsilon^2)^N g(x) dx.
\end{eqnarray*} 
Using the well know fact that $|J_0(r)| \leq 1$ for all $r\in \R_+,$ it follows that if 
$ \supp (g)\subset [-a,a],$ then there exists a constant $c$ such that 
$$\sup_{y\in [-a,a]}| D^{2N}\circ   {}^{\rm t}\mathcal E_\varepsilon   g(y)| \leq c \sup_{x\in [-a,a]}| D^{M} g(x)| <\infty,$$ 
for some positive  integer $M.$ 
Thus, the space $\mathcal D_e(\R)$  is stable by $ {}^{\rm t} \mathcal E_\varepsilon .$ 

We now prove that the transform inverse of ${}^{\rm t} \mathcal E_\varepsilon $ is given by  \eqref{e-t21}. Recall that we may rewrite ${}^{\rm t} \mathcal E_\varepsilon $  as
$${}^{\rm t} \mathcal E_\varepsilon g(y)= -   \int_{ |y|}^\infty   g'(x) J_0(\varrho_\varepsilon \sqrt{x^2-y^2})dx.
$$
We may also rewrite  the ``potential" transform inverse as 
$$
{}^{\rm t}\mathcal  E_\varepsilon^{-1} g(y)=- \int_{ |y|}^\infty g'(x)  I_0(\varrho_\varepsilon \sqrt{x^2-y^2})  dx .
$$
We will assume that $y>0.$ Then 
\begin{eqnarray*}
 {}^{\rm t} \mathcal E_\varepsilon  \big( {}^{\rm t}\mathcal  E_\varepsilon^{-1} g\big)(y) 
&=&-\int_{x>y} \left\{  {}^{\rm t}\mathcal  E_\varepsilon^{-1} g (x)\right\}' J_0(\varrho_\varepsilon \sqrt{x^2-y^2})dx\\
&=& \int_{x>y} \Big\{ \int_{s>x} g'(s) I_0(\varrho_\varepsilon \sqrt{s^2-x^2}) ds \Big\}' J_0(\varrho_\varepsilon \sqrt{x^2-y^2})dx\\
&=&-\int_{x>y} g'(x) J_0(\varrho_\varepsilon \sqrt{x^2-y^2})dx \\
&& +\int_{x>y}   \Big\{ \int_{s>x} g'(s) \partial_x I_0(\varrho_\varepsilon \sqrt{s^2-x^2}) ds \Big\}J_0(\varrho_\varepsilon \sqrt{x^2-y^2})dx.
\end{eqnarray*}
Integration by parts implies 
$$\int_{s>x} g'(s) \partial_x I_0(\varrho_\varepsilon \sqrt{s^2-x^2}) ds=  {{\varrho_\varepsilon^2}\over 2}\, x \,g(x) -\int_{s>x} g(s) \partial_s\partial_x I_0(\varrho_\varepsilon \sqrt{s^2-x^2}) ds. $$
Above we have used the fact that $I_0'(z)=I_1(z)$ and that  the function  $\bigl({z\over 2}\bigr)^{-\nu} I_\nu(z)$ is normalized at $0$ by $1.$
Thus,
\begin{eqnarray*}
{}^{\rm t} \mathcal E_\varepsilon  \big( {}^{\rm t}\mathcal  E_\varepsilon^{-1} g\big)(y)&=&-\int_{x>y} g'(x) J_0(\varrho_\varepsilon \sqrt{x^2-y^2})dx + {{\varrho_\varepsilon^2}\over 2} \int_{x>y} x \,g(x) \,J_0(\varrho_\varepsilon \sqrt{x^2-y^2})dx\\
 &&-\int_{s>y}   g(s) \:\Big\{  \int_{y}^s   J_0(\varrho_\varepsilon \sqrt{x^2-y^2})   \partial_s\partial_x I_0(\varrho_\varepsilon \sqrt{s^2-x^2}) dx\Big\} ds.
\end{eqnarray*}
Next, we will compute the integral within  brackets on the right hand side of the identity above. 
On the one hand, since $I_0'(z)=I_1(z),$ we have 
\begin{eqnarray*}
 \partial_s\partial_x I_0(\varrho_\varepsilon \sqrt{s^2-x^2}) &=&-\varrho_\varepsilon x \partial_s \Big( (s^2-x^2)^{-1/2} I_1( \varrho_\varepsilon \sqrt{s^2-x^2}) \Big)\\
&=&\varrho_\varepsilon {{xs}\over{ (s^2-x^2)^{3/2}}} I_1(\varrho_\varepsilon \sqrt{s^2-x^2}) -\varrho_\varepsilon^2  {{xs}\over{ (s^2-x^2)}} I_1'(\varrho_\varepsilon \sqrt{s^2-x^2}) \\
&=&-\varrho_\varepsilon^2  {{xs}\over{ (s^2-x^2)}}I_2(\varrho_\varepsilon \sqrt{s^2-x^2}) .
\end{eqnarray*}
Above we have used the well known differentiation identity
$I_\nu'(z)= I_{\nu+1}(z)+{\nu\over z} I_\nu(z). $
On the other hand, using the following integral formula (see \cite[formula (1), page 725]{GR})
$$\int_0^a x^{\mu+1} (a^2-x^2)^{-\mu/2 -1}  J_\mu(x) I_\nu(\sqrt{a^2-x^2})dx= {{\left({a\over 2}\right)^\mu 
\Gamma\left({{\nu-\mu}\over2}\right)}\over{2\Gamma\left( {{\nu+\mu}\over2}+1\right)}} J_\nu(a) ,\quad   \Re \nu>\Re \mu >-1$$
 we have
 \begin{align*}
 &\int_{y}^s   J_0(\varrho_\varepsilon \sqrt{x^2-y^2})   \partial_s\partial_x I_0(\varrho_\varepsilon \sqrt{s^2-x^2}) dx\\
= &-\varrho_\varepsilon^2  s  \int_{y}^s   {{x}\over{ (s^2-x^2)}} J_0(\varrho_\varepsilon \sqrt{x^2-y^2}) I_2(\varrho_\varepsilon \sqrt{s^2-x^2}) dx \\
=&-{{\varrho_\varepsilon^2}\over 2} s J_2(\varrho_\varepsilon \sqrt{s^2-y^2}) \\
=&-\varrho_\varepsilon {s \over {\sqrt{s^2-y^2}}} J_1(\varrho_\varepsilon \sqrt{s^2-y^2}) +{{\varrho_\varepsilon^2}\over 2} s J_0(\varrho_\varepsilon \sqrt{s^2-y^2}) \\
= &\; \partial_s \Big(J_0(\varrho_\varepsilon \sqrt{s^2-y^2})\Big)+{{\varrho_\varepsilon^2}\over 2} s J_0(\varrho_\varepsilon \sqrt{s^2-y^2}). 
\end{align*}
Above we have used the recurrence relation   $J_{\nu+1}(z)+J_{\nu-1}(z)={{2\nu}\over z} J_\nu(z)$ and the fact that $J_0'(z)=-J_1(z).$
Consequently,
\begin{eqnarray*}
 {}^{\rm t} \mathcal E_\varepsilon  \big( {}^{\rm t}\mathcal  E_\varepsilon^{-1} g\big)(y)
&=&-\int_{x>y} g'(x) J_0(\varrho_\varepsilon \sqrt{x^2-y^2})dx + {{\varrho_\varepsilon^2}\over 2} \int_{x>y} x g(x) J_0(\varrho_\varepsilon \sqrt{x^2-y^2})dx\\
 &&-\int_{x>y}   g(x) \partial_x \Big(J_0(\varrho_\varepsilon \sqrt{x^2-y^2})\Big) dx - {{\varrho_\varepsilon^2}\over 2}  \int_{x>y} x  g(x)   J_0(\varrho_\varepsilon \sqrt{x^2-y^2}) dx\\
&=& g(y).
\end{eqnarray*}
Similarly  one proves that 
${}^{\rm t}\mathcal  E_\varepsilon^{-1}  \big(  {}^{\rm t} \mathcal E_\varepsilon  g\big)=g.$
\end{proof}

Recall from Lemma \ref{Ch1} that for every $\mu\in \C,$ the eigenfunction $\varphi_{\mu }$  has the following Laplace type representation 
\begin{equation}\label{laplace}
\varphi_{\mu } (x) =\int_{0}^{|x|} K(|x|,y) \cos({\mu y})dy,\qquad x\in \R^*,
\end{equation} 
where $K(|x|,\cdot)$  is a non-negative even continuous function supported in $[-|x|, |x|].$ 
The following alternative Laplace type representation of $\varphi_{\mu }$ is needed for later use. 

For $x\in \R$ and $y\in \R_+$ put \index{$K_{\varepsilon}(x,y)$}
\begin{equation}\label{Kepsilon}
K_\varepsilon({ |x|},y):= {}^{\rm t}\mathcal E_\varepsilon^{-1} K (|x|,\cdot)(y).
\end{equation}
Observe that $ K_\varepsilon (x,\cdot)$ is even, continuous and supported in $[-|x|,|x|].$ We note  that if  $\varepsilon=\pm1,$ then  the transformation  $\mathcal E_{\pm1}=\id,$ and therefore $K_{\pm1}(x,y)=K (|x|,y).$
\begin{lema}\label{alt} Let $\lambda\in \C.$ The integral representation \eqref{laplace} can be rewritten as 
$$\varphi_{\mu_\varepsilon }(x)=\int_0^{|x|}  K_\varepsilon(x,y) \cos(\lambda y) dy,$$
where the relationship between $\mu_\varepsilon$  and $\lambda$ is $\mu_\varepsilon^2 = \lambda^2+ ( \varepsilon^2-1)\varrho^2.$
\end{lema}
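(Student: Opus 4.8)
The plan is to start from the two representations of $\varphi_{\mu_\varepsilon}$ that are already at our disposal and connect them through the transform ${}^{\rm t}\mathcal E_\varepsilon$. Recall from Lemma \ref{Ch1} that the eigenfunction $\varphi_{\mu_\varepsilon}$ of $\Delta$ admits the representation
$$\varphi_{\mu_\varepsilon}(x)=\int_0^{|x|} K(|x|,t)\cos(\mu_\varepsilon t)\,dt,$$
where $\mu_\varepsilon^2=\lambda^2+(\varepsilon^2-1)\varrho^2$, i.e.\ $\mu_\varepsilon^2+\varrho_\varepsilon^2=\lambda^2$ with $\varrho_\varepsilon=\sqrt{1-\varepsilon^2}\,\varrho$. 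The goal is to show that the same function is reproduced by integrating $\cos(\lambda y)$ against the modified kernel $K_\varepsilon(x,y)={}^{\rm t}\mathcal E_\varepsilon^{-1} K(|x|,\cdot)(y)$. So the whole statement is really an assertion that applying ${}^{\rm t}\mathcal E_\varepsilon^{-1}$ to the kernel in the variable $y$ converts the frequency $\mu_\varepsilon$ into the frequency $\lambda=\sqrt{\mu_\varepsilon^2+\varrho_\varepsilon^2}$.

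The key computational fact I would isolate is the action of ${}^{\rm t}\mathcal E_\varepsilon$ on a pure cosine. Writing $g_\mu(y)=\cos(\mu y)$, the intertwining relation \eqref{et}, namely ${}^{\rm t}\mathcal E_\varepsilon\circ D^2=(D^2-\varrho_\varepsilon^2)\circ{}^{\rm t}\mathcal E_\varepsilon$, shows that ${}^{\rm t}\mathcal E_\varepsilon$ sends an eigenfunction of $D^2$ with eigenvalue $-\mu^2$ to an eigenfunction with eigenvalue $-(\mu^2+\varrho_\varepsilon^2)$; together with the normalization ${}^{\rm t}\mathcal E_\varepsilon g(0)$-type boundary behaviour this forces
$${}^{\rm t}\mathcal E_\varepsilon\big(\cos(\sqrt{\mu^2+\varrho_\varepsilon^2}\,\cdot)\big)=\cos(\mu\,\cdot),$$
equivalently ${}^{\rm t}\mathcal E_\varepsilon^{-1}\big(\cos(\mu\,\cdot)\big)=\cos(\sqrt{\mu^2+\varrho_\varepsilon^2}\,\cdot)$. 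I would verify this directly from the explicit formula \eqref{e-t22} for ${}^{\rm t}\mathcal E_\varepsilon^{-1}$ applied to $\cos(\mu_\varepsilon x)$: differentiating gives $-\mu_\varepsilon\sin(\mu_\varepsilon x)$ under the integral, and the resulting integral $\int_{|y|}^\infty \mu_\varepsilon\sin(\mu_\varepsilon x)\,I_0(\varrho_\varepsilon\sqrt{x^2-y^2})\,dx$ is a classical integral that evaluates to $\cos(\sqrt{\mu_\varepsilon^2+\varrho_\varepsilon^2}\,|y|)=\cos(\lambda y)$. This is the single most delicate step, and it is where I expect the genuine work to lie.

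With that lemma in hand I would combine it with the duality/adjointness built into the construction. Since $K_\varepsilon(x,\cdot)={}^{\rm t}\mathcal E_\varepsilon^{-1} K(|x|,\cdot)$ and since $\mathcal E_\varepsilon,\ {}^{\rm t}\mathcal E_\varepsilon$ are transpose to one another on $C_e^\infty$ and $\mathcal D_e$, I would write
$$\int_0^{|x|} K_\varepsilon(x,y)\cos(\lambda y)\,dy=\int_{\R} {}^{\rm t}\mathcal E_\varepsilon^{-1}K(|x|,\cdot)(y)\,\cos(\lambda y)\,dy=\int_{\R} K(|x|,y)\,\mathcal E_\varepsilon^{-1}\big(\cos(\lambda\,\cdot)\big)(y)\,dy,$$
after a harmless even extension of the kernel and of $\cos$. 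One then needs the companion identity $\mathcal E_\varepsilon^{-1}(\cos(\lambda\,\cdot))=\cos(\mu_\varepsilon\,\cdot)$ for the other transform, which follows from \eqref{c3} in exactly the same way as the ${}^{\rm t}\mathcal E_\varepsilon$ computation (or can be transferred via transposition). Substituting this back recovers $\int_0^{|x|} K(|x|,y)\cos(\mu_\varepsilon y)\,dy$, which is precisely $\varphi_{\mu_\varepsilon}(x)$ by \eqref{laplace}.

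A few bookkeeping points deserve care but are routine. The kernel $K(|x|,\cdot)$ is only continuous, not smooth, and is merely supported in $[-|x|,|x|]$, so the transpose identity $\int \mathcal E_\varepsilon f\cdot g=\int f\cdot{}^{\rm t}\mathcal E_\varepsilon g$ and the manipulation above must be justified by an approximation argument or by working with the explicit Volterra-type integral kernels and using Fubini, whose applicability is guaranteed by the compact support in $x$. The cases $\varepsilon=\pm1$ are trivial since then $\varrho_\varepsilon=0$, the transforms reduce to the identity, $\mu_\varepsilon=\lambda$, and the claimed formula collapses to \eqref{laplace} itself. The main obstacle, to reiterate, is establishing the cosine-to-cosine frequency-shift formula for ${}^{\rm t}\mathcal E_\varepsilon^{-1}$ (equivalently for $\mathcal E_\varepsilon^{-1}$); once that special-function identity is pinned down, the rest is a transposition argument.
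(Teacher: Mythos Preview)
Your overall architecture is exactly the paper's: prove the frequency-shift identity $\mathcal E_\varepsilon^{-1}\big(\cos(\lambda\,\cdot)\big)=\cos(\mu_\varepsilon\,\cdot)$, then move ${}^{\rm t}\mathcal E_\varepsilon^{-1}$ from the kernel $K(|x|,\cdot)$ onto the cosine by transposition. The trivial cases $\varepsilon=\pm1$ and the bookkeeping about $K$ being merely continuous are also handled the same way.

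There is, however, one genuine misstep. You propose to verify the cosine identity by applying ${}^{\rm t}\mathcal E_\varepsilon^{-1}$ to $\cos(\mu_\varepsilon\,\cdot)$ via formula \eqref{e-t22}. That formula reads
\[
{}^{\rm t}\mathcal E_\varepsilon^{-1}g(y)=-\int_{|y|}^\infty g'(x)\,I_0\big(\varrho_\varepsilon\sqrt{x^2-y^2}\big)\,dx,
\]
and $I_0(r)$ grows like $e^{r}/\sqrt{2\pi r}$; so the integral $\int_{|y|}^\infty \mu_\varepsilon\sin(\mu_\varepsilon x)\,I_0(\varrho_\varepsilon\sqrt{x^2-y^2})\,dx$ diverges. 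The operator ${}^{\rm t}\mathcal E_\varepsilon^{-1}$ is only an automorphism of $\mathcal D_e(\R)$, and $\cos(\mu_\varepsilon\,\cdot)$ is not in that space, so neither the intertwining relation \eqref{et} nor the explicit kernel can be invoked on it. The paper sidesteps this by working on the other side of the duality: it computes $\mathcal E_\varepsilon\big(\cos(\mu_\varepsilon\,\cdot)\big)$ directly, since $\mathcal E_\varepsilon$ acts on $C_e^\infty(\R)$ and its defining integral \eqref{e} is over a compact interval against the bounded kernel $J_1$. A classical Bessel identity then gives $\mathcal E_\varepsilon\big(\cos(\mu_\varepsilon\,\cdot)\big)(y)=\cos(\lambda y)$, hence $\mathcal E_\varepsilon^{-1}\big(\cos(\lambda\,\cdot)\big)=\cos(\mu_\varepsilon\,\cdot)$, which is precisely the identity you need in your third paragraph. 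Once you replace your ${}^{\rm t}\mathcal E_\varepsilon^{-1}$ computation by this $\mathcal E_\varepsilon$ computation, your argument coincides with the paper's.
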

\begin{proof} The cases $\varepsilon=\pm 1$ are trivial since $\mathcal E_{\pm 1}=\id.$
So assume $\varepsilon\not = \pm 1.$   Observe that we may rewrite $\mathcal E_\varepsilon$ as 
$$  \mathcal E_\varepsilon (f)(y) :=\Big(y\int_0^1 f(yt) J_0(\varrho_\varepsilon y \sqrt{1-t^2})dt\Big)'_y$$
 for $y>0.$ Using the following  integral formula  (see \cite[formula (7), page 722]{GR})
 $$ \int_0^a \cos (ct)\:J_0 (b\sqrt{a^2-t^2})  dt={{\sin (a\sqrt{b^2+c^2})}\over {\sqrt{b^2+c^2}}},\qquad b>0$$
  we obtain
\begin{eqnarray*}
\mathcal E_\varepsilon \big(\cos (\mu_\varepsilon \cdot)\big)(y)&= &
\Big(y\int_0^1 \cos(\mu_\varepsilon yt) J_0(\varrho_\varepsilon y \sqrt{1-t^2})dt\Big)'_y\\
&=&\Big({{\sin(y\sqrt{\varrho_\varepsilon^2+\mu_\varepsilon^2})}\over{\sqrt{\varrho_\varepsilon^2+\mu_\varepsilon^2}}}\Big)'_y\\
&=&\cos\Big(\big\{\underbrace{(1-\varepsilon^2) \varrho^2 +\mu_\varepsilon^2}_{=\lambda^2}\big\}^{1/2} \,y\Big) .
\end{eqnarray*}
 Thus 
$$\mathcal E_\varepsilon^{-1} \big(\cos (\lambda\cdot )\big)(y) =\cos (\mu_\varepsilon y).$$  
Hence the integral representation \eqref{laplace} becomes 
\begin{eqnarray*}
\varphi_{\mu_\varepsilon }(x)&=&\int_0^{|x|} K  (|x|,y) \mathcal E_\varepsilon^{-1} (\cos (\lambda \cdot))(y) dy\\
&=&\int_0^{|x|}{}^{\rm t} \mathcal E_\varepsilon^{-1} K (|x|,\cdot)(y) \cos(\lambda y) dy.
\end{eqnarray*}
\end{proof}

We now establish a Laplace type representation of the eigenfunction $\Psi_{A, \varepsilon}(\lambda, \cdot).$
Henceforth we will use the  following notation 
\begin{equation}
G_\varepsilon(x,y):=\int_{|y|}^{|x|}  K_\varepsilon (t,y) A(t)dt. \index{$G_{\varepsilon}(x,y)$}
\end{equation}
The function $G_\varepsilon(x,\cdot )$ is even, continuous on  its support $[-|x|,|x|]$ and of class $C^1$ on $]-|x|,|x|[$  (see e.g. \cite[Lemma 2.8]{MT}).

\begin{thm}\label{Mh} For all $\lambda\in \C$   the  function $ \Psi_{A, \varepsilon}(\lambda, \cdot):\R^* \rightarrow \C $ is the Laplace transform of a compactly supported function. More precisely,
$$\Psi_{A, \varepsilon}(\lambda, x)=\int_{|y|<|x|}  \KK_\varepsilon(x,y) e^{i\lambda y} dy,\qquad \forall x\in \R^*,$$ where 
\index{$\KK_{\varepsilon}(x,y)$}
\begin{equation}\label{K}
\KK_\varepsilon(x,y):={{K_\varepsilon (x,y)}\over 2} +\varepsilon \varrho {{\sg(x) }\over{2A(x)}} G_\varepsilon(x,y) -{{\sg(x)}\over{2A(x)}}\partial_y G_\varepsilon(x,y).
\end{equation} 
\end{thm}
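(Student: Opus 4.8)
The plan is to start from the explicit solution \eqref{2.4} in Theorem \ref{thm1},
$$\Psi_{A,\varepsilon}(\lambda,x)=\varphi_{\mu_\varepsilon}(x)+(i\lambda+\varepsilon\varrho)\frac{\sg(x)}{A(x)}\int_0^{|x|}\varphi_{\mu_\varepsilon}(t)A(t)\,dt,$$
and to feed in the alternative Laplace representation $\varphi_{\mu_\varepsilon}(x)=\int_0^{|x|}K_\varepsilon(x,y)\cos(\lambda y)\,dy$ supplied by Lemma \ref{alt}. Since $K_\varepsilon(x,\cdot)$ is even and supported in $[-|x|,|x|]$, a parity argument rewrites the leading term as $\tfrac12\int_{-|x|}^{|x|}K_\varepsilon(x,y)e^{i\lambda y}\,dy$, which already contributes the kernel piece $K_\varepsilon(x,y)/2$ in \eqref{K}.

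For the integral term I would insert the same representation into $\int_0^{|x|}\varphi_{\mu_\varepsilon}(t)A(t)\,dt$ and interchange the order of integration by Fubini (legitimate because $K_\varepsilon$ is continuous with compact support). Sweeping the region $\{0\le y\le t\le|x|\}$ in the order $y\in[0,|x|]$, $t\in[y,|x|]$ produces the inner integral $\int_{|y|}^{|x|}K_\varepsilon(t,y)A(t)\,dt=G_\varepsilon(x,y)$, so that $\int_0^{|x|}\varphi_{\mu_\varepsilon}(t)A(t)\,dt=\int_0^{|x|}G_\varepsilon(x,y)\cos(\lambda y)\,dy$. Splitting the prefactor $(i\lambda+\varepsilon\varrho)$, the $\varepsilon\varrho$ part is handled by the same even-extension trick (here using that $G_\varepsilon(x,\cdot)$ is even) and yields the contribution $\varepsilon\varrho\,\tfrac{\sg(x)}{2A(x)}G_\varepsilon(x,y)$.

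The $i\lambda$ part is the crux. Writing $\lambda\cos(\lambda y)=\tfrac{d}{dy}\sin(\lambda y)$ and integrating by parts transfers the derivative onto $G_\varepsilon$; the boundary terms vanish since $G_\varepsilon(x,|x|)=0$ and $\sin 0=0$, leaving $-\int_0^{|x|}\partial_y G_\varepsilon(x,y)\sin(\lambda y)\,dy$. Because $\partial_y G_\varepsilon(x,\cdot)$ is odd and $\sin$ is odd, the integrand is even, and expressing $\sin$ through exponentials while folding the $e^{-i\lambda y}$ contribution into the $e^{i\lambda y}$ one (again by oddness) gives exactly $-\tfrac{\sg(x)}{2A(x)}\int_{-|x|}^{|x|}\partial_y G_\varepsilon(x,y)e^{i\lambda y}\,dy$. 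Summing the three contributions over $\{|y|<|x|\}$ reconstructs the kernel $\KK_\varepsilon(x,y)$ of \eqref{K}, which proves the claim.

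I expect the main obstacle to be the integration by parts, since $G_\varepsilon(x,\cdot)$ is only $C^1$ on $]-|x|,|x|[$ and merely continuous up to the endpoints. I would justify it by integrating over $[\delta,|x|]$ and letting $\delta\to0^+$, controlling the endpoint contribution at $0$ by continuity of $G_\varepsilon$ and using $G_\varepsilon(x,|x|)=0$ at the other end, with the integrability of $\partial_y G_\varepsilon$ near the endpoints guaranteed by \cite[Lemma 2.8]{MT}. The remaining parity bookkeeping that collapses the cosine and sine integrals into a single integral of $e^{i\lambda y}$ over $[-|x|,|x|]$ is routine, but must be tracked carefully to land the precise factors of $1/2$ and the correct signs.
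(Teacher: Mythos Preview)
Your proposal is correct and follows essentially the same route as the paper's proof: both start from the representation \eqref{2.4}, substitute the alternative Laplace representation of $\varphi_{\mu_\varepsilon}$ from Lemma \ref{alt}, apply Fubini to produce $G_\varepsilon$, handle the $i\lambda$ factor via $(\sin(\lambda y))'=\lambda\cos(\lambda y)$ and integration by parts, and then fold the cosine and sine integrals into a single $e^{i\lambda y}$ integral by parity. Your treatment of the endpoint regularity in the integration by parts is in fact more explicit than the paper's, which jumps directly to the final formula.
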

\begin{proof} By invoking the identity \eqref{2.4}  and Lemma \ref{alt}  in the second equality below
we have 
\begin{eqnarray*}
\Psi_{A, \varepsilon}(\lambda,x) &=&\varphi_{\mu_\varepsilon}(x) +(i\lambda+\varepsilon \varrho)   {{ \sg(x) }\over{A(x)}} \int_0^{|x|} \varphi_{\mu_\varepsilon}(t) A(t)dt\\
&=&\int_0^{|x|}  K_\varepsilon(x,y) \cos (\lambda y) dy+\varepsilon \varrho {{\sg(x)}\over{A(x)}} \int_0^{|x|} \Big\{\int_0^{| t|}  K_\varepsilon (t,y) \cos(\lambda y) dy\Big\} A(t)  dt\\
&&+i{{\sg(x)}\over{A(x)}} \int_0^{|x|}  \Big\{ \int_0^{| t|}  K_\varepsilon (t,y) \,\lambda\, \cos (\lambda y) dy\Big\} A(t) dt\\
&=&\int_0^{|x|} K_\varepsilon(x,y) \cos (\lambda y) dy +\varepsilon \varrho {{\sg(x)}\over{A(x)}} \int_0^{|x|}  \cos(\lambda y) \Big\{\int_{|y|}^{|x|}  K_\varepsilon (t,y) A(t) dt  \Big\} dy\\
&&+i{{\sg(x)}\over{A(x)}} \int_0^{|x|}   \big( \sin (\lambda y) \big)'\:\Big\{ \int_{|y|}^{|x|}  K_\varepsilon (t,y) A(t)dt\Big\} dy\\
&=&\int_{-|x|}^{|x|} 
\Big\{ {{ K_\varepsilon(x,y)}\over 2}+\varepsilon \varrho {{\sg(x)}\over{2A(x)}} {{ G_\varepsilon(x,y)} } - {{\sg(x)}\over{2A(x)}} {d\over{dy}}  G_\varepsilon(x,y)\Big\} e^{i\lambda y} dy.
\end{eqnarray*}
\end{proof}

\section{The existence of an intertwining operator}
This section is concerned with the existence of an intertwining operator 
between  $\Lambda_{A,\varepsilon}$
 and the ordinary  derivative   $d/dx.$   

Recall from \eqref{abel}  the definition of the Abel transform of a function  $f\in \mathcal D_e(\R)$ (the space of even and smooth functions with compact support on $\R$),
$$\mathcal A f(y)={1\over 2}\int_{|x|>|y|}  K(|x|,y) f(x) A(x)dx,\qquad   y\in \R$$ where $K(|x|,y)$ is as in \eqref{K1}. 
It is natural to define for smooth even functions the dual transform $ {}^{\rm t} \mathcal A$  of $\mathcal A$ in the following sense
 $$\int_{\R} f(y) \mathcal Ag(y) dy=\int_{\R}  {}^{\rm t}  \mathcal A f(x) g(x) A(x)dx. $$
In  \cite{Tri} the author showed that \index{${}^{\rm t}\mathcal A$lmain}
 $$ {}^{\rm t} \mathcal A  f(x) ={1\over 2}\int_{|u|<|x|} K(|x|,u)f(u) du.$$
Further, by \cite[Theorem 5.1]{Tri},  the transform
  ${}^{\rm t} \mathcal A $  is an automorphism  of  $C_e^\infty(\R)$ (the space of even and smooth functions on $\R$)  satisfying 
\begin{equation}\label{center3}
(\Delta +\varrho^2)  \circ{}^{\rm t} \mathcal A  = {}^{\rm t}\mathcal A   \circ  {{d^2}\over{dx^2}} ,
\end{equation}
where  $\Delta $ is the  operator \eqref{L}.

For $-1\leq \varepsilon\leq 1$ we define the integral transform $\mathcal A_\varepsilon$ on 
$\mathcal D_e(\R)$ by   \index{$\mathcal A_{\varepsilon}$lmain}
$$\mathcal A_\varepsilon g(y)={1\over 2}\int_{|x|>|y|}  K_\varepsilon(x,y) g(x) A(x) dx,$$
where the kernel $ K_\varepsilon$ is as in \eqref{Kepsilon}.  We note  that for $\varepsilon=\pm 1$ the transform $\mathcal A_\varepsilon$ reduces to the Abel transform $\mathcal A$. Thus we may think of  $\mathcal A_\varepsilon$ as a  
deformation of  the transform $\mathcal A.$ Let  $ {}^{\rm t}\mathcal A_\varepsilon  $ be the linear mapping of 
$C_e^\infty (\R)$ so that 
$$\int_{\R} f(y)\,   \mathcal A_\varepsilon g(y) dy =\int_{\R}  {}^{\rm t}\mathcal A_\varepsilon f(x) \,g(x) A(x) dx,$$ 
for  $f\in C_e^\infty (\R)$ and  $ g\in \mathcal D_e(\R).$ Then \index{${}^{\rm t}\mathcal A_{\varepsilon}$lmain}
$$   {}^{\rm t}\mathcal A_\varepsilon  f(x)={1\over 2}\int_{|y|<|x|}  K_\varepsilon (x,y) f(y)dy .$$
Notice that for $f\in C_e^\infty(\R)$ and $g\in \mathcal D_e(\R) ,$   the functions $  {}^{\rm t} \mathcal A_\varepsilon  f$ and  $ \mathcal A_\varepsilon g$ belong respectively to  $C_e^\infty(\R)$ and $\mathcal D_e(\R).$ 
Moreover,
\begin{equation}\label{center2}
 \mathcal A_\varepsilon =  {}^{\rm t}\mathcal E_\varepsilon^{-1} \circ \mathcal A   ,
 \end{equation}
 and 
 \begin{equation}\label{center2}
 {}^{\rm t}\mathcal A_\varepsilon =   {}^{\rm t} \mathcal A \circ    \mathcal E_\varepsilon^{-1} .
 \end{equation}
 The next corollary contains some additional properties of $ \mathcal A_\varepsilon$  and  ${}^{\rm t}\mathcal A_\varepsilon .$
\begin{coro}\label{next} Let $D$ be the ordinary  derivative and let $\Delta $ be the   operator \eqref{L}. Then for all $\varepsilon\in \R$ we have: 
\begin{enumerate}[\upshape 1)]
\item 
 $ \mathcal A_\varepsilon \circ (\Delta +\varrho^2)=(D^2+\varrho_\varepsilon^2)\circ  \mathcal A_\varepsilon,$ 
where $\varrho_\varepsilon^2=(1-\varepsilon^2)\varrho^2.$
\item  $(\Delta +\varrho^2) \circ  {}^{\rm t}  \mathcal A_\varepsilon   = {}^{\rm t}  \mathcal A_\varepsilon   \circ  (D^2+\varrho_\varepsilon^2).$
  \end{enumerate}
  \end{coro}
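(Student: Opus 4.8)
The plan is to reduce both identities to the intertwining relations already established for the classical Abel transforms $\mathcal A$, ${}^{\rm t}\mathcal A$ (namely \eqref{1.10} and \eqref{center3}) together with those for $\mathcal E_\varepsilon$, ${}^{\rm t}\mathcal E_\varepsilon$ (namely \eqref{c3} and \eqref{et}), by inserting the factorizations \eqref{center2}, i.e. $\mathcal A_\varepsilon = {}^{\rm t}\mathcal E_\varepsilon^{-1}\circ\mathcal A$ and ${}^{\rm t}\mathcal A_\varepsilon = {}^{\rm t}\mathcal A\circ\mathcal E_\varepsilon^{-1}$. Writing $D^2 = d^2/dx^2$, the only genuine work is to transfer the intertwining relations for $\mathcal E_\varepsilon$ and ${}^{\rm t}\mathcal E_\varepsilon$ to their inverses, a manipulation that flips the sign of the $\varrho_\varepsilon^2$-term.

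For part 1, I would start from $\mathcal A_\varepsilon = {}^{\rm t}\mathcal E_\varepsilon^{-1}\circ\mathcal A$ and compose on the right with $\Delta+\varrho^2$. By \eqref{1.10} one replaces $\mathcal A\circ(\Delta+\varrho^2)$ with $D^2\circ\mathcal A$, so it remains to commute $D^2$ past ${}^{\rm t}\mathcal E_\varepsilon^{-1}$. From \eqref{et}, ${}^{\rm t}\mathcal E_\varepsilon\circ D^2 = (D^2-\varrho_\varepsilon^2)\circ{}^{\rm t}\mathcal E_\varepsilon$; conjugating this relation by ${}^{\rm t}\mathcal E_\varepsilon^{-1}$ and using that $\varrho_\varepsilon^2$ is a scalar gives first $D^2\circ{}^{\rm t}\mathcal E_\varepsilon^{-1} = {}^{\rm t}\mathcal E_\varepsilon^{-1}\circ(D^2-\varrho_\varepsilon^2)$, and then, adding $\varrho_\varepsilon^2\,{}^{\rm t}\mathcal E_\varepsilon^{-1}$ on both sides, the equivalent form ${}^{\rm t}\mathcal E_\varepsilon^{-1}\circ D^2 = (D^2+\varrho_\varepsilon^2)\circ{}^{\rm t}\mathcal E_\varepsilon^{-1}$. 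Substituting this into $\mathcal A_\varepsilon\circ(\Delta+\varrho^2) = {}^{\rm t}\mathcal E_\varepsilon^{-1}\circ D^2\circ\mathcal A$ yields $(D^2+\varrho_\varepsilon^2)\circ{}^{\rm t}\mathcal E_\varepsilon^{-1}\circ\mathcal A = (D^2+\varrho_\varepsilon^2)\circ\mathcal A_\varepsilon$, which is the claim.

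For part 2, I would argue symmetrically, starting from ${}^{\rm t}\mathcal A_\varepsilon = {}^{\rm t}\mathcal A\circ\mathcal E_\varepsilon^{-1}$ and composing on the left with $\Delta+\varrho^2$. Using \eqref{center3} to convert $(\Delta+\varrho^2)\circ{}^{\rm t}\mathcal A$ into ${}^{\rm t}\mathcal A\circ D^2$ reduces matters to commuting $D^2$ past $\mathcal E_\varepsilon^{-1}$. Inverting \eqref{c3}, $D^2\circ\mathcal E_\varepsilon = \mathcal E_\varepsilon\circ(D^2-\varrho_\varepsilon^2)$, by the same conjugation argument produces $D^2\circ\mathcal E_\varepsilon^{-1} = \mathcal E_\varepsilon^{-1}\circ(D^2+\varrho_\varepsilon^2)$; substituting gives ${}^{\rm t}\mathcal A\circ\mathcal E_\varepsilon^{-1}\circ(D^2+\varrho_\varepsilon^2) = {}^{\rm t}\mathcal A_\varepsilon\circ(D^2+\varrho_\varepsilon^2)$, as required.

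The computations are purely formal operator identities, so there is no analytic obstacle; the one point that needs care is the direction of the sign flip on $\varrho_\varepsilon^2$ when passing to the inverse operators, since reversing it would erroneously produce $(D^2-\varrho_\varepsilon^2)$ in place of $(D^2+\varrho_\varepsilon^2)$. For the compositions to make sense on the asserted domains, I would also recall the mapping properties noted just before the corollary: $\mathcal A_\varepsilon$ sends $\mathcal D_e(\R)$ into itself and ${}^{\rm t}\mathcal A_\varepsilon$ sends $C_e^\infty(\R)$ into itself, as do the operators $\mathcal E_\varepsilon$, ${}^{\rm t}\mathcal E_\varepsilon$ and their inverses by the two preceding results.
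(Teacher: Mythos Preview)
Your proof is correct and follows essentially the same approach as the paper, which simply cites \eqref{1.10} and \eqref{et} for part 1 and \eqref{c3} and \eqref{center3} for part 2. You have spelled out in full the conjugation step (passing from the intertwining relation for ${}^{\rm t}\mathcal E_\varepsilon$, $\mathcal E_\varepsilon$ to that for their inverses, with the attendant sign flip on $\varrho_\varepsilon^2$) that the paper leaves implicit.
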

  \begin{proof}   The first statement  is an immediate consequence of \eqref{1.10} and \eqref{et}. The second transmutation 
property follows from \eqref{c3} and \eqref{center3}.
\end{proof}

Recall that the space of smooth functions $C^\infty(\R)$  equipped with  the topology of 
compact convergence for all derivatives is a Fr\'echet space. For $f\in C^\infty(\R)$ we define $V_{A, \varepsilon} f$ by 
\index{$V_{\varepsilon}$}
\begin{equation}\label{vvv}V_{A, \varepsilon} f(x)=
\begin{cases}\displaystyle 
\int_{|y|<|x|}  \KK_\varepsilon(x,y) f(y) dy,&\qquad x\not = 0\\
f(0),&\qquad x=0
\end{cases}\end{equation}
where the kernel $\KK_\varepsilon(x,y)$ is as in \eqref{K}. Observe that 
\begin{equation}\label{VE}
\Psi_{A, \varepsilon}(\lambda, x)=V_{A, \varepsilon}(e^{i\lambda\, \cdot}\,)(x).
\end{equation}
\begin{lema}\label{LVV}
The operator $V_{A, \varepsilon}$ can be expressed as 
\begin{equation}\label{VV}
V_{A, \varepsilon} f(x)=\Big(\id +\varepsilon \varrho \mathcal M\Big)\;  {}^{\rm t} \mathcal A_\varepsilon f_e(x)+
\Big(\varepsilon^2\varrho^2 \mathcal M+{d\over {dx}}\Big)\;
 {}^{\rm t} \mathcal A_\varepsilon (If_o)(x),
 \end{equation}
  where \index{$\mathcal M$}
  $$\mathcal M h(x):={{\sg(x)}\over {A(x)}} \int_0^{|x|} h(t) A(t)dt,$$
  and \index{$I$}\begin{equation}\label{I}
  I h(x):=\int_0^x h(t)dt.
  \end{equation}
\end{lema}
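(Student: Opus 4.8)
The plan is to start from the explicit integral formula \eqref{vvv} for $V_{A,\varepsilon}f$ together with the expression \eqref{K} for the kernel $\KK_\varepsilon(x,y)$, and to reorganize the three resulting terms so that they match the right-hand side of \eqref{VV}. First I would split $f=f_e+f_o$ into its even and odd parts, noting that the final formula treats $f_e$ and $f_o$ separately. Since $\KK_\varepsilon(x,y)$ is built from $K_\varepsilon(x,y)$ and $G_\varepsilon(x,y)$, and since $K_\varepsilon(x,\cdot)$ and $G_\varepsilon(x,\cdot)$ are \emph{even} functions of $y$ supported in $[-|x|,|x|]$, I would exploit the parity of the integrand: integrating an even kernel against $f_e$ over the symmetric interval $|y|<|x|$ doubles the integral over $[0,|x|]$, while integrating it against $f_o$ kills the symmetric part and leaves only the contribution that pairs with the odd structure. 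This parity bookkeeping is what will separate the four pieces of \eqref{VV}.

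Next I would identify each piece with the operators $\mathcal M$, ${}^{\rm t}\mathcal A_\varepsilon$, $I$, and $d/dx$. The term ${K_\varepsilon(x,y)\over 2}$ in \eqref{K}, integrated against $f_e$, is by definition exactly ${}^{\rm t}\mathcal A_\varepsilon f_e(x)={1\over 2}\int_{|y|<|x|}K_\varepsilon(x,y)f_e(y)\,dy$, giving the leading $\id\circ{}^{\rm t}\mathcal A_\varepsilon f_e$ term. The term $\varepsilon\varrho\,{\sg(x)\over 2A(x)}G_\varepsilon(x,y)$ carries the prefactor ${\sg(x)\over A(x)}\int_0^{|x|}(\cdots)A(t)\,dt$ structure of $\mathcal M$ once I unwind the definition $G_\varepsilon(x,y)=\int_{|y|}^{|x|}K_\varepsilon(t,y)A(t)\,dt$ and swap the order of integration (Fubini) to rewrite $\int_{|y|<|x|}G_\varepsilon(x,y)h(y)\,dy$ as $\int_0^{|x|}\big(\int K_\varepsilon(t,y)h(y)\,dy\big)A(t)\,dt$; the inner integral reproduces a ${}^{\rm t}\mathcal A_\varepsilon$ applied either to $f_e$ (producing $\varepsilon\varrho\,\mathcal M\,{}^{\rm t}\mathcal A_\varepsilon f_e$) or, after one integration by parts absorbing $\varepsilon\varrho$, to $If_o$ (producing the $\varepsilon^2\varrho^2\mathcal M\,{}^{\rm t}\mathcal A_\varepsilon(If_o)$ term). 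The last term $-{\sg(x)\over 2A(x)}\partial_y G_\varepsilon(x,y)$ is where the derivative $d/dx$ and the antiderivative $I$ appear: integrating $\partial_y G_\varepsilon(x,y)$ against $f(y)$ and integrating by parts in $y$ transfers the $y$-derivative onto $f$, and the boundary terms vanish because $G_\varepsilon(x,\cdot)$ is supported in $[-|x|,|x|]$; combining this with the even/odd splitting and the fundamental-theorem-of-calculus relation $\partial_x\big({\sg(x)\over A(x)}\int_0^{|x|}\cdots\big)$ yields the $d/dx\circ{}^{\rm t}\mathcal A_\varepsilon(If_o)$ contribution.

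The main obstacle I anticipate is the careful handling of the $\partial_y G_\varepsilon$ term: I must justify the integration by parts in $y$ (checking that boundary contributions at $y=\pm|x|$ genuinely vanish, using the regularity stated just before the theorem, namely that $G_\varepsilon(x,\cdot)$ is continuous on $[-|x|,|x|]$ and $C^1$ on the open interval), and I must correctly track how the odd factor $\sg(x)$, the weight $1/A(x)$, and the outer $x$-derivative interact. In particular the appearance of $d/dx$ in \eqref{VV} rather than a plain multiplication operator comes from differentiating the prefactor ${\sg(x)/A(x)}$ together with the variable upper limit $|x|$ inside $G_\varepsilon$, so I would verify this by writing $V_{A,\varepsilon}f(x)$ as a function of $x$ and differentiating the relevant $\mathcal M$-type integrals explicitly, matching against $\big(d/dx\big)\,{}^{\rm t}\mathcal A_\varepsilon(If_o)$.

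A cleaner and more conceptual alternative, which I would pursue if the direct bookkeeping becomes unwieldy, is to avoid rearranging \eqref{K} term by term and instead verify \eqref{VV} by showing both sides agree on the exponentials $e^{i\lambda\,\cdot}$ for all $\lambda\in\C$. The left side is $\Psi_{A,\varepsilon}(\lambda,x)$ by \eqref{VE}, and I would compute the right side on $e^{i\lambda\,\cdot}$ using the decomposition $e^{i\lambda y}=\cos(\lambda y)+i\sin(\lambda y)$ (so $f_e=\cos(\lambda\,\cdot)$, $f_o=i\sin(\lambda\,\cdot)$, and $If_o=i(1-\cos(\lambda\,\cdot))/\lambda$), then apply Lemma \ref{alt} to turn ${}^{\rm t}\mathcal A_\varepsilon(\cos(\lambda\,\cdot))$ into $\varphi_{\mu_\varepsilon}$, and compare with the explicit formula \eqref{2.4} for $\Psi_{A,\varepsilon}$; since the exponentials are dense in $C^\infty(\R)$ in the relevant topology and both sides are continuous operators on $C^\infty(\R)$, equality on exponentials forces equality everywhere. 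This route replaces the delicate integration-by-parts verification with the already-established identities \eqref{2.4} and Lemma \ref{alt}, at the cost of a density/continuity argument.
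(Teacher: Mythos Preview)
Your treatment of the even part is correct and matches the paper exactly: parity kills the $\partial_y G_\varepsilon$ contribution against $f_e$, and the Fubini swap turns the $G_\varepsilon$ integral into $\mathcal M\circ{}^{\rm t}\mathcal A_\varepsilon f_e$.

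The odd part has a real gap. Your explanation of where the coefficient $\varepsilon^2\varrho^2$ comes from is wrong: the kernel term $\varepsilon\varrho\,\dfrac{\sg(x)}{2A(x)}G_\varepsilon(x,y)$ is \emph{even} in $y$, so it integrates to zero against $f_o$ and cannot ``after one integration by parts absorbing $\varepsilon\varrho$'' produce the $\varepsilon^2\varrho^2\mathcal M\,{}^{\rm t}\mathcal A_\varepsilon(If_o)$ term. Only the $\partial_y G_\varepsilon$ piece survives against $f_o$; your integration by parts and Fubini there are fine and give $V_{A,\varepsilon}f_o=\mathcal M\circ{}^{\rm t}\mathcal A_\varepsilon(f_o')$. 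But to rewrite this as $\bigl(\varepsilon^2\varrho^2\mathcal M+\dfrac{d}{dx}\bigr)\,{}^{\rm t}\mathcal A_\varepsilon(If_o)$ you need two ingredients you never mention: the transmutation identity ${}^{\rm t}\mathcal A_\varepsilon\circ D^2=(\Delta+\varepsilon^2\varrho^2)\circ{}^{\rm t}\mathcal A_\varepsilon$ from Corollary~\ref{next} (applied with $f_o'=(If_o)''$), and the relation $\mathcal M\circ\Delta=d/dx$ on even functions, which is formula~\eqref{Lw}. These are precisely what the paper invokes to prove claim~\eqref{claimm}. Your phrase ``differentiating the prefactor $\sg(x)/A(x)$ together with the variable upper limit $|x|$'' does not by itself yield the identity; without the transmutation step there is no mechanism that generates the factor $\varepsilon^2\varrho^2$.

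Your alternative route via exponentials is sound and genuinely different from the paper's argument: checking \eqref{VV} on $e^{i\lambda\,\cdot}$ reduces, via Lemma~\ref{alt} and~\eqref{2.4}, to a concrete identity for $\varphi_{\mu_\varepsilon}$, after which a density/continuity argument closes it. The paper works entirely at the level of general $f$ using the transmutation machinery instead.
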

\begin{proof} 
As usual, we write $f$ as the superposition  $f=f_e+f_o$  of an even function $f_e$ 
and an odd function $f_o.$ On  the one hand, we have
\begin{eqnarray*}
V_{A, \varepsilon} f_e(x)&=&\int_{-|x|}^{|x|} {{ K_\varepsilon(x,y)}\over 2} f_e(y)dy+\varepsilon \varrho {{\sg(x)}\over{2A(x)}}
\int_{-|x|}^{|x|}   G_\varepsilon(x,y) f_e(y)dy\\
&=&\int_0^{|x|}  K_\varepsilon (x,y) f_e(y)dy +\varepsilon \varrho {{\sg(x)}\over{A(x)}} \int_0^{|x|}  G_\varepsilon(x,y) f_e(y)dy\\
&=&  {}^{\rm t} \mathcal A_\varepsilon f_e(x)+\varepsilon \varrho \mathcal M \circ  {}^{\rm t} \mathcal A_\varepsilon   f_e(x).
\end{eqnarray*}
On the other hand, 
$$V_{A, \varepsilon} f_o(x)=-{{\sg(x)}\over{A(x)}} \int_0^{|x|} f_o(y)\partial_y   G_\varepsilon(x,y) dy.$$
We claim that 
\begin{equation}\label{claimm}
-{{\sg(x)}\over{A(x)}} \int_0^{|x|} f_o(y) \partial_y  G_\varepsilon(x,y) dy =
\Big(\varepsilon^2\varrho^2  \mathcal M +{d\over {dx}}\Big)\;  {}^{\rm t}\mathcal A_\varepsilon  (I f_o)(x),
\end{equation}
where $I f_o$ is as in \eqref{I}. Indeed, by invoking  formula \eqref{Lw} in the first equality  below and the transmutation property    in Corollary \ref{next}.3  in the second equality below we have 
\begin{align*}
{d\over {dx}}\;{}^{\rm t} \mathcal A_\varepsilon  ( I f_o)(x) 
&=  {{\sg(x)}\over{A(x)}}\int_0^{|x|}  \Delta  \, {}^{\rm t}\mathcal A_\varepsilon   (I f_o)(s) A(s) ds\\
&= {{\sg(x)}\over{A(x)}} \int_0^{|x|}    {}^{\rm t}\mathcal A_\varepsilon  \Big({{d^2}\over {dx^2}}-\varepsilon^2\varrho^2\Big)  (I f_o)(s) A(s)ds\\
&= {{\sg(x)}\over{A(x)}} \int_0^{|x|}   {}^{\rm t}\mathcal A_\varepsilon (  f_o') (s) A(s) ds -\varepsilon^2\varrho^2  {{\sg(x)}\over{A(x)}} \int_0^{|x|}   {}^{\rm t} \mathcal A_\varepsilon  (I f_o) (s) A(s) ds \\
&= {{\sg(x)}\over{A(x)}} \int_0^{|x|} \Big\{\int_0^{s} K_\varepsilon (s,u) f_o'(u)du\Big\} A(s) ds 
-\varepsilon^2\varrho^2  \mathcal M\circ    {}^{\rm t}\mathcal A_\varepsilon  (I f_o) (x)\\
  &= {{\sg(x)}\over{A(x)}} \int_0^{|x|}   f_o'(u)\Big\{\int_{u}^{|x|}  K_\varepsilon (s,u)  A(s)ds\Big\} du
-\varepsilon^2\varrho^2  \mathcal M\circ   {}^{\rm t}\mathcal A_\varepsilon (I f_o) (x)\\
  &=- {{\sg(x)}\over{A(x)}} \int_0^{|x|} f_o(u) \partial_u G_\varepsilon(x,u) du    -\varepsilon^2\varrho^2  \mathcal M\circ   {}^{\rm t} \mathcal A_\varepsilon  (I f_o) (x).
\end{align*}
This concludes the proof of  claim \eqref{claimm}, and therefore the proof of the Lemma \ref{LVV}.
\end{proof}

\begin{thm}\label{th63} The operator    $V_{A, \varepsilon}$ is the unique automorphism of $C^\infty(\R)$ such that 
\begin{equation}\label{inter}
\Lambda_{A,\varepsilon} \circ V_{A, \varepsilon}=V_{A, \varepsilon}\circ {d\over {dx}},
\end{equation}
where $\Lambda_{A,\varepsilon}$ is the differential-reflection operator \eqref{Op}.
\end{thm}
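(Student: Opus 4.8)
The plan is to reduce the entire statement to the behaviour of $V_{A,\varepsilon}$ on the exponentials $e_\lambda:=e^{i\lambda\,\cdot}$, $\lambda\in\C$, and then to propagate that information to all of $C^\infty(\R)$ by density and continuity. First I would record that $V_{A,\varepsilon}$ is a continuous linear endomorphism of $C^\infty(\R)$: this is read off from the representation \eqref{VV} in Lemma \ref{LVV}, since each of the building blocks ${}^{\rm t}\mathcal A_\varepsilon$, $\mathcal M$, $d/dx$ and $I$ maps $C^\infty(\R)$ (or its even and odd subspaces) continuously into $C^\infty(\R)$. Consequently both operators $\Lambda_{A,\varepsilon}\circ V_{A,\varepsilon}$ and $V_{A,\varepsilon}\circ (d/dx)$ occurring in \eqref{inter} are continuous on the Fr\'echet space $C^\infty(\R)$.

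The key identity is \eqref{VE}, namely $V_{A,\varepsilon}(e_\lambda)=\Psi_{A,\varepsilon}(\lambda,\cdot)$. Since $\Psi_{A,\varepsilon}(\lambda,\cdot)$ solves \eqref{system}, that is $\Lambda_{A,\varepsilon}\Psi_{A,\varepsilon}(\lambda,\cdot)=i\lambda\,\Psi_{A,\varepsilon}(\lambda,\cdot)$, and since $\tfrac{d}{dx}e_\lambda=i\lambda\,e_\lambda$, I obtain at once
$$\Lambda_{A,\varepsilon}V_{A,\varepsilon}(e_\lambda)=i\lambda\,\Psi_{A,\varepsilon}(\lambda,\cdot)=V_{A,\varepsilon}(i\lambda\,e_\lambda)=V_{A,\varepsilon}\Big(\tfrac{d}{dx}e_\lambda\Big),$$
so the intertwining relation \eqref{inter} holds on every exponential. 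To reach arbitrary smooth functions I would note that $\lambda\mapsto e_\lambda$ is holomorphic into $C^\infty(\R)$, so its $\lambda$-derivatives at $\lambda=0$, which are the monomials $(i\cdot)^{k}$, are limits in $C^\infty(\R)$ of difference quotients of exponentials and hence lie in the closed linear span of $\{e_\lambda\}$. That closed span therefore contains all polynomials, and polynomials are dense in $C^\infty(\R)$. As two continuous operators agreeing on a dense subspace agree everywhere, \eqref{inter} follows on all of $C^\infty(\R)$. (Alternatively one could verify \eqref{inter} directly from \eqref{VV} and the transmutation identities of Corollary \ref{next}, but this route is computationally heavier.)

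For the automorphism property I would exploit the parity splitting already visible in \eqref{VV}. Writing $f=f_e+f_o$, the even part of $V_{A,\varepsilon}f$ is exactly ${}^{\rm t}\mathcal A_\varepsilon f_e$, while its odd part is $\varepsilon\varrho\,\mathcal M\,{}^{\rm t}\mathcal A_\varepsilon f_e+(\varepsilon^2\varrho^2\mathcal M+\tfrac{d}{dx})\,{}^{\rm t}\mathcal A_\varepsilon(If_o)$. Because ${}^{\rm t}\mathcal A_\varepsilon={}^{\rm t}\mathcal A\circ\mathcal E_\varepsilon^{-1}$ is a composition of automorphisms of $C_e^\infty(\R)$ (see \eqref{center2}), the even part of the output recovers $f_e$ by inverting ${}^{\rm t}\mathcal A_\varepsilon$; feeding this back, the odd part reduces to a now-known odd function plus the image of $f_o$ under the map $f_o\mapsto(\varepsilon^2\varrho^2\mathcal M+\tfrac{d}{dx})\,{}^{\rm t}\mathcal A_\varepsilon(If_o)$, which I would invert by unwinding $I$ (a bijection from odd functions onto even functions vanishing at the origin), the automorphism ${}^{\rm t}\mathcal A_\varepsilon$, and a Volterra-type inversion of the remaining first-order piece. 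This produces a continuous two-sided inverse, so $V_{A,\varepsilon}$ is an automorphism.

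Finally, uniqueness follows from the same exponential reduction together with the normalization $V_{A,\varepsilon}f(0)=f(0)$ built into \eqref{vvv}. If $W$ is any automorphism of $C^\infty(\R)$ satisfying \eqref{inter} with $Wf(0)=f(0)$, then $g_\lambda:=W(e_\lambda)$ obeys $\Lambda_{A,\varepsilon}g_\lambda=i\lambda g_\lambda$ and $g_\lambda(0)=1$; by the uniqueness clause of Theorem \ref{thm1} this forces $g_\lambda=\Psi_{A,\varepsilon}(\lambda,\cdot)=V_{A,\varepsilon}(e_\lambda)$, so $W$ and $V_{A,\varepsilon}$ agree on all exponentials and hence, by density and continuity, on all of $C^\infty(\R)$. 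The main obstacle I anticipate is not the intertwining relation---which is essentially free from \eqref{VE} and Theorem \ref{thm1}---but the surjectivity and continuous invertibility of $V_{A,\varepsilon}$; the delicate point there is inverting the odd-part operator $(\varepsilon^2\varrho^2\mathcal M+\tfrac{d}{dx})\,{}^{\rm t}\mathcal A_\varepsilon I$ and controlling its inverse in the $C^\infty$ topology, for which the Volterra structure of the kernel $\KK_\varepsilon(x,y)$, supported in $|y|<|x|$, is crucial.
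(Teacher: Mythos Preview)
Your approach to the intertwining identity \eqref{inter} is correct and genuinely different from the paper's. You verify it on the exponentials $e_\lambda$ via \eqref{VE} and Theorem \ref{thm1}, and then pass to all of $C^\infty(\R)$ by continuity and density of polynomials; the paper instead proves \eqref{inter} by a direct parity computation from the formula \eqref{VV}, using the algebraic identities $\Lambda_{A,\varepsilon}\circ\mathcal M=\id+\varepsilon\varrho\,\mathcal M$ and $\mathcal M^{-1}=\tfrac{d}{dx}+\tfrac{A'}{A}\id$. Your route is conceptually cleaner and avoids those computations; the paper's route is more self-contained and does not rely on a density argument.

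Where your proposal is weaker is precisely at the point you flag as the ``main obstacle'': inverting the odd-part operator $(\varepsilon^2\varrho^2\,\mathcal M+\tfrac{d}{dx})\,{}^{\rm t}\mathcal A_\varepsilon\, I$. The paper dissolves this difficulty with a single observation you are missing: using \eqref{Lw} and the transmutation of Corollary \ref{next} one rewrites
\[
V_{A,\varepsilon}f_o \;=\; \mathcal M\circ{}^{\rm t}\mathcal A_\varepsilon\big(f_o'\big),
\]
i.e.\ on odd functions $V_{A,\varepsilon}$ is simply the composite $\mathcal M\circ{}^{\rm t}\mathcal A_\varepsilon\circ\tfrac{d}{dx}$. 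Since $\tfrac{d}{dx}:C^\infty_o(\R)\to C^\infty_e(\R)$ (inverse $I$), ${}^{\rm t}\mathcal A_\varepsilon:C^\infty_e(\R)\to C^\infty_e(\R)$, and $\mathcal M:C^\infty_e(\R)\to C^\infty_o(\R)$ (inverse $\tfrac{d}{dx}+\tfrac{A'}{A}\id$) are all explicit isomorphisms, bijectivity is immediate---no Volterra inversion is needed. Incidentally, the operator $\varepsilon^2\varrho^2\,\mathcal M+\tfrac{d}{dx}$ acting on \emph{all} of $C^\infty_e(\R)$ is \emph{not} injective (its kernel is spanned by $\varphi_{i\sqrt{1-\varepsilon^2}\,\varrho}$), so your sketch would have to carefully restrict to the image of ${}^{\rm t}\mathcal A_\varepsilon I$, namely even functions vanishing at the origin; the paper's rewriting sidesteps this entirely.

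A minor point on uniqueness: both your argument and the paper's implicitly assume that a competing intertwiner $W$ is continuous on $C^\infty(\R)$, in order to pass from agreement on exponentials to agreement everywhere. The paper's one-line justification is just as terse as yours here.
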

\begin{proof}  For the proof of this theorem it is more convenient to rewrite   $V_{A, \varepsilon}  f_o$ in \eqref{VV} as 
\begin{equation}\label{711}
V_{A, \varepsilon} f_o(x)=\mathcal M\circ  {}^{\rm t} \mathcal A_\varepsilon \big( f_o'\big)(x).
\end{equation}
 Indeed,
 \begin{eqnarray*}
 V_{A, \varepsilon} f_o(x)&=&
{d\over {dx}}  \; {}^{\rm t}\mathcal A_\varepsilon  (If_o)(x)+\varepsilon^2 \varrho^2 \mathcal M \; {}^{\rm t}\mathcal A_\varepsilon (If_o)(x)\\
&=& \mathcal M\; \Delta \;  {}^{\rm t} \mathcal A_\varepsilon (If_o)(x) +\varepsilon^2 \varrho^2 \mathcal M\; {}^{\rm t} \mathcal A_\varepsilon  (If_o)(x)\\
&= &\mathcal M (\Delta +\varepsilon^2 \varrho^2)\; {}^{\rm t} \mathcal A_\varepsilon  (If_o)(x)\\
&=&\mathcal M \;{}^{\rm t} \mathcal A_\varepsilon  \big(  (If_o)''\big)(x)\\
&=&\mathcal M \; {}^{\rm t} \mathcal A_\varepsilon  \big(  f_o' \big)(x).
\end{eqnarray*}

Let  $C^\infty_e(\R)$ and $C^\infty_o(\R)$ be the 
subspaces of even and odd functions in $C^\infty(\R)$, respectively. 
Firstly, the operator  $d/dx$  is one to one from $C^\infty_o(\R)$ onto $C^\infty_e(\R)$,  and $d/dx\circ I=I\circ d/dx=\id.$ Secondly, The transform $\mathcal M$ is an isomorphism from $C^\infty_e(\R)$ to $C^\infty_o(\R)$ and its inverse is given by 
\begin{equation}\label{M-1}
\mathcal M^{-1}=  {d\over {dx}}+ {{A'(x)}\over {A(x)}}\id.
\end{equation} 
Thus,  from \eqref{VV} and \eqref{711} it follows that $V_{A, \varepsilon}$ is an automorphism of $C^\infty(\R).$  We now prove the transmutation property \eqref{inter}. 

By \eqref{711} we have  
\begin{eqnarray*}
\Lambda_{A,\varepsilon} (V_{A, \varepsilon} f_o)&=&\Lambda_{A,\varepsilon} \big(\mathcal M \;  {}^{\rm t}\mathcal A_\varepsilon (  f_o')\big)\\
&=&\big(\id +\varepsilon \varrho \mathcal M\big)\, {}^{\rm t} \mathcal A_\varepsilon (  f_o').
\end{eqnarray*}
Above we have used the fact that 
$$\Lambda_{A,\varepsilon} \circ \mathcal M= \id +\varepsilon \varrho \mathcal M.$$
Moreover, one can check that 
\begin{eqnarray*}
\Lambda_{A,\varepsilon} (V_{A, \varepsilon} f_e)&=&\Lambda_{A,\varepsilon} (  \,  {}^{\rm t}\mathcal A_\varepsilon   f_e+\varepsilon \varrho \mathcal M \circ   {}^{\rm t}\mathcal A_\varepsilon   f_e)\\
&=&{d\over{dx}} \;  {}^{\rm t}\mathcal A_\varepsilon   f_e-\varepsilon \varrho  \, {}^{\rm t}\mathcal A_\varepsilon   f_e +
\varepsilon \varrho\Big( {d\over{dx}}   +  {{A'(x)}\over{A(x)}} \Big)\mathcal M  \; {}^{\rm t}\mathcal A_\varepsilon   f_e+
 \varepsilon^2\varrho^2 \mathcal M \;  {}^{\rm t} \mathcal A_\varepsilon   f_e\\
&=&\Big({d\over{dx}} +\varepsilon^2\varrho^2 \mathcal M\Big)  \;  {}^{\rm t}\mathcal A_\varepsilon   f_e.
\end{eqnarray*}
Above we have used \eqref{M-1}. In summary,
\begin{equation}\label{id1}
\Lambda_{A,\varepsilon} (V_{A, \varepsilon} f)=\Big({d\over {dx}} +\varepsilon^2\varrho^2 \mathcal M\Big)  \;  {}^{\rm t}\mathcal A_\varepsilon   f_e+
\big(\id +\varepsilon \varrho \mathcal M\big)\, {}^{\rm t} \mathcal A_\varepsilon ( f_o').
\end{equation}
Now, by invoking the expression \eqref{VV} of the operator $V_{A, \varepsilon} $ we get
$$V_{A, \varepsilon} (f_e')=\Big(\varepsilon^2\varrho^2 \mathcal M+ {d\over {dx}} \Big) \,   {}^{\rm t}\mathcal A_\varepsilon  f_e,$$ 
and 
$$V_{A, \varepsilon}(   f_o')= \Big(\id + \varepsilon \varrho \mathcal M \Big)  \,    {}^{\rm t}\mathcal A_\varepsilon   (  f_o').$$
That is 
$$V_{A, \varepsilon}( f' )=\Big(\varepsilon^2\varrho^2 \mathcal M+ {d\over {dx}} \Big) \,   {}^{\rm t}\mathcal A_\varepsilon  f_e+
 \Big(\id + \varepsilon \varrho \mathcal M \Big)  \,    {}^{\rm t}\mathcal A_\varepsilon   (  f_o').$$ 
This compares well  with \eqref{id1}.

The uniqueness of $V_{A, \varepsilon}$ is due to the fact that $\Psi_{A, \varepsilon}(\lambda,x)=V_{A, \varepsilon}(e^{i\lambda\, \cdot}\,)(x)$ and   that $V_{A, \varepsilon} f(0)=f(0).$  
\end{proof}

On the space  $\mathcal D(\R)$ of smooth functions with compact support, we consider  
   the dual   operator ${}^{\rm t}V_{A, \varepsilon}$ of $V_{A, \varepsilon}$ in the sense that 
   \index{${}^{\rm t}V_{\varepsilon} $}
\begin{equation}\label{vtt} \int_\R V_{A, \varepsilon} f(x) g(x) A(x) dx=\int_\R f(y)\, {}^{\rm t}V_{A, \varepsilon}  g(y)dy.\end{equation}
That is  
\begin{equation}\label{Vt}
{}^{\rm t}V_{A, \varepsilon} g(y)=\int_{|x|>|y|} \KK_\varepsilon(x,y) g(x) A(x)dx.
\end{equation}

\begin{lema}\label{LVVt}
The dual operator ${}^{\rm t}V_{A, \varepsilon} $ can be expressed as 
\begin{equation}\label{expVt}
{}^{\rm t}V_{A, \varepsilon}   g(y)= \mathcal A_\varepsilon g_e(y)-\Big( \varepsilon \varrho -{d\over{dx}}\Big) \mathcal A_\varepsilon (J g_o)(y),
\end{equation}
where \index{$J$}
$$Jh(x):= \int_{-\infty}^x h(t)dt.$$
\end{lema}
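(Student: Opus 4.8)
The plan is to argue directly from the kernel representation \eqref{Vt} together with the explicit form \eqref{K} of $\KK_\varepsilon$, exploiting the even/odd decomposition in the same spirit as the proof of Lemma \ref{LVV}. First I would write $g=g_e+g_o$ and feed it into
$${}^{\rm t}V_{A,\varepsilon}g(y)=\int_{|x|>|y|}\KK_\varepsilon(x,y)g(x)A(x)dx,$$
inspecting the parity in $x$ of each of the three summands of $\KK_\varepsilon$. Since $K_\varepsilon(x,y)$ depends on $x$ only through $|x|$ and $A$ is even, the product $K_\varepsilon(x,y)A(x)$ is even in $x$, so the first summand ${1\over 2}K_\varepsilon(x,y)$ annihilates $g_o$ and reproduces exactly $\mathcal A_\varepsilon g_e(y)$ by the definition of $\mathcal A_\varepsilon$. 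Conversely, both $\sg(x)G_\varepsilon(x,y)$ and $\sg(x)\partial_y G_\varepsilon(x,y)$ are odd in $x$, so the remaining two summands kill $g_e$ and pair only against $g_o$, leaving the two contributions
$$\varepsilon\varrho\int_{x>|y|}G_\varepsilon(x,y)g_o(x)dx\qquad\text{and}\qquad-\int_{x>|y|}\partial_y G_\varepsilon(x,y)g_o(x)dx .$$

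The heart of the argument is to recognise these $G_\varepsilon$-integrals in terms of $\mathcal A_\varepsilon(Jg_o)$. I would first note that $Jg_o$ is even and compactly supported whenever $g_o$ is odd and compactly supported, because an odd compactly supported function integrates to $0$; hence $\mathcal A_\varepsilon(Jg_o)$ is legitimately defined on $\mathcal D_e(\R)$. Using $\partial_x G_\varepsilon(x,y)=K_\varepsilon(x,y)A(x)$ for $x>0$ and integrating by parts in $x$ — the boundary terms vanishing since $G_\varepsilon(|y|,y)=0$ and $Jg_o$ has compact support — I would establish the key identity
$$\int_{x>|y|}G_\varepsilon(x,y)g_o(x)dx=-\mathcal A_\varepsilon(Jg_o)(y),$$
which immediately turns the first contribution into $-\varepsilon\varrho\,\mathcal A_\varepsilon(Jg_o)(y)$. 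For the second contribution, differentiating this same identity under the integral sign (treating $y>0$, so that the boundary term carries the factor $G_\varepsilon(y,y)=0$ and drops out) gives ${d\over{dy}}\mathcal A_\varepsilon(Jg_o)(y)=-\int_{x>y}\partial_y G_\varepsilon(x,y)g_o(x)dx$, which is precisely the second $G_\varepsilon$-integral.

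Assembling the three pieces then yields
$${}^{\rm t}V_{A,\varepsilon}g(y)=\mathcal A_\varepsilon g_e(y)-\varepsilon\varrho\,\mathcal A_\varepsilon(Jg_o)(y)+{d\over{dy}}\mathcal A_\varepsilon(Jg_o)(y),$$
which is exactly \eqref{expVt} after factoring out $-\bigl(\varepsilon\varrho-{d\over{dx}}\bigr)$ from the last two terms. I expect the main obstacle to be the integration-by-parts identity for $\int_{x>|y|}G_\varepsilon(x,y)g_o(x)dx$ and its differentiated version: one must track carefully the parity conventions, keep in mind that it is $Jg_o$ (not $g_o$ itself) that is the even function to which $\mathcal A_\varepsilon$ applies, and verify the vanishing of every boundary contribution. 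Since the integrand domain $|x|>|y|$ and the kernel depend on $y$ only through $|y|$ except for the $\partial_y G_\varepsilon$ term, it suffices to carry out the computation for $y>0$; the case $y<0$ follows by the symmetric computation and $y=0$ by continuity.
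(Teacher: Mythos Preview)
Your argument is correct, but it proceeds along a different route than the paper's own proof. The paper does not work with the kernel $\KK_\varepsilon$ at all; instead it dualises the decomposition of $V_{A,\varepsilon}$ obtained in Lemma~\ref{LVV} (in the form \eqref{711}). Concretely, the paper first records the adjunction identity
\[
\int_\R \mathcal M f(x)\,g(x)\,A(x)\,dx=-\int_\R f(x)\,Jg(x)\,A(x)\,dx
\]
for even $f$ and odd $g$, and then reads off ${}^{\rm t}V_{A,\varepsilon}$ from $\langle V_{A,\varepsilon}f,g\rangle_A=\langle f,{}^{\rm t}V_{A,\varepsilon}g\rangle$ by systematically passing ${}^{\rm t}\mathcal A_\varepsilon$ to $\mathcal A_\varepsilon$ and $\mathcal M$ to $-J$ across the pairing, together with one integration by parts in~$y$.

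Your approach, by contrast, bypasses Lemma~\ref{LVV} entirely: you split $\KK_\varepsilon$ into its three pieces, use parity in~$x$ to isolate the $g_e$- and $g_o$-contributions, and then establish the key identity $\int_{x>|y|}G_\varepsilon(x,y)g_o(x)\,dx=-\mathcal A_\varepsilon(Jg_o)(y)$ by integrating by parts in~$x$ (using $\partial_xG_\varepsilon=K_\varepsilon A$), together with its $y$-derivative. This is more self-contained and arguably more elementary, since it does not presuppose the structural decomposition of~$V_{A,\varepsilon}$. The paper's approach, on the other hand, makes the duality $\mathcal M\leftrightarrow -J$, ${}^{\rm t}\mathcal A_\varepsilon\leftrightarrow\mathcal A_\varepsilon$ transparent, which is exactly what is reused later to write down ${}^{\rm t}V_{A,\varepsilon}^{-1}$ in~\eqref{expVt} of Lemma~\ref{support}.
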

\begin{proof} The reader will have no trouble verifying that for every even function 
$f\in C^\infty(\R)$ and every odd function $g\in \mathcal D(\R) $
$$\int_\R \mathcal M f(x) g(x) A(x) dx=-\int_\R f(x) J g(x) A(x) dx. $$
Starting from the expression \eqref{VV} of $V_{A, \varepsilon}$ in Lemma \ref{LVV}, and by invoking \eqref{711} in the first equality below, we obtain   
\begin{align*}
&\int_{\R} V_{A, \varepsilon} f(x)g(x) A(x)dx \\
=&\int_{\R} \Big\{  {}^{\rm t}\mathcal  A_\varepsilon  f_e(x) g_e(x)  
+\varepsilon \varrho   \mathcal M\:  {}^{\rm t}\mathcal A_\varepsilon    f_e(x) g_o(x)   +  \mathcal M \: {}^{\rm t}\mathcal A_\varepsilon    f'_o(x) g_o(x) \Big\} A(x)dx\hfill \\
=&\int_{\R} \Big\{ f_e(x) \mathcal A_\varepsilon g_e(x)  -\varepsilon \varrho  {}^{\rm t} \mathcal A_\varepsilon  f_e(x) Jg_o(x) A(x)   -  {}^{\rm t} \mathcal A_\varepsilon  (f_o')(x) Jg_o(x)  A(x)\Big\}dx\hfill \\
=&\int_{\R}\Big\{ f_e(x) \mathcal A_\varepsilon g_e(x)  -\varepsilon \varrho    f_e(x)  \mathcal A_\varepsilon Jg_o(x)   -  f_o'(x) \mathcal A_\varepsilon Jg_o(x) \Big\} dx\hfill \\
=&\int_{\R} \Big\{ f(x) \mathcal A_\varepsilon g_e(x)  -\varepsilon \varrho   f(x)  \mathcal A_\varepsilon Jg_o(x)   +  f_o(x) {d\over {dx}} \mathcal A_\varepsilon Jg_o(x)\Big\} dx\hfill \\
=&\int_{\R} f(x) \Big\{  \mathcal A_\varepsilon g_e(x)  -\varepsilon \varrho  \, \mathcal A_\varepsilon 
Jg_o(x)+ {d\over{dx}} \mathcal A_\varepsilon Jg_o(x)\Big\}dx. 
\end{align*}

This finishes the proof of Lemma \ref{LVVt}. 
\end{proof}

The operator ${}^{\rm t}V_{A, \varepsilon} $ satisfies the following additional property.
\begin{thm} 
The operator ${}^{\rm t}V_{A, \varepsilon} $ is the unique automorphism of $\mathcal D(\R)$ satisfying the intertwining property 
$${d\over{dx}} \circ  {}^{\rm t}V_{A, \varepsilon} ={}^{\rm t}V_{A, \varepsilon}  \circ \big( \Lambda_{A,\varepsilon} +2\varepsilon \varrho S\big),$$
where   $S $ denotes the symmetry $(S f)(x):= f(-x).$
 \end{thm}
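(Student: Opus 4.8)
The plan is to deduce all three assertions --- the intertwining relation, the automorphism property, and uniqueness --- from the corresponding facts about $V_{A,\varepsilon}$ in Theorem \ref{th63}, using Lemma \ref{lema1} to move $\Lambda_{A,\varepsilon}$ across the pairing \eqref{vtt} and Lemma \ref{LVVt} to handle invertibility. Two preliminary remarks make the transposition legitimate: first, $\Lambda_{A,\varepsilon}$ and $S$ map $\mathcal D(\R)$ into itself, so $^{t}V_{A,\varepsilon}$ may be applied to $(\Lambda_{A,\varepsilon}+2\varepsilon\varrho S)g$ whenever $g\in\mathcal D(\R)$; second, formula \eqref{Vt} shows that $^{t}V_{A,\varepsilon}$ does not enlarge supports (if $\supp g\subset\{|x|\le R\}$ then $\KK_\varepsilon(x,y)g(x)$ forces $|y|<|x|\le R$), so $^{t}V_{A,\varepsilon}$ indeed maps $\mathcal D(\R)$ into $\mathcal D(\R)$.

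For the intertwining relation, which is the main step, I fix $f\in C^\infty(\R)$ and $g\in\mathcal D(\R)$ and pair the two sides of \eqref{inter} against $g$ with respect to $A(x)\,dx$. On the right-hand side the defining relation \eqref{vtt} turns $\int_\R V_{A,\varepsilon}(f')\,g\,A\,dx$ into $\int_\R f'\,{}^{t}V_{A,\varepsilon}g\,dy$, and since $^{t}V_{A,\varepsilon}g\in\mathcal D(\R)$ an integration by parts gives $-\int_\R f\,\frac{d}{dy}({}^{t}V_{A,\varepsilon}g)\,dy$. On the left-hand side, the same integration by parts that underlies Lemma \ref{lema1} --- which here needs no decay hypothesis on $V_{A,\varepsilon}f$ because $g$ has compact support --- yields $\int_\R\Lambda_{A,\varepsilon}(V_{A,\varepsilon}f)\,g\,A\,dx=-\int_\R V_{A,\varepsilon}f\,(\Lambda_{A,\varepsilon}+2\varepsilon\varrho S)g\,A\,dx$, and one more use of \eqref{vtt} rewrites this as $-\int_\R f\,{}^{t}V_{A,\varepsilon}\big[(\Lambda_{A,\varepsilon}+2\varepsilon\varrho S)g\big]\,dy$. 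Equating the two expressions and letting $f$ range over $C^\infty(\R)$ forces $\frac{d}{dy}({}^{t}V_{A,\varepsilon}g)={}^{t}V_{A,\varepsilon}\big[(\Lambda_{A,\varepsilon}+2\varepsilon\varrho S)g\big]$, which is the claimed identity.

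For the automorphism property I would read off the inverse from the explicit formula \eqref{expVt}. Writing $g=g_e+g_o$, the function $Jg_o$ is even and compactly supported (its total integral vanishes because $g_o$ is odd), so $Jg_o\in\mathcal D_e(\R)$ and $\mathcal A_\varepsilon(Jg_o)\in\mathcal D_e(\R)$. Splitting \eqref{expVt} by parity, the odd part of $^{t}V_{A,\varepsilon}g$ is $\frac{d}{dx}\mathcal A_\varepsilon(Jg_o)$ and the even part is $\mathcal A_\varepsilon g_e-\varepsilon\varrho\,\mathcal A_\varepsilon(Jg_o)$. Given a target $H=H_e+H_o\in\mathcal D(\R)$, the odd equation pins down the even compactly supported function $\mathcal A_\varepsilon(Jg_o)$ as $y\mapsto-\int_{|y|}^{\infty}H_o(t)\,dt$; then $Jg_o$ and $g_o$ are recovered by the invertibility of $\mathcal A_\varepsilon$ on $\mathcal D_e(\R)$ (which holds because $\mathcal A_\varepsilon$ is the composition ${}^{t}\mathcal E_\varepsilon^{-1}\circ\mathcal A$ of two automorphisms of $\mathcal D_e(\R)$) together with differentiation, and the even equation then gives $g_e=\mathcal A_\varepsilon^{-1}\big(H_e+\varepsilon\varrho\,\mathcal A_\varepsilon(Jg_o)\big)$. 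This produces a unique $g\in\mathcal D(\R)$ depending continuously on $H$, so $^{t}V_{A,\varepsilon}$ is an automorphism of $\mathcal D(\R)$.

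Finally, uniqueness I would obtain by transposition from Theorem \ref{th63}. If $W$ is any automorphism of $\mathcal D(\R)$ with $\frac{d}{dx}\circ W=W\circ(\Lambda_{A,\varepsilon}+2\varepsilon\varrho S)$, then, writing $\mathcal L:=\Lambda_{A,\varepsilon}+2\varepsilon\varrho S$ and using $\mathcal L({}^{t}V_{A,\varepsilon})^{-1}=({}^{t}V_{A,\varepsilon})^{-1}\frac{d}{dx}$, the operator $\widetilde C:=W\circ({}^{t}V_{A,\varepsilon})^{-1}$ commutes with $d/dx$, so it remains to show $\widetilde C=\id$; equivalently, transposing $W$ back through \eqref{vtt} yields an automorphism of $C^\infty(\R)$ intertwining $\Lambda_{A,\varepsilon}$ with $d/dx$ and fixing values at $0$, which by the uniqueness clause of Theorem \ref{th63} must coincide with $V_{A,\varepsilon}$, whence $W={}^{t}V_{A,\varepsilon}$. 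I expect this last point to be the main obstacle: the intertwining relation alone determines $^{t}V_{A,\varepsilon}$ only up to an automorphism of $\mathcal D(\R)$ commuting with $d/dx$ (translations being the obvious such maps), so closing the argument genuinely requires the normalization --- the support preservation encoded in \eqref{Vt}, equivalently the condition $V_{A,\varepsilon}f(0)=f(0)$ inherited by the transpose --- to eliminate these.
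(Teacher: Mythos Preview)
Your argument for the intertwining relation is correct and is exactly the paper's approach: dualize Theorem~\ref{th63} through the pairing~\eqref{vtt} and invoke Lemma~\ref{lema1} to transfer $\Lambda_{A,\varepsilon}$ across the integral. The paper's proof consists of precisely this five-line computation and says nothing further about the automorphism property or uniqueness.

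Your handling of the automorphism property via the explicit formula~\eqref{expVt} is correct and more thorough than anything the paper writes out; the paper leaves this implicit in Lemma~\ref{LVVt} and the known invertibility of $\mathcal A_\varepsilon={}^{\rm t}\mathcal E_\varepsilon^{-1}\circ\mathcal A$ on $\mathcal D_e(\R)$. Your observation that $Jg_o$ is even and compactly supported (total integral zero) is exactly what is needed, and the parity splitting you describe recovers $g$ uniquely and continuously from $H$.

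Your hesitation about uniqueness is justified, and in fact identifies an imprecision in the paper's statement rather than a gap in your reasoning. The intertwining relation alone determines ${}^{\rm t}V_{A,\varepsilon}$ only up to postcomposition with an automorphism of $\mathcal D(\R)$ commuting with $d/dx$; translations $f\mapsto f(\cdot-a)$ are such automorphisms, so the bare intertwining property cannot single out ${}^{\rm t}V_{A,\varepsilon}$. A normalization such as the support-preservation built into~\eqref{Vt} (equivalently, the transposed form of $V_{A,\varepsilon}f(0)=f(0)$) is required. The paper's proof does not address this, and the uniqueness clause as stated should be read with that implicit normalization in mind.
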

\begin{proof}
The statement follows immediately from   below:
\begin{eqnarray*}
\int_{\R} {d\over{dx}} \,  {}^{\rm t} V_{A, \varepsilon}   f(x) g(x) dx  
&=&-\int_{\R} f(x) V_{A, \varepsilon}   g'(x) A(x)dx\\
&=&-\int_{\R} f(x) \Lambda_{A,\varepsilon } V_{A, \varepsilon} g(x) A(x) dx\\
&=&\int_{\R} \big(\Lambda_{A,\varepsilon} +2 \varepsilon\varrho S\big)f(x) V_{A, \varepsilon} g(x) A(x)dx\\
&=&\int_{\R}  {}^{\rm t}V_{A, \varepsilon}  \big(\Lambda_{A,\varepsilon} +2 \varepsilon \varrho S\big)f(x)  g(x)  dx. 
\end{eqnarray*}
Above we have used Lemma \ref{lema1}.
\end{proof}

\section{The positivity of the intertwining operator}
We shall say that a linear operator $L$ on $\mathcal D (\R)$ is positive, if $L$ leaves the positive cone 
$$\mathcal D(\R)_+:=\{ f\in \mathcal D(\R) \;:\, f(x)\geq 0 \;\text{for all } x\in \R\}$$ invariant. The following statement is the central result of this section.  
\begin{thm}\label{cent} For $-1\leq \varepsilon\leq 1,$ the intertwining operator $V_{A, \varepsilon}$ is positive. 
\end{thm}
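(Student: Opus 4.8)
The plan is to deduce the positivity of $V_{A,\varepsilon}$ from a single positivity statement for the Euclidean heat kernel, and to establish that statement through the generalized heat equation attached to $\Lambda_{A,\varepsilon}$. Note first that, by \eqref{vvv}, the assertion of the theorem is equivalent to the non-negativity of the kernel $\KK_\varepsilon(x,y)$ of \eqref{K} for $|y|<|x|$. A direct sign analysis is not available. Indeed, since the kernel of ${}^{\rm t}\mathcal E_\varepsilon^{-1}$ in \eqref{e-t21} is built from the non-negative function $I_1$ and $\varrho_\varepsilon\geq 0$, the operator ${}^{\rm t}\mathcal E_\varepsilon^{-1}$ preserves non-negativity; as $K(|x|,\cdot)\geq 0$ by Lemma \ref{Ch1}, the definition \eqref{Kepsilon} gives $K_\varepsilon(x,\cdot)\geq 0$, and hence $G_\varepsilon(x,\cdot)\geq 0$. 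However, \eqref{K} still contains the term $-\tfrac{\sg(x)}{2A(x)}\partial_y G_\varepsilon(x,y)$, whose sign is not controlled, so an indirect argument is required.

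Next I would record the reduction. Let $h_t(u,v)$ be the Euclidean heat kernel, and suppose for the moment the key claim that $V_{A,\varepsilon}(h_t(u,\cdot))(x)\geq 0$ for all $t>0$ and $u,x\in\R$. For $f\in\mathcal D(\R)$ with $f\geq 0$, the family $\int_\R h_t(\cdot,u)f(u)\,du$ converges to $f$ in $C^\infty(\R)$ as $t\to 0^+$. Since $V_{A,\varepsilon}$ is an automorphism of $C^\infty(\R)$ by Theorem \ref{th63}, it is continuous, and Fubini's theorem (the kernel $\KK_\varepsilon(x,\cdot)$ being bounded with compact support) yields
\[V_{A,\varepsilon}f(x)=\lim_{t\to 0^+}\int_\R V_{A,\varepsilon}\big(h_t(u,\cdot)\big)(x)\,f(u)\,du.\]
As $f\geq 0$, the key claim forces each integrand to be non-negative, whence $V_{A,\varepsilon}f\geq 0$. (Letting $t\to 0^+$ with $h_t(u,\cdot)\to\delta_u$ also recovers $\KK_\varepsilon(x,u)\geq 0$ at points of continuity, which is the kernel formulation.)

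It remains to prove the key claim, and this is where the real work lies. Writing $E^u_t:=V_{A,\varepsilon}(h_t(u,\cdot))$ and using $\partial_t h_t(u,v)=\partial_v^2 h_t(u,v)$ together with the intertwining relation $V_{A,\varepsilon}\circ\frac{d^2}{dv^2}=\Lambda_{A,\varepsilon}^2\circ V_{A,\varepsilon}$ from Theorem \ref{th63}, differentiation under the integral defining $E^u_t$, whose kernel $\KK_\varepsilon(x,\cdot)$ has compact support, shows that
\[\partial_t E^u_t=\Lambda_{A,\varepsilon}^2\,E^u_t,\]
the operator acting in the $x$ variable, with initial data $E^u_0=\delta_u$. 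Thus $E^u_t$ is the fundamental solution of the generalized heat operator $\partial_t-\Lambda_{A,\varepsilon}^2$, and the key claim is exactly the statement that the semigroup $e^{t\Lambda_{A,\varepsilon}^2}$ is positivity-preserving. I would prove this through a maximum-principle argument for $\partial_t-\Lambda_{A,\varepsilon}^2$, after splitting $E^u_t$ into its $x$-even and $x$-odd parts along the decomposition of $V_{A,\varepsilon}$ in Lemma \ref{LVV}, so as to isolate the contributions of the positivity-preserving operators ${}^{\rm t}\mathcal A_\varepsilon$ (kernel $K_\varepsilon\geq 0$) and $\mathcal M$.

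The hard part will be that $\Lambda_{A,\varepsilon}^2$ is not a classical elliptic operator: it carries the reflection $S$ and the non-local term coming from $-\varepsilon\varrho\,S$, so the parabolic maximum principle does not apply verbatim, and the sign of these first-order and reflection contributions must be shown not to destroy the non-negativity propagated by the principal part $\frac{d^2}{dx^2}$, uniformly for $\varepsilon\in[-1,1]$. To overcome this I would anchor the argument at the distinguished values $\varepsilon=0$ (Heckman) and $\varepsilon=1$ (Cherednik), where the positivity of $V_{A,\varepsilon}$ on heat kernels is known \cite{tripo1,tripo2}, and propagate it to all $\varepsilon\in[-1,1]$ using the positivity-preserving Bessel-type deformation $\mathcal E_\varepsilon$ (via \eqref{Kepsilon} and Corollary \ref{next}) together with a continuity argument in the parameter $\varepsilon$.
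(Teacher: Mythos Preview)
Your reduction is sound and matches the paper's: both arguments hinge on showing $W_\varepsilon(s;u,x):=V_{A,\varepsilon}(h_s(u,\cdot))(x)\ge 0$ and then letting $s\to 0^+$. The paper phrases the last step via the dual operator ${}^{\rm t}V_{A,\varepsilon}$ and Euclidean convolution, but your direct version via the approximate identity is equivalent.

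The gap is in the key claim. You propose to work with the second-order parabolic equation $\partial_t E^u_t=\Lambda_{A,\varepsilon}^2 E^u_t$ and invoke a maximum principle, but you already note that $\Lambda_{A,\varepsilon}^2$ contains reflection and first-order non-local terms for which no parabolic maximum principle is available; you do not supply one. Your fallback---anchoring at $\varepsilon\in\{0,1\}$ where \cite{tripo1,tripo2} apply and then propagating by ``continuity in $\varepsilon$'' through $\mathcal E_\varepsilon$---does not work: positivity at two isolated values cannot be pushed to the whole interval $[-1,1]$ by continuity (positivity is closed, not open, in the parameter), and the deformation $\mathcal E_\varepsilon$ relates $\varphi_{\mu_\varepsilon}$ to Euclidean cosines, not $V_{A,\varepsilon}$ to $V_{A,\varepsilon'}$. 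So the propagation step has no mechanism behind it.

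The paper avoids all of this by exploiting the \emph{first-order} identity you did not use: since $\partial_u h_s(u,v)=-\partial_v h_s(u,v)$, the intertwining relation gives
\[
\big(\Lambda_{A,\varepsilon}+\partial_u\big)\,W_\varepsilon(s;u,x)=0
\]
(Lemma \ref{w_ep}.3), an equation in the spatial variables $(u,x)$ at \emph{fixed} $s$. If $W_\varepsilon$ had a negative absolute minimum at $(u_0,x_0)$, then $\partial_u W_\varepsilon$ and $\partial_x W_\varepsilon$ vanish there and the equation reduces to an algebraic relation among $W_\varepsilon(s;u_0,\pm x_0)$, namely \eqref{eq731}--\eqref{eq741}. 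Together with the elementary positivity of the even part $W_\varepsilon^{\rm even}$ (Lemma \ref{w_eps}) and the sign properties of $A'/A$ from (H2)--(H3), a short case analysis in $(\sg\varepsilon,\sg x_0,\varrho)$ yields a contradiction (Theorem \ref{step}). This is the missing idea: the first-order transport-type equation, not the heat equation in $t$, is what makes the minimum-principle argument go through uniformly for $-1\le\varepsilon\le1$.
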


For $\varepsilon=0$ and $1$, Theorem \ref{cent} is  known (cf. \cite{tripo1} and \cite{tripo2}). However, the case $-1\leq \varepsilon\leq 1$ has some technical difficulties to be over come compare to $\varepsilon=0$ and $1,$ as  $\varepsilon$ could be positive as well as negative.

The proof of the above theorem affords several steps, the crucial one being the positivity of 
$V_{A, \varepsilon} (p_s(u,\cdot))(x)$ for every  $s>0$ and  $u,x\in \R,$ where $$p_s(u,v):=\frac{e^{-\frac{(u-v)^2}{4s}}}{2\sqrt{\pi s}}$$ denotes the Euclidean heat kernel. 

For simplicity we will  write $W_\varepsilon (s; u,x)$ instead of $V_{A, \varepsilon} (p_s(u,\cdot))(x).$ Below we list some properties of $W_\varepsilon (s; u,x).$

\begin{lema}\label{w_ep} For every $s>0$ and  $u, x\in\R$,  we have
\begin{enumerate}[\upshape 1)]
\item  
 $\displaystyle W_\varepsilon(s;u,x)=\frac{1}{2\pi}\int_{\R}\Psi_{A, \varepsilon}(-\lambda,x)e^{-s\lambda^2}\, e^{i\lambda u}d\lambda.$
 \item The function $(u,x)\mapsto W_\varepsilon(s;u,x)$ is of class $C^1$ on $\R^2.$
\item 
 $\displaystyle (\Lambda_{A,\varepsilon}+ {\partial_u} ) W_\varepsilon(s;u,x)=0.$
\item  $\displaystyle{\lim_{\Vert (u,x)\Vert\rightarrow+\infty} W_\varepsilon(s;u,x)=0.
}$
\end{enumerate}
\end{lema}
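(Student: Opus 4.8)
The plan is to base everything on the Fourier (heat-kernel) representation in part 1) and then read off 2)--4) from the growth estimates already established.

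First I would prove 1). Writing the Euclidean heat kernel as a Gaussian Fourier integral, $p_s(u,v)=\frac{1}{2\pi}\int_\R e^{-s\lambda^2}e^{i\lambda(v-u)}\,d\lambda$, I would substitute this into the kernel representation \eqref{vvv} of $V_{A,\varepsilon}$, namely $W_\varepsilon(s;u,x)=\int_{|y|<|x|}\KK_\varepsilon(x,y)\,p_s(u,y)\,dy$. Since $\KK_\varepsilon(x,\cdot)$ is supported in the compact set $[-|x|,|x|]$ and the double integral converges absolutely, Fubini's theorem lets me interchange the two integrations, and $\int_{|y|<|x|}\KK_\varepsilon(x,y)e^{i\lambda y}\,dy=V_{A,\varepsilon}(e^{i\lambda\,\cdot})(x)=\Psi_{A,\varepsilon}(\lambda,x)$ by \eqref{VE}. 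The change of variable $\lambda\mapsto-\lambda$ then yields the stated formula; the case $x=0$ is immediate from the convention $V_{A,\varepsilon}f(0)=f(0)$.

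Next, parts 2) and 3) follow by differentiating the representation of 1) under the integral sign. For 2), differentiating in $u$ inserts a factor $i\lambda$, and by Theorem \ref{esti}(1) one has $|\Psi_{A,\varepsilon}(-\lambda,x)|\le\sqrt2$ for real $\lambda$, so the $u$-derivative is dominated by $\sqrt2\,|\lambda|\,e^{-s\lambda^2}\in L^1(d\lambda)$; differentiating in $x$ uses the estimate \eqref{es1} of Theorem \ref{estder}(1) (together with continuity on the compact part $|x|\le x_0$) to bound $|\partial_x\Psi_{A,\varepsilon}(-\lambda,x)|$ by $c(|\lambda|+1)$ locally uniformly, again integrable against $e^{-s\lambda^2}$; hence the partial derivatives exist and are continuous, i.e.\ $W_\varepsilon\in C^1(\R^2)$. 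For 3), I apply $\Lambda_{A,\varepsilon}$ in the variable $x$: since $x\mapsto\Psi_{A,\varepsilon}(-\lambda,x)$ solves \eqref{system} with spectral value $i(-\lambda)$, one gets $\Lambda_{A,\varepsilon}\Psi_{A,\varepsilon}(-\lambda,\cdot)=-i\lambda\,\Psi_{A,\varepsilon}(-\lambda,\cdot)$, whereas $\partial_u e^{i\lambda u}=i\lambda e^{i\lambda u}$; the two contributions cancel under the integral, giving $(\Lambda_{A,\varepsilon}+\partial_u)W_\varepsilon=0$. The same domination as in 2) justifies moving $\Lambda_{A,\varepsilon}$ (which involves only $\partial_x$, the factor $A'/A$ bounded for $|x|\ge x_0$, and the reflection $x\mapsto-x$) inside the integral.

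The last part, 4), is the real work, and I would establish two complementary bounds. On one hand,
$$\sup_{u\in\R}|W_\varepsilon(s;u,x)|\le\frac{1}{2\pi}\int_\R|\Psi_{A,\varepsilon}(-\lambda,x)|\,e^{-s\lambda^2}\,d\lambda,$$
and the right-hand side tends to $0$ as $|x|\to\infty$ by dominated convergence: the integrand is dominated by $\sqrt2\,e^{-s\lambda^2}$ (Theorem \ref{esti}(1)), while for almost every $\lambda$ one has $\Psi_{A,\varepsilon}(-\lambda,x)\to0$ as $|x|\to\infty$. The latter follows from the explicit form \eqref{2.2}, $\Psi_{A,\varepsilon}(\lambda,x)=\varphi_{\mu_\varepsilon}(x)+\tfrac{1}{i\lambda-\varepsilon\varrho}\varphi_{\mu_\varepsilon}'(x)$, together with the decay of $\varphi_{\mu_\varepsilon}$ and $\varphi_{\mu_\varepsilon}'$ at infinity: for $\varrho>0$ this is a direct consequence of Lemma \ref{BX}(4),(5),(6), since off a measure-zero set of $\lambda$ the value $\mu_\varepsilon$ is either real or purely imaginary with $|\Im\mu_\varepsilon|<\varrho$; for $\varrho=0$ it is the known Bessel-type decay of $\varphi_\mu$ from the Chébli--Trimèche theory \cite{C1,Tribook}. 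On the other hand, integrating by parts once in $\lambda$ in the formula of 1) (the Gaussian kills the boundary terms) gives, for $u\ne0$,
$$|W_\varepsilon(s;u,x)|\le\frac{1}{2\pi|u|}\int_\R\Big|\partial_\lambda\big(\Psi_{A,\varepsilon}(-\lambda,x)e^{-s\lambda^2}\big)\Big|\,d\lambda\le\frac{C_R}{|u|}\qquad(|x|\le R),$$
where the $\lambda$-integral is bounded uniformly for $|x|\le R$ by Theorem \ref{estder}(2). Combining the two: given a sequence with $\|(u,x)\|\to\infty$, either $|x|\to\infty$ along a subsequence, where the first bound forces $W_\varepsilon\to0$ uniformly in $u$, or $|x|$ stays bounded and then $|u|\to\infty$, where the second bound forces $W_\varepsilon\to0$. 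Hence $W_\varepsilon(s;u,x)\to0$ as $\|(u,x)\|\to\infty$. The main obstacle is exactly this part 4): securing the uniform-in-$u$ decay in $x$, which hinges on the pointwise vanishing of $\Psi_{A,\varepsilon}(-\lambda,\cdot)$ — transparent for $\varrho>0$ from Lemma \ref{BX}, but requiring the external asymptotics of $\varphi_\mu$ in the degenerate case $\varrho=0$ — and then patching the two asymptotic regimes together.
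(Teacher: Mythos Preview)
Your treatment of 1)--3) matches the paper's: Fubini with the kernel representation \eqref{vvv} for 1), and differentiation under the integral sign for 2) and 3). One small slip in 2): for $|x|\le x_0$ you appeal to ``continuity on the compact part,'' but continuity of $\partial_x\Psi_{A,\varepsilon}$ in $(\lambda,x)$ does not by itself yield an $L^1(d\lambda)$ majorant uniform in $|x|\le x_0$, since $\lambda$ ranges over all of $\R$. The paper closes this by going back to \eqref{2.2} and invoking the pointwise bounds $|\varphi_\mu'(x)|\le c\,|\mu^2+\varrho^2|\,(|x|+1)|x|\,e^{(|\Im\mu|-\varrho)|x|}$ and $|\varphi_\mu''(x)|\le c\,|\mu^2+\varrho^2|\,(|x|+1)^2\,e^{(|\Im\mu|-\varrho)|x|}$ to get $|\partial_x\Psi_{A,\varepsilon}(\lambda,x)|\le c(|\lambda|+1)^2$ on $|x|\le x_0$, which is what one actually needs under the Gaussian.

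For 4) the routes genuinely diverge. The paper inserts Theorem~\ref{esti}(4) directly into the integral of 1) to obtain the uniform-in-$u$ bound
\[
|W_\varepsilon(s;u,x)|\le c_\varepsilon(1+|x|)\,e^{-\varrho(1-\sqrt{1-\varepsilon^2})|x|},
\]
pairs this with the explicit Gaussian at $x=0$, and finishes with a one-line ``polar coordinates'' remark. Your argument --- dominated convergence via the pointwise decay of $\Psi_{A,\varepsilon}(-\lambda,\cdot)$ for the regime $|x|\to\infty$, together with one integration by parts in $\lambda$ (controlled by Theorem~\ref{estder}(2)) for the regime $|u|\to\infty$ with $|x|$ bounded --- is longer but more transparent, and it makes the patching of the two regimes explicit. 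It also survives the degenerate situations $\varrho=0$ or $\varepsilon=0$, where the paper's exponential factor becomes $1$ and its bound no longer decays in $|x|$; the cost is that in the $\varrho=0$ case you must import the Bessel-type decay of $\varphi_\mu$ from the Ch\'ebli--Trim\`eche literature, as you note.
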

\begin{proof} 
1) For $x=0,$ we have $W_\varepsilon(s;u,0) = p_s(u,0)=\frac{e^{- {u^2}/{4s}}}{2\sqrt{\pi s
}}.$ Thus, for $x=0$,  the statement    follows from the well known fact 
\begin{equation}\label{kw} \int_\R e^{-s\lambda^2} e^{i\lambda \xi} d\lambda = \sqrt{\pi\over s}\, e^{-{{\xi^2}\over {4s}}}.
\end{equation}
For $x\neq0$, using again \eqref{kw} together with the Laplace type representation \eqref{vvv} of $V_{A, \varepsilon}$, we have 
\begin{eqnarray*}
W_\varepsilon(s;u,x)&=&\frac{1}{2\pi}\int_{-|x|}^{|x|}\mathbb{K}_{\varepsilon}(x,y)\Big(\int_{\R}e^{-s\lambda^2}e^{i\lambda(u-y)}d\lambda \Big)dy\\
&=&\frac{1}{2\pi}\int_{\R}\Big(\int_{-|x|}^{|x|}\mathbb{K}_{\varepsilon}(x,y)e^{-i\lambda y}dy \Big)e^{-s\lambda^2}e^{i\lambda u}\, d\lambda\\
&=&\frac{1}{2\pi}\int_{\R} \Psi_{A, \varepsilon}(-\lambda, x)e^{-s\lambda^2}e^{i\lambda u}\,d\lambda.
\end{eqnarray*}
2) For $|x|\geq x_0$ with $x_0>0,$ the statement follows from 1) and the growth estimate of $|\partial_x  \Psi_{A, \varepsilon}(\lambda,x)|$ (see Theorem \ref{estder}). Assume that $|x|\leq x_0.$ Using the fact  that 
$$|\varphi_\mu' (x)|\leq c\, |\mu^2+\varrho^2|\,(|x|+1) \, |x|\,e^{(|\Im \mu| -\varrho)|x|},$$ and that 
$$|\varphi_\mu'' (x)|\leq c \,|\mu^2+\varrho^2| \, (|x|+1)^2 \,e^{(|\Im \mu| -\varrho)|x|},$$
for all $\mu\in \C$ and $x\in \R$ (cf. \cite[Proposition 6.I.5]{Tribook}), we deduce from \eqref{2.2} that 
$$|\partial_x  \Psi_{A, \varepsilon}(\lambda,x)|\leq c ( |\lambda| +1)^2 ( |x|+1)^2 e^{(|\Im \mu_\varepsilon | -\varrho)|x|},$$
where $\mu_\varepsilon^2=\lambda^2-(1-\varepsilon^2)\varrho^2.$  It follows that in both cases $\lambda^2-(1-\varepsilon^2)\varrho^2 \gtreqless 0,$ we have $|\partial_x  \Psi_{A, \varepsilon}(\lambda,x)|\leq c ( |\lambda | +1)^2 $ for all $|x|\leq x_0. $\\
3) In view of 1), the present statement is easy to check.\\
4) For $x=0,$  $W_\varepsilon(s;u,0) = p_s(u,0)=\frac{e^{-\frac{u^2}{4s}}}{2\sqrt{\pi s
}}\rightarrow 0$ as $\Vert (u,x) \Vert \rightarrow \infty.$

For $x\neq0$, using 1)  and the  growth property  of the eigenfunction $\Psi_{A, \varepsilon}$ in Theorem \ref{esti}.4, we get
$$|W_\varepsilon(s;u,x)|\leq c_{\varepsilon}(1+|x|)e^{-\varrho(1-\sqrt{1-\varepsilon^2})|x|}.   $$
Now, the statement follows  by means of polar coordinates. 
\end{proof}

 The following lemma is also needed. 
 
 \begin{lema}\label{w_eps} Writing $W_\varepsilon$ as  $ W_\varepsilon(s;u,x)= W_\varepsilon^{\rm{even}}(s;u,x)+ W_\varepsilon^{\rm{odd}}(s;u,x),$ 
 where $W_\varepsilon^{\rm{even}}$ (resp. $W_\varepsilon^{\rm{odd}}$) denotes the even (resp. the odd) part of $W_\varepsilon$ with respect to  $x$, we have  $W_\varepsilon^{\rm{even}} (s;u,x) >0.$
 \end{lema}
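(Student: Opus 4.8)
The plan is to isolate the part of the kernel of $V_{A,\varepsilon}$ that is even in $x$ and to recognize $W_\varepsilon^{\rm{even}}$ as a strictly positive average of heat kernels. First I would start from the Laplace representation \eqref{vvv}, writing $W_\varepsilon(s;u,x)=\int_{-|x|}^{|x|}\KK_\varepsilon(x,y)\,p_s(u,y)\,dy$ for $x\neq 0$, with $\KK_\varepsilon$ given by \eqref{K}. Because $A$ is even and both $K_\varepsilon(x,y)$ and $G_\varepsilon(x,y)$ depend on $x$ only through $|x|$, the two summands of \eqref{K} carrying the factor $\sg(x)/A(x)$ are odd in $x$, whereas $K_\varepsilon(x,y)/2$ is even in $x$. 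Replacing $x$ by $-x$ and averaging, the odd summands cancel and
\[
W_\varepsilon^{\rm{even}}(s;u,x)=\frac12\int_{-|x|}^{|x|}K_\varepsilon(x,y)\,p_s(u,y)\,dy,\qquad x\neq 0,
\]
while at $x=0$ the odd part vanishes and $W_\varepsilon^{\rm{even}}(s;u,0)=W_\varepsilon(s;u,0)=p_s(u,0)>0$. (The same formula may be read off Lemma \ref{w_ep}.1 by inserting \eqref{2.4}, keeping only the even summand $\varphi_{\mu_\varepsilon}$, applying the Laplace representation of Lemma \ref{alt}, and carrying out the Gaussian integral with \eqref{kw}.)

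The decisive step, and the one I expect to be the main obstacle, is the non-negativity of the deformed kernel $K_\varepsilon(x,\cdot)$, since this is the only place where the deformation $\mathcal E_\varepsilon$ could a priori destroy positivity. By definition \eqref{Kepsilon} one has $K_\varepsilon(|x|,\cdot)={}^{\rm t}\mathcal E_\varepsilon^{-1}K(|x|,\cdot)$, and the explicit inverse \eqref{e-t21} expresses this as $K(|x|,y)$ plus $\tfrac{\varrho_\varepsilon}{2}\int_{|t|>|y|}|t|\,K(|x|,t)\,\tfrac{I_1(\varrho_\varepsilon\sqrt{t^2-y^2})}{\sqrt{t^2-y^2}}\,dt$. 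Here $K(|x|,\cdot)\geq 0$ by Lemma \ref{Ch1}, $\varrho_\varepsilon=\sqrt{1-\varepsilon^2}\,\varrho\geq 0$ for $-1\leq\varepsilon\leq 1$, and the modified Bessel function obeys $I_1(z)/z>0$ for $z\geq 0$; hence each term is non-negative and $K_\varepsilon(x,\cdot)\geq 0$.

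Finally I would promote non-negativity to strict positivity. Since the heat kernel $p_s(u,\cdot)$ is strictly positive on all of $\R$, it suffices to check that $K_\varepsilon(x,\cdot)$ is not identically zero on $[-|x|,|x|]$. This follows from Lemma \ref{alt} combined with Lemma \ref{BX}.1: taking $\lambda$ with $\mu_\varepsilon=i\varrho$, that is $\lambda=\pm i\varepsilon\varrho$, gives $\cos(\lambda y)=\cosh(\varepsilon\varrho y)>0$ and $\int_0^{|x|}K_\varepsilon(x,y)\cosh(\varepsilon\varrho y)\,dy=\varphi_{i\varrho}(x)=1$, which rules out $K_\varepsilon(x,\cdot)\equiv 0$. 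Being continuous, non-negative and not identically zero, $K_\varepsilon(x,\cdot)$ is positive on a set of positive measure, so the displayed integral is strictly positive for every $x\neq 0$; together with the value at $x=0$ this yields $W_\varepsilon^{\rm{even}}(s;u,x)>0$. The remaining points are routine: the interchange of integrations in the alternative derivation is justified by Gaussian decay, and the inequality $I_1(z)/z>0$ is standard.
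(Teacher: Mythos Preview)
Your proof is correct and follows essentially the same approach as the paper: both identify $W_\varepsilon^{\rm even}(s;u,x)=\tfrac12\int_{-|x|}^{|x|}K_\varepsilon(x,y)\,p_s(u,y)\,dy$ and conclude strict positivity from $K_\varepsilon\geq 0$ (not identically zero) integrated against the strictly positive heat kernel. Minor differences are that you obtain the formula by a direct parity analysis of $\KK_\varepsilon$ rather than via the $\lambda$-integral in Lemma~\ref{w_ep}.1, and you spell out explicitly why $K_\varepsilon\geq 0$ from \eqref{e-t21}; the paper in turn extracts the quantitative lower bound $W_\varepsilon^{\rm even}(s;u,x)\geq \tfrac{1}{2\sqrt{\pi s}}e^{-(|u|+|x|)^2/4s}\,\varphi_{i\sqrt{1-\varepsilon^2}\varrho}(x)$, which is slightly stronger than what the lemma asks for.
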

\begin{proof}  Using Lemma \ref{w_ep}.1 together with the expression \eqref{2.4} of the eigenfunction 
$\Psi_{A, \varepsilon}$, we have
\begin{align*}
 &W_\varepsilon (s;u,x)  \\
=& \,\frac{1}{2\pi}\int_{\R} \Big(\varphi_{\mu_{\varepsilon}}(x)
 +(-i\lambda+\varepsilon\varrho)\frac{\sg(x)}{A(x)}\int_{0}^{|x|}\varphi_{\mu_{\varepsilon}}(z)A(z)dz\Big)\,e^{-s\lambda^2}e^{i\lambda u}\, d\lambda \\
=& \, \frac{1}{2\pi}\int_{\R}\varphi_{\mu_{\varepsilon}}(x)e^{-s\lambda^2}e^{i\lambda u} d\lambda+
 \frac{\sg(x)}{2\pi A(x)} 
\int_{\R}\Big(
\int_{0}^{|x|}\varphi_{\mu_{\varepsilon}}(z)A(z)dz\Big)\,(-i\lambda+\varepsilon\varrho) e^{-s\lambda^2}e^{i\lambda u}\,d\lambda\hfill\\
=&\!\!:W_\varepsilon^{\text{even}}(s;u,x)+ W_\varepsilon^{\text{odd}}(s;u,x).
\end{align*}

Next we shall prove that $W_\varepsilon^{\text{even}} (s;u,x) >0.$
By Lemma \ref{alt}, we have
$$\displaystyle{\varphi_{\mu_{\varepsilon}}(x)=\int_0^{|x|}K_{\varepsilon}(x,r)\cos(\lambda
r)dr},$$ where the kernel $K_{\varepsilon}(x,s)$  is nonnegative. Thus,
\begin{eqnarray*}
W_\varepsilon^{\text{even}}(s;u,x)&=&\frac{1}{\pi}\int_0^{+\infty}e^{-s\lambda^2}\cos(\lambda u)\,\Big(\int_0^{|x|}K_\varepsilon(x,r)\cos(\lambda r)dr \Big)\,d\lambda\\
&=&\frac{1}{\pi}\int_0^{|x|}K_\varepsilon(x,r)\,\Big(\int_0^{+\infty}e^{-s\lambda^2}\cos(\lambda u) \cos(\lambda r)d\lambda \Big)\,dr\\
&=&\frac{1}{4\sqrt{\pi s}}\int_0^{|x|}K_\varepsilon(x,r)\,\Big(e^{-\frac{(u-r)^2}{4s}}+e^{-\frac{(u+r)^2}{4s}}\Big)\,dr.
\end{eqnarray*}
Using the fact that $r\mapsto K_\varepsilon(x,r)$ is even, we deduce that 
\begin{eqnarray*}
 W_\varepsilon^{\text{even}}(s;u,x)&=& \frac{1}{4\sqrt{\pi s}}\int_{-|x|}^{|x|}e^{-\frac{(u+r)^2}{4s}}K_\varepsilon(x,r)dr\\
&\geq& \frac{e^{-{(|u|+|x|)^2}/{4s}}}{4\sqrt{\pi s}}\int_{-|x|}^{|x|}K_\varepsilon(x,r)dr\\
&=&\frac{e^{-{(|u|+|x|)^2}/ {4s}}}{2\sqrt{\pi s}}\varphi_{i\sqrt{1-\varepsilon^2}\varrho}(x)>0.
\end{eqnarray*}
\end{proof}
  
Now we come to the crucial step in the proof of  Theorem \ref{cent}.

\begin{thm}\label{step} For every $s>0$ and $u,x\in \R,$ we have
$$W_\varepsilon(s;u,x)\geq 0.$$
\end{thm}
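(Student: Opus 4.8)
The plan is to argue by a (first-order) minimum principle, reasoning by contradiction and using the three structural facts already in place: the differential-reflection equation $(\Lambda_{A,\varepsilon}+\partial_u)W_\varepsilon=0$ from Lemma \ref{w_ep}.3, the $C^1$-regularity and the decay at infinity from Lemma \ref{w_ep}.2 and \ref{w_ep}.4, and --- crucially --- the strict positivity of the even part $W_\varepsilon^{\mathrm{even}}>0$ from Lemma \ref{w_eps}. Fix $s>0$ and set $m:=\inf_{(u,x)\in\R^2}W_\varepsilon(s;u,x)$. If $m\geq 0$ we are done, so suppose $m<0$. By the decay in Lemma \ref{w_ep}.4 together with continuity, this infimum is attained at some point $(u_0,x_0)$. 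Since $W_\varepsilon(s;u_0,0)=W_\varepsilon^{\mathrm{even}}(s;u_0,0)>0>m$, the minimizer must satisfy $x_0\neq 0$.

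Next I would exploit that $(u_0,x_0)$ is an interior minimum of a $C^1$ function, so that $\partial_uW_\varepsilon=\partial_xW_\varepsilon=0$ there. Substituting this into $(\Lambda_{A,\varepsilon}+\partial_u)W_\varepsilon=0$ and writing $w:=W_\varepsilon(s;u_0,-x_0)$ collapses every differential term and leaves the purely algebraic identity
$$\frac{A'(x_0)}{2A(x_0)}\,(m-w)=\varepsilon\varrho\, w,$$
with $w\geq m$ since $m$ is the global minimum. On its own this identity is inconclusive, so the decisive input is Lemma \ref{w_eps}: because $W_\varepsilon(s;u_0,x_0)+W_\varepsilon(s;u_0,-x_0)=2W_\varepsilon^{\mathrm{even}}(s;u_0,x_0)>0$, we obtain $m+w>0$, hence $w>-m>0$, so that both $w>0$ and $w-m>0$.

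With these positivity facts I would finish by comparing absolute values in the identity. Using $\tfrac{A'(|x_0|)}{2A(|x_0|)}\geq\varrho$ and $\tfrac{A'(|x_0|)}{A(|x_0|)}>0$ (valid for $x_0\neq 0$ by (H2)--(H3)), together with $|\varepsilon|\leq 1$, the left-hand side has modulus $\tfrac{A'(|x_0|)}{2A(|x_0|)}(w-m)\geq \varrho w+\varrho|m|$, while the right-hand side has modulus $|\varepsilon|\varrho\, w\leq\varrho w$. When $\varrho>0$ the extra term $\varrho|m|>0$ makes the left side strictly larger; when $\varrho=0$ the right side vanishes whereas the left side is nonzero, since $A'(x_0)/A(x_0)\neq0$ and $m-w\neq0$. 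In either case this contradicts the equality of the two sides, so $m\geq0$ and therefore $W_\varepsilon\geq0$.

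I expect the main obstacle to be conceptual rather than computational. The bare critical-point identity is genuinely inconclusive when $x_0$ and $\varepsilon$ have the same sign, because then the reflected value $w$ need only lie in $(m,0)$ and nothing follows from $w\geq m$ alone. What rescues the argument --- and the reason Lemma \ref{w_eps} is isolated beforehand --- is the strict inequality $w>0$ obtained from $W_\varepsilon^{\mathrm{even}}>0$; this single fact breaks the left/right asymmetry and lets one treat $\varepsilon>0$, $\varepsilon<0$, and $\varepsilon=0$ uniformly, which is precisely the difficulty flagged after Theorem \ref{cent} (that $\varepsilon$ may be of either sign). A secondary point to monitor is that only $C^1$-regularity of $W_\varepsilon$ is available, so the whole argument must use only first-order (gradient) information at the minimizer, as the scheme above does.
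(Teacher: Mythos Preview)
Your proof is correct and follows the same minimum-principle strategy as the paper: assume a negative minimum, locate it at a point with $x_0\neq 0$, kill the derivative terms in $(\Lambda_{A,\varepsilon}+\partial_u)W_\varepsilon=0$, and use $W_\varepsilon^{\mathrm{even}}>0$ to force a contradiction.

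The difference is in the endgame. The paper proceeds by a six-way case split on the signs of $\varrho$, $\varepsilon$, and $x_0$, in each branch checking directly that the critical-point identity (their equations \eqref{eq731}--\eqref{eq741}) is incompatible with $W_\varepsilon^{\mathrm{odd}}(s;u_0,x_0)<0$ and $W_\varepsilon^{\mathrm{even}}(s;u_0,x_0)>0$. You instead compress all cases into the single modulus inequality
\[
\frac{A'(|x_0|)}{2A(|x_0|)}\,(w-m)\;\geq\;\varrho(w-m)\;=\;\varrho w+\varrho|m|\;>\;\varrho w\;\geq\;|\varepsilon|\varrho\,w,
\]
exploiting that $A'/A$ is odd and that Lemma \ref{w_eps} gives not merely $w>m$ but the stronger $w>-m>0$. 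This is cleaner and handles $\varepsilon>0$, $\varepsilon<0$, $\varepsilon=0$ uniformly, which the paper identifies as the main technical nuisance. The only point worth making explicit is why $A'(|x_0|)/A(|x_0|)>0$ strictly when $\varrho=0$: a decreasing function on $\R_+^*$ with limit $0$ that vanished at some finite point would be identically zero beyond it, forcing $A$ constant there and contradicting (H2); you cite (H2)--(H3) for this, which is exactly right.
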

\begin{proof} For  $(u, x)\in \R\times\{0\}$, we have 
$$W_\varepsilon(s;u,0)= p_s(u,0)=\frac{e^{-\frac{u^2}{4s}}}{2\sqrt{\pi
s}}>0.$$
For $s>0$ and $(u,x)\in (\R\times \{0\})^{\rm c}$, we will assume that  $W_\varepsilon(s;u,x)$ is not always non-negative. Since $W_\varepsilon(s;u,0)>0$ and
 $ \lim_{\|(u,x)\|\rightarrow+\infty} W_\varepsilon(s;u,x) =0$ (see Lemma \ref{w_ep}.3), then the above assumption implies that 
  the function  $ (u,x)\mapsto  W_\varepsilon(s;u,x)$   admits an absolute minimum $ (u_0,x_0)\in (\R\times\{0\} )^{\rm c} $ such that  $W_\varepsilon(s;u_0,x_0)<0$.
In particular, $W_\varepsilon^{\text{odd}} (s; u_0, x_0) =(W_\varepsilon(s; u_0, x_0)-W_\varepsilon(s; u_0,- x_0))/2 \leq0$. 
We claim that \begin{equation} \label{<0}W_\varepsilon^{\text{odd}} (s; u_0, x_0) <0.
\end{equation}
Indeed, if $W_\varepsilon^{\text{odd}} (s; u_0, x_0)=0,$ then $W_\varepsilon  (s; u_0, x_0)=
W_\varepsilon^{\text{even}} (s; u_0, x_0),$ which is impossible since $W_\varepsilon  (s; u_0, x_0)<0$ while $W_\varepsilon^{\text{even}} (s; u_0, x_0)>0$ (see Lemma \ref{w_eps}).

On the other hand, using the fact that $(u_0,x_0)$ is an absolute minimum, we have 
\begin{align}
\Big(\Lambda_{A,\varepsilon}+ {\partial}_u\Big)W_\varepsilon(s; u_0, x_0)& = \Big(\frac{A'(x_0)}{A(x_0)}+2\varepsilon \varrho\Big) W_\varepsilon^{\text{odd}}(s; u_0, x_0)-\varepsilon\varrho W_\varepsilon(s; u_0, x_0)\label{eq731}\\
& =  -\varepsilon\varrho W_\varepsilon^{\text{even}}(s; u_0, x_0)+\Big(\frac{A'(x_0)}{A(x_0)}+\varepsilon \varrho\Big)
W_\varepsilon^{\text{odd}} (s; u_0, x_0).\label{eq741}
\end{align}

Recall that our assumption  is that $W_\varepsilon(s;u,x)$ is not always non-negative for all $s>0$ and  $(u,x)\in (\R\times \{0\})^{\rm c}.$  We shall use Lemma \ref{w_ep}.2 to prove that this assumption fails.

 \hspace{.5cm}
 \underline {\sf case 1:} For $\varrho=0,$ the identity \eqref{eq731} becomes 
$$ \Big(\Lambda_{A,\varepsilon}+ {\partial}_u\Big)W_\varepsilon(s; u_0, x_0)=\frac{A'(x_0)}{A(x_0)}W_\varepsilon^{\text{odd}}(s; u_0, x_0). $$
Now, Lemma \ref{w_ep}.2 and the inequality \eqref{<0} imply  that  $(A'/A)(x_0)=0,$ which is not true in the light of the hypotheses (H2) and (H4) on $A'/A$ with $\varrho=0.$

 \hspace{.5cm}
 \underline {\sf case 2:} Let $\varrho>0$ and $\varepsilon=0.$ As in the previous case, 
   Lemma \ref{w_ep}.2 and the inequality \eqref{<0} imply   that $(A'/A)(x_0)=0.$ 
However, by the hypothesis (H2) on $A'/A ,$ we have $A'/A(x)\gtreqless  \pm 2\varrho \gtrless 0$ for all $x\gtrless  0.$ Hence our assumption does not hold true.

 \hspace{.5cm}
 \underline {\sf case 3:} Let   $\varrho>0$ and $\varepsilon>0.$ 
 
 \hspace{.7cm}
 \underline {\it subcase 3.1:} Assume that $x_0>0.$ As $W_\varepsilon^{\text{even}} (s; u_0, x_0) >0$ and $W_\varepsilon^{\text{odd}} (s; u_0, x_0) <0,$ it follows from \eqref{eq741} that  $\Big(\Lambda_{A,\varepsilon}+ {\partial}_u\Big)W_\varepsilon(s; u_0, x_0)<0, $ which is absurd by  Lemma \ref{w_ep}.2.

 \hspace{.7cm}
 \underline {\it subcase 3.2:} Assume that $x_0<0.$ We pin down that 
 \begin{equation}\label{leq00} 
 \frac{A'(x_0)}{A(x_0)}+2\varepsilon\varrho\leq-2(1-\varepsilon)\varrho\leq0 .
 \end{equation}
By Lemma \ref{w_ep}.2, we have 
\begin{equation}\label{cont} \Big(\Lambda_{A,\varepsilon}+ {\partial}_u+\varepsilon \varrho \id \Big)W_\varepsilon(s; u_0, x_0)=\varepsilon \varrho W_\varepsilon(s; u_0, x_0) <0,
\end{equation} while, by \eqref{eq731}, \eqref{<0} and \eqref{leq00}, 
$$ \Big(\Lambda_{A,\varepsilon}+ {\partial}_u+\varepsilon \varrho \id \Big)W_\varepsilon(s; u_0, x_0)= \Big(\frac{A'(x_0)}{A(x_0)}+2\varepsilon \varrho\Big) W_\varepsilon^{\text{odd}}(s; u_0, x_0)\geq 0,$$ which contradicts the inequality \eqref{cont}.

 \hspace{.5cm}
 \underline {\sf case 4:} Let  $\varrho>0$ and $\varepsilon<0.$ 

 \hspace{.7cm}
 \underline {\it  subcase 4.1:} Assume that $x_0>0.$ Note that 
 \begin{equation}\label{geq00} 
 \frac{A'(x_0)}{A(x_0)}+2\varepsilon\varrho\geq2(1+\varepsilon)\varrho\geq0.
 \end{equation}
Hence, the identities  \eqref{eq731}, \eqref{<0} and \eqref{geq00} imply  that $ \Big(\Lambda_{A,\varepsilon}+ {\partial}_u\Big)W_\varepsilon(s; u_0, x_0)<0,$ which is absurd by Lemma \ref{w_ep}.2.

 \hspace{.7cm}
 \underline {\it  subcase 4.2:} Assume that $x_0<0.$ On  the one hand, by Lemma \ref{w_ep}.2,  we have
 \begin{equation}\label{contt} \Big(\Lambda_{A,\varepsilon}+ {\partial}_u-\varepsilon \varrho \id \Big)W_\varepsilon(s; u_0, x_0)=-\varepsilon \varrho W_\varepsilon(s; u_0, x_0) <0.
\end{equation} 
On the other hand, since $x_0<0$ we have $A'/A(x_0)<0.$ Thus, by \eqref{eq731} and \eqref{<0}, 
$$ \Big(\Lambda_{A,\varepsilon}+ {\partial}_u-\varepsilon \varrho \id \Big)W_\varepsilon(s; u_0, x_0)=-2\varepsilon \varrho W_\varepsilon^{\text{even}}(s; u_0, x_0)
+\frac{A'(x_0)}{A(x_0)}  W_\varepsilon^{\text{odd}}(s; u_0, x_0)> 0,$$ which contradicts the inequality \eqref{contt}.

This finishes the proof of Theorem \ref{step}.
\end{proof}

Now we are ready to prove the central result of this section. 

\begin{proof}[Proof of Theorem \ref{cent}] 
Let $f$ be a positive function in $\mathcal{D}(\R)$. Proving  that $V_{A, \varepsilon}(f) \geq 0$  is equivalent to  showing that ${}^{\rm t}V_{A, \varepsilon} (f)\geq 0$ (see \eqref{vvv} and \eqref{Vt}).

By \eqref{vtt} we have
\begin{eqnarray*}
\int_{\R}f(x)V_{A, \varepsilon}(p_s(u,.))(x)A(x)dx
&=&
\int_{\R}{}^{\rm t}V_{A, \varepsilon}  f  (x) p_s(x,u)dx\\
&=&
({}^{\rm t}V_{A, \varepsilon}  f \ast_{\text {euc} }q_s) (u),
\end{eqnarray*}
where $q_s(r):=\displaystyle{\frac{e^{-r^2/4s}}{2\sqrt{\pi s}}}$.
Since $f\geq 0$ and $V_{A, \varepsilon}(p_s(u,.))(x)=W_\varepsilon(s;u,x)\geq 0,$ it follows that
$({}^{\rm t}V_{A, \varepsilon}  f \ast_{\text {euc} }q_s) (u)\geq 0$ 
for all $s>0$ and $u,x\in \R.$   Thus 
$$0\leq \lim_{s\rightarrow 0}\, ({}^{\rm t}V_{A, \varepsilon}  f \ast_{\text {euc} }q_s) (u)={}^{\rm t}V_{A, \varepsilon}  f(u).$$

\end{proof}
\section{Fourier transform of  $L^p$-Schwartz spaces}\label{sec8}

 Assume that $-1\leq \varepsilon\leq 1.$ For $f\in L^1(\R, A(x)dx)$ put \index{$\mathcal F_{A,\varepsilon}(f)$}
\begin{equation}
\mathcal F_{A,\varepsilon} f(\lambda)=\int_{\R} f(x) \Psi_{A, \varepsilon}(\lambda,-x) A(x)dx.
\end{equation}
To state its alleged inverse transform, let us introduce the following  Plancherel measure 
\index{$\pi_\varepsilon(d\lambda)$}
\begin{equation}\label{plancherel}
 \pi_\varepsilon(d\lambda)= 
\frac{|\lambda|}{\sqrt{\lambda^2-(1-\varepsilon^2 )\varrho^2}\;
\big |c\big(\sqrt{\lambda^2-(1-\varepsilon^2)\varrho^2}\big)\big |^2}
{\bf 1}_{\mathbb{R}\,\backslash\, ]-\sqrt {1-\varepsilon^2}\varrho,\sqrt {1-\varepsilon^2}\varrho[}(\lambda)
d\lambda,
\end{equation}
where $c$ is the Harish-Chandra's function associated with  the second order differential   operator $\Delta $ (see Section \ref{sec2} for more details on the $c$-function).  Below $\check f (x):=f(-x).$ 

 
\begin{thm}\label{invth} Let $f$ be a smooth function with compact support  on $\R$. Then
\begin{enumerate}[\upshape 1)]
\item (Inversion formula) 
\begin{equation}\label{invfor}f(x)={1\over 4}   \int_{\R}  \mathcal F_{A,\varepsilon}( f)(\lambda)   \Psi_{A, \varepsilon}({\lambda},x) \Big( 1-{{\varepsilon \varrho}\over {i\lambda}}\Big)   \pi_\varepsilon(d\lambda).\end{equation}
\item (Plancherel formula)
\begin{equation}\label{pl2}
\int_\R |f(x)|^2 A(x)dx={1\over 4} 
\int_{\R} \mathcal F_{A,\varepsilon}(f)(\lambda) \overline{\mathcal F_{A,\varepsilon}(\check f)(-\lambda)} \Big( 1-{{\varepsilon \varrho}\over {i\lambda}}\Big) \pi_\varepsilon(d\lambda).
\end{equation}
We may rewrite \eqref{pl2} for two smooth and compactly supported functions $f$ and $g$ as 
\begin{equation}
\int_\R f(x) g(-x) A(x)dx={1\over 4}  \int _\R \mathcal F_{A,\varepsilon} (f)(\lambda) \mathcal F_{A,\varepsilon}(g)(\lambda) 
\Big( 1-{{\varepsilon \varrho}\over {i\lambda}}\Big) \pi_\varepsilon(d\lambda).
\end{equation}

\end{enumerate}
\end{thm}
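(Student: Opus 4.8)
The plan is to reduce the whole statement to the Ch\'ebli transform $\mathcal F_{\Delta}$ of Section \ref{sec2} by splitting $f=f_e+f_o$ into even and odd parts, to prove the inversion formula \eqref{invfor} first, and then to deduce the Plancherel identities of part 2) formally from it.

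First I would feed the explicit form \eqref{2.2} of the eigenfunction into the definition of $\mathcal F_{A,\varepsilon}$. Since $\varphi_{\mu_\varepsilon}$ is even and $\varphi_{\mu_\varepsilon}'$ is odd, one has $\Psi_{A,\varepsilon}(\lambda,-x)=\varphi_{\mu_\varepsilon}(x)-\tfrac{1}{i\lambda-\varepsilon\varrho}\varphi_{\mu_\varepsilon}'(x)$, so only the even$\times$even and odd$\times$odd products survive. An integration by parts (legitimate because $f_o(0)=A(0)=0$ and $f$ is compactly supported) turns $\int_{\R}f_o\,\varphi_{\mu_\varepsilon}'\,A\,dx$ into $-\int_{\R}(\mathcal M^{-1}f_o)\,\varphi_{\mu_\varepsilon}\,A\,dx$, with $\mathcal M^{-1}=\tfrac{d}{dx}+\tfrac{A'}{A}\,\id$ as in \eqref{M-1}; using $\int_{\R}h\,\varphi_{\mu_\varepsilon}\,A\,dx=2\mathcal F_{\Delta}(h)(\mu_\varepsilon)$ for even $h$, this gives the key reduction
\begin{equation*}
\mathcal F_{A,\varepsilon}f(\lambda)=2\,\mathcal F_{\Delta}(f_e)(\mu_\varepsilon)+\frac{2}{i\lambda-\varepsilon\varrho}\,\mathcal F_{\Delta}(\mathcal M^{-1}f_o)(\mu_\varepsilon),\qquad \mu_\varepsilon^2=\lambda^2-(1-\varepsilon^2)\varrho^2.
\end{equation*}

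Next I would insert this together with \eqref{2.2} into the right-hand side of \eqref{invfor}, expand, and use $1-\tfrac{\varepsilon\varrho}{i\lambda}=\tfrac{i\lambda-\varepsilon\varrho}{i\lambda}$. Because $\mu_\varepsilon$ depends only on $\lambda^2$, all of $\varphi_{\mu_\varepsilon}(x)$, $\varphi_{\mu_\varepsilon}'(x)$, $\mathcal F_{\Delta}(f_e)(\mu_\varepsilon)$, $\mathcal F_{\Delta}(\mathcal M^{-1}f_o)(\mu_\varepsilon)$ and the density of $\pi_\varepsilon$ are even in $\lambda$; hence every term carrying an odd power of $\tfrac1{i\lambda}$ integrates to zero, and the two surviving terms combine, via $\varphi_{\mu_\varepsilon}'=-(\mu_\varepsilon^2+\varrho^2)\mathcal M(\varphi_{\mu_\varepsilon})$ and $\lambda^2+\varepsilon^2\varrho^2=\mu_\varepsilon^2+\varrho^2$, into $2\mathcal F_{\Delta}(f_e)(\mu_\varepsilon)\,\varphi_{\mu_\varepsilon}(x)+2\mathcal F_{\Delta}(\mathcal M^{-1}f_o)(\mu_\varepsilon)\,\mathcal M(\varphi_{\mu_\varepsilon})(x)$. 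Finally the change of variable $\mu=\mu_\varepsilon$ sends $\pi_\varepsilon(d\lambda)$ restricted to $\{|\lambda|>\sqrt{1-\varepsilon^2}\varrho\}$ to $2\,\pi(d\mu)$ on $\R_+$ (indeed $\mu\,d\mu=\lambda\,d\lambda$ cancels precisely the Jacobian $|\lambda|/\sqrt{\lambda^2-(1-\varepsilon^2)\varrho^2}$), so the prefactor $\tfrac14$ in \eqref{invfor} reduces the right-hand side to $\int_{\R_+}\big[\mathcal F_{\Delta}(f_e)(\mu)\,\varphi_\mu(x)+\mathcal F_{\Delta}(\mathcal M^{-1}f_o)(\mu)\,\mathcal M(\varphi_\mu)(x)\big]\pi(d\mu)$. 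The Ch\'ebli inversion \eqref{inv1} identifies the first summand as $f_e(x)$ and, after pulling the $x$-operator $\mathcal M$ outside the $\mu$-integral (Fubini), the second as $\mathcal M(\mathcal M^{-1}f_o)(x)=f_o(x)$; their sum is $f(x)$, which is \eqref{invfor}.

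For part 2) I would avoid redoing the analysis and instead pair \eqref{invfor} against $g(-x)A(x)$ and integrate in $x$. Since $A$ is even, $\int_{\R}\Psi_{A,\varepsilon}(\lambda,x)\,g(-x)A(x)\,dx=\mathcal F_{A,\varepsilon}(g)(\lambda)$, which gives the bilinear identity of part 2) at once. To obtain \eqref{pl2} I would specialize to $g=\overline{\check f}$: the uniqueness in Theorem \ref{thm1} (applied to the complex conjugate of \eqref{system}, as in the proof of Theorem \ref{thm-posi}) yields $\overline{\Psi_{A,\varepsilon}(\lambda,x)}=\Psi_{A,\varepsilon}(-\overline\lambda,x)$, hence for real $\lambda$ one has $\mathcal F_{A,\varepsilon}(\overline{\check f})(\lambda)=\overline{\mathcal F_{A,\varepsilon}(\check f)(-\lambda)}$, while $g(-x)=\overline{f(x)}$ turns the left-hand side into $\int_{\R}|f|^2A\,dx$.

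The main obstacle is the bookkeeping of the factor $1-\tfrac{\varepsilon\varrho}{i\lambda}$: one must check that the spurious pole at $\lambda=0$ it introduces is harmless. This is exactly where the parity cancellation does the work — the genuinely odd pieces drop out under $\int_{\R}$, and the only surviving $\tfrac1{i\lambda(i\lambda-\varepsilon\varrho)}$ contribution enters through its even part $-\tfrac{1}{\mu_\varepsilon^2+\varrho^2}$, so that the potentially singular factor appears only as $\tfrac{\varphi_{\mu_\varepsilon}'(x)}{\mu_\varepsilon^2+\varrho^2}=-\mathcal M(\varphi_{\mu_\varepsilon})(x)$, which is manifestly regular in $\lambda$. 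Besides this, I would need to justify the Fubini interchanges, the integration by parts across the singularity of $A'/A$ at $0$, and the convergence of the $\pi_\varepsilon$-integral near $\mu_\varepsilon=0$ and at $\infty$; the latter follow from the growth estimates of Theorems \ref{esti} and \ref{estder} together with the $c$-function asymptotics (i)--(iii) of Section \ref{sec2}.
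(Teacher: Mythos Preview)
Your argument is correct, and it follows a genuinely different route from the paper's.

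For part 1), the paper runs the derivation in the opposite direction: it starts from the Ch\'ebli inversion formula \eqref{inv1} applied to $f_e$ and to $Jf_o$ (where $Jh(x)=\int_{-\infty}^x h$), rewrites $\varphi_{\mu_\varepsilon}$ and $\varphi_{\mu_\varepsilon}'$ as $\tfrac12(\Psi_{A,\varepsilon}(-\lambda,-x)\pm\Psi_{A,\varepsilon}(-\lambda,x))$, and then uses the functional identities
\[
\Psi_{A,\varepsilon}(\lambda,x)=\Big(1+\tfrac{\varepsilon\varrho}{i\lambda}\Big)\Psi_{A,\varepsilon}(-\lambda,-x)-\tfrac{\varepsilon\varrho}{i\lambda}\Psi_{A,\varepsilon}(\lambda,-x),\qquad
\mathcal F_{A,\varepsilon}(\check f)(\lambda)=\Big(1+\tfrac{\varepsilon\varrho}{i\lambda}\Big)\mathcal F_{A,\varepsilon}(f)(-\lambda)-\tfrac{\varepsilon\varrho}{i\lambda}\mathcal F_{A,\varepsilon}(f)(\lambda)
\]
to reshuffle the $\R_+$-integral into the stated $\R$-integral with weight $1-\tfrac{\varepsilon\varrho}{i\lambda}$. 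Your approach bypasses these identities entirely: you expand the right-hand side of \eqref{invfor} directly and let the $\lambda\mapsto-\lambda$ parity kill the odd cross terms, so the only non-trivial computation is recognising the even part of $\tfrac{1}{i\lambda(i\lambda-\varepsilon\varrho)}$ as $-\tfrac{1}{\mu_\varepsilon^2+\varrho^2}$. Your use of $\mathcal M^{-1}f_o$ in place of the paper's $Jf_o$ is equivalent (they differ by the factor $-(\mu_\varepsilon^2+\varrho^2)=(i\lambda-\varepsilon\varrho)(i\lambda+\varepsilon\varrho)$ after transforming), but pairs more cleanly with $\mathcal M(\varphi_{\mu_\varepsilon})$ in the final step.

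For part 2), the paper again works harder: it first derives the symmetric intermediate formula
\[
\int_\R f\overline g\,A\,dx=\tfrac14\int_{\R_+}\Big\{\mathcal F_{A,\varepsilon}(f)\overline{\mathcal F_{A,\varepsilon}(g)}+\mathcal F_{A,\varepsilon}(\check f)\overline{\mathcal F_{A,\varepsilon}(\check g)}\Big\}\pi_\varepsilon(d\lambda),
\]
and then performs a rather lengthy algebraic manipulation (combining the above functional identity for $\overline{\mathcal F_{A,\varepsilon}(\check f)(-\lambda)}$ with its variants) to collapse this into \eqref{pl2}. Your derivation---pair \eqref{invfor} against $g(-x)A(x)$, use $A$ even, then specialise $g=\overline{\check f}$---is considerably shorter and yields the bilinear identity first, with \eqref{pl2} as a corollary. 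What the paper's route buys is the intermediate $\R_+$-Plancherel formula and the explicit $\Psi/\mathcal F$ functional identities, which are of some independent interest; what yours buys is transparency and brevity.
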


\begin{proof}  For the sake of completeness, we provide a detailed proof.\\
1)  Below $Jh(x):=\int_{-\infty}^x h(t)dt.$ 
Using the   superposition \eqref{2.2} of the eigenfunction $\Psi_\varepsilon(\lambda,x),$ we obtain
$$ 
\mathcal F_{A,\varepsilon} f(\lambda) =2\mathcal F_{\Delta} (f_e)(\mu_\varepsilon)+ 2(i\lambda+\varepsilon \varrho)  \mathcal F_{\Delta} ( Jf_o)(\mu_\varepsilon),\label{7.3}
$$
where $F_{\Delta}$ is as in \eqref{Fdelta}.
By the inversion formula \eqref{inv1} for the Ch\'ebli transform $\mathcal F_{\Delta}$ we deduce that 
\begin{equation}\label{f4}
f(x)=\int_{\R^+} \Big\{ \mathcal F_{\Delta}(f_e)(\mu_\varepsilon) \varphi_{\mu_\varepsilon}(x) +\mathcal F_{\Delta}(Jf_o)(\mu_\varepsilon) \varphi_{\mu_\varepsilon}'(x)\Big\} \: \pi(d\mu_\varepsilon).
\end{equation}
Now, let us express  $\varphi_{\mu_\varepsilon}$ and  $\varphi_{\mu_\varepsilon}' $ in terms of $\Psi_{A,\varepsilon}$ as follows
$$\varphi_{\mu_\varepsilon}(x) ={1\over 2} \Big(\Psi_{A,\varepsilon}({-\lambda},-x)+\Psi_{A,\varepsilon}({-\lambda},x)\Big),\qquad 
\varphi_{\mu_\varepsilon}'(x) ={{i\lambda+\varepsilon \varrho}\over 2} \Big(\Psi_{A,\varepsilon}({-\lambda},-x)-\Psi_{A,\varepsilon}({-\lambda},x)\Big).$$ 
Consequently, formula \eqref{f4} becomes   
\begin{eqnarray}
f(x)
&=&{1\over 2}\int_{\R^+}  \Psi_{A,\varepsilon}({-\lambda},-x)\Big\{ \mathcal F_{\Delta}(f_e)(\mu_\varepsilon)  + (i\lambda+\varepsilon \varrho) \mathcal F_{\Delta}(Jf_o)(\mu_\varepsilon) \Big\} \: \pi(d\mu_\varepsilon)\nonumber \\
&&+{1\over 2}\int_{\R^+} \Psi_{A,\varepsilon}({-\lambda},x)\Big\{ \mathcal F_{\Delta}(f_e)(\mu_\varepsilon)  - (i\lambda+\varepsilon \varrho) \mathcal F_{\Delta}(Jf_o)(\mu_\varepsilon) \Big\} \:\pi(d\mu_\varepsilon)\nonumber \\
&=&{1\over 4} \int_{\R^+} \Big\{\Psi_{A,\varepsilon}({-\lambda},-x) \mathcal F_{A,\varepsilon}(f )(\lambda)+ \Psi_{A,\varepsilon}({-\lambda},x) \mathcal F_{A,\varepsilon}(\check f)(\lambda) \Big\} \: \pi(d\mu_\varepsilon) .\label{eqna}
\end{eqnarray}
Further, it is easy to check that 
\begin{equation}\label{claim4}
\Psi_{A,\varepsilon}(\lambda,x)=  \Big( 1+{{\varepsilon \varrho}\over {i\lambda}}\Big)\Psi_{A,\varepsilon}({-\lambda},-x)-{{\varepsilon \varrho}\over {i\lambda}} \Psi_{A,\varepsilon}({\lambda},-x),
\end{equation}
and therefore 
\begin{eqnarray}\label{8.8}
 \mathcal F_{A,\varepsilon}(\check f)(\lambda) =\Big( 1+{{\varepsilon \varrho}\over {i\lambda}}\Big) \mathcal F_{A,\varepsilon}(f)(-\lambda)-{{\varepsilon \varrho}\over {i\lambda}} \mathcal F_{A,\varepsilon}(f)(\lambda).
\end{eqnarray} 
In view of \eqref{claim4} and \eqref{8.8}  we obtain
\begin{align} 
\int_{\R^+}   \mathcal F_{A,\varepsilon}(f )(\lambda) \Psi_{A,\varepsilon}({-\lambda},-x)  \: \pi(d\mu_\varepsilon)
 =& \int_{\R^+}  \Psi_{A,\varepsilon}({\lambda},x)  \mathcal F_{A,\varepsilon}( f)(\lambda) \:\Big( 1-{{\varepsilon \varrho}\over {i\lambda}}\Big)  \:\pi(d\mu_\varepsilon)\nonumber\\
& +\int_{\R^+}  \Psi_{A,\varepsilon}({-\lambda},x)  \mathcal F_{A,\varepsilon}( f)(\lambda) \: \Big({{\varepsilon \varrho}\over {i\lambda}}\Big)\:  \pi(d\mu_\varepsilon),\label{78a}
\end{align}
and 
\begin{align}
\int_{\R^+} \mathcal F_{A,\varepsilon}(\check f)(\lambda) \Psi_{A,\varepsilon}({-\lambda},x)  \:  \pi(d\mu_\varepsilon)
 = &\int_{\R^+}  \Psi_{A,\varepsilon}({-\lambda},x)  \mathcal F_{A,\varepsilon}( f)(-\lambda) \:\Big( 1+{{\varepsilon \varrho}\over {i\lambda}}\Big)\:  \pi(d\mu_\varepsilon)\nonumber\\
& +\int_{\R^+}  \Psi_{A,\varepsilon}({-\lambda},x)   \mathcal F_{A,\varepsilon}(  f)(\lambda) \:\Big(-{{\varepsilon \varrho}\over {i\lambda}}\Big)\: \pi(d\mu_\varepsilon).  \label{78b}
\end{align}
By substituting \eqref{78a} and \eqref{78b} into \eqref{eqna}, we get the inversion formula  \eqref{invfor}.\\
2) On the one hand, using  the fact that 
$\overline{\Psi_{A,\varepsilon}(\lambda, x)}= \Psi_{A,\varepsilon}(- \lambda, x)$ for $\lambda\in \R,$
we have 
$$\overline{ \mathcal F_{A,\varepsilon}(\check g)(\lambda)}=\int_{\R} \overline {g(x)} \Psi_\varepsilon({-\lambda},x) A(x)dx.$$
Applying   the identity \eqref{eqna} for $f$  implies 
\begin{equation}\label{plpar}
 \int_\R f(x)   \overline{g(x)}  A(x)dx
={1\over 4} \int_{\R^+} \Big\{ \mathcal F_{A,\varepsilon}( f)(\lambda) \overline{ \mathcal F_{A,\varepsilon}( g)(\lambda)} +\mathcal F_{A,\varepsilon}( \check f)(\lambda) \overline{ \mathcal F_{A,\varepsilon}( \check g)(\lambda)} \Big\}   \pi_\varepsilon(d\lambda).
\end{equation}
 On the other hand, from  \eqref{claim4}   it  follows that 
\begin{equation}\label{check}
\overline{\mathcal F_{A,\varepsilon}(\check f)(-\lambda)}= \Big( 1+{{\varepsilon \varrho}\over {i\lambda}}\Big) \overline{\mathcal F_{A,\varepsilon}(f)(\lambda)}-{{\varepsilon \varrho}\over {i\lambda}}\overline{ \mathcal F_{A,\varepsilon}(f)(-\lambda)}.
\end{equation}
Hence
\begin{multline}\label{eq1}
\Big( 1-{{\varepsilon \varrho}\over {i\lambda}}\Big)\mathcal F_{A,\varepsilon}( f)(\lambda) \overline{\mathcal F_{A,\varepsilon}(\check f)(-\lambda)}\\= \Big( 1+{{\varepsilon^2 \varrho^2}\over {\lambda^2}}\Big) \vert \mathcal F_{A,\varepsilon}(f)(\lambda)\vert^2
-{{\varepsilon \varrho}\over {i\lambda}}\Big( 1-{{\varepsilon \varrho}\over {i\lambda}}\Big) \mathcal F_{A,\varepsilon}( f)(\lambda) \overline{ \mathcal F_{A,\varepsilon}(f)(-\lambda)}.
\end{multline}
Now let us rewrite \eqref{check} as
$$\overline{\mathcal F_{A,\varepsilon}(f)(-\lambda)}={{i\lambda}\over{i\lambda-\varepsilon \varrho}}  \overline{\mathcal F_{A,\varepsilon}(\check f)(\lambda)}+{{\varepsilon \varrho}\over{-i\lambda+\varepsilon \varrho}}  
\overline{\mathcal F_{A,\varepsilon}( f)(\lambda)}.$$
Hence
$$\mathcal F_{A,\varepsilon}(f)(\lambda) \overline{\mathcal F_{A,\varepsilon}(f)(-\lambda)}={{i\lambda}\over{i\lambda-\varepsilon \varrho}} \mathcal F_{A,\varepsilon}(f)(\lambda)  \overline{\mathcal F_{A,\varepsilon}(\check f)(\lambda)}+{{\varepsilon \varrho}\over{-i\lambda+\varepsilon \varrho}}  
\vert{\mathcal F_{A,\varepsilon}( f)(\lambda)}\vert^2,$$
which implies 
\begin{multline}
\Big(-{{\varepsilon \varrho}\over {i\lambda}} \Big)\Big({{i\lambda-\varepsilon \varrho}\over{i\lambda}}\Big) \mathcal F_{A,\varepsilon}(f)(\lambda) \overline{\mathcal F_{A,\varepsilon}(f)(-\lambda)}\\
=-
\Big({{\varepsilon \varrho}\over {i\lambda}} \Big) \mathcal F_{A,\varepsilon}(f)(\lambda)  \overline{\mathcal F_{A,\varepsilon}(\check f)(\lambda)}-\Big({{\varepsilon^2 \varrho^2}\over {\lambda^2}} \Big) \vert{\mathcal F_{A,\varepsilon}( f)(\lambda)}\vert^2.
\end{multline}
Thus, \eqref{eq1} becomes 
 \begin{equation}\label{eq11}
\Big(1-{{ \varepsilon \varrho}\over{i\lambda}}\Big) \mathcal F_{A,\varepsilon}(f)(\lambda) \overline{\mathcal F_{A,\varepsilon}(\check f)(-\lambda)} 
= |\mathcal F_{A,\varepsilon}(f)(\lambda)|^2 -{{\varepsilon \varrho}\over{i\lambda}} \mathcal F_{A,\varepsilon}(f)(\lambda)  \overline{\mathcal F_{A,\varepsilon}(\check f)(\lambda)}.
\end{equation}   
This is the key identity towards  the Plancherel formula \eqref{pl2}.  One more thing, from \eqref{eq11} we  also   have
\begin{equation}\label{eq2}
\Big(1-{{ \varepsilon \varrho}\over{i\lambda}}\Big)\overline{ \mathcal F_{A,\varepsilon}(\check f)(-\lambda)} \mathcal F_{A,\varepsilon}( f)(\lambda) =  |\mathcal F_{A,\varepsilon}(\check f)(-\lambda)|^2 -{{\varepsilon \varrho}\over{i\lambda}} \overline{\mathcal F_{A,\varepsilon}(\check f)(-\lambda)}  {\mathcal F_{A,\varepsilon}(  f)(-\lambda)}.
\end{equation}
Indeed, we obtain \eqref{eq2} in three steps: 
\begin{itemize}
\item[{1.}]  replace  $f$ by $\check f $ in \eqref{eq11}.
\item[{2.}]  substitute $\lambda$ by $-\lambda$ in the resulting identity from step 1.
 \item[{3.}]   take the complex conjugates in the resulting identity from  step 2.
\end{itemize}
By putting  the pieces together we arrive at 
\begin{eqnarray*}
&&\int_{\R} \mathcal F_{A,\varepsilon}(f)(\lambda) \overline{\mathcal F_{A,\varepsilon}(\check f)(-\lambda)} \: \Big( 1-{{\varepsilon \varrho}\over {i\lambda}}\Big) \:\pi_\varepsilon(d\lambda)\\
&=&\int_{\R^+} |\mathcal F_{A,\varepsilon}(f)(\lambda)|^2   \pi_\varepsilon(d\lambda)-\int_{\R^+}  \mathcal F_{A,\varepsilon}(f)(\lambda) \overline{\mathcal F_{A,\varepsilon}(\check f)(\lambda)}  \:\Big( {{\varepsilon \varrho}\over {i\lambda}} \Big)\: \pi_\varepsilon(d\lambda)\\
&&+\int_{\R^-} |\mathcal F_{A,\varepsilon}(\check f)(-\lambda)|^2   \pi_\varepsilon(d\lambda)-\int_{\R^-} \mathcal F_{A,\varepsilon}(f)(-\lambda) \overline{\mathcal F_{A,\varepsilon}(\check f)(-\lambda)} \: \Big({{\varepsilon \varrho}\over {i\lambda}} \Big)\:  \pi_\varepsilon(d\lambda)\\
&=&\int_{\R^+} \Big\{ |\mathcal F_{A,\varepsilon}(f)(\lambda)|^2  + |\mathcal F_{A,\varepsilon}(\check f)(\lambda)|^2\Big\} \:  \pi_\varepsilon(d\lambda),
\end{eqnarray*}
which compares very well with $4 \Vert f\Vert_{L^2(\R, A(x)dx)}$ (see \eqref{plpar}).
\end{proof}
\begin{rems}
\begin{enumerate}[\upshape 1)]
\item For $\varepsilon =1,$ the   Plancherel formula \eqref{pl2}  corrects Theorem 5.13 in \cite{Fitou}
(stated without a proof).  
\item For  $\varepsilon=0$ we can prove the following stronger versions of  the inversion   and the Plancherel formulas: 
\begin{enumerate}[\upshape (i)]
\item  If $f\in L^1(\R, A(x)dx)$ and $\mathcal F_{A,0}(f)\in L^1(\R,  \pi_0(d\lambda))$ then
$$
f(x)={1\over 4} \int_{\R} \mathcal F_{A,0}(f)(\lambda)\, \Psi_{A,0}(\lambda, x) \, \pi_0(d\lambda)  \qquad
\text{\rm almost everywhere.}
$$
\item  If $f\in L^1\cap L^2(\R, A(x)dx),$ then $\mathcal F_{A,0} f \in L^2(\R, \pi_0(d\lambda))$ and $\Vert \mathcal F_{A,0} f \Vert_{L_\lambda^2}=2\Vert f\Vert_{L^2_x}.$ 
\item There exists a unique   isometry on $L^{2}(\R, A(x)dx)$ that coincides with $(1/2) \mathcal F_{A,0}$ on $L^1\cap L^2(\R, A(x)dx).$
\end{enumerate}
\end{enumerate}
\end{rems}

The following lemma is needed in the proof of the Paley-Wiener theorem below. 
\begin{lema}\label{support} For $R>0,$ denote by $\mathcal D_R(\R)$ the space of smooth functions with support inside  $[-R,R]$.
Then,  $f\in \mathcal D_R(\R)$    if and 
only if  $\,{}^{\rm t}V_{A, \varepsilon} f\in \mathcal D_R(\R).$
 \end{lema}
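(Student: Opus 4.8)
The plan is to read off the forward implication directly from the kernel and to obtain the converse by inverting the factorization of \({}^{\rm t}V_{A,\varepsilon}\) supplied by Lemma \ref{LVVt}. For the forward direction, suppose \(f\in\mathcal D_R(\R)\). By \eqref{Vt} we have \({}^{\rm t}V_{A,\varepsilon}f(y)=\int_{|x|>|y|}\KK_\varepsilon(x,y)f(x)A(x)\,dx\), and since \(\KK_\varepsilon(x,\cdot)\) is supported in \([-|x|,|x|]\), for \(|y|>R\) the domain of integration lies in \(\{|x|>|y|>R\}\), where \(f\) vanishes. Hence \({}^{\rm t}V_{A,\varepsilon}f(y)=0\) for \(|y|>R\); as \({}^{\rm t}V_{A,\varepsilon}f\) is smooth, this gives \({}^{\rm t}V_{A,\varepsilon}f\in\mathcal D_R(\R)\).

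For the converse, recall that \({}^{\rm t}V_{A,\varepsilon}\) is an automorphism of \(\mathcal D(\R)\) (previous theorem), so it suffices to show that its inverse again maps \(\mathcal D_R(\R)\) into \(\mathcal D_R(\R)\); that is, \(\supp({}^{\rm t}V_{A,\varepsilon}g)\subseteq[-R,R]\) forces \(\supp g\subseteq[-R,R]\). I would use the expression \eqref{expVt}, writing \(g=g_e+g_o\) and noting that \(Jg_o\) is even and compactly supported with \(\supp(Jg_o)\subseteq\supp(g_o)\), while \(g_o=(Jg_o)'\). Separating \eqref{expVt} into even and odd parts gives \(h_e=\mathcal A_\varepsilon g_e-\varepsilon\varrho\,\mathcal A_\varepsilon(Jg_o)\) and \(h_o=\tfrac{d}{dx}\mathcal A_\varepsilon(Jg_o)\), where \(h:={}^{\rm t}V_{A,\varepsilon}g\). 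From the odd part one recovers the even \(\mathcal D\)-function \(\mathcal A_\varepsilon(Jg_o)(x)=-\int_x^\infty h_o(t)\,dt\), whose support lies in \([-R,R]\) as soon as \(\supp h\subseteq[-R,R]\); from the even part, \(\mathcal A_\varepsilon g_e=h_e+\varepsilon\varrho\,\mathcal A_\varepsilon(Jg_o)\) then also has support in \([-R,R]\). Thus the whole problem reduces to the assertion that \(\mathcal A_\varepsilon\) on \(\mathcal D_e(\R)\) preserves the support radius in \emph{both} directions: if \(\mathcal A_\varepsilon\phi\in\mathcal D_{e,R}(\R)\) then \(\phi\in\mathcal D_{e,R}(\R)\), applied to \(\phi=Jg_o\) and \(\phi=g_e\). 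Inverting \(\tfrac{d}{dx}\) on the odd part recovers \(g_o\) with the same support bound, and \(g=g_e+g_o\in\mathcal D_R(\R)\) follows.

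It remains to justify the two-sided support property of \(\mathcal A_\varepsilon\). Here I would use the factorization \(\mathcal A_\varepsilon={}^{\rm t}\mathcal E_\varepsilon^{-1}\circ\mathcal A\) from \eqref{center2}. The operators \({}^{\rm t}\mathcal E_\varepsilon\) and \({}^{\rm t}\mathcal E_\varepsilon^{-1}\) are of the form identity plus a kernel operator supported in \(\{|x|>|y|\}\) (see \eqref{e-t11} and \eqref{e-t21}), so each, and hence \({}^{\rm t}\mathcal E_\varepsilon^{\pm1}\), maps \(\mathcal D_{e,R}(\R)\) into itself; being mutually inverse, they preserve the support radius in both directions. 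The genuinely delicate ingredient, and what I expect to be the main obstacle, is the corresponding statement for the classical Abel transform \(\mathcal A\): the forward inclusion is clear from its kernel being supported in \(\{|x|>|y|\}\), but the backward inclusion \(\supp(\mathcal A\phi)\subseteq[-R,R]\Rightarrow\supp\phi\subseteq[-R,R]\) is the deep fact that \(\mathcal A^{-1}\) does not enlarge supports. This is precisely Trim\`eche's theorem that \(\mathcal A\) is a support-preserving topological automorphism of \(\mathcal D_e(\R)\) \cite{Tri}. Granting it, the chain \(\mathcal A_\varepsilon\phi\in\mathcal D_{e,R}\Rightarrow\mathcal A\phi={}^{\rm t}\mathcal E_\varepsilon(\mathcal A_\varepsilon\phi)\in\mathcal D_{e,R}\Rightarrow\phi=\mathcal A^{-1}(\mathcal A\phi)\in\mathcal D_{e,R}\) closes the argument, and with it the converse implication.
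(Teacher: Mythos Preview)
Your argument is correct and follows essentially the same route as the paper. The paper condenses the converse by writing down an explicit formula for ${}^{\rm t}V_{A,\varepsilon}^{-1}$, namely
\[
{}^{\rm t}V_{A,\varepsilon}^{-1} g(y)=\mathcal A_\varepsilon^{-1} g_e(y)+\Big(\varepsilon\varrho+\tfrac{d}{dy}\Big)\mathcal A_\varepsilon^{-1}(Jg_o)(y),
\]
and then appealing directly to the support preservation of $\mathcal A_\varepsilon^{-1}=\mathcal A^{-1}\circ{}^{\rm t}\mathcal E_\varepsilon$, citing \cite[Lemma~4.10]{BX1} for the $\mathcal A^{-1}$ part. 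You instead invert \eqref{expVt} by hand---separating even/odd components and recovering $\mathcal A_\varepsilon(Jg_o)$ from $h_o$---but this is precisely the computation that verifies the inverse formula above, so the two presentations coincide in substance. Your identification of the support preservation of $\mathcal A^{-1}$ (Trim\`eche's theorem) as the one nontrivial ingredient is exactly right. One cosmetic remark: the inclusion you want is $\supp(Jg_o)\subseteq[-R,R]$ whenever $\supp(g_o)\subseteq[-R,R]$, not literally $\supp(Jg_o)\subseteq\supp(g_o)$, but your intent is clear.
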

 \begin{proof} The direct statement follows from \eqref{Vt}. The converse direction is more involved. On the  one hand, one can prove that 
\begin{equation}\label{expVt}
{}^{\rm t}V_{A, \varepsilon} ^{-1} \, g(y)= \mathcal A_\varepsilon^{-1} g_e(y)+\Big( \varepsilon \varrho +{d\over{dy}}\Big) \mathcal A_\varepsilon^{-1} (J g_o)(y),
\end{equation}
where  $Jh(x):= \int_{-\infty}^x h(t)dt$ and $\mathcal A_\varepsilon^{-1}=\mathcal A^{-1}\circ {}^{\rm t}\mathcal E_\varepsilon$ (see \eqref{center2}). On the other hand, from \eqref{e-t11} and \cite[Lemma 4.10]{BX1} it follows that if $ g_e \in \mathcal D_R(\R)$ then $\mathcal A_\varepsilon^{-1} g_e  \in \mathcal D_R(\R).$  Further, on may check that $g_o\in \mathcal D_R(\R)$ if and only if $J g_o\in   \mathcal D_R(\R).$ Thus, as $J g_o$ is an even function, it follows from above that $\mathcal A_\varepsilon^{-1} (J g_o)  \in \mathcal D_R(\R).$
 \end{proof}
 
 Let $ { PW_R(\C)}$ be the space of entire functions $h$ on $\C$  which are of exponential type and rapidly decreasing, i.e. 
 \index{$PW(\C)$}
 \begin{equation}\label{R}
 \exists \:R>0, \,\forall t\in \N,\quad  \sup_{\lambda\in \C} (|\lambda |+1)^t e^{-R|\Im \lambda |}\; |h(\lambda)|<\infty.
 \end{equation}

 \begin{thm}\label{pw} Assume that $-1\leq \varepsilon\leq 1.$ The Fourier transform $\mathcal F_{A,\varepsilon}$ is a linear  isomorphism
  between   $\mathcal D_R(\R)$   and the space of all entire functions $h$ on $\C$ satisfying 
 \eqref{R}.
 \end{thm}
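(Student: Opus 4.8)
The plan is to reduce the statement to the classical Paley--Wiener theorem for the Euclidean Fourier transform by means of the intertwining operator. The cornerstone is the factorization
\[
\mathcal F_{A,\varepsilon} f(\lambda)=\int_\R {}^{\rm t}V_{A, \varepsilon}\check f(y)\, e^{i\lambda y}\, dy ,
\]
valid for every $f\in\mathcal D(\R)$. To establish it, I would first substitute $x\mapsto -x$ in the defining integral of $\mathcal F_{A,\varepsilon} f(\lambda)$ and use that $A$ is even (hypothesis (H1)) to rewrite $\mathcal F_{A,\varepsilon} f(\lambda)=\int_\R \check f(x)\,\Psi_{A, \varepsilon}(\lambda,x)\,A(x)\,dx$. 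Then I would insert the Laplace-type representation of Theorem~\ref{Mh}, namely $\Psi_{A, \varepsilon}(\lambda,x)=\int_{|y|<|x|}\KK_\varepsilon(x,y)e^{i\lambda y}\,dy$, and interchange the order of integration (justified since $f$ is compactly supported and $\KK_\varepsilon(x,\cdot)$ is continuous on a compact interval). The inner integral is exactly ${}^{\rm t}V_{A, \varepsilon}\check f(y)$ by \eqref{Vt}, which yields the displayed identity. In other words, $\mathcal F_{A,\varepsilon}$ is, up to the reflection $f\mapsto\check f$, the Euclidean Fourier transform precomposed with ${}^{\rm t}V_{A, \varepsilon}$.

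For the forward inclusion, let $f\in\mathcal D_R(\R)$. Then $\check f\in\mathcal D_R(\R)$, and by Lemma~\ref{support} we have ${}^{\rm t}V_{A, \varepsilon}\check f\in\mathcal D_R(\R)$. The classical Paley--Wiener theorem then guarantees that its Euclidean Fourier transform extends to an entire function of exponential type $R$ that is rapidly decreasing on every horizontal strip, i.e.\ satisfies \eqref{R}; by the factorization above this entire function is precisely $\mathcal F_{A,\varepsilon} f$.

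For surjectivity and injectivity, I would start from an entire function $h$ obeying \eqref{R} for some $R>0$. The Euclidean Paley--Wiener theorem produces a unique $g\in\mathcal D_R(\R)$ whose Euclidean Fourier transform equals $h$. Since ${}^{\rm t}V_{A, \varepsilon}$ is an automorphism of $\mathcal D(\R)$, I set $\check f:=({}^{\rm t}V_{A, \varepsilon})^{-1}g$; the converse half of Lemma~\ref{support} gives $\check f\in\mathcal D_R(\R)$, hence $f\in\mathcal D_R(\R)$, and the factorization shows $\mathcal F_{A,\varepsilon} f=h$, proving surjectivity. Injectivity follows because each of the three maps composing the factorization, namely $f\mapsto\check f$, ${}^{\rm t}V_{A, \varepsilon}$, and the Euclidean transform on $\mathcal D_R(\R)$, is injective.

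The genuine analytic content of the theorem is housed in Lemma~\ref{support} and in the Laplace-type representation of Theorem~\ref{Mh}, both already available; within the present proof the only points demanding care are the Fubini interchange and the bookkeeping of the reflection $f\mapsto\check f$ together with the parity of $A$. I expect no serious obstacle beyond correctly matching the support radius $R$ on both sides of the correspondence, which is exactly what the ``if and only if'' form of Lemma~\ref{support} is designed to deliver.
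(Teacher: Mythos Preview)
Your proposal is correct and follows essentially the same approach as the paper: both establish the factorization $\mathcal F_{A,\varepsilon}(f)=\mathcal F_{\rm euc}\circ{}^{\rm t}V_{A,\varepsilon}(\check f)$ via the Laplace-type representation (Theorem~\ref{Mh}, equivalently \eqref{VE}), and then invoke Lemma~\ref{support} together with the classical Paley--Wiener theorem. The paper's version is terser, but your added detail on Fubini, the reflection bookkeeping, and the explicit treatment of surjectivity and injectivity are all appropriate and in line with the intended argument.
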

 \begin{proof}
 The proof is standard. We shall only  indicate how to proceed towards  the statement. On the one hand, the Fourier transform 
 $\mathcal F_{A,\varepsilon}$ can be written as $\mathcal F_{A,\varepsilon}(f)=\mathcal F_{\rm euc} \circ {}^{\rm t}V_{A, \varepsilon}(\check f),$ where $\mathcal F_{\rm euc}$
  is the Euclidean  Fourier transform and $ {}^{\rm t}V_{A, \varepsilon}$ is the intertwining operator \eqref{Vt}. Indeed, in view of Mehler's type Theorem \ref{Mh}, we have 
  \begin{eqnarray*}
 \mathcal F_{A,\varepsilon}(f)(\lambda)
 &=&\int_{\R} f(x) \Psi_{A, \varepsilon}(\lambda, -x)A(x)dx\\
& =&\int_\R \check f(x) V_{A, \varepsilon}(e^{i\lambda \,\cdot})(x) A(x)dx\\
&  =&\int_\R {}^{\rm t}V_{A, \varepsilon}\, \check f(x)\, e^{i\lambda \,x}  dx.
\end{eqnarray*}
  Now, in view of Lemma \ref{support}, appealing to the Paley-Wiener theorem for the Euclidean 
  Fourier transform $\mathcal F_{\rm euc}$ we get the desired statement. 
  \end{proof}

 For $-1\leq \varepsilon\leq 1$ and  $0<p\leq {2\over {1+  \sqrt{1-\varepsilon^2}}},$ set \index{$\vartheta_{p,\varepsilon} $} 
 $\vartheta_{p,\varepsilon}:= {2\over p} -1 -\sqrt{1-\varepsilon^2}.$ Observe that $1\leq {2\over {1+  \sqrt{1-\varepsilon^2}}} \leq 2.$  
 We introduce the   tube domain  
   $$\C_{p,\varepsilon}:=\{ \lambda\in \C\;|\; |\Im \lambda | \leq \varrho \;\vartheta_{p,\varepsilon} \}.$$ 
 For $\vartheta_{p,\varepsilon}=0$ or $\varrho=0,$ the domain 
 $\C_{p,\varepsilon}$ reduces to $\R.$ 
 
 \begin{prop}\label{bounded} For all  $\lambda \in \C_{1,\varepsilon},$ the function $\lambda\mapsto \Psi_{A,\varepsilon}(\lambda,x)$ is bounded for all $x\in \R.$ 
 \end{prop}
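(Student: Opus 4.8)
The plan is to deduce the boundedness directly from the growth estimates of Theorem \ref{esti}, exploiting the fact that the half-width $\varrho(1-\sqrt{1-\varepsilon^2})$ of the tube $\C_{1,\varepsilon}$ (obtained from $\vartheta_{1,\varepsilon}=1-\sqrt{1-\varepsilon^2}$) is exactly the exponential decay rate of the ground eigenfunction $\Psi_{A,\varepsilon}(0,\cdot)$. First I would chain parts 2) and 3) of Theorem \ref{esti}: writing $\lambda=a+ib$, these give
$$|\Psi_{A,\varepsilon}(\lambda,x)|\leq \Psi_{A,\varepsilon}(ib,x)\leq \Psi_{A,\varepsilon}(0,x)\,e^{|b|\,|x|}$$
for every $x\in\R$, so that the modulus is controlled by a bound depending on $\lambda$ only through $|\Im\lambda|=|b|$ and not through $\Re\lambda=a$.

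Next I would insert the estimate for $\Psi_{A,\varepsilon}(0,\cdot)$ and split into the two cases of Theorem \ref{esti} 4). If $\varepsilon=0$ then $\vartheta_{1,0}=0$, so $\C_{1,0}=\R$ and $\Psi_{A,0}(0,x)=1$; since $\Im\lambda=0$ for real $\lambda$, the displayed bound gives $|\Psi_{A,0}(\lambda,x)|\leq 1$ and the claim is immediate. If $\varepsilon\neq 0$, part 4b) gives $\Psi_{A,\varepsilon}(0,x)\leq c_\varepsilon(|x|+1)\,e^{-\varrho(1-\sqrt{1-\varepsilon^2})|x|}$, whence
$$|\Psi_{A,\varepsilon}(\lambda,x)|\leq c_\varepsilon(|x|+1)\,e^{(|\Im\lambda|-\varrho(1-\sqrt{1-\varepsilon^2}))|x|}.$$
For $\lambda\in\C_{1,\varepsilon}$ one has $|\Im\lambda|\leq \varrho(1-\sqrt{1-\varepsilon^2})$, so the exponent is nonpositive and the right-hand side is at most $c_\varepsilon(|x|+1)$. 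This quantity is independent of $\lambda$ (in particular of $\Re\lambda$), so for each fixed $x$ the map $\lambda\mapsto\Psi_{A,\varepsilon}(\lambda,x)$ is bounded on the whole tube $\C_{1,\varepsilon}$, which is the assertion.

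I do not expect a genuine obstacle: the proposition is essentially a bookkeeping consequence of estimates already in hand. The one point deserving attention is that the bound must remain valid as $\Re\lambda\to\pm\infty$ along the tube, and this is guaranteed because the inequalities of Theorem \ref{esti} 2)--3) involve only $\Im\lambda$, so enlarging the real part of $\lambda$ does not spoil them. It is also worth flagging that the argument does \emph{not} produce a bound uniform in $x$ as well: on the boundary $|\Im\lambda|=\varrho(1-\sqrt{1-\varepsilon^2})$ the exponential is neutralized and only the factor $(|x|+1)$ survives; hence the statement is to be read as boundedness in $\lambda$ for each fixed $x\in\R$.
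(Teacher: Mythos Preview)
Your chain of inequalities is correct and does establish, for each fixed $x\in\R$, that $\lambda\mapsto\Psi_{A,\varepsilon}(\lambda,x)$ is bounded on $\C_{1,\varepsilon}$ by $c_\varepsilon(|x|+1)$. The gap is one of interpretation: the proposition is meant---and is used in Corollary~\ref{LRL1}, where one concludes $|\mathcal F_{A,\varepsilon}(f)(\lambda)|\leq 2\|f\|_1$---to give a bound \emph{uniform in both} $\lambda\in\C_{1,\varepsilon}$ and $x\in\R$, namely $|\Psi_{A,\varepsilon}(\lambda,x)|\leq 2$. Your route through Theorem~\ref{esti}~4b) necessarily inherits the factor $(|x|+1)$ and cannot yield this; you yourself flag the limitation in your last paragraph, but that limitation is fatal for the downstream application.

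The paper avoids this loss by replacing the step through $\Psi_{A,\varepsilon}(0,x)$ with a sharper device. After reducing via Theorem~\ref{esti}~2) to $\Psi_{A,\varepsilon}(i\eta,x)$ for $|\eta|\leq\varrho(1-\sqrt{1-\varepsilon^2})$, it uses the even-part identity
\[
\Psi_{A,\varepsilon}(i\eta,x)+\Psi_{A,\varepsilon}(i\eta,-x)=2\varphi_{\mu_\varepsilon}(x),\qquad \mu_\varepsilon^2=-\eta^2-(1-\varepsilon^2)\varrho^2,
\]
coming from \eqref{2.2}. One checks that $\mu_\varepsilon\in i\R$ with $|\mu_\varepsilon|\leq\varrho$ (this amounts to $(1-s)^2+s^2\leq 1$ for $s=\sqrt{1-\varepsilon^2}\in[0,1]$), so Lemma~\ref{BX}~3) gives $\varphi_{\mu_\varepsilon}(x)\leq 1$ with no polynomial loss. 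Combined with the strict positivity $\Psi_{A,\varepsilon}(i\eta,\cdot)>0$ from Theorem~\ref{thm-posi}, this yields $\Psi_{A,\varepsilon}(i\eta,x)\leq 2$ outright. The point is that the $(|x|+1)$ in Theorem~\ref{esti}~4b) originates from Lemma~\ref{BX}~6) applied to $\varphi_0$; by working instead with $\varphi_{\mu_\varepsilon}$ in the regime $|\Im\mu_\varepsilon|\leq\varrho$, one accesses the sharp bound $|\varphi_{\mu_\varepsilon}|\leq 1$ directly.
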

 \begin{proof} Let $R >0$ be arbitrary but fixed and let $\R_{1,\varepsilon}:=\{\nu\in \R\;|\; |\nu|\leq \varrho\,(1-\sqrt{1-\varepsilon^2})\}.$ Applying the maximum modulus principle together with the fact that $|\Psi_{A,\varepsilon}(\lambda, x)|\leq \Psi_{A,\varepsilon}(i \Im \lambda, x)$ in the domain 
$[-R,R]+i\R_{1, \varepsilon}$ implies that the maximum of $|\Psi_{A,\varepsilon}(\lambda, x)|$ is obtained when $\lambda$ belongs to the boundary of  $i\R_{1, \varepsilon}.$ That is 
$\lambda=i\eta$ with $|\eta|=\varrho\,(1-\sqrt{1-\varepsilon^2}). $
Now, recall that  $\Psi_{A,\varepsilon}(i\eta, x)+ \Psi_{A,\varepsilon}(i\eta, -x) =2\varphi_{\mu_\varepsilon}(x)$ when $\varepsilon\not = 0, \pm 1,$ and  $\Psi_{A,\varepsilon}(i\eta, x)+ \Psi_{A,\varepsilon}(i\eta, -x) =2$ when  $\varepsilon  = 0, \pm 1.$ 
The parameter   $\mu_\varepsilon$ satisfies  $\mu_\varepsilon^2=\lambda^2- (1-\varepsilon^2)\varrho^2=-\varrho^2\big( 1-2\sqrt{1-\varepsilon^2}\{1-\sqrt{1-\varepsilon^2}\}\big)\leq 0,$ and therefore $\mu_\varepsilon\in i\R$ with $|\mu_\varepsilon|\leq \varrho. $  
 Using the fact that $\Psi_{A,\varepsilon}(i\eta, x) >0$ for all $x\in \R,$ together with the fact that 
 $\varphi_{\mu_\varepsilon}(x) \leq 1$ for $\mu_\varepsilon$ is as above (see Lemma \ref{BX}.3),  it follows that  $\Psi_{A,\varepsilon}(i\eta, x) \leq 2$ for all $x\in \R$ and $-1\leq \varepsilon\leq 1.$
 \end{proof}
 \begin{coro}\label{LRL1} Let $f\in L^1(\R, A(x)dx).$ Then the following holds.
  \begin{enumerate}[\upshape 1)]
 \item The Fourier transform $\mathcal F_{A,\varepsilon} (f) (\lambda)$ is well defined for all $\lambda\in  {\C}_{1,\varepsilon}.$ Moreover,
$$| \mathcal F_{A,\varepsilon} (f) (\lambda)| \leq 2 \Vert f\Vert_1,\qquad \lambda\in   {\C}_{1,\varepsilon}.
$$ 
 \item The function $\mathcal F_{A,\varepsilon} (f)$ is holomorphic on  $\mathring{\C}_{1,\varepsilon},$ the interior of ${\C}_{1,\varepsilon}.$
  \item { \rm (Riemann-Lebesgue lemma)} 
\begin{equation}\label{limit1} 
\lim\limits_{   \lambda\in   {\C}_{1,\varepsilon} , |\lambda |\rightarrow \infty  } 
 | \mathcal F_{A,\varepsilon} (f) (\lambda)| =0.\end{equation}
\end{enumerate}
 \end{coro}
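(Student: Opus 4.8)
The plan is to derive all three assertions from the uniform estimate $|\Psi_{A,\varepsilon}(\lambda,x)|\le 2$ valid on the \emph{entire} tube $\C_{1,\varepsilon}$, which is exactly what the proof of Proposition \ref{bounded} delivers (the maximum modulus argument there reduces to the boundary values $\Psi_{A,\varepsilon}(i\eta,x)\le 2$). For part 1) I would simply observe that, since $|\Psi_{A,\varepsilon}(\lambda,-x)|\le 2$ for every $\lambda\in\C_{1,\varepsilon}$ and every $x\in\R$, the defining integral converges absolutely for $f\in L^1(\R,A(x)dx)$ and
$$|\mathcal F_{A,\varepsilon}(f)(\lambda)|\le\int_{\R}|f(x)|\,|\Psi_{A,\varepsilon}(\lambda,-x)|\,A(x)\,dx\le 2\Vert f\Vert_1,$$
which is precisely the claimed bound and shows that $\mathcal F_{A,\varepsilon}(f)$ is well defined throughout $\C_{1,\varepsilon}$.

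For part 2) I would use differentiation under the integral sign in the form of Morera's theorem. For each fixed $x$ the map $\lambda\mapsto\Psi_{A,\varepsilon}(\lambda,-x)$ is entire by Theorem \ref{thm1}, while on $\mathring{\C}_{1,\varepsilon}$ the integrand $f(x)\Psi_{A,\varepsilon}(\lambda,-x)A(x)$ is dominated, uniformly in $\lambda$, by the integrable function $2|f(x)|A(x)$. For any closed triangle $\gamma\subset\mathring{\C}_{1,\varepsilon}$, Fubini's theorem then permits interchanging $\oint_\gamma$ with $\int_{\R}$, and Cauchy's theorem gives $\oint_\gamma\Psi_{A,\varepsilon}(\lambda,-x)\,d\lambda=0$; hence $\oint_\gamma\mathcal F_{A,\varepsilon}(f)(\lambda)\,d\lambda=0$. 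Combined with continuity of $\mathcal F_{A,\varepsilon}(f)$ (a further dominated-convergence step), this yields holomorphy on $\mathring{\C}_{1,\varepsilon}$.

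For the Riemann--Lebesgue statement 3) I would argue by density. First, for $f\in\mathcal D(\R)$ the Paley--Wiener theorem (Theorem \ref{pw}) furnishes an $R>0$ so that $\mathcal F_{A,\varepsilon}(f)$ satisfies \eqref{R}. Since $|\Im\lambda|\le\varrho(1-\sqrt{1-\varepsilon^2})$ is bounded on $\C_{1,\varepsilon}$, the factor $e^{R|\Im\lambda|}$ stays bounded there, so \eqref{R} with any $t\ge 1$ gives $|\mathcal F_{A,\varepsilon}(f)(\lambda)|\le C_t(|\lambda|+1)^{-t}\to 0$ as $|\lambda|\to\infty$ inside $\C_{1,\varepsilon}$. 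Then, for arbitrary $f\in L^1(\R,A(x)dx)$ and $\eta>0$, I would choose $g\in\mathcal D(\R)$ with $\Vert f-g\Vert_1<\eta/4$ (using density of $\mathcal D(\R)$ in $L^1(\R,A(x)dx)$) and combine the $L^1$-bound of part 1) with the decay just established: for $|\lambda|$ large inside $\C_{1,\varepsilon}$,
$$|\mathcal F_{A,\varepsilon}(f)(\lambda)|\le 2\Vert f-g\Vert_1+|\mathcal F_{A,\varepsilon}(g)(\lambda)|<\eta.$$

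The only genuinely delicate point is part 3): everything turns on converting the exponential-type control of \eqref{R} into uniform polynomial decay on the tube, which works precisely because $|\Im\lambda|$ is bounded on $\C_{1,\varepsilon}$, and on the passage from the dense class $\mathcal D(\R)$ to all of $L^1$, where the uniform estimate of part 1) carries the approximation error. Parts 1) and 2) are routine once Proposition \ref{bounded} is available.
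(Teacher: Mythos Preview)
Your proof is correct and follows essentially the same approach as the paper: parts 1) and 2) are deduced from Proposition~\ref{bounded} together with the holomorphy of $\lambda\mapsto\Psi_{A,\varepsilon}(\lambda,x)$ and Morera's theorem, while part 3) is obtained from the Paley--Wiener Theorem~\ref{pw} for test functions followed by a density argument in $L^1(\R,A(x)dx)$. You have simply supplied more detail than the paper's proof, but the strategy is identical.
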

  \begin{proof}
 The first two statements are direct consequences of Proposition \ref{bounded}, the fact that 
$\Psi_{A, \varepsilon}(\lambda, \cdot) $ is holomorphic in $\lambda,$ and Morera's theorem. For the Riemann-Lebesgue lemma, a classical proof for the Euclidean Fourier transform carries over. More precisely, assume that $f\in \mathcal D(\R)$ (the space of smooth functions with compact support on $\R$).     Now using the Paley-Wiener Theorem \ref{pw} to conclude that the limit \eqref{limit1} holds for test functions; the general case then follows from the fact that $\mathcal D(\R)$ is dense in $L^1(\R, A(x)dx).$
 \end{proof}

 Next we discuss some properties of the Fourier transform $\mathcal F_{A,\varepsilon}$ on $L^p(\R, A(x)dx)$ with $p>1.$ 
\begin{lema}\label{LRL2}  Let $f\in L^p(\R, A(x)dx)$ with $1<p\leq {2\over {1+  \sqrt{1-\varepsilon^2}}}.$ Then the following holds.
 \begin{enumerate}[\upshape 1)]
 \item The Fourier transform $\mathcal F_{A,\varepsilon} (f) (\lambda)$ is well defined for all $\lambda$ in  
 $\mathring{\C}_{p,\varepsilon},$ the interior of  $\C_{p,\varepsilon}.$ Moreover,
$$| \mathcal F_{A,\varepsilon} (f) (\lambda)| \leq c \Vert f\Vert_p,\qquad \lambda\in  \mathring{\C}_{p,\varepsilon}.
$$ 
 \item The function $\mathcal F_{A,\varepsilon} (f)$ is holomorphic on  $\mathring{\C}_{p,\varepsilon}.$
  \item { \rm (Riemann-Lebesgue lemma)} 
\begin{equation}\label{limit} 
\lim\limits_{   \lambda\in  \mathring{\C}_{p,\varepsilon} , |\lambda |\rightarrow \infty  } 
 | \mathcal F_{A,\varepsilon} (f) (\lambda)| =0.\end{equation}
\end{enumerate}
 \end{lema}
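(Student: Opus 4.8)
The plan is to run the argument of Corollary \ref{LRL1} with the uniform boundedness of the kernel on $\C_{1,\varepsilon}$ (Proposition \ref{bounded}) replaced by the $L^{p'}$-integrability of $\Psi_{A,\varepsilon}(\lambda,\cdot)$ against $A(x)dx$, where $p'$ is the conjugate exponent of $p$. For part 1) I would first settle well-definedness by Hölder's inequality,
$$|\mathcal F_{A,\varepsilon}(f)(\lambda)|\leq \|f\|_p\,\Big(\int_\R |\Psi_{A,\varepsilon}(\lambda,-x)|^{p'}A(x)\,dx\Big)^{1/p'}.$$
Combining Theorems \ref{esti} and \ref{estder} with Lemma \ref{BX} yields the pointwise estimate $|\Psi_{A,\varepsilon}(\lambda,x)|\leq c_\varepsilon(|x|+1)e^{(|\Im\lambda|-\varrho(1-\sqrt{1-\varepsilon^2}))|x|}$, and together with $A(x)=O(e^{2\varrho|x|})$ (resp. $A(x)=O(|x|^{2\alpha+1})$ when $\varrho=0$) a direct computation shows that the exponential rate of the integrand is $p'(|\Im\lambda|-\varrho\vartheta_{p,\varepsilon})$. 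Hence the integral converges exactly when $|\Im\lambda|<\varrho\vartheta_{p,\varepsilon}$, i.e. on $\mathring\C_{p,\varepsilon}$, which gives a finite bound at each interior point.

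The delicate point, and what I expect to be the main obstacle, is that this $L^{p'}$-norm blows up (at rate a negative power of $\varrho\vartheta_{p,\varepsilon}-|\Im\lambda|$) as $\lambda$ approaches $\partial\C_{p,\varepsilon}$, so crude Hölder does not produce the uniform constant $c$ claimed in part 1). To obtain uniformity over the whole open tube I would invoke complex interpolation (Stein's theorem for an analytic family) in which the parameter simultaneously moves the exponent and the height $\Im\lambda$ of the horizontal line on which the transform is evaluated. The two endpoints are the $L^1\to L^\infty$ estimate of Corollary \ref{LRL1} on the wider tube $\C_{1,\varepsilon}$, of half-width $\varrho(1-\sqrt{1-\varepsilon^2})=\varrho\vartheta_{1,\varepsilon}$, and the critical endpoint $p_0:=2/(1+\sqrt{1-\varepsilon^2})$, at which $\vartheta_{p_0,\varepsilon}=0$ and the tube degenerates to $\R$. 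Writing $1/p=(1-\theta)+\theta/p_0$ one checks $1-\theta=\vartheta_{p,\varepsilon}/(1-\sqrt{1-\varepsilon^2})$, so the interpolated half-width is $(1-\theta)\varrho(1-\sqrt{1-\varepsilon^2})=\varrho\vartheta_{p,\varepsilon}$, landing precisely on $\C_{p,\varepsilon}$; the genuinely hard input is controlling the $p_0$-endpoint, where the Hölder estimate only marginally fails. (When $\varrho=0$ or $p=p_0$ the set $\mathring\C_{p,\varepsilon}$ is empty and all three assertions are vacuous, so one may assume $\varrho>0$ and $p<p_0$ throughout.)

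For part 2), holomorphy on $\mathring\C_{p,\varepsilon}$ follows from Morera's theorem exactly as in Corollary \ref{LRL1}: on any closed triangle contained in a disk $\overline D\subset\mathring\C_{p,\varepsilon}$ one interchanges the contour integral with the integral in $x$, which is legitimate because the bound of part 1) is finite and locally uniform on $\overline D$, and then uses that $\lambda\mapsto\Psi_{A,\varepsilon}(\lambda,-x)$ is entire (Theorem \ref{thm1}), so the contour integral vanishes and Morera applies.

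Finally, for the Riemann--Lebesgue lemma in part 3) I would mirror Corollary \ref{LRL1}.3): since $\mathcal D(\R)$ is dense in $L^p(\R,A(x)dx)$ it suffices to treat $f\in\mathcal D(\R)$, for which the factorization $\mathcal F_{A,\varepsilon}(f)=\mathcal F_{\mathrm{euc}}\circ{}^{\rm t}V_{A,\varepsilon}(\check f)$, together with Lemma \ref{support} and the Paley--Wiener Theorem \ref{pw}, shows that $\mathcal F_{A,\varepsilon}(f)$ is entire of exponential type and rapidly decreasing, hence tends to $0$ as $|\lambda|\to\infty$ in the tube. The general case then follows by approximating $f$ in $L^p$ and using the uniform bound of part 1) to make the error uniformly small on $\mathring\C_{p,\varepsilon}$; this is exactly the step where the uniform constant produced by the interpolation argument is indispensable, since approaching $\partial\C_{p,\varepsilon}$ while $|\lambda|\to\infty$ must also be covered.
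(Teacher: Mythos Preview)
Your core argument---H\"older for part~1), Morera for part~2), and density of $\mathcal D(\R)$ in $L^p$ for part~3)---is exactly what the paper does. The paper's entire proof reads: the first two statements follow from the pointwise estimate
\[
|\Psi_{A,\varepsilon}(\lambda,x)|\leq c(|x|+1)\,e^{|\Im\lambda|\,|x|}e^{-\varrho(1-\sqrt{1-\varepsilon^2})|x|}
\]
together with $A(x)\leq c|x|^\beta e^{2\varrho|x|}$, holomorphy of $\lambda\mapsto\Psi_{A,\varepsilon}(\lambda,x)$, and Morera's theorem; the Riemann--Lebesgue lemma is proved by approximation in $L^p$ by test functions, exactly as for $p=1$.

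Where you diverge is in worrying about the uniformity of the constant $c$ in part~1). You are right that the crude H\"older bound produces a constant depending on $\varrho\vartheta_{p,\varepsilon}-|\Im\lambda|$, and that this threatens the approximation argument in part~3) when $\Im\lambda$ is allowed to drift toward the boundary. The paper simply does not address this; it treats the statements as following ``easily'' from the pointwise estimates and does not invoke any interpolation. So either the paper intends $c$ to depend on compact subsets of $\mathring\C_{p,\varepsilon}$ (in which case your direct H\"older computation already proves the lemma as intended and the interpolation is unnecessary), or the paper has the same gap you identified. Your proposed Stein-interpolation fix is reasonable in spirit, but as you yourself flag, the $p_0$-endpoint is not supplied anywhere in the paper---there is no $L^{p_0}\to L^\infty(\R)$ bound established---so the interpolation cannot be closed with the tools at hand. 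In short: your proof matches the paper's, and your extra scruple about uniformity is a legitimate observation about the paper rather than a defect in your argument.
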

 \begin{proof}
 The first two statements follow easily from the estimate 
$$ \Psi_{A, \varepsilon}(\lambda, x)\leq \begin{cases}
c (|x|+1)e^{|\Im \lambda | \,|x|} e^{-\varrho |x|(1-\sqrt{1-\varepsilon^2})}&\text{for}\,\varrho>0\\
c e^{|\Im \lambda | \,|x|},&\text{for}\,\varrho=0
\end{cases}
$$ the fact that $A(x)\leq c |x|^\beta \, e^{2\varrho |x|} $  (a consequence of the hypothesis   (H4)  on   Ch\'ebli's function $A$), the fact that 
$\Psi_{A, \varepsilon}(\lambda, \cdot) $ is holomorphic in $\lambda,$ and Morera's theorem.    The Riemann-Lebesgue lemma is established exactly as for \eqref{limit1} by approximating any function in $L^p(\R, A(x)dx)$ by  compactly supported smooth functions  for all $1\leq p<\infty.$
 \end{proof}

{\begin{thm}\label{injderniermin} The Fourier transform $\mathcal F_{A,\varepsilon}$ is injective on
   $L^p(\R, A(x)dx)$  for $1\leq p\leq  {2\over {1+  \sqrt{1-\varepsilon^2}}}.$ 
  \end{thm}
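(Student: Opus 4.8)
The plan is to establish injectivity by testing against $\mathcal D(\R)$ and converting the pairing into an integral of $\mathcal F_{A,\varepsilon}(f)$ by means of the inversion formula. Let $f\in L^p(\R,A(x)dx)$ with $\mathcal F_{A,\varepsilon}(f)=0$. Since $A$ is continuous, H\"older's inequality shows that $fA\in L^1_{\mathrm{loc}}(\R)$, so it is enough to prove that $\int_\R f(x)g(x)A(x)dx=0$ for every $g\in\mathcal D(\R)$: this gives $fA=0$ as a locally integrable function, and as $A>0$ off the null set $\{0\}$ we conclude $f=0$ almost everywhere. Replacing $g$ by $\check g$, I will instead show $\int_\R f(x)g(-x)A(x)dx=0$ for all $g\in\mathcal D(\R)$.

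First I would fix $g\in\mathcal D(\R)$ and apply the inversion formula of Theorem \ref{invth}.1 to $g$, obtaining
\[
g(-x)=\frac14\int_\R \mathcal F_{A,\varepsilon}(g)(\lambda)\,\Psi_{A,\varepsilon}(\lambda,-x)\Bigl(1-\tfrac{\varepsilon\varrho}{i\lambda}\Bigr)\pi_\varepsilon(d\lambda).
\]
Multiplying by $f(x)A(x)$, integrating in $x$, and exchanging the order of integration yields
\[
\int_\R f(x)g(-x)A(x)dx=\frac14\int_\R \mathcal F_{A,\varepsilon}(g)(\lambda)\,\mathcal F_{A,\varepsilon}(f)(\lambda)\Bigl(1-\tfrac{\varepsilon\varrho}{i\lambda}\Bigr)\pi_\varepsilon(d\lambda)=0,
\]
because $\mathcal F_{A,\varepsilon}(f)\equiv0$. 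This is exactly the desired identity, whence $f=0$.

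The technical core is the Fubini step, i.e. the absolute integrability of $(x,\lambda)\mapsto f(x)\,\mathcal F_{A,\varepsilon}(g)(\lambda)\,\Psi_{A,\varepsilon}(\lambda,-x)\,(1-\varepsilon\varrho/i\lambda)$ against $A(x)dx\otimes\pi_\varepsilon(d\lambda)$. In the $x$-variable I would use H\"older together with the growth estimates of Theorem \ref{esti}: for real $\lambda$ one has $|\Psi_{A,\varepsilon}(\lambda,\cdot)|\le\Psi_{A,\varepsilon}(0,\cdot)\le c(|\cdot|+1)e^{-\varrho(1-\sqrt{1-\varepsilon^2})|\cdot|}$, and with $A(x)=O(|x|^\beta e^{2\varrho|x|})$ this places $\Psi_{A,\varepsilon}(\lambda,\cdot)$ in $L^{p'}(A(x)dx)$ as soon as $p<\tfrac{2}{1+\sqrt{1-\varepsilon^2}}$; H\"older then makes the inner $x$-integral finite and equal to $\mathcal F_{A,\varepsilon}(f)(\lambda)$. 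In the $\lambda$-variable, $\mathcal F_{A,\varepsilon}(g)$ is rapidly decreasing by the Paley--Wiener Theorem \ref{pw}, while $\pi_\varepsilon$ has polynomial growth and is supported in $\{|\lambda|\ge\sqrt{1-\varepsilon^2}\varrho\}$; thus $(1-\varepsilon\varrho/i\lambda)$ is bounded on the support when $\sqrt{1-\varepsilon^2}\varrho>0$, and when it is not the factor $1/\lambda$ is tamed by the vanishing of the density $|c|^{-2}$ at the origin, so the $\lambda$-integral converges. This disposes of the whole range $1\le p<\tfrac{2}{1+\sqrt{1-\varepsilon^2}}$, and the case $p=1$ works directly via Corollary \ref{LRL1} and the uniform bound $|\Psi_{A,\varepsilon}(\lambda,\cdot)|\le\sqrt2$ of Theorem \ref{esti}.1.

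The main obstacle is the endpoint $p=\tfrac{2}{1+\sqrt{1-\varepsilon^2}}$ when $\varrho>0$ and $\varepsilon\neq0$: there $p'=\tfrac{2}{1-\sqrt{1-\varepsilon^2}}$, the exponential weights cancel exactly, $\Psi_{A,\varepsilon}(\lambda,\cdot)\notin L^{p'}(A(x)dx)$, and neither the defining integral for $\mathcal F_{A,\varepsilon}(f)$ nor the Fubini interchange is available for a general $f\in L^p$. To cover it I would first extend $\mathcal F_{A,\varepsilon}$ to $L^p$ by a Hausdorff--Young inequality, obtained by interpolating the $L^1$-bound $\|\mathcal F_{A,\varepsilon}(f)\|_\infty\le2\|f\|_1$ of Corollary \ref{LRL1} with the $L^2$-Plancherel identity of Theorem \ref{invth}.2; this produces a bounded map $L^p(\R,A(x)dx)\to L^{p'}(\pi_\varepsilon)$ that agrees with the integral wherever the latter converges. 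The bilinear Plancherel identity of Theorem \ref{invth}.2, valid for $f,g\in\mathcal D(\R)$, then extends by continuity in $f\in L^p$ (the left-hand functional is bounded since $g(-\cdot)A\in L^{p'}$, and the right-hand one by H\"older in $\pi_\varepsilon$ using the rapid decay of $\mathcal F_{A,\varepsilon}(g)$), exploiting the density of $\mathcal D(\R)$ in $L^p$. Imposing $\mathcal F_{A,\varepsilon}(f)=0$ again forces $\int_\R f(x)g(-x)A(x)dx=0$ for all $g\in\mathcal D(\R)$, hence $f=0$.
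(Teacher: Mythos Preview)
Your argument is correct and rests on the same core identity as the paper's proof: both show that
\[
\int_\R f(x)\,g(-x)\,A(x)\,dx \;=\; \frac14\int_\R \mathcal F_{A,\varepsilon}(f)(\lambda)\,\mathcal F_{A,\varepsilon}(g)(\lambda)\Bigl(1-\tfrac{\varepsilon\varrho}{i\lambda}\Bigr)\pi_\varepsilon(d\lambda)
\]
for $f\in L^p$ and $g\in\mathcal D(\R)$, and conclude by testing against $\mathcal D(\R)$. The difference is organizational. You split into cases: for $p$ strictly below the endpoint you prove the identity directly via Fubini (using $\Psi_{A,\varepsilon}(\lambda,\cdot)\in L^{p'}(A\,dx)$), and for the endpoint you invoke a Hausdorff--Young extension together with density. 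The paper instead treats all $p$ at once by a pure density argument: the identity holds for $f\in\mathcal D(\R)$ by Theorem~\ref{invth}, and both sides are $L^p$-continuous in $f$ (the right-hand side via $\Vert\mathcal F_{A,\varepsilon}(f)\Vert_{L^\infty}\le c\Vert f\Vert_p$ from Corollary~\ref{LRL1} and Lemma~\ref{LRL2}), so it extends from $\mathcal D(\R)$ to $L^p$. This avoids both your Fubini verification and the Hausdorff--Young interpolation.

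Your caution at the endpoint is well placed: there $\vartheta_{p,\varepsilon}=0$, so $\mathring{\C}_{p,\varepsilon}$ is empty and Lemma~\ref{LRL2} gives no $L^\infty$ bound on $\mathcal F_{A,\varepsilon}(f)$; the paper's invocation of that lemma is therefore also delicate at $p=\tfrac{2}{1+\sqrt{1-\varepsilon^2}}$. Your Hausdorff--Young patch is a reasonable fix, but note that Theorem~\ref{invth}.2 is not a plain $L^2$ isometry (it involves $\overline{\mathcal F_{A,\varepsilon}(\check f)(-\lambda)}$ rather than $\overline{\mathcal F_{A,\varepsilon}(f)(\lambda)}$), so extracting the needed $L^2\!\to\!L^2$ bound for interpolation requires one further step, namely the symmetric identity $\Vert f\Vert_{L^2}^2=\tfrac14\int_{\R^+}\bigl(|\mathcal F_{A,\varepsilon}(f)|^2+|\mathcal F_{A,\varepsilon}(\check f)|^2\bigr)\,\pi_\varepsilon$, which appears inside the proof of Theorem~\ref{invth}.
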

  \begin{proof} Take $q$ such that $p+q=pq.$  For $f\in L^p(\R, A(x)dx)$ et $g \in \mathcal D(\R)$ we have the inequalities  
$$
\big | (f, g)_A\big | :=\big | \int_\R f(x) g(-x) A(x) dx \big | \leq \Vert f\Vert_{L^p_x} \Vert g\Vert_{L^q_x}
$$
  and 
\begin{eqnarray} \label{RL}
\big | (\mathcal F_{A,\varepsilon} (f),\mathcal F_{A,\varepsilon} (g) )_{\pi_\varepsilon}\big | &:= &\big | \int_\R 
  \mathcal F_{A,\varepsilon} (f)(\lambda) \mathcal F_{A,\varepsilon} (g)(\lambda) 
   \Big( 1-{{\varepsilon \varrho} \over { i\lambda}} \Big) \pi_\varepsilon(d\lambda) \big | \nonumber\\
&   \leq& \Vert \mathcal F_{A,\varepsilon} (f)\Vert_{L^\infty_\lambda} \Vert \mathcal F_{A,\varepsilon}(g)\Vert_{L_\lambda^1} 
   \leq c \Vert f\Vert_{L^p_x}  \Vert \mathcal F_{A,\varepsilon}(g)\Vert_{L_\lambda^1} . 
   \end{eqnarray}
Above we have used Corollary \ref{LRL1} and Lemma  \ref{LRL2} to get  \eqref{RL}. 
   Therefore the mapping $f\mapsto (f,g)_A$ and $f\mapsto (\mathcal F_{A,\varepsilon} (f), \mathcal F_{A,\varepsilon} (g))_{\pi_\varepsilon}$ are continuous functionals on $L^p(\R, A(x)dx) .$  Now 
    $(f,g)_A= (\mathcal F_{A,\varepsilon} (f), \mathcal F_{A,\varepsilon} (g))_{\pi_\varepsilon}$  for all $f \in \mathcal D(\R)$ and by continuity for all $f\in L^p(\R, A(x)dx) .$   Assume that $f\in L^p(\R, A(x)dx) $ and that $\mathcal F_{A,\varepsilon} (f) =0,$ then for all $g\in \mathcal D(\R)$  we have $(f,g)_A =(\mathcal F_{A,\varepsilon} (f), \mathcal F_{A,\varepsilon} (g))_{\pi_\varepsilon} =0$ and therefore $f=0.$ 
  \end{proof}
  
  }

For $-1\leq \varepsilon\leq 1$ and $0<p\leq {2\over {1+  \sqrt{1-\varepsilon^2}}},$ let $\mathcal S_p(\R)$ be the  space consisting of all functions 
 $f\in C^\infty (\R)$ such that  \index{$\mathcal S_p(\R)$}
 \index{$\sigma_{s,k }^{(p)}(f)$}
\begin{equation}\label{sigma}
 \sup_{x\in \R} (|x|+1)^s \;  \varphi_0(x)^{-2/p}  \;| f^{(k)}(x)| <\infty,
 \end{equation} for any $s\in \N$ and any $k\in \N.$ The topology of $\mathcal S_p(\R)$ is defined by 
 the seminorms
 $$\sigma_{s,k}^{(p)}(f)= \sup_{x\in \R} (|x|+1)^s \;\varphi_0(x)^{-2/p}\; | f^{(k)}(x)| .$$
 We pin down that $\mathcal S_p(\R)$ is a dense subspace of $L^q(\R, A(x)dx)$
  for $p\leq q<\infty,$ while it is not contained in $L^q(\R, A(x)dx)$ for $0<q<p.$
  
  The following facts are standard; see for instance \cite[Appendix A]{DelT}.
  \begin{lema} 
  \begin{enumerate} [\upshape 1)]
  \item $\mathcal S_p(\R)$ is a Fr\'echet space with respect to the seminorms $\sigma_{s,k}^{(p)}.$
  \item $\mathcal D(\R)$ is a dense subspace of $\mathcal S_p(\R). $
  \end{enumerate}
  \end{lema}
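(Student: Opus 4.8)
The plan is to treat the two assertions separately, both by standard arguments for weighted Schwartz spaces, the relevant weight being $\varphi_0(x)^{-2/p}$. By Lemma \ref{BX} 6) this weight is comparable to $(|x|+1)^{-2/p}e^{(2/p)\varrho|x|}$ and, what matters most here, it is bounded above and below by positive constants on every compact subset of $\R$.

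For the first assertion I would first observe that the topology is generated by the countable family $\{\sigma_{s,k}^{(p)}\}_{s,k\in\N}$, so it is automatically metrizable and locally convex; it is Hausdorff because $\sigma_{0,0}^{(p)}$ is already a norm (if $\sigma_{0,0}^{(p)}(f)=0$ then $f\equiv 0$). The only substantive point is completeness. Given a Cauchy sequence $(f_n)$ in $\mathcal S_p(\R)$, for each fixed pair $(s,k)$ the weighted functions $x\mapsto (|x|+1)^s\varphi_0(x)^{-2/p}f_n^{(k)}(x)$ form a Cauchy sequence in the Banach space of bounded continuous functions with the sup-norm, hence converge uniformly to a bounded continuous limit. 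Taking $s=0$ and using that $\varphi_0^{-2/p}$ is bounded above and below by positive constants on compacts shows that, for each $k$, the sequence $f_n^{(k)}$ converges uniformly on compact sets to some $g_k$. The classical theorem on differentiating uniformly convergent sequences then yields $g:=g_0\in C^\infty(\R)$ with $g^{(k)}=g_k$. The uniform limits identified above show that $(|x|+1)^s\varphi_0(x)^{-2/p}g^{(k)}(x)$ is bounded, i.e. $g\in\mathcal S_p(\R)$, and that $\sigma_{s,k}^{(p)}(f_n-g)\to 0$ for every $(s,k)$, which establishes completeness.

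For the second assertion I would use a dilation cutoff. Fix $\chi\in\mathcal D(\R)$ with $0\le\chi\le 1$, $\chi\equiv 1$ on $[-1,1]$ and $\supp\chi\subset[-2,2]$, and set $\chi_n(x):=\chi(x/n)$. For $f\in\mathcal S_p(\R)$ put $f_n:=\chi_n f\in\mathcal D(\R)$; the goal is $\sigma_{s,k}^{(p)}(f_n-f)\to 0$ for all $s,k$. Writing $f_n-f=(\chi_n-1)f$ and applying the Leibniz rule gives $(f_n-f)^{(k)}=\sum_{j=0}^k\binom{k}{j}(\chi_n-1)^{(j)}f^{(k-j)}$. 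For the term $j=0$ the factor $\chi_n-1$ is bounded by $1$ and supported in $\{|x|\ge n\}$, so the corresponding seminorm is dominated by $\sup_{|x|\ge n}(|x|+1)^s\varphi_0(x)^{-2/p}|f^{(k)}(x)|$; using that $(|x|+1)^{s+1}\varphi_0(x)^{-2/p}|f^{(k)}(x)|$ is bounded (since $f\in\mathcal S_p(\R)$) bounds this by a constant times $(n+1)^{-1}\to 0$. For $j\ge 1$ one has $(\chi_n)^{(j)}(x)=n^{-j}\chi^{(j)}(x/n)$, which is bounded by $n^{-j}\|\chi^{(j)}\|_\infty$ and supported in $\{n\le|x|\le 2n\}$; since $(|x|+1)^s\varphi_0(x)^{-2/p}|f^{(k-j)}(x)|$ is bounded, these terms are $O(n^{-j})\to 0$. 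Summing over $j$ gives $\sigma_{s,k}^{(p)}(f_n-f)\to 0$, proving density.

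There is no genuinely hard step here; the only thing to watch is the bookkeeping of the weight $\varphi_0^{-2/p}$, and in particular the use of the extra decay factor $(|x|+1)^{-1}$ gained in the $j=0$ term (by invoking membership in $\mathcal S_p(\R)$ with index $s+1$ rather than $s$) to beat the slowly spreading cutoff region $\{|x|\ge n\}$ on which $\chi_n-1\neq 0$. Everything else reduces to the boundedness above and below of $\varphi_0^{-2/p}$ on compact sets and to standard facts about uniform convergence of derivatives.
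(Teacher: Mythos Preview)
Your proof is correct and follows exactly the standard weighted Schwartz space arguments one would expect; the paper itself does not supply a proof but simply records the lemma as standard with a reference to \cite[Appendix A]{DelT}. Your cutoff-by-dilation argument for density and your completeness argument via uniform convergence of weighted derivatives are precisely the kind of verification one finds in such references, so there is nothing to compare.
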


Recall from above the tube domain 
 $$\C_{p,\varepsilon}:=\{ \lambda\in \C\;|\; |\Im \lambda| \leq\varrho \,\vartheta_{p,\varepsilon} \},$$ where  
 $\vartheta_{p,\varepsilon}= {2\over p} -1 -\sqrt{1-\varepsilon^2}.$
 
 The Schwartz space $ \mathcal S(\C_{p,\varepsilon})$ consists of  all complex valued
  functions $h$ that are analytic in the interior
 of $\C_{p,\varepsilon},$ and such that $h$ together with all  its derivatives extend 
continuously to $\C_{p,\varepsilon}$ and satisfy \index{$\mathcal S(\C_{p,\varepsilon})$}
\begin{equation}\label{var}
 \sup_{\lambda\in \C_{p,\varepsilon}} (|\lambda|+1)^t\; | h^{(\ell)} (\lambda)| <\infty,
 \end{equation} 
 for any $t\in \N$ and any $\ell\in \N.$
 The topology of $\mathcal S(\C_{p,\varepsilon})$ is defined by the seminorms \index{$\tau_{t,\ell }^{(\vartheta_{p,\varepsilon})}(f)$}
\begin{equation}\label{9.5}
\tau_{t,\ell}^{(\vartheta_{p,\varepsilon})} (h):=\sup_{\lambda\in \C_{p,\varepsilon}} (|\lambda|+1)^t \; | h^{(\ell)} (\lambda)| .
\end{equation}
  For $\vartheta_{p,\varepsilon} =0$ or $\varrho=0,$   $\mathcal S(\C_{p,\varepsilon})$ is the classical Schwartz space on $\R.$ 
 By \cite[Lemma 4.17]{BX1} the Paley-Wiener space $ PW(\C)$ is dense in the Schwartz space 
$ \mathcal S(\C_{p,\varepsilon}).$
 \begin{lema}\label{inje} The Fourier transform $\mathcal F_{A,\varepsilon} $ maps $\mathcal S_p(\R)$ continuously into  
 $ \mathcal S(\C_{p,\varepsilon})$ and is injective. 
 \end{lema}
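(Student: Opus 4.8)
The plan is to treat the two assertions separately: injectivity reduces to a result already in hand, while the mapping property is the substantive part and rests on one weighted estimate whose exponents conspire to cancel precisely on the range of $p$ under consideration.

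For injectivity, I would use that $1\le \tfrac{2}{1+\sqrt{1-\varepsilon^2}}$, so that $\mathcal S_p(\R)$ embeds into $L^r(\R,A(x)dx)$ with $r=\max(p,1)\in[1,\tfrac{2}{1+\sqrt{1-\varepsilon^2}}]$ (recall $\mathcal S_p(\R)\subset L^q(\R,A(x)dx)$ for every $q\ge p$). If $\mathcal F_{A,\varepsilon}f$ vanishes on $\C_{p,\varepsilon}$, then it vanishes on $\R$, and Theorem \ref{injderniermin} forces $f=0$.

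For the mapping property the heart is a single weighted bound. Differentiating under the integral sign (legitimate once the estimate below is in place), $h^{(\ell)}(\lambda)=\int_\R f(x)\,\partial_\lambda^\ell\Psi_{A,\varepsilon}(\lambda,-x)A(x)\,dx$ for $h:=\mathcal F_{A,\varepsilon}f$. I would insert the derivative estimate of Theorem \ref{estder}.2, namely $|\partial_\lambda^\ell\Psi_{A,\varepsilon}(\lambda,x)|\le c|x|^\ell e^{|\Im\lambda||x|}\varphi_{i\sqrt{1-\varepsilon^2}\varrho}(x)$ together with $\varphi_{i\sqrt{1-\varepsilon^2}\varrho}(x)\le c(|x|+1)e^{-\varrho(1-\sqrt{1-\varepsilon^2})|x|}$; the defining decay of $f\in\mathcal S_p(\R)$, namely $|f(x)|\le \sigma_{s,0}^{(p)}(f)\,(|x|+1)^{-s}\varphi_0(x)^{2/p}$ with $\varphi_0(x)\le c(|x|+1)e^{-\varrho|x|}$; and the growth $A(x)\le c(|x|+1)^{2\alpha+1}e^{2\varrho|x|}$ coming from (H4). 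The crucial point, and the reason for the exact range of $p$ and the precise tube width $\varrho\,\vartheta_{p,\varepsilon}$, is that for $\lambda\in\C_{p,\varepsilon}$ the four exponential contributions combine into $\exp\!\big(\varrho\big[\tfrac{|\Im\lambda|}{\varrho}-\tfrac2p-(1-\sqrt{1-\varepsilon^2})+2\big]|x|\big)\le 1$, the bracket being $0$ exactly on the boundary $|\Im\lambda|=\varrho\,\vartheta_{p,\varepsilon}=\varrho(\tfrac2p-1-\sqrt{1-\varepsilon^2})$. Hence the integrand is dominated by $c\,\sigma_{s,0}^{(p)}(f)\,(|x|+1)^{-s+2/p+\ell+2\alpha+2}$, which is Lebesgue-integrable once $s$ is taken large, giving $\sup_{\lambda\in\C_{p,\varepsilon}}|h^{(\ell)}(\lambda)|\le c\,\sigma_{s,0}^{(p)}(f)$ for each $\ell$ (the estimate is uniform and in particular yields analyticity in $\mathring{\C}_{p,\varepsilon}$ and continuity of all derivatives up to the boundary via Morera's theorem and locally uniform convergence).

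It remains to produce the polynomial decay $(|\lambda|+1)^t$. I would first verify that $\Lambda_{A,\varepsilon}$ maps $\mathcal S_p(\R)$ continuously into itself, using that $A'/A$ is smooth with its singular part $\tfrac{2\alpha+1}{x}$ absorbed by the odd factor $\tfrac{f(x)-f(-x)}{2}$, and bounded together with its derivatives at infinity by (H3)--(H4). Next, for $f\in\mathcal D(\R)$ an integration by parts with no boundary terms, combined with the identity $(\Lambda_{A,\varepsilon}+2\varepsilon\varrho S)[\Psi_{A,\varepsilon}(\lambda,-\cdot)]=-i\lambda\,\Psi_{A,\varepsilon}(\lambda,-\cdot)$ (checked from the system \eqref{system} using that $A'$ is odd, exactly as in Lemma \ref{lema1}), gives the transmutation relation $\mathcal F_{A,\varepsilon}(\Lambda_{A,\varepsilon}f)(\lambda)=i\lambda\,\mathcal F_{A,\varepsilon}f(\lambda)$; since $\mathcal D(\R)$ is dense in $\mathcal S_p(\R)$ and both sides are $\mathcal S_p$-continuous in $f$, the relation extends to $\mathcal S_p(\R)$, so $(i\lambda)^n\mathcal F_{A,\varepsilon}f=\mathcal F_{A,\varepsilon}(\Lambda_{A,\varepsilon}^n f)$. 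Applying the boundedness estimate to $\Lambda_{A,\varepsilon}^n f$ and then comparing, through Leibniz's rule and an induction on $\ell$, the quantity $\partial_\lambda^\ell[(i\lambda)^n\mathcal F_{A,\varepsilon}f]=\partial_\lambda^\ell\mathcal F_{A,\varepsilon}(\Lambda_{A,\varepsilon}^n f)$ with its lower-order terms, I obtain $\tau_{t,\ell}^{(\vartheta_{p,\varepsilon})}(\mathcal F_{A,\varepsilon}f)\le c\sum \sigma_{s,k}^{(p)}(f)$ for every $t,\ell$, which is precisely continuity $\mathcal S_p(\R)\to\mathcal S(\C_{p,\varepsilon})$. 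The main obstacles I anticipate are, first, the exact exponent bookkeeping above (it is what pins down the admissible $p$), and second, the two auxiliary facts feeding the polynomial-decay step: the stability of $\mathcal S_p(\R)$ under $\Lambda_{A,\varepsilon}$ and the density argument extending the transmutation relation from $\mathcal D(\R)$ to $\mathcal S_p(\R)$.
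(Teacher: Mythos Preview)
Your proposal is correct and follows essentially the same route as the paper: both rest on the exponential cancellation on $\C_{p,\varepsilon}$ (your bracketed exponent computation is exactly the paper's), the derivative bound from Theorem~\ref{estder}, the transmutation identity $(i\lambda)^r\mathcal F_{A,\varepsilon}f=\mathcal F_{A,\varepsilon}(\Lambda_{A,\varepsilon}^r f)$, and injectivity via Theorem~\ref{injderniermin}. The only organizational difference is that you isolate the continuity of $\Lambda_{A,\varepsilon}$ on $\mathcal S_p(\R)$ as a separate step and extend the transmutation relation by density from $\mathcal D(\R)$, whereas the paper invokes Lemma~\ref{lema1} directly and controls $|\Lambda_{A,\varepsilon}^r f(x)|$ pointwise by a sum of derivative values of $f$ (citing \cite[Lemma~4.18]{BX1}); both amount to the same estimate.
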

 \begin{proof} Let $f\in \mathcal S_p(\R)$. For $\lambda\in \C_{p,\varepsilon}$ we have  
\begin{eqnarray*}
 \left\vert \mathcal F_{A,\varepsilon} (f)(\lambda)\right|
 &\leq& \int_\R |f(x)| \;|\Psi_{A, \varepsilon}(\lambda, -x) |\; A(x)dx \\
& \leq &\int_\R |f(x)| \;  \varphi_0(x)^{-2/p}  \;   \varphi_0(x)^{2/p}  \;\Psi_{A, \varepsilon}(0, -x)  e^{|\Im \lambda| |x|} A(x)dx \\
&  \leq &c_1 \int_\R |f(x)| \;  \varphi_0(x)^{-2/p}  \;     (|x|+1)^{2/p+1} e^{-2\varrho |x|  }   A(x)dx .
 \end{eqnarray*}
Under the hypothesis   (H4)  on   Ch\'ebli's function $A,$ there exists a $\beta>0$ such that 
 $$A(x)\leq c |x|^\beta e^{2\varrho |x|}. $$
 Hence,  
  $$\left\vert \mathcal F_{A,\varepsilon} (f)(\lambda)\right| \leq  c_2 \int_\R |f(x)| \;  \varphi_0(x)^{-2/p}  \;  (|x|+1)^{2/p+1}  |x|^\beta dx <\infty.$$
 This proves that $\mathcal F_{A,\varepsilon} (f)$ is well defined for all $f\in  \mathcal S_p(\R)$ when  $-1\leq \varepsilon\leq 1$ and $0<p\leq {2\over {1+  \sqrt{1-\varepsilon^2}}}.$ Moreover, since the map $\lambda\mapsto \Psi_{A, \varepsilon}(\lambda, x)$ is holomorphic on $\C,$ it follows that for all $f\in  \mathcal S_p(\R)$, the function  $\mathcal F_{A,\varepsilon}(f)$  is analytic in the interior of $\C_{p,\varepsilon},$ and continuous on $\C_{p,\varepsilon}.$ Furthermore, by Theorem \ref{estder}, we have 
 $$  \left\vert \mathcal F_{A,\varepsilon}(f)(\lambda)^{(k)}\right| \leq  c_3 \int_\R |f(x)|  \;  \varphi_0(x)^{-2/p}  \; (|x|+1)^{2/p+k+1}    |x|^\beta dx<\infty. $$
   Thus, all derivatives of $\mathcal F_{A,\varepsilon}(f)$ extend continuously to $\C_{p,\varepsilon}.$ Next, we will prove that given a continuous 
   seminorm $\tau$ on  $ \mathcal S(\C_{p,\varepsilon})$, there exists a continuous seminorm $\sigma$ on $\mathcal S_p(\R)$ such that 
 $$  \tau(\mathcal F_{A,\varepsilon}(f))\leq c_4 \sigma(f),\qquad \forall f\in  \mathcal S_p(\R).$$

 Note that the space $ \mathcal S(\C_{p,\varepsilon})$  and its topology are also determined by the seminorms
\begin{equation}\label{9.6}
h\mapsto  \tilde{\tau}_{t,\ell}^{(\vartheta_{p,\varepsilon})} (h):=\sup_{\lambda\in \C_{p,\varepsilon}}  \left| \Big\{ (\lambda+1)^t h(\lambda)\Big\} ^{(\ell)}\right|,
\end{equation}
  where $t$ and $\ell$ are two arbitrary positive integers.   By invoking Lemma \ref{lema1} we have for $r\in \N,$
  \begin{eqnarray*}
  (i\lambda)^r \mathcal F_{A,\varepsilon}(f)(\lambda)  &=&(i\lambda)^r \int_\R \check{f}(x) \Psi_{A, \varepsilon}(\lambda, x) A(x) dx\\
&   =&  \int_\R \check{f}(x)\; \Lambda_{A,\varepsilon}^r  \Psi_{A, \varepsilon}(\lambda, x) A(x) dx\\
&  =& (-1)^r \int_\R (\Lambda_{A,\varepsilon} +2\varepsilon \varrho S)^r\check{f}(x)  \Psi_{A, \varepsilon}(\lambda, x) A(x) dx\\
&=&\int_\R  \Lambda_{A,\varepsilon}^r f(-x) \Psi_{A, \varepsilon}(\lambda, x) A(x) dx\\
&=& \mathcal F_{A,\varepsilon}( \Lambda_{A,\varepsilon}^r\,f)(\lambda),
\end{eqnarray*}
  where $S$ denotes the symmetry $S\!f(x)=f(-x).$ Above we have used $(\Lambda_{A,\varepsilon} +2\varepsilon \varrho S)^r\circ S=(-1)^r \:S\circ \Lambda_{A, \varepsilon}^r.$ 
Thus 
$$
   \Big\{ (i\lambda)^r \mathcal F_{A,\varepsilon}(f)(\lambda)\Big\}  ^{(\ell)} 
   =  \int_\R    \Lambda_{A,\varepsilon}^r  f(x) \;\partial_\lambda^\ell \Psi_{A, \varepsilon}(\lambda, -x)A(x)dx. 
$$
On the one hand, using Theorem \ref{estder}.2 we obtain  
\begin{eqnarray*}
&&\left|    \Big\{ (i\lambda)^r \mathcal F_{A,\varepsilon}(f)(\lambda)\Big\}  ^{(\ell)} \right|\\
 &\leq &c_5
 \int_\R |\Lambda_{A,\varepsilon}^r  f(x)| \;  (|x|+1)^\ell \;  \varphi_{i\sqrt{1-\varepsilon^2}\varrho}(x)\; e^{|\Im \lambda | |x|} A(x)dx\\
&  =&c_5
 \int_{|x| \leq a} |\Lambda_{A,\varepsilon}^r f(x)| \; (|x|+1)^\ell  \;  \varphi_{i\sqrt{1-\varepsilon^2}\varrho}(x)\; e^{|\Im \lambda | |x|} A(x)dx\\
& &+c_5
 \int_{|x|>a} |\Lambda_{A,\varepsilon}^r f(x)|\;  (|x|+1)^\ell  \;  \varphi_{i\sqrt{1-\varepsilon^2}\varrho}(x)\; e^{|\Im \lambda | |x|} A(x)dx\\
& \leq& c_6
 \int_{|x| \leq a} |\Lambda_{A,\varepsilon}^r f(x)| \;  \varphi_0(x)^{-2/p}  \;  (|x|+1)^{2/p+\ell+1}   
\; e^{-2\varrho |x|  }   A(x)dx \\
&&+c_6
 \int_{|x|>a} |\Lambda_{A,\varepsilon}^r f(x)| \;  \varphi_0(x)^{-2/p}  \; 
(|x|+1)^{2/p+\ell+1}   \;  e^{-2\varrho |x|  }   A(x)dx .
\end{eqnarray*}
On the other hand, by mimicking  the proof of \cite[Lemma 4.18]{BX1} we have:
\begin{itemize}
\item[{(i)}] For $|x| \leq a, $
$$|\Lambda_{A,\varepsilon}^r f(x)| \leq c_7   \Big( \sum_{i=0}^r |f^{(i)}(x)| +\sum_{i=0}^{r-1}  |f^{(i)}(-x)| +\sum_{i=0}^r \sum_{m=0} ^{n_r} | f^{(i)}(\xi_m)| \Big),$$
where $\xi_m=\xi_m(x,r)\in ]-|x|, |x|[.$ 
\item[{(ii)}] For $|x|>a,$
$$|\Lambda_{A,\varepsilon}^r f(x)| \leq c'_7  \Big( \sum_{i=0}^r |f^{(i)}(x)| +\sum_{i=0}^{r-1}  |f^{(i)}(-x)|   \Big).$$
 \end{itemize}
The estimate 
$$
\tau (\mathcal F_{A,\varepsilon}(f))\leq c_8 \sum_{\text {finite}} \sigma (f) , \qquad \forall f\in \mathcal S_p(\R)
$$
is now a matter of putting the pieces together. 

{The injectivity of the transform $\mathcal F_{A,\varepsilon}$ on $\mathcal S_p(\R)$ is clear, by the fact that $\mathcal F_{A,\varepsilon}$ is injective on $L^q(\R, A(x)dx)$ for $1\leq q\leq   {2\over {1+  \sqrt{1-\varepsilon^2}}}$ (see  Theorem \ref{injderniermin})  and the fact that  $\mathcal S_p(\R)$ is a dense subspace of $L^q(\R, A(x)dx)$  for all $q<\infty$ so that $p\leq q $.}

This concludes the proof of Lemma \ref{inje}.
 \end{proof}

 \begin{lema}\label{surj} Let  $-1\leq \varepsilon\leq 1$ and $0<p\leq {2\over {1+  \sqrt{1-\varepsilon^2}}}.$ The inverse Fourier transform 
 $\mathcal F_{A,\varepsilon}^{-1}: PW(\C) \longrightarrow \mathcal D(\R)$ given by 
  $$\mathcal F_{A,\varepsilon}^{-1}h(x)={1\over  4} \int_\R h(\lambda) \Psi_{A,\varepsilon}(\lambda, x)\, \Big(1-{{\varepsilon \varrho}\over{i\lambda}}\Big)\;  \pi_\varepsilon(d\lambda)$$
 is continuous for the topologies induced by $ \mathcal S(\C_{p, \varepsilon}) $  and $\mathcal S_p(\R).$   
 \end{lema}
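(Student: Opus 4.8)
The plan is to reduce everything to a single seminorm estimate
\[
\sigma_{s,k}^{(p)}\big(\mathcal F_{A,\varepsilon}^{-1}h\big)\le C\sum_{\text{finite}}\tau_{t,\ell}^{(\vartheta_{p,\varepsilon})}(h),\qquad \forall h\in PW(\C),
\]
for each fixed pair $(s,k)$. Since $PW(\C)$ is dense in $\mathcal S(\C_{p,\varepsilon})$ and $\mathcal D(\R)$ is dense in $\mathcal S_p(\R)$, such a continuity bound on the Paley--Wiener space is exactly the assertion of the lemma. I would not attack the oscillatory integral defining $\mathcal F_{A,\varepsilon}^{-1}$ head-on, because the symmetric bound $|\Psi_{A,\varepsilon}(\lambda,x)|\le c\,e^{|\Im\lambda|\,|x|}\varphi_{i\sqrt{1-\varepsilon^2}\varrho}(x)$ of Theorem~\ref{estder} is too crude to reveal the decay that a contour shift ought to produce. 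Instead I would exploit the factorization already used in Theorem~\ref{pw}: the identity $\mathcal F_{A,\varepsilon}(f)=\mathcal F_{\rm euc}\circ{}^{\rm t}V_{A,\varepsilon}(\check f)$ inverts, on $PW(\C)$, to
\[
\mathcal F_{A,\varepsilon}^{-1}=S\circ\big({}^{\rm t}V_{A,\varepsilon}\big)^{-1}\circ\mathcal F_{\rm euc}^{-1},
\]
where $S$ is the reflection $Sf(x)=f(-x)$; this agrees with the explicit integral formula of the lemma by uniqueness of the inverse. The proof then amounts to tracking continuity and the exponential weights through the two factors.

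For the first factor I would invoke the classical Paley--Wiener--Schwartz theorem on a strip. Since every $h\in\mathcal S(\C_{p,\varepsilon})$ is holomorphic in the interior of the tube $|\Im\lambda|\le\varrho\,\vartheta_{p,\varepsilon}$ with all derivatives rapidly decreasing, shifting the contour to the boundary lines $\Im\lambda=\pm\varrho\,\vartheta_{p,\varepsilon}$ shows that $\phi:=\mathcal F_{\rm euc}^{-1}h$ is smooth and satisfies $\sup_{x}(|x|+1)^{s'}e^{\varrho\vartheta_{p,\varepsilon}|x|}|\phi^{(k')}(x)|<\infty$ for all $s',k'$, with each such weighted seminorm controlled by finitely many of the $\tau_{t,\ell}^{(\vartheta_{p,\varepsilon})}(h)$. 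In other words $\mathcal F_{\rm euc}^{-1}$ maps $\mathcal S(\C_{p,\varepsilon})$ continuously into the Euclidean Schwartz space weighted by $e^{\varrho\vartheta_{p,\varepsilon}|x|}$.

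For the second factor I would use the explicit form of $({}^{\rm t}V_{A,\varepsilon})^{-1}$ obtained in Lemma~\ref{support}, namely $({}^{\rm t}V_{A,\varepsilon})^{-1}g=\mathcal A_\varepsilon^{-1}g_e+(\varepsilon\varrho+d/dy)\,\mathcal A_\varepsilon^{-1}(Jg_o)$ together with $\mathcal A_\varepsilon^{-1}=\mathcal A^{-1}\circ{}^{\rm t}\mathcal E_\varepsilon$ (see the proof of Lemma~\ref{support}), and show that it carries the weighted Euclidean Schwartz space of the first step continuously into $\mathcal S_p(\R)$. The decisive point is that $({}^{\rm t}V_{A,\varepsilon})^{-1}$ upgrades the exponential decay rate by exactly $\varrho\big(1+\sqrt{1-\varepsilon^2}\big)$ — the inverse Abel transform $\mathcal A^{-1}$ supplying the main boost and the deformation ${}^{\rm t}\mathcal E_\varepsilon$ contributing $\varrho_\varepsilon=\sqrt{1-\varepsilon^2}\,\varrho$ — so that the output rate is
\[
\varrho\,\vartheta_{p,\varepsilon}+\varrho\big(1+\sqrt{1-\varepsilon^2}\big)=\frac{2\varrho}{p},
\]
which is precisely the exponential rate of $\varphi_0(x)^{-2/p}$ governing $\mathcal S_p(\R)$ (recall $e^{-\varrho|x|}\le\varphi_0(x)\le c(|x|+1)e^{-\varrho|x|}$ from Lemma~\ref{BX}). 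The even summand $\mathcal A_\varepsilon^{-1}g_e$ is handled directly by the weighted continuity of $\mathcal A^{-1}$; for the odd summand one uses that $g_o$ odd forces $\int_\R g_o=0$, so that $Jg_o$ is an \emph{even} function lying in the same weighted Schwartz space (the primitive from $-\infty$ preserves the rate), after which the same estimate for $\mathcal A^{-1}$ and the outer operator $\varepsilon\varrho+d/dy$ apply.

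The main obstacle, and the step requiring the most care, is the quantitative weighted continuity of the inverse Abel transform $\mathcal A^{-1}$: one must show that $\mathcal A^{-1}$ transforms the weight produced by the first factor into $\varphi_0^{-2/p}$ and bounds each seminorm $\sigma_{s,k}^{(p)}$ by finitely many weighted seminorms of its argument, uniformly in $-1\le\varepsilon\le1$. This is where hypotheses (H1)--(H4) on the Ch\'ebli function $A$ enter, through the asymptotics of $A$ and of the kernel $K(|x|,\cdot)$, and where I would lean on the weighted estimates for $\mathcal A^{-1}$ developed in \cite{BX1}. The secondary delicate point is the bookkeeping for the odd part, ensuring that $Jg_o$ genuinely remains in the weighted space and that the final derivative $d/dy$ only costs one extra seminorm.
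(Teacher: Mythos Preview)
Your route is genuinely different from the paper's: the paper does \emph{not} factor through $({}^{\rm t}V_{A,\varepsilon})^{-1}$ at all. It works directly with the oscillatory integral for $\mathcal F_{A,\varepsilon}^{-1}$ via Anker's localization scheme: with $g=\mathcal F_{\rm euc}^{-1}h$, it introduces cutoffs $\omega_j$ supported outside $I_{j-1}$, sets $g_j=\omega_j g$, $h_j=\mathcal F_{\rm euc}g_j$, $f_j=\mathcal F_{A,\varepsilon}^{-1}h_j$, uses the support-preservation property of the Paley--Wiener correspondence to get $f=f_j$ on $I_{j+1}\setminus I_j$, bounds $|f_j^{(k)}(x)|$ there by $\varphi_{i\sqrt{1-\varepsilon^2}\varrho}(x)\,\tau_{t_1,0}^{(0)}(h_j)$ via Theorem~\ref{estder}, and only at the very end applies a contour shift to $g$ to close the estimate. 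The point of the localization is that the exponential weight $e^{\varrho j(2/p-1+\sqrt{1-\varepsilon^2})}$ that appears is paid for by the fact that $g_j$ lives outside $I_{j-1}$, not by the width of the tube alone.

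Your factorization idea is attractive, but the rate bookkeeping is wrong, and this is a real gap. You assert that ${}^{\rm t}\mathcal E_\varepsilon$ contributes an \emph{additive} boost of $\varrho_\varepsilon$ to the exponential decay. It does not: from the formula ${}^{\rm t}\mathcal E_\varepsilon g(y)=-\int_{|y|}^\infty g'(x)J_0(\varrho_\varepsilon\sqrt{x^2-y^2})\,dx$ and the classical integral $\int_y^\infty e^{-ax}J_0(b\sqrt{x^2-y^2})\,dx=(a^2+b^2)^{-1/2}e^{-y\sqrt{a^2+b^2}}$ one gets ${}^{\rm t}\mathcal E_\varepsilon(e^{-a|\cdot|})=\tfrac{a}{\sqrt{a^2+\varrho_\varepsilon^2}}\,e^{-\sqrt{a^2+\varrho_\varepsilon^2}\,|\cdot|}$, so the boost is $\sqrt{a^2+\varrho_\varepsilon^2}-a$, strictly less than $\varrho_\varepsilon$ whenever $a>0$. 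Granting that $\mathcal A^{-1}$ boosts by $\varrho$ (itself essentially the even Schwartz isomorphism of \cite{BX1}), the composite $\mathcal A_\varepsilon^{-1}=\mathcal A^{-1}\circ{}^{\rm t}\mathcal E_\varepsilon$ applied at the input rate $a=\varrho\vartheta_{p,\varepsilon}$ only reaches rate $\varrho+\sqrt{\varrho^2\vartheta_{p,\varepsilon}^2+\varrho_\varepsilon^2}$, which equals $2\varrho/p$ \emph{only} when $\vartheta_{p,\varepsilon}=0$ or $\varrho_\varepsilon=0$; for instance with $\varepsilon=0$, $\varrho=1$, $p=2/3$ you get $1+\sqrt2\approx 2.41$ against the required $3$. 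So the splitting of $({}^{\rm t}V_{A,\varepsilon})^{-1}$ into these two pieces with additive gains does not deliver the $\mathcal S_p$-seminorm control you need, and a direct weighted estimate on $({}^{\rm t}V_{A,\varepsilon})^{-1}$ with the correct gain $\varrho(1+\sqrt{1-\varepsilon^2})$ would have to be proved by other means --- which is essentially the content of the lemma itself.
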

 \begin{proof} Let $f\in \mathcal D(\R)$ and let 
 $h\in PW(\C)$ so that $f=\mathcal F_{A,\varepsilon}^{-1}(h).$ Given a seminorm 
 $\sigma$ on  $\mathcal S_p(\R)$ we should find a continuous seminorm $\tau$ on  $\mathcal S(\C_{p,\varepsilon})$ 
 such that $\sigma(f)\leq c\, \tau(h).$   
 
 Denote by   $g$  the image of $h$ by the inverse Euclidean Fourier 
 transform  $\mathcal F_{\rm euc}^{-1}.$ Making use of the Paley-Wiener Theorem \ref{pw} for $\mathcal F_{A,\varepsilon}$ and 
 the classical Paley-Wiener theorem   for $\mathcal F_{\rm euc},$ we have the following support 
 conservation property: $\supp(f)\subset I_R:=[-R, R] \Leftrightarrow \supp(g)\subset I_R.$ 
 
For  $j\in \N_{\geq 1}  ,$ let $\omega_j \in C^\infty(\R)$ with $\omega_j=0$ on $I_{j-1}$ and $\omega_j=1$
  outside of $I_j.$ Assume that  $\omega_j$ and all its derivatives are bounded, uniformly in $j.$
   We will write $g_j=\omega_j g$, and define $h_j:=\mathcal F_{\rm euc}(g_j)$ and $f_j:={\mathcal F}_{A,\varepsilon}^{-1}(h_j).$ 
   Note that $g_j=g$ outside $I_j.$ Hence, by the above support property, 
    $f_j=f$ outside $I_j.$ We shall estimate the function 
 $$x\mapsto (|x|+1)^s\;  \varphi_0(x)^{-2/p}  \;  | f_j^{(k)}(x)| $$ on $I_{j+1}\setminus I_j$ with $j\in \N_{\geq 1} .$
  Recall that $f_j=f$ on $I_{j+1}\setminus I_j.$  In view of Theorem \ref{estder}  we have 
  \begin{eqnarray*}
 | f_j^{(k)}(x)| &\leq& \int_\R |h_j(\lambda)| \; | \partial_x^k \Psi_{A, \varepsilon}(\lambda, x)| \; \left|1-{{\varepsilon \varrho}\over{i\lambda}}\right|\;  \pi_\varepsilon(d\lambda)\\
& \leq&  \varphi_{i\sqrt{1-\varepsilon^2}\varrho}(x)  \int_\R |h_j(\lambda)|\; ( |\lambda|+1)^k  \; \left|1-{{\varepsilon \varrho}\over{i\lambda}}\right|\;    \pi_\varepsilon(d\lambda),
\end{eqnarray*}
where 
$$\left| 1-{{\varepsilon \varrho}\over{i\lambda}}\right| \,  \pi_\varepsilon(d\lambda) =
{{\sqrt{\lambda^2+\varepsilon^2\varrho^2}\over{\sqrt{\lambda^2-(1-\varepsilon^2)\varrho^2}}}} {1\over {\big |c\big(\sqrt{\lambda^2-(1-\varepsilon^2 )\varrho^2}\big)\big |^2}}
{\bf 1}_{\mathbb{R}\,\backslash\, \left]-\sqrt {1-\varepsilon^2}\varrho,\sqrt {1-\varepsilon^2}\varrho\right[}(\lambda)
d\lambda.$$
By knowing about the asymptotic behavior of the $c$-function (see Section 2),  one comes to 
$$ | f_j^{(k)}(x)|
  \leq  c_1  \varphi_{i\sqrt{1-\varepsilon^2}\varrho}(x) \;  \tau_{t_1, 0}^{(0)}(h_j),$$
 for some integer $t_1>0.$  It follows that 
$$\sup_{x\in  I_{j+1}\setminus I_j}  (|x|+1)^s \;  \varphi_0(x)^{-2/p}  \;   | f_j^{(k)}(x)|
\leq c_2 \; j^{s+1} \; e^{\varrho j  ({2\over p}-1 +\sqrt{1-\varepsilon^2})} \;  \tau_{t_1, 0}^{(0)}(h_j) .$$

 Recall that the two seminorms ${\tau}_{t,\ell}^{(\vartheta_{p,\varepsilon})}$ (see \eqref{9.5}) and $ \tilde{\tau}_{t,\ell}^{(\vartheta_{p,\varepsilon})} $ (see \eqref{9.6})
 are equivalent on $\mathcal S(\C_{p,\varepsilon}).$ Since $h_j=\mathcal F_{\rm euc}(g_j),$ it follows that 
 $$(1+\lambda)^{t_1} h_j(\lambda) 
 =\sum_{\ell=0}^{t_1} 
 \begin{pmatrix} t_1\\ \ell\end{pmatrix} \lambda^\ell \mathcal F_{\rm euc}(g_j)(\lambda).$$
Thus
\begin{eqnarray*}
 \tilde{\tau}_{t_1,0}^{(0)} (h_j)&\leq& 
 \sum_{\ell=0}^{t_1} 
 \begin{pmatrix} t_1\\ \ell\end{pmatrix} \int_\R\;  | g_j^{(\ell)}(y)| \; dy \\
& \leq &
c_3 \sum_{\ell=0}^{t_1}  \sup_{y\in \R} (|y|+1)^2 \; | g_j^{(\ell)}(y)|  \\
&=&
c_3 \sum_{\ell=0}^{t_1} \sup_{w\in \{\pm 1\}} \sup_{y\in \R_+} (y+1)^2\;  | g_j^{(\ell)}(wy)|  .
\end{eqnarray*}
Now one uses the Leibniz rule to compute the derivatives of $g_j=\omega_j g.$ Since 
$\omega_j =0$ on $I_{j-1} $ and is bounded, together with all its derivatives uniformly in $j,$
 then we  have 
  $$\tilde{\tau}_{t_1,0}^{(0)} (h_j)\leq c_4 \sum_{\ell =0}^{t_1}  \sup_{w\in \{\pm 1\}} 
  \sup_{y\in \R_+\setminus I_{j-1}}  (y+1)^2\;  | g^{(\ell)} (wy)| .$$
Hence 
$$ j^{s+1} \; e^{\varrho j  ({2\over p}-1+\sqrt{1-\varepsilon^2} )} \;   \tilde \tau_{t_1, 0}^{(0)}(h_j) 
 \leq c_5 \sum_{\ell =0}^{t_1}  \sup_{w\in \{\pm 1\}} \sup_{y\in \R_+\setminus I_{j-1}}  (y+1)^{s+3} \; 
 e^{\varrho y ({2\over p}-1+\sqrt{1-\varepsilon^2} )} \;  | g^{(\ell)}(wy)| .$$

Recall that $g(x)=\mathcal F_{\rm euc}^{-1} (h)(x),$ where $\mathcal F_{\rm euc}$ is the Euclidean Fourier 
transform and  $h\in PW(\C) .$  By Cauchy's integral theorem, it is known that 
$$ p(u)\;e^{\alpha u} \;g^{(\ell)}(u)=\text{cst}\int_\R p(i\partial_\lambda) \Big\{ (i\lambda-\alpha)^\ell 
h(\lambda+i\alpha)\Big\} e^{i\lambda u} d\lambda, $$ for any polynomial $p\in \R[u].$ Hence,   
\begin{eqnarray*}
 &&\sum_{\ell=0}^{t_1}  \sup_{w\in \{\pm 1\}} \sup_{y\in \R_+\setminus I_{j-1}}  (y+1)^{s +3} \;
 e^{\varrho y ({2\over p}-1+\sqrt{1-\varepsilon^2}  )}  \; | g^{(\ell)}(wy)| \\
&\leq& c_6 \sum_{r=0}^{s+3} \; \sup_{|\Im \lambda| \leq \varrho\, \vartheta_{p,\varepsilon} } 
(|   \lambda|+1) ^{t_2}  \;
|h^{(r)}(\lambda)|   \\
&=& c_6 \sum_{r=0}^{s+3}    {\tau}_{t_2,r}^{(\vartheta_{p,\varepsilon})} (h),
\end{eqnarray*}
for some integer $t_2>0.$

It remains for us to estimate the function 
$$x\mapsto  (|x|+1)^s\; \varphi_0(x)^{-2/p} \; | f^{(k)}(x)| $$ on $I_{1}=[-1, 1].$ First, it is not hard to prove that for $|x|\leq 1,$  there is a  positive constant $c$ and an integer $m_k\geq 1$ such that 
\begin{equation} 
\Big| {{\partial ^k}\over{\partial x^k}} \Psi_{A,\varepsilon}(\lambda, x)\Big| \leq c 
 {{(|\lambda|+1)^{m_k} }\over {|i\lambda-\varepsilon \varrho|} } \varphi_0(x) 
 \end{equation} 
 for $\lambda\in \R$ such that $|\lambda|\geq \sqrt{1-\varepsilon^2}\varrho.$  Now, arguing as above,  we have
 $$ | f^{(k)}(x)| \leq  c_1  \varphi_{0}(x)  \int_\R |h(\lambda)|\;  {{(|\lambda|+1)^{m_k} } \over {|i\lambda-\varepsilon \varrho|}}\;\left| 1-{{\varepsilon \varrho}\over {i\lambda}}\right|  \;  \pi_\varepsilon(d\lambda).$$
Since $I_1$ is compact, it follows that 
\begin{eqnarray*}
 \sup_{x\in I_1} (|x|+1)^s\; \varphi_0(x)^{-2/p} \;  | f^{(k)}(x)| &\leq& c_2  \int_\R |h(\lambda)|\;  {{(|\lambda|+1)^{m_k} } \over {|i\lambda-\varepsilon \varrho|}} \;  \left| 1-{{\varepsilon \varrho}\over {i\lambda}}\right|  \;  \pi_\varepsilon(d\lambda)\\
& \leq& c_3 \tau_{t, 0}^{(0)} (h),
\end{eqnarray*}
 for some integer $t>0.$ 
 
 This finishes the proof of Lemma \ref{surj}.
 \end{proof}
In summary, we have proved:
 \begin{thm}\label{+-1} Let $-1\leq \varepsilon\leq 1$ and  $0<p\leq {2\over {1+  \sqrt{1-\varepsilon^2}}}  .$
  Then the Fourier transform $\mathcal F_{A,\varepsilon}$ is a topological isomorphism between $\mathcal S_p(\R)$ and
 $ \mathcal S(\C_{p,\varepsilon})$.
 \end{thm}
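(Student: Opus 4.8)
The plan is to assemble the theorem from the two preceding lemmas by a standard extension-by-density argument, exploiting the completeness of the target spaces. The point is that Lemma \ref{inje} already supplies the continuity and injectivity of $\mathcal F_{A,\varepsilon}\colon\mathcal S_p(\R)\to\mathcal S(\C_{p,\varepsilon})$, while Lemma \ref{surj} supplies a continuous candidate for the inverse, but only on the dense subspace $PW(\C)$. All that remains is to upgrade the latter to a genuine two-sided continuous inverse defined on the whole Schwartz space, and then to read off that $\mathcal F_{A,\varepsilon}$ is bicontinuous and bijective.

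First I would extend the inverse. By Lemma \ref{surj} the map $\mathcal F_{A,\varepsilon}^{-1}\colon PW(\C)\to\mathcal D(\R)$ is continuous for the seminorms of $\mathcal S(\C_{p,\varepsilon})$ on the source and those of $\mathcal S_p(\R)$ on the target. Since $PW(\C)$ is dense in $\mathcal S(\C_{p,\varepsilon})$ (by \cite[Lemma 4.17]{BX1}) and since $\mathcal S_p(\R)$ is a Fr\'echet space, hence complete, the map $\mathcal F_{A,\varepsilon}^{-1}$ extends uniquely to a continuous linear map $G\colon\mathcal S(\C_{p,\varepsilon})\to\mathcal S_p(\R)$.

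Next I would verify that $G$ is a genuine two-sided inverse of $\mathcal F_{A,\varepsilon}$. On the dense subspace $\mathcal D(\R)\subset\mathcal S_p(\R)$ one has $G\circ\mathcal F_{A,\varepsilon}=\id$: indeed, for $f\in\mathcal D(\R)$ the image $\mathcal F_{A,\varepsilon}(f)$ lies in $PW(\C)$ by the Paley--Wiener Theorem \ref{pw}, and there $G$ agrees with $\mathcal F_{A,\varepsilon}^{-1}$, which inverts $\mathcal F_{A,\varepsilon}$ by the inversion formula of Theorem \ref{invth}. Both $G\circ\mathcal F_{A,\varepsilon}$ and $\id$ are continuous on $\mathcal S_p(\R)$ and coincide on the dense subspace $\mathcal D(\R)$, so they coincide everywhere. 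Symmetrically, on the dense subspace $PW(\C)\subset\mathcal S(\C_{p,\varepsilon})$ one has $\mathcal F_{A,\varepsilon}\circ G=\id$, and by continuity and density this identity propagates to all of $\mathcal S(\C_{p,\varepsilon})$. Hence $\mathcal F_{A,\varepsilon}$ is a continuous linear bijection with continuous inverse $G$, that is, a topological isomorphism between $\mathcal S_p(\R)$ and $\mathcal S(\C_{p,\varepsilon})$.

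The genuinely hard work is already packed into the two lemmas, in particular into the weighted estimates of Lemma \ref{surj}; at the level of the theorem there is no substantive analytic obstacle, only the bookkeeping of extension by density. The single point that needs care is the compatibility of the two extensions --- namely, that the abstractly extended $G$ really inverts $\mathcal F_{A,\varepsilon}$ on \emph{both} sides --- which is why I would pin the composites down on the respective dense subspaces $\mathcal D(\R)$ and $PW(\C)$ before invoking continuity to conclude.
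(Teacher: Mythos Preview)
Your proposal is correct and matches the paper's approach exactly: the paper simply writes ``In summary, we have proved'' after Lemmas \ref{inje} and \ref{surj}, leaving the density/extension argument implicit, whereas you have spelled it out carefully. The only addition on your side is the explicit verification that the extended map is a two-sided inverse, which is a welcome clarification of what the paper takes for granted.
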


\section{Pointwise multipliers}\label{sec9}

For  $-1\leq \varepsilon\leq 1$ and   $0<p\leq {2\over {1+  \sqrt{1-\varepsilon^2}}},$  denote by $\mathcal S'_p(\R)$ \index{$\mathcal S_p'(\R)$} and by  $\mathcal S'(\C_{p,\varepsilon})$ \index{$\mathcal S'(\C_{p,\varepsilon})$} the topological dual spaces of 
$\mathcal S_p(\R)$ and $\mathcal S(\C_{p,\varepsilon})$, respectively. 
 
Let $f$ be a Lebesgue measurable function on $\R$ such that 
$$ \int_\R |f(x) |  \varphi_0(x)^{2/p} (|x|+1)^{-\ell} A(x)dx<\infty$$ for some $\ell \in \N.$ Then the functional 
$T_f$ defined on  $\mathcal S_p(\R)$ by 
 $$\langle T_f,\phi \rangle =\int_\R f(x)  {\phi(-x)} A(x)dx,\qquad \phi \in \mathcal S_p(\R)$$
  is in $\mathcal S'_p(\R).$ Indeed, 
$$ | \langle T_f, \phi\rangle| \leq \sigma_{\ell,0}^{(p)} (\phi)\int_\R  |f(x)| \varphi_0(x)^{2/p} 
 (|x|+1)^{-\ell} A(x) dx<\infty.$$
Further, since $p \leq {2\over {1+  \sqrt{1-\varepsilon^2}}}\leq 2,$ the Schwartz space $\mathcal S_p(\R)$ can be seen as a subspace of 
 $ \mathcal S'_p(\R)$   by 
 identifying $f\in \mathcal S_p(\R)$ with $T_f\in  \mathcal S_p'(\R).$

Now let $h$ be a measurable function on $\R$ such that 
$$\int_{\R} |h(\lambda)| (|\lambda|+1)^{-\ell} \left| 1-{{\varepsilon \varrho}\over {i\lambda}}\right|  \pi_\varepsilon(d\lambda)   <\infty$$
for some $\ell \in \N.$ Here $ \pi_\varepsilon(d\lambda)$  denotes   the Plancherel measure  \eqref{plancherel},
$$
 \pi_\varepsilon(d\lambda)= 
\frac{|\lambda|}{\sqrt{\lambda^2-(1-\varepsilon^2 )\varrho^2}\;
\big |c\big(\sqrt{\lambda^2-(1-\varepsilon^2 )\varrho^2}\big)\big |^2}
{\bf 1}_{\mathbb{R}\,\backslash\, \left]-\sqrt {1-\varepsilon^2}\varrho,\sqrt {1-\varepsilon^2}\varrho\right[}(\lambda)
d\lambda,
$$
where $c$ is the Harish-Chandra's function associated with the  operator $\Delta $ (see Section \ref{sec2}).  Then the functional $\mathcal T_h$ defined on $\mathcal S(\C_{p,\varepsilon}) $ by 
$$\langle \mathcal T_h, \psi\rangle =\int_{\R }  h(\lambda)  \psi(\lambda)  \Big(1-{{ \varepsilon \varrho  }\over{i\lambda  }}\Big) \pi_\varepsilon(d\lambda),
\qquad \psi\in \mathcal S(\C_{p,\varepsilon})$$ is in the dual space $\mathcal S'(\C_{p,\varepsilon}).$
In fact, 
 $$|\langle \mathcal T_h, \psi\rangle |\leq c\, \tau_{0,\ell}^{(0)}(\psi) \int_{\R} |h(\lambda)| 
 (|\lambda|+1)^{-\ell} \left|1-{{\varepsilon \varrho }\over{i\lambda}}\right|  \pi_\varepsilon(d\lambda)<\infty.$$
Moreover, since $|c(\mu)|^{-2} \sim |\mu|^{2\alpha+1}$  for $|\mu|$ large (with $\alpha>-1/2$) and
   $$|c(\mu)|^{-2}\sim
   \begin{cases}
       |\mu |^2 & \text{for}  \:|\mu| \lleq 1 \: \text{and} \:\varrho>0,\\
    |\mu|^{2\alpha+1} & \text{for}\:  |\mu| \lleq 1  \:\text{and}\: \varrho=0,
   \end{cases}
$$ 
it follows that the Schwartz space $\mathcal S(\C_{p,\varepsilon})$ can be identified with a subspace of $\mathcal S'(\C_{p,\varepsilon}).$

For $T$ in $\mathcal S'_p(\R),$ we define the distributional Fourier 
transform $\mathcal F_{A,\varepsilon}(T)$ of $T$ on  $\mathcal S (\C_{p,\varepsilon})=\mathcal F_{A,\varepsilon}(\mathcal S_p(\R))$  by
\index{$\mathcal F_{A,\varepsilon}(T)$}
\begin{equation}\label{dis}
 \langle \mathcal F_{A,\varepsilon}(T),\mathcal F_{A,\varepsilon}( \phi)\rangle = \langle  T,\phi \rangle ,\qquad \phi\in \mathcal S_p(\R).
\end{equation}
That is,
\begin{equation*} 
 \langle \mathcal F_{A,\varepsilon}(T), \psi  \rangle = \langle  T,\mathcal F^{-1}_{A,\varepsilon} (\psi) \rangle ,\qquad \psi \in 
 \mathcal S (\C_{p,\varepsilon}).
\end{equation*}
This definition is an extension of the Fourier transform on $\mathcal S_p(\R).$ Indeed, 
let $f \in \mathcal S_p(\R)$ with $0<p \leq {2\over {1+  \sqrt{1-\varepsilon^2}}}\leq 2.$ Applying Fubini's theorem, then, for  every $\phi\in \mathcal S_p(\R),$  we have 
\begin{eqnarray*}
\langle \mathcal T_{\mathcal F_{A,\varepsilon}(f)}, \mathcal F_{A,\varepsilon}(\phi)\rangle &=&
\int_{\R }  \mathcal F_{A,\varepsilon}(f)(\lambda)  \mathcal F_{A,\varepsilon}(  \phi)( \lambda) \Big(1-{{ \varepsilon \varrho }\over{i\lambda  }}\Big) \pi_\varepsilon(d\lambda)\\
& =&\int_{\R } f(x) \Big\{ \int_{\R } \mathcal F_{A,\varepsilon}(  \phi)(\lambda)  \Psi_{A,\varepsilon}(\lambda, -x) \Big(1-{{ \varepsilon \varrho }\over{i\lambda  }}\Big) \pi_\varepsilon(d\lambda) \Big\} A(x)dx\\
&=&\int_\R f(x) {\phi(-x)} A(x) dx \\
&=&\langle T_f, \phi\rangle. 
\end{eqnarray*} 
Hence $\mathcal F_{A,\varepsilon}(T_f)=\mathcal T_{\mathcal F_{A,\varepsilon}(f)}.$  

A function $\psi$ defined on $\C_{p,\varepsilon}$  is called a pointwise multiplier of $\mathcal S(\C_{p,\varepsilon})$ 
 if the mapping $\phi\mapsto  \psi\phi$ is continuous from  $\mathcal S(\C_{p,\varepsilon})$ into itself.
 The following statement comes from \cite[Proposition 3.2]{BBM}, with changes appropriate to our setting.
 \begin{lema}\label{bbm} 
Let $\psi$ be a function defined on $\C_{p,\varepsilon}.$ Then,    $\psi$    
is a pointwise multiplier of $\mathcal S(\C_{p,\varepsilon})$ if and only if $\psi$ satisfies the following three conditions:
 \begin{itemize}
 \item[{(i)}] $\psi$ is holomorphic in the interior of $\C_{p,\varepsilon}.$
 \item[{(ii)}] For every $t\in \N,$ the derivatives $\psi^{(t)}$ extend continuously to $\C_{p,\varepsilon}.$
 \item[{(iii)}] For every $t\in \N,$ there exists $n_t\in \N,$ such that \index{$n_t$}
\begin{equation}\label{modi2}
\sup_{\lambda\in \C_{p,\varepsilon}} (|\lambda|+1)^{-n_t} | \psi^{(t)} (\lambda)| <\infty.
\end{equation}
 \end{itemize}
 \end{lema}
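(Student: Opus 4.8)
The plan is to prove the two implications separately, adapting the scheme of \cite[Proposition 3.2]{BBM} to the holomorphic setting of the tube $\C_{p,\varepsilon}$. For \textbf{sufficiency}, suppose $\psi$ satisfies (i)--(iii) and let $\phi\in\mathcal S(\C_{p,\varepsilon}).$ Then $\psi\phi$ is holomorphic in the interior of $\C_{p,\varepsilon}$ by (i), and Leibniz' rule
$$(\psi\phi)^{(\ell)}=\sum_{k=0}^{\ell}\binom{\ell}{k}\psi^{(k)}\phi^{(\ell-k)}$$
together with (ii) shows that every derivative of $\psi\phi$ extends continuously to $\C_{p,\varepsilon}.$ Inserting the polynomial bounds $|\psi^{(k)}(\lambda)|\leq c_k(|\lambda|+1)^{n_k}$ from (iii) I would estimate
$$\tau_{t,\ell}^{(\vartheta_{p,\varepsilon})}(\psi\phi)\leq\sum_{k=0}^{\ell}\binom{\ell}{k}c_k\,\tau_{t+n_k,\,\ell-k}^{(\vartheta_{p,\varepsilon})}(\phi),$$
which simultaneously shows $\psi\phi\in\mathcal S(\C_{p,\varepsilon})$ and the continuity of $\phi\mapsto\psi\phi,$ so that $\psi$ is a pointwise multiplier.

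For \textbf{necessity} I would fix the Gaussian $\phi_0(\lambda):=e^{-\lambda^2},$ which lies in $\mathcal S(\C_{p,\varepsilon})$ because $|e^{-\lambda^2}|\leq e^{2(\varrho\vartheta_{p,\varepsilon})^2}e^{-|\lambda|^2}$ on the tube, and which never vanishes. Since $\psi\phi_0\in\mathcal S(\C_{p,\varepsilon})$ is holomorphic in the interior and $1/\phi_0=e^{\lambda^2}$ is entire, the representation $\psi=(\psi\phi_0)\,e^{\lambda^2}$ gives (i); differentiating this product and using that all derivatives of $\psi\phi_0$ and of $e^{\lambda^2}$ are continuous on $\C_{p,\varepsilon}$ gives (ii).

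The remaining condition (iii) is the crux. Here I would use the invariance of the tube under real translations: for $a\in\R$ put $\phi_a(\lambda):=\phi_0(\lambda-a),$ so that $\phi_a\in\mathcal S(\C_{p,\varepsilon})$ with $\tau_{t,\ell}^{(\vartheta_{p,\varepsilon})}(\phi_a)\leq(|a|+1)^t\,\tau_{t,\ell}^{(\vartheta_{p,\varepsilon})}(\phi_0)$ (from $|\mu+a|+1\leq(|\mu|+1)(|a|+1)$). Continuity of the multiplier map yields, for each $t,$ a constant $c$ and an integer $T$ with $\tau_{0,t}^{(\vartheta_{p,\varepsilon})}(\psi\phi_a)\leq c\,(|a|+1)^T.$ Evaluating the Leibniz expansion of $(\psi\phi_a)^{(t)}$ at a point $\lambda=a+ib$ with $|b|\leq\varrho\vartheta_{p,\varepsilon},$ namely
$$(\psi\phi_a)^{(t)}(a+ib)=\sum_{k=0}^{t}\binom{t}{k}\psi^{(k)}(a+ib)\,\phi_0^{(t-k)}(ib),$$
the leading coefficient of $\psi^{(t)}(a+ib)$ is $\phi_0(ib)=e^{b^2},$ which stays bounded away from $0$ as $b$ ranges over the compact interval $[-\varrho\vartheta_{p,\varepsilon},\varrho\vartheta_{p,\varepsilon}].$ Inducting on $t,$ I would solve for $\psi^{(t)}(a+ib)$ in terms of $(\psi\phi_a)^{(t)}(a+ib)$ and the already-bounded derivatives $\psi^{(k)}(a+ib),$ $k<t;$ since $|a|=|\Re\lambda|\leq|\lambda|$ and $|\Im\lambda|$ is bounded on the tube, $(|a|+1)$ and $(|\lambda|+1)$ are comparable, and one arrives at $\sup_{\lambda\in\C_{p,\varepsilon}}(|\lambda|+1)^{-n_t}|\psi^{(t)}(\lambda)|<\infty.$

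The main obstacle is exactly this last step. Because $\mathcal S(\C_{p,\varepsilon})$ consists only of holomorphic functions, one cannot probe $\psi$ with compactly supported bumps, and the growth of its derivatives is not visible directly from the multiplier inequality. The remedy is to work with a single fixed, entire, nowhere-vanishing Schwartz function and its real translates, and to extract $\psi^{(t)}$ inductively from the Leibniz expansion via the nonvanishing leading coefficient $\phi_0(ib)$; this is where the care has to be concentrated.
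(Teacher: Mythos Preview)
The paper does not actually prove this lemma; it merely states it and attributes it to \cite[Proposition~3.2]{BBM} ``with changes appropriate to our setting.'' Your proposal is therefore not being compared against an existing proof but rather supplies one.

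Your argument is correct and is precisely the adaptation of the real-variable BBM proof that the holomorphic setting requires. The sufficiency direction is routine via Leibniz' rule. For necessity, the key difficulty you identify---that $\mathcal S(\C_{p,\varepsilon})$ contains no compactly supported functions, so one cannot localize $\psi$ by bump functions---is real, and your remedy of using the entire nowhere-vanishing Gaussian $\phi_0(\lambda)=e^{-\lambda^2}$ together with its \emph{real} translates $\phi_a$ (which remain in the tube and whose seminorms grow only polynomially in $|a|$) is the standard and correct device. The inductive extraction of $\psi^{(t)}(a+ib)$ from the Leibniz expansion at $\lambda=a+ib$, using that the leading coefficient $\phi_0(ib)=e^{b^2}$ is bounded below on the compact imaginary interval, is sound; together with the comparability $(|a|+1)\asymp(|\lambda|+1)$ on the tube this yields~(iii). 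In the degenerate case $\varrho\,\vartheta_{p,\varepsilon}=0$ the tube collapses to $\R$, $b=0$, and your argument specializes to the classical one in \cite{BBM}.
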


 \begin{thm}\label{pwm} Suppose that  $0<p \leq {2\over {1+  \sqrt{1-\varepsilon^2}}}$  whenever $\varrho=0,$ and $ {2\over {2+  \sqrt{1-\varepsilon^2}}}\leq p \leq {2\over {1+  \sqrt{1-\varepsilon^2}}}$  whenever $\varrho>0.$
 If $T\in \mathcal S_p'(\R)$ such that $\psi:=\mathcal F_{A,\varepsilon}(T)$  is 
a pointwise multiplier of  $\mathcal S(\C_{p,\varepsilon}),$ then for any $s\in \N$  
there exist $\ell\in \N$ and continuous functions $f_m$ defined on $\R,$ $m=0,1,\ldots  ,\ell ,$  such that 
$$T=\sum_{m=0}^\ell \Lambda_{A,\varepsilon}^m \,f_m  $$ and, for every such $m,$ 
\begin{equation}\label{modi}
\sup_{x\in \R} \,(|x|+1)^s \,\varphi_0(x)^{-{2\over p}+\sqrt{1-\varepsilon^2}} \,| f_m (x)|<\infty.
\end{equation} Here 
$\Lambda_{A,\varepsilon}$ is the differential-reflection operator \eqref{Op}.
 \end{thm}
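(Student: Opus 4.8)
The plan is to reduce the statement, through the Fourier transform, to a division problem in the spectral variable and then to recover $T$ by applying a fixed power of $\Lambda_{A,\varepsilon}$. Throughout I would use the intertwining identity $\mathcal F_{A,\varepsilon}(\Lambda_{A,\varepsilon}^r f)=(i\lambda)^r\mathcal F_{A,\varepsilon}(f)$ (established in the proof of Lemma \ref{inje}), the isomorphism Theorem \ref{+-1}, and the distributional definition \eqref{dis}. Write $\psi=\mathcal F_{A,\varepsilon}(T)$. Since $\psi$ is a pointwise multiplier, Lemma \ref{bbm} tells me that $\psi$ is holomorphic in the interior of $\C_{p,\varepsilon}$, that each derivative $\psi^{(t)}$ extends continuously to $\C_{p,\varepsilon}$, and that each such derivative has polynomial growth of some order $n_t$ there. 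Fix $s\in\N$. The aim is to produce a single continuous function $f$ satisfying \eqref{modi} (with this $s$) such that $T=(a^2-\Lambda_{A,\varepsilon}^2)^N f$ for suitable $a>0$ and $N\in\N$; expanding $(a^2-z^2)^N=\sum_{m=0}^{2N}c_m z^m$ and putting $f_m:=c_m f$, $\ell:=2N$, then gives $T=\sum_{m=0}^{\ell}\Lambda_{A,\varepsilon}^m f_m$ with all $f_m$ continuous and subject to \eqref{modi}.

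To construct $f$ I would choose $a>\varrho\,\vartheta_{p,\varepsilon}$, so that $a^2+\lambda^2$ is zero-free on the closed tube $\C_{p,\varepsilon}$ (here the finite height $\varrho\,\vartheta_{p,\varepsilon}$ of the tube is used), and set $g(\lambda):=\psi(\lambda)/(a^2+\lambda^2)^N$. By Lemma \ref{bbm}, $g$ is holomorphic inside $\C_{p,\varepsilon}$ and, with all its derivatives, extends continuously to $\C_{p,\varepsilon}$; the factor $(a^2+\lambda^2)^{-N}$ lowers the polynomial order by $2N$, so for $N$ large (depending on $s$, on $n_0,\dots,n_s$, and on the growth of the Plancherel density $|c(\cdot)|^{-2}$ recalled in Section \ref{sec2}) the functions $g,g',\dots,g^{(s)}$ are all integrable against $\pi_\varepsilon$. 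Since $|\Psi_{A,\varepsilon}(\lambda,x)|\leq\sqrt2$ for real $\lambda$ (Theorem \ref{esti}.1), the inversion integral of Theorem \ref{invth} then defines $f:=\mathcal F_{A,\varepsilon}^{-1}(g)$ as a genuine continuous function, and a Fubini argument resting on the inversion and Plancherel formulas of Theorem \ref{invth} shows $\mathcal F_{A,\varepsilon}(T_f)=\mathcal T_g$, i.e. $\mathcal F_{A,\varepsilon}(f)=g$ distributionally.

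The decay estimate \eqref{modi} for $f$ I would obtain by the contour-shift technique already exploited in the proof of Lemma \ref{surj}. Expressing $\mathcal F_{A,\varepsilon}^{-1}$ through the Euclidean inverse Fourier transform and the support-preserving transforms ${}^{\rm t}V_{A,\varepsilon}^{-1}$ and $\mathcal A_\varepsilon^{-1}$ (Lemma \ref{support} and \eqref{center2}), I would apply Cauchy's integral theorem to shift the spectral variable by $i\varrho\,\vartheta_{p,\varepsilon}$; the holomorphy of $g$ on $\C_{p,\varepsilon}$ and the integrability of its first $s$ derivatives convert into the exponential weight $e^{(\frac2p-\sqrt{1-\varepsilon^2})\varrho|x|}$ together with the polynomial factor $(|x|+1)^s$, which is exactly \eqref{modi} once one recalls $\varphi_0(x)\sim(|x|+1)e^{-\varrho|x|}$ (Lemma \ref{BX}.6). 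The admissible range of $p$ enters twice: the shift needs $\vartheta_{p,\varepsilon}=\frac2p-1-\sqrt{1-\varepsilon^2}\geq0$ (the upper bound on $p$), while the lower bound $p\geq\frac{2}{2+\sqrt{1-\varepsilon^2}}$ when $\varrho>0$ is precisely the condition $\frac2p-\sqrt{1-\varepsilon^2}\leq2$, which keeps the decay rate of $f$ below $2\varrho$ and thereby guarantees that each $f_m$ really defines an element of $\mathcal S_p'(\R)$ (cf. the integrability condition stated just before Lemma \ref{bbm}).

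Finally, the identity $T=(a^2-\Lambda_{A,\varepsilon}^2)^N f$ follows by injectivity: working on the Fourier side (which sidesteps the reflection and sign bookkeeping coming from Lemma \ref{lema1}), one has $\mathcal F_{A,\varepsilon}\big((a^2-\Lambda_{A,\varepsilon}^2)^N f\big)=(a^2+\lambda^2)^N g=\psi=\mathcal F_{A,\varepsilon}(T)$, and the distributional $\mathcal F_{A,\varepsilon}$ is injective on $\mathcal S_p'(\R)$ as a direct consequence of \eqref{dis} together with the surjectivity of $\mathcal F_{A,\varepsilon}:\mathcal S_p(\R)\to\mathcal S(\C_{p,\varepsilon})$ in Theorem \ref{+-1}. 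I expect the main obstacle to be the decay estimate \eqref{modi}: because the eigenfunction $\Psi_{A,\varepsilon}(\lambda,\cdot)$ carries a two-term Harish-Chandra asymptotic, a naive contour shift makes one exponential term decay while the other grows, and the clean bound is recovered only by routing the inversion through the Euclidean Fourier transform and the support-preservation of ${}^{\rm t}V_{A,\varepsilon}$, exactly as in Lemma \ref{surj}; tracking the exact exponential rate through this detour is what forces the stated bounds on $p$.
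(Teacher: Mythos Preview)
Your approach is correct and follows essentially the same scheme as the paper: divide $\psi$ by a polynomial that is zero-free on $\C_{p,\varepsilon}$, check that the quotient has a continuous inverse Fourier transform, recover $T$ by applying the corresponding polynomial in $\Lambda_{A,\varepsilon}$, and establish \eqref{modi} via the dyadic localization and Cauchy-shift argument already used in Lemma \ref{surj}. The paper takes the divisor $(i\lambda+\varrho+1)^{\ell}$ where you take $(a^2+\lambda^2)^N$; after that the two proofs are indistinguishable.

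One point deserves correction: your explanation of where the lower bound $p\geq\frac{2}{2+\sqrt{1-\varepsilon^2}}$ (for $\varrho>0$) enters is off. The inequality $\frac2p-\sqrt{1-\varepsilon^2}\leq2$ is not what guarantees $f_m\in\mathcal S_p'(\R)$; faster decay of $f_m$ only helps, and the upper bound on $p$ already handles the integrability against $\varphi_0^{2/p}A$. In the paper the lower bound is used exactly once, to keep the zero $\lambda=i(\varrho+1)$ of the chosen divisor $i\lambda+\varrho+1$ outside the tube $\C_{p,\varepsilon}$ (it gives $\varrho\,\vartheta_{p,\varepsilon}\leq\varrho<\varrho+1$). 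With your divisor $a^2+\lambda^2$ and $a>\varrho\,\vartheta_{p,\varepsilon}$ chosen freely, this obstruction vanishes; carried through, your argument would in fact prove the theorem without the lower bound on $p$, a mild strengthening of the stated result.
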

 \begin{proof} It is assumed that  $\psi= \mathcal F_{A,\varepsilon}(T)$  is   a pointwise multiplier of $\mathcal S(\C_{p,\varepsilon}).$  Then  by Lemma \ref{bbm}, for all $t\in \N$ there is an integer $n_t \in \N$  such that 
\begin{equation}\label{F}
\sup_{\lambda\in \C_{p,\varepsilon}} (|\lambda|+1)^{-n_t} | \psi^{(t)} (\lambda)| <\infty.
\end{equation}
 Fix $s\in \N$ and consider an integer $\ell  $ that will be later specified. Define the function 
 $\kappaup$ on $\C_{p,\varepsilon}$ by 
$$  \kappaup(\lambda )=(i\lambda + \varrho+1)^{-\ell} \psi(\lambda). $$ 
In view of our assumption on $p,$ the function  $\kappaup$ satisfies the first and the second conditions in the definition of the space $\mathcal S(\C_{p,\varepsilon}).$
Further,  since $|\Psi_{A,\varepsilon}(\lambda, x)|\leq \sqrt 2$ for all $\lambda\in \R,$ we have 
$$ |\mathcal F_{A,\varepsilon}^{-1}(\kappaup)(x)|:=  \Big|c\int_\R \kappaup(\lambda) \Psi_{A,\varepsilon}(\lambda, x)\, \Big(1-{{\varepsilon \varrho}\over{i\lambda}}\Big)\;  \pi_\varepsilon(d\lambda)\Big|
\leq  c_1  \int_{\R } |\kappaup(\lambda)|\,\left| 1-{{\varepsilon \varrho}\over{i\lambda}}\right| 
\,  \pi_\varepsilon(d\lambda),$$
where 
$$\left| 1-{{\varepsilon \varrho}\over{i\lambda}}\right| \,  \pi_\varepsilon(d\lambda) =
{{\sqrt{\lambda^2+\varepsilon^2\varrho^2}\over{\sqrt{\lambda^2-(1-\varepsilon^2)\varrho^2}}}} {1\over {\big |c\big(\sqrt{\lambda^2-(1-\varepsilon^2 )\varrho^2}\big)\big |^2}}
{\bf 1}_{\mathbb{R}\,\backslash\,   ]-\sqrt {1-\varepsilon^2}\varrho,\sqrt {1-\varepsilon^2}\varrho  [}(\lambda)
d\lambda.$$
 Thus, in view of the estimate \eqref{F} and the behavior of $|c(\mu)|^{-2}$ for small and large $|\mu|,$ it follows that  $\mathcal F^{-1}_{A,\varepsilon}(\kappaup)(x)$ exists for all $x\in \R$ provided that   $\ell >n_0+2\alpha+2.$ Moreover, for all $\phi\in \mathcal S_p(\R),$ Fubini's theorem leads to 
 \begin{eqnarray*}
&& \int_\R \phi(-x) \mathcal F_{A,\varepsilon}^{-1}(\kappaup)(x) A(x) dx\\
 &=&c_1 \int_\R \phi(-x)\, \Big( \int_\R \kappaup (\lambda) \Psi_{A,\varepsilon}(\lambda, x)\, \Big(1-{{\varepsilon \varrho}\over{i\lambda}}\Big)\;  \pi_\varepsilon(d\lambda) \Big) \,A(x) dx\\
 &=&c_1 \int_\R  \kappaup (\lambda) \, \Big(   \int_\R   \phi(-x) \Psi_{A,\varepsilon}(\lambda, x)\,  A(x) dx \, \Big) \Big(1-{{\varepsilon \varrho}\over{i\lambda}}\Big)\;  \pi_\varepsilon(d\lambda)\\
 &=& c_1 \int_\R  \kappaup (\lambda)  \mathcal F_{A, \varepsilon} (\phi) (\lambda) \Big(1-{{\varepsilon \varrho}\over{i\lambda}}\Big)\;  \pi_\varepsilon(d\lambda). 
  \end{eqnarray*}
 It follows that the inverse Fourier transform $\mathcal F_{A,\varepsilon}^{-1}(\kappaup)$ of $\kappaup$ as an element of 
 $\mathcal S'(\C_{p,\varepsilon})$
concurs  with the classical Fourier transform of $\kappaup.$
Further 
$$
T= \mathcal F_{A,\varepsilon}^{-1}( (i\lambda+\varrho+1)^{\ell} \kappaup)
= \sum_{m=0}^\ell  \begin{pmatrix}
 \ell \cr m \end{pmatrix}
(\varrho+1)^{\ell-m}    \Lambda_{A,\varepsilon }^m  \mathcal F^{-1}_{A,\varepsilon}(\kappaup)
  :=  \sum_{m=0}^\ell   \Lambda_{A,\varepsilon }^m   {   f_m}.
$$
  It remains for us to show that, given $s\in \N,$ the functions $f_m$ satisfy  \eqref{modi},  provided that $\ell$ is   large enough. To do so, we will use a similar approach to that  in  the proof of Lemma \ref{surj}.
 
Denote by  $\xiup:=\mathcal F^{-1}_{A,\varepsilon}(\kappaup)$ and by  $g:=\mathcal F_{\rm euc}^{-1}(\kappaup),$
   where $\mathcal F_{\rm euc}$ denotes the 
Euclidean Fourier transform. Observe that if $\ell$ is large enough, then $g$ is well defined. 
For $j\in \N_{\geq 1},$ let $\omega_j \in C^\infty(\R)$ such that   $\omega_j=0$
 on $I_{j-1}:=[-(j-1), j-1]$  and $ \omega_j=1$ outside of $I_j.$ We shall assume that $\omega_j$
together with all its derivatives are bounded, uniformly in $j.$ 
  
We set $g_j :=\omega_j g$, and define 
 $\kappaup_j:=\mathcal F_{\rm euc}(g_j)$ and $\xiup_j=\mathcal F^{-1}_{A,\varepsilon}(\kappaup_j).$   Since $\omega_j=1$ outside of $I_j,$
  it follows that $g_j-g=0$ outside of $I_j.$ That is  $\supp(g_j-g)\subset I_j.$
 Using the support conservation property from the proof of Lemma \ref{surj}, we deduce that 
 $\xiup$ may differ from $\xiup_j$ only inside  $I_j.$ Now, we will estimate the function 
\begin{equation}\label{ref}
x\mapsto \,(|x|+1)^s \,\varphi_0(x)^{-{2\over p}+\sqrt{1-\varepsilon^2}} \, \xiup(x),
\end{equation} first on $I_1$ 
 and next on $I_{j+1}\setminus I_j$ for $j\in \N_{\geq 1}.$
 
We claim   that $|\Psi_{A,\varepsilon}(\lambda, x)|\leq c_2(|\lambda|+1)\varphi_0(x)$ for  $\lambda\in \R$ such that $|\lambda|\geq \sqrt{1-\varepsilon^2}\varrho.$ Indeed, as $\lambda\in \R$ such that $|\lambda|\geq \sqrt{1-\varepsilon^2}\varrho,$ it follows that $\mu_{\varepsilon}\in \R.$  Thus, the claim follows from the superposition \eqref{2.2} of $\Psi_{A,\varepsilon}(\lambda, x)$ and the facts that $|\varphi_{\mu_\varepsilon}(x)| \leq \varphi_0(x)$ and $|\varphi_{\mu_\varepsilon}'(x)| \leq c\, (\mu_\varepsilon^2+\varrho^2)\varphi_0(x)$ (see Lemma \ref{BX}.4 and  \ref{BX}.5).

From the claim above we have  
\begin{eqnarray*}
|\xiup  (x)|& \leq& c_3 \int_{\R } |\kappaup(\lambda)|\, |   \Psi_{A,\varepsilon}(\lambda,x)|\,\left| 1-{{\varepsilon \varrho}\over {i\lambda}}\right| \,   \pi_\varepsilon(d\lambda) \\
&\leq & c_4 \,\varphi_0(x)  \int_{\R } |\kappaup(\lambda)| \,(| \lambda|+1)  \, \left| 1-{{\varepsilon \varrho}\over {i\lambda}}\right| \,  \pi_\varepsilon (d\lambda). 
\end{eqnarray*}
Since $I_1$ is compact, we deduce that for every $s\in \N$   
 $$\sup_{x\in I_1}\,(|x|+1)^s \,\varphi_0(x)^{-{2\over p}+\sqrt{1-\varepsilon^2}} \,  |\xiup (x)|<\infty$$ 
 whenever $\ell>n_0 +2\alpha+3.$ Here the parameter $n_0$ comes from \eqref{F}. 
Now we consider  the  estimate of the function \eqref{ref} on  $I_{j+1}\setminus I_j$ for $j\in \N_{\geq 1}.$
 Recall that $\xiup=\xiup_j$ outside of $I_j.$ 

Arguing as above, we obtain
$$|\xiup_j(x)| \leq c_5\, \varphi_0(x) \,\sup_{\lambda\in \R\setminus  ]-\sqrt{1-\varepsilon^2}\varrho,\sqrt{1-\varepsilon^2}\varrho [}|(\lambda+1)^{t_1} \kappaup_j(\lambda)|$$
 for some integer $t_1>2\alpha+3.$ It follows that 
$$\sup_{x\in I_{j+1}\setminus I_j} \,(|x|+1)^s \,\varphi_0(x)^{-{2\over p}+\sqrt{1-\varepsilon^2}} \, |\xiup_j(x)| \leq c_6
j^{s }  e^{\big({2\over p}-{1} -\sqrt{1-\varepsilon^2}\big) \varrho j} \sup_{\lambda\in \R\setminus  ]-\sqrt{1-\varepsilon^2}\varrho,\sqrt{1-\varepsilon^2}\varrho [}|(\lambda+1)^{t_1} \kappaup_j(\lambda)|.$$
Since $\kappaup_j=\mathcal F_{\rm euc}(g_j)$ 
with $g_j=\omega_j g,$ we claim  that 
\begin{equation}\label{claimc7}
|(\lambda+1)^{t_1} \kappaup_j(\lambda)| \leq c_7 
 \sum_{q=0}^{t_1}  \sup_{w\in\{\pm 1\}}  \sup_{x\in \R^+\setminus I_{j-1}}(x+1)^2 |g^{(q)}(wx)| .
 \end{equation}
Indeed, on the one hand 
\begin{eqnarray}\label{limit-1}
(\lambda+1)^{t_1} \kappaup_j(\lambda) &=&\sum_{r=0}^{t_1} c_r\int_\R g_j(x)\, \partial_x^r e^{i\lambda x} \,dx \nonumber \\
&=&\sum_{r=0}^{t_1} c_r\int_\R \omega_j(x) g(x)\, \partial_x^r e^{i\lambda x} \,dx.
\end{eqnarray}
On the other hand, we have 
\begin{equation}\label{limit0}
(\omega_j g)^{(r)}(x)= \sum_{q=0}^r c_q g^{(q)}(x) \omega_j^{(r-q)}(x)\rightarrow 0 \quad\text{as}\quad  |x|\rightarrow +\infty.
\end{equation}
In fact, starting from $g=\mathcal F_{\rm euc}^{-1}(\kappaup),$ we obtain 
\begin{equation}\label{gq}
g^{(q)}(x)=c \int_\R \kappaup(\lambda) (i\lambda)^q e^{i\lambda x} d\lambda.
\end{equation}
 Thus, if $\ell>n_0 +t_1+1$  then by Riemann-Lebesgue lemma for the Euclidean Fourier transform, 
 $g^{(q)}(x)\rightarrow 0$
  as $|x|\rightarrow \infty.$ Thus \eqref{limit0} holds true. 
Now, in view of \eqref{limit0} we may rewrite \eqref{limit-1} as 
$$(\lambda+1)^{t_1} \kappaup_j(\lambda) = \sum_{r=0}^{t_1}  \sum_{q=0}^r c_{q,r} \int_\R g^{(q)}(x) \omega_j^{(r-q)}(x) e^{i\lambda x} dx.$$
Recall that the function
 $ \omega_j$ vanishes on $I_{j-1}$ and is bounded, together with all its derivatives, uniformly in $j.$
Therefore, 
\begin{eqnarray*}
|(\lambda+1)^{t_1} \kappaup_j(\lambda) | &\leq& c \sum_{q=0}^{t_1} \int_{\R\setminus I_{j-1}} |g^{(q)}(x)| dx\\
&\leq &c \sum_{q=0}^{t_1}   \sup_{x\in \R\setminus I_{j-1}}(|x|+1)^2 |g^{(q)}(x)|.
\end{eqnarray*}
This finishes the proof of our claim \eqref{claimc7}.

It follows that 
\begin{align*}
& j^{s}  e^{\big({2\over p}-{1} -\sqrt{1-\varepsilon^2}\big) \varrho j}  \sup_{\lambda\in \R\setminus  ]-\sqrt{1-\varepsilon^2}\varrho,\sqrt{1-\varepsilon^2}\varrho [}
|(\lambda+1)^{t_1} \kappaup_j(\lambda)|\\
&\qquad \quad  \leq c_{7} \sum_{q=0}^{t_1}  \sup_{w\in\{\pm 1\}}  
\sup_{x\in \R^+\setminus I_{j-1}}(x+1)^{s+2} e^{\big({2\over p}-{1} -\sqrt{1-\varepsilon^2}\big) \varrho x}    |g^{(q)}(wx)| .
\end{align*}
Next we shall prove that the right hand is finite. Assume first  that $\varrho=0.$ By \eqref{gq} we have 
\begin{equation}\label{cab}(x+1)^{s+2} g^{(q)}(wx)=\sum_{r=0}^{s+2} c_{q,r} \int_\R \kappaup(\lambda)\lambda^q \, \partial_\lambda^r e^{i\lambda w x}\, d\lambda.
\end{equation} We claim that 
\begin{equation}\label{claimc8}
(\kappaup(\lambda) \lambda^q)^{(r)}\rightarrow 0 \qquad \text{as}\qquad  |\lambda|\rightarrow +\infty
\end{equation}
provided that $\ell$ is large enough. Indeed, this claim  follows immediately from the fact that 
\begin{eqnarray}\label{1012}
(\kappaup(\lambda) \lambda^q)^{(r)} &=&\sum_{a=0}^r c_a  \lambda^{q-r+a}\kappaup ^{(a)}(\lambda)\qquad\qquad   {(\text{with }   r-a \leq q)}\nonumber\\
&=&\sum_{a=0}^r \sum_{b=0}^a c_{a,b} \lambda^{q-r+a} (i\lambda+1)^{-\ell-a+b} \psi^{(b)}(\lambda),
\end{eqnarray}
together with the fact that $\psi $ satisfies \eqref{F}. Thus, by \eqref{claimc8} we may rewrite \eqref{cab} as 
\begin{equation}\label{1011}
(x+1)^{s+2} g^{(q)}(wx) =\sum_{r=0}^{s+2} c_{q,r}' \int_\R (\kappaup(\lambda) \lambda^q)^{(r)}   \,e^{i\lambda w x} d\lambda.
\end{equation}
Using again the fact that  $\psi$ satisfies \eqref{F} together with the double sum \eqref{1012},  it follows  from \eqref{1011} that for $\varrho=0$
$$\sup_{w\in\{\pm 1\}}  
\sup_{x\in \R^+\setminus I_{j-1}}(x+1)^{s+2}       |g^{(q)}(wx)| <\infty$$ provided that $\ell$ is large enough. 

Now assume that $\varrho>0.$ 
Since $g=\mathcal F_{\rm euc}^{-1}(\kappaup)$ and $\kappaup$ is holomorphic in the interior of $\C_{p,\varepsilon},$ Cauchy's integral theorem gives 
$$ p(u)\;e^{\alpha u} \;g^{(q)}(u)=\text{cst}\int_\R p(i\partial_\lambda) \Big\{ (i\lambda-\alpha)^q 
\kappaup(\lambda+i\alpha)\Big\} e^{i\lambda u} d\lambda, $$ with $p(x)=(x+1)^{s+2}$ and  
$\alpha=\big({2\over p}-{1} -\sqrt{1-\varepsilon^2}\big)\varrho. $ The same argument as above implies that 
$$ \sup_{w\in\{\pm 1\}}  \sup_{x\in \R^+\setminus I_{j-1}}(x+1)^{s+2} e^{\big({2\over p}-{1} -\sqrt{1-\varepsilon^2}\big) \varrho x} |g^{(q)}(wx)|<\infty$$
provided that $\ell$ is large enough.

Putting the pieces together we conclude that 
$$\sup_{x\in I_{j+1}\setminus I_j} \,(|x|+1)^s \,\varphi_0(x)^{-{2\over p}+\sqrt{1-\varepsilon^2}} \,   |\xiup_j(x)| <\infty$$
for $\ell$ large enough. 
\end{proof}

\end{document}